\newcommand{\plim}{\varprojlim}
\newcommand{\mcal}{\mathcal}
\newcommand{\mbb}{\mathbb}
\newcommand{\mrm}{\mathrm}
\newcommand{\mfS}{\mathfrak{S}}
\newcommand{\vphi}{\varphi}
\newcommand{\mfL}{\mathfrak{L}}
\newcommand{\mfM}{\mathfrak{M}}
\newcommand{\mfN}{\mathfrak{N}}
\newcommand{\mft}{\mathfrak{t}}
\newcommand{\mfm}{\mathfrak{m}}
\newcommand{\mfn}{\mathfrak{n}}
\newcommand{\ku}{k[\![u]\!]}
\newcommand{\wh}{\widehat}
\newcommand{\whR}{\widehat{\mathcal{R}}}
\newcommand{\Max}{\mathrm{Max}}
\newcommand{\wt}{\widetilde}
\newcommand{\Mod}{\mrm{Mod}}
\newcommand{\wtMod}{\widetilde{\mrm{Mod}}}
\newtheorem{theorem}{Theorem}[section]
\newtheorem{corollary}[theorem]{Corollary}
\newtheorem{lemma}[theorem]{Lemma}
\newtheorem{question}[theorem]{Question}
\newtheorem{proposition}[theorem]{Proposition}
\theoremstyle{definition}
\newtheorem{definition}[theorem]{Definition}
\newtheorem{remark}[theorem]{Remark}
\newcommand{\e}{\varepsilon}
\newcommand{\cO}{\mathcal{O}}
\newtheorem*{acknowledgements}{Acknowledgements}
\title{On Galois equivariance of homomorphisms between torsion potentially crystalline representations}
\author{Yoshiyasu Ozeki\footnote{
Research Institute for Mathematical Sciences, Kyoto University,
Kyoto 606-8502, JAPAN.
\endgraf
e-mail: {\tt yozeki@kurims.kyoto-u.ac.jp}
\endgraf
Supported by the JSPS Fellowships for Young Scientists.}}
\date{}
\begin{document}
\maketitle

\begin{abstract}
%Let $K$ be a complete discrete valuation field of mixed characteristic $(0,p)$ 
%with perfect residue field and absolute ramification index $e$.
%In a previous paper, under the condition $er<p-1$,
%we showed a full faithfulness theorem for torsion crystalline representations of 
%$\mrm{Gal}(\overline{K}/K)$ with prescribed Hodge-Tate weights in $[0,r]$.
%In this paper, we prove the same theorem under the refined condition $e(r-1)<p-1$.
%This refined condition is in fact the best possible for many $K$. 
%Moreover, our study gives some analogous results for certain potential torsion 
%crystalline representations. 
Let $K$ be a complete discrete valuation field of mixed characteristic $(0,p)$ 
with perfect residue field.
Let $(\pi_n)_{n\ge 0}$ be a system of $p$-power roots of a uniformizer 
$\pi=\pi_0$ of $K$ with $\pi^p_{n+1}=\pi_n$, 
and define $G_s$ (resp.\ $G_{\infty}$) the absolute Galois group of $K(\pi_s)$
(resp.\ $K_{\infty}:=\bigcup_{n\ge 0} K(\pi_n)$). 
In this paper, we study $G_s$-equivatiantness properties of 
$G_{\infty}$-equivariant homomorphisms 
between torsion (potentially) crystalline representations.
\end{abstract}

\tableofcontents

%%%%%%%%%%%%%%%%%%%%%%%%%%%%%%%%%%%%%%%%%%%%%%%%%%%%%%%%%%%%%%%%%%%%%%%%%%%%%%%%%%%%%%%%%%%%%%%%%%%%%%%%%%%
%%%%%%%%%%%%%%%%%%%%%%%%%%%%%%%%%%%%%%%%%%%%%%%%%%%%%%%%%%%%%%%%%%%%%%%%%%%%%%%%%%%%%%%%%%%%%%%%%%%%%%%%%%%
%                           1                              %%%%%%%%%%%%%%%%%%%%%%%%%%%%%%%%%%%%%%%%%%%%%%%%
%%%%%%%%%%%%%%%%%%%%%%%%%%%%%%%%%%%%%%%%%%%%%%%%%%%%%%%%%%%%%%%%%%%%%%%%%%%%%%%%%%%%%%%%%%%%%%%%%%%%%%%%%%%
%%%%%%%%%%%%%%%%%%%%%%%%%%%%%%%%%%%%%%%%%%%%%%%%%%%%%%%%%%%%%%%%%%%%%%%%%%%%%%%%%%%%%%%%%%%%%%%%%%%%%%%%%%%

\section{Introduction}
Let $p$ be a prime number and $r\ge 0 $ an integer.
Let $K$ be a complete discrete valuation field 
of mixed characteristic $(0,p)$ with perfect residue field
and absolute ramification index $e$.
Let $\pi=\pi_0$ be a uniformizer of $K$ and 
$\pi_n$ a $p^n$-th root of $\pi$ such that $\pi^p_{n+1}=\pi_n$ for all $n\ge 0$.  
For any integer $s\ge 0$,
we put $K_{(s)}=K(\pi_s)$.
We also put $K_{\infty}=\bigcup_{n\ge 0}K_{(n)}$. 
We denote by $G_K, G_s$ and $G_{\infty}$ absolute Galois groups of 
$K$, $K_{(s)}$ and  $K_{\infty}$,
respectively.
By definition we have the following decreasing sequence of Galois groups:
$$
G_K=G_0\supset G_1\supset G_2\supset\cdots \supset G_{\infty}.  
$$
Since $K_{\infty}$ is a strict APF extension of $K$,
the theory of fields of norm implies that 
$G_{\infty}$ is isomorphic to the absolute Galois group of some field of characteristic $p$.
Therefore, representations of $G_{\infty}$ has easy interpretations
via Fontaine's \'etale $\vphi$-modules. 
Hence it seems natural to have the following question:
\begin{question}
\label{que1}
Let $T$ be a representation of $G_K$.
For a ``small'' integer $s\ge 0$, can we reconstruct various information of the $G_s$-action on $T$ 
from that of the $G_{\infty}$-action? 
\end{question}

\noindent
Nowadays there is an interesting insight of Breuil; 
he showed that representations of $G_K$ arising from finite flat group schemes or 
$p$-divisible groups over the integer ring of $K$ is ``determined'' by its restriction to $G_{\infty}$. 
Moreover, for $\mbb{Q}_p$-representations,
Kisin proved the following theorem in \cite{Kis}
(which was a conjecture of Breuil):
the restriction functor from the category of
crystalline $\mbb{Q}_p$-representations of $G_K$ 
into the category of $\mbb{Q}_p$-representations of $G_{\infty}$
is fully faithful. 

In this paper, we give some partial answers to Question \ref{que1} for 
{\it torsion crystalline representations}, moreover, {\it torsion potentially crystalline representations}.
Our first main result is as follows.
Let $\mrm{Rep}^{r,\mrm{ht},\mrm{pcris}(s)}_{\mrm{tor}}(G_K)$
be the category of torsion $\mbb{Z}_p$-representations $T$ of $G_K$
which satisfy the following:
there exist free $\mbb{Z}_p$-representations 
$L$ and $L'$ of $G_K$, of height $\le r$, such that
\begin{itemize}
\item  $L|_{G_s}$ is a subrepresentation of $L'|_{G_s}$.
Furthermore, $L|_{G_s}$ and $L'|_{G_s}$ are lattices in some 
crystalline $\mbb{Q}_p$-representation of $G_s$ with Hodge-Tate weights in $[0,r]$;
\item  $T|_{G_s} \simeq (L'|_{G_s})/(L|_{G_s})$.
\end{itemize}
\begin{theorem}
\label{Main1}
Suppose that $p$ is odd and $e(r-1)<p-1$.
Let $T$ and $T'$ be objects of $\mrm{Rep}^{r,\mrm{ht},\mrm{pcris}(s)}_{\mrm{tor}}(G_K)$.
Then any $G_{\infty}$-equivariant homomorphism $T\to T'$ is in fact $G_s$-equivariant.
\end{theorem}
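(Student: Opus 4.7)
My plan is to translate the statement into one about torsion Kisin modules enhanced by Liu's $(\vphi,\hat G)$-structure, and to exploit the ramification condition $e(r-1)<p-1$ to force compatibility of any $G_\infty$-equivariant map with the added monodromy data.

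First I would unwind the definition of $\mrm{Rep}^{r,\mrm{ht},\mrm{pcris}(s)}_{\mrm{tor}}(G_K)$ and fix free $\mbb{Z}_p$-representations $L,L'$ (respectively $M,M'$) of $G_K$, of height $\le r$, such that after restriction to $G_s$ one has $L|_{G_s}\subset L'|_{G_s}$ (respectively $M|_{G_s}\subset M'|_{G_s}$) as lattices in a crystalline $G_s$-representation with Hodge--Tate weights in $[0,r]$, together with $T|_{G_s}\cong L'|_{G_s}/L|_{G_s}$ and $T'|_{G_s}\cong M'|_{G_s}/M|_{G_s}$. Since the conclusion only involves $G_s$-equivariance, it suffices to work entirely with the $G_s$-actions from this point on.

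Next I would attach Kisin modules $\mfM_L,\mfM_{L'},\mfM_M,\mfM_{M'}$ over $\mfS=W(k)[\![u]\!]$ to the free lattices. Via the field-of-norms equivalence, a $G_\infty$-equivariant map $f\colon T|_{G_s}\to T'|_{G_s}$ corresponds to a $\vphi$-equivariant map between the associated \'etale $\vphi$-modules, and the bounded-height hypothesis ensures that this map comes from a $\vphi$-equivariant morphism $\tilde f$ between the torsion Kisin modules $\mfM_{L'}/\mfM_L$ and $\mfM_{M'}/\mfM_M$. The crystalline hypothesis on $L|_{G_s}$ and $L'|_{G_s}$ further endows the attached Kisin modules with a canonical monodromy operator---equivalently, a $(\vphi,\hat G_s)$-module structure in the sense of Liu---encoding the full $G_s$-action.

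The heart of the argument is to show that $\tilde f$ automatically commutes with this monodromy, and hence lifts to a morphism of $(\vphi,\hat G_s)$-modules. This reduces, through the reconstruction of Galois actions from $(\vphi,\hat G_s)$-modules, to showing that any $\vphi$-equivariant morphism between torsion Kisin modules of height $\le r$ arising from crystalline lattices over $K_{(s)}$ respects the monodromy. The ramification bound $e(r-1)<p-1$ would enter exactly here: it is the Fontaine--Laffaille range in which the filtration---and hence the monodromy---on a crystalline lattice of Hodge--Tate weights in $[0,r]$ is uniquely determined by the underlying $\vphi$-structure, so that $\vphi$-equivariant maps automatically respect it. Once compatibility with $\hat G_s$ is secured, passing back from $(\vphi,\hat G_s)$-modules to representations yields the desired $G_s$-equivariance.

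The principal obstacle I anticipate is the torsion refinement of this uniqueness. For $\mbb{Q}_p$-coefficients, classical Liu--Kisin theory provides uniqueness of the monodromy, but for the torsion quotients $L'/L$ and $M'/M$ one must first produce monodromies on $\mfM_L$ and $\mfM_{L'}$ that descend compatibly to the quotient, and then prove that the resulting torsion monodromy is rigid under $\vphi$-equivariant maps. This will very likely require a careful $u$-adic analysis of $\mfS/p^n$-modules, controlled by the $E(u)^r$-divisibility of the cokernel of $\vphi$; the bound $e(r-1)<p-1$ is presumably precisely the arithmetic input needed to divide by suitable powers of $E(u)$ in this analysis without introducing denominators in $p$.
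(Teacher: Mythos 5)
Your proposal identifies the right framework---translate to Kisin modules equipped with a $(\vphi,\hat G_s)$-structure and use the bound $e(r-1)<p-1$ to force monodromy compatibility---and in that sense you are pointed in the same direction as the paper. However, there is a genuine gap in the step where you pass from a $G_\infty$-equivariant map $f\colon T|_{G_s}\to T'|_{G_s}$ to a $\vphi$-equivariant morphism $\tilde f$ of torsion Kisin modules. The functor $T_{\mfS}$ from torsion Kisin modules of height $\le r$ to $G_\infty$-representations is only known to be \emph{full} when $er<p-1$; the hypothesis $e(r-1)<p-1$ is strictly weaker (e.g.\ $e=1$, $r=p-1$ satisfies the latter but not the former), so you cannot conclude that $f$ lifts directly to the torsion Kisin modules attached to your chosen lattices. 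This is not a technicality: the extra room afforded by $e(r-1)<p-1$ over $er<p-1$ is precisely what makes the theorem nontrivial.

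The paper circumvents this by working with Caruso--Liu's \emph{maximal} Kisin modules, for which $T_{\mfS}\colon \Max^r_{/\mfS_\infty}\to\mrm{Rep}_{\mrm{tor}}(G_\infty)$ is fully faithful without any bound on $er$. But maximizing a torsion Kisin module can destroy the chosen $\hat G_s$-structure, so one cannot simply maximize and then invoke monodromy rigidity. The paper instead first classifies simple objects (the modules $\mfM(\mfn)$), shows they carry a unique compatible $\hat G_s$-structure in the relevant category $\wtMod^{r,\hat G_s,J}_{/\mfS_\infty}$ once a suitable ideal $J$ is fixed (with $J=u^p\vphi(\mft)W(R)$, giving $c_J=p/e+p/(p-1)>pr/(p-1)$ exactly when $e(r-1)<p-1$), proves full faithfulness directly for irreducible mod $p$ tame pieces (Lemma \ref{FFLEM}), and then performs a d\'evissage in the exact category $\wt{\mrm{Rep}}^{r,\hat G_s,J}_{\mrm{tor}}(G_s)$ to deduce Theorem \ref{FFTHM}. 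The rigidity of the $\hat G_s$-action under $\vphi$-equivariant maps is proved by an iterative $u$-adic convergence estimate on $\Delta=\tau^{p^s}\circ\hat f-\hat f\circ\tau^{p^s}$ (Proposition \ref{FFTHMMOD}), not by invoking Fontaine--Laffaille determinacy of the filtration; the latter requires $e=1$ and is a different and narrower statement than what is used here. Your outline also does not address the descent step needed to produce a $(\vphi,\hat G_s)$-module over $\mfS$ (not merely over $\mfS_s$) from the $G_s$-crystalline data---this is handled in Proposition \ref{proofMain1} via the base change machinery of Section 3---nor the reduction to $k$ algebraically closed that the d\'evissage requires.
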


\noindent
We should remark that 
the condition $e(r-1)<p-1$ in the above does not depend on $s$.
We put $\mrm{Rep}^{r, \mrm{cris}}_{\mrm{tor}}(G_K)=\mrm{Rep}^{r, \mrm{ht}, \mrm{pcris}(0)}_{\mrm{tor}}(G_K)$.
By definition,
a torsion $\mbb{Z}_p$-representation $T$ of $G_K$ 
is contained in this category
if and only if it can be written as the quotient of lattices in some crystalline $\mbb{Q}_p$-representation
of $G_K$ with Hodge-Tate weights in $[0,r]$. 
We call the objects in $\mrm{Rep}^{r, \mrm{cris}}_{\mrm{tor}}(G_K)$ 
{\it torsion crystalline representations
with Hodge-Tate weights in $[0,r]$}.
In the case $r=1$, such representations are equivalent to finite flat representations.
(Here, a torsion $\mbb{Z}_p$-representation of $G_K$ is finite flat
if it arises from the generic fiber of some $p$-power order 
finite flat commutative group scheme over the integer ring of $K$.)
Combining Theorem \ref{Main1} with results of  
\cite{Kim}, \cite{La}, \cite{Li3} (the case $p=2$)
we obtain the following full faithfulness theorem for torsion 
crystalline representations.

\begin{corollary}[Full Faithfulness Theorem]
\label{FFTHMtorcris}
Suppose $e(r-1)<p-1$.
Then the restriction functor
$\mrm{Rep}^{r, \mrm{cris}}_{\mrm{tor}}(G_K)\to \mrm{Rep}_{\mrm{tor}}(G_{\infty})$
is fully faithful.
\end{corollary}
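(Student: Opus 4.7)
The plan is to show full faithfulness by reducing to Theorem \ref{Main1} when $p$ is odd and to existing results on finite flat representations when $p=2$. Full faithfulness amounts to injectivity and surjectivity of the natural map
\[
\mrm{Hom}_{G_K}(T,T')\longrightarrow \mrm{Hom}_{G_\infty}(T,T')
\]
for $T,T'\in \mrm{Rep}^{r,\mrm{cris}}_{\mrm{tor}}(G_K)$. Injectivity is immediate because both Hom sets sit inside the common set of $\mbb{Z}_p$-linear maps $T\to T'$. The entire content of the statement is therefore surjectivity: every $G_\infty$-equivariant $\mbb{Z}_p$-linear map $f\colon T\to T'$ must already be $G_K$-equivariant.

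When $p$ is odd, this is the $s=0$ specialization of Theorem \ref{Main1}. Unpacking the definition shows that $\mrm{Rep}^{r,\mrm{ht},\mrm{pcris}(0)}_{\mrm{tor}}(G_K)$ coincides with $\mrm{Rep}^{r,\mrm{cris}}_{\mrm{tor}}(G_K)$: for $s=0$ the restrictions $L|_{G_0}$ and $L'|_{G_0}$ are simply $L$ and $L'$ as $G_K$-modules, the containment condition is a $G_K$-stable inclusion of lattices, and both lattices lie inside the same crystalline $G_K$-representation with Hodge-Tate weights in $[0,r]$. Applying Theorem \ref{Main1} with $s=0$ upgrades $G_\infty$-equivariance of $f$ to $G_0=G_K$-equivariance, which is exactly the surjectivity we need.

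When $p=2$, the hypothesis $e(r-1)<p-1=1$ forces $r\le 1$. The case $r=0$ is vacuous, and in the case $r=1$ the torsion crystalline representations with Hodge-Tate weights in $[0,1]$ are precisely the finite flat representations of $G_K$. Full faithfulness of the restriction functor to $G_\infty$-representations on finite flat representations in residue characteristic $2$ is exactly the content of the theorems of Kim, Lau, and Liu cited in the statement. Combining the two residue characteristic cases yields the corollary. The main obstacle is thus absorbed entirely into Theorem \ref{Main1} (for odd $p$) and into the cited literature (for $p=2$); the proof of the corollary itself is a short bookkeeping step identifying the relevant categories.
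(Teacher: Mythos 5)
Your proposal is correct and follows exactly the paper's intended argument, which is contained in the remark immediately preceding the corollary: take $s=0$ in Theorem \ref{Main1} for odd $p$ (using the definition $\mrm{Rep}^{r,\mrm{cris}}_{\mrm{tor}}(G_K)=\mrm{Rep}^{r,\mrm{ht},\mrm{pcris}(0)}_{\mrm{tor}}(G_K)$), and invoke the cited results of Kim, Lau, and Liu for $p=2$, where $e(r-1)<1$ forces $r\le 1$ and $r=1$ is the finite flat case. One tiny imprecision: the $r=0$ subcase for $p=2$ is not literally vacuous, but it is immediate since the representations are then unramified and $G_\infty$ surjects onto $G_K/I_K$.
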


\noindent
Before this work,
some results are already known.
First,
the full faithfulness theorem was proved by 
Breuil for  
$e=1$ and $r<p-1$ via the Fontaine-Laffaille theory (\cite{Br2}, the proof of Th\'eor\`em 5.2).
He also proved the theorem
under the assumptions $p>2$ and $r\le 1$ 
as a consequence of a study of the category of finite flat group schemes (\cite[Theorem 3.4.3]{Br3}).
Later, his result was extended to the case $p=2$ in \cite{Kim}, \cite{La}, \cite{Li3} 
(proved independently). 
In particular, the full faithfulness theorem for $p=2$ is 
a consequence of their works.
On the other hand, Abrashkin proved the full faithfulness 
in the case where $p>2, r<p$ and $K$ is a finite unramified extension of $\mbb{Q}_p$ (\cite[Section 8.3.3]{Ab2}).
His proof is based on calculations of ramification bounds for torsion crystalline representations. 
In \cite{Oz2}, a proof of Corollary \ref{FFTHMtorcris} under the assumption $er<p-1$ 
is given via $(\vphi,\hat{G})$-modules
(which was introduced by Tong Liu \cite{Li2} to classify lattices in 
semi-stable representations).

Our proof of Theorem \ref{Main1} is similar to the proof for the main result of \cite{Oz2},
but we need more careful considerations for $(\vphi,\hat{G})$-modules.
Indeed we need special base change arguments
to study some potential crystalline representations.
In fact, we prove a full faithfulness theorem for 
torsion representations arising from certain classes of 
$(\vphi,\hat{G})$-modules (cf.\ Theorem \ref{FFTHM}), 
which immediately gives our main theorem.
In addition,
our study gives a result as below which is the second main result of this paper
(here, we define $\mrm{log}_p(x):=-\infty$ for any real number $x\le 0$).

\begin{theorem}
\label{Main2}
Suppose that $p$ is odd and $s> n-1 + \mrm{log}_p(r-(p-1)/e)$.
Let $T$ and $T'$ be objects of $\mrm{Rep}^{r, \mrm{cris}}_{\mrm{tor}}(G_K)$
which are killed by $p^n$.
Then any $G_{\infty}$-equivariant homomorphism $T\to T'$ is in fact $G_s$-equivariant.
\end{theorem}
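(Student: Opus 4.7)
The plan is to prove Theorem \ref{Main2} by refining the $(\varphi,\hat G)$-module argument that underlies Theorem \ref{Main1}, now carrying through the $p$-adic valuation bookkeeping to extract a quantitative estimate in $n$ and $s$ rather than a qualitative vanishing.

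\textbf{Step 1 (Kisin and Liu setup).} For $T,T'$ in $\mathrm{Rep}^{r,\mathrm{cris}}_{\mathrm{tor}}(G_K)$ killed by $p^n$, I write $T = L_2/L_1$ and $T' = L_2'/L_1'$ with the $L_i,L_i'$ being $G_K$-stable $\mathbb Z_p$-lattices in crystalline representations of $G_K$ with Hodge-Tate weights in $[0,r]$. Kisin's theory attaches associated Kisin modules $\mathfrak M_i,\mathfrak M_i'$ of height $\le r$ over $\mathfrak S = W(k)[\![u]\!]$, and Liu's refinement equips each with a canonical $(\varphi,\hat G)$-module structure on $\widehat{\mathfrak M}_i = \widehat{\mathcal R}\otimes_{\mathfrak S}\mathfrak M_i$ recovering the full $G_K$-action on $L_i$. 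Passing to torsion quotients $\bar{\mathfrak M} := \mathfrak M_2/\mathfrak M_1$ (and $\bar{\mathfrak M}'$), the tensor product $\widehat{\mathcal R}/p^n \otimes \bar{\mathfrak M}$ reconstitutes $T$ as a $G_K$-module, and analogously for $T'$.

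\textbf{Step 2 (Reduction).} A $G_\infty$-equivariant homomorphism $f\colon T\to T'$ corresponds, via Kisin's equivalence for $G_\infty$-representations, to a $\varphi$-equivariant morphism $\bar f\colon \bar{\mathfrak M}\to\bar{\mathfrak M}'$ of torsion Kisin modules. The $G_s$-equivariance of $f$ is equivalent to the induced $\widehat{\mathcal R}/p^n$-linear map $\bar F := \widehat{\mathcal R}/p^n\otimes\bar f$ commuting with the action of every $\sigma \in G_s\subset \hat G$; the entire problem is thus to check that the defect $\delta_\sigma := \sigma\circ\bar F - \bar F\circ\sigma$ vanishes.

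\textbf{Step 3 (Key estimate).} Since $\sigma\in G_s$ fixes $\pi_s$, its action on the Kummer-tower generator $[\pi^\flat]\in \widehat{\mathcal R}$ differs from the identity by a factor $[\epsilon]^{a}$ with $a\in p^s\mathbb Z_p$; hence $(\sigma-1)[\pi^\flat]$ lies in the ideal generated by $\mathfrak t^{p^{s-1}}$-type elements, and Frobenius iteration converts this into a normalized $p$-adic gain that scales as $p^{s-n+1}$. On the other side, the height-$\le r$ hypothesis on $\mathfrak M_i$ bounds the valuation one loses when promoting a $\varphi$-equivariant datum to a $\hat G$-equivariant one, with the loss controlled by $r-(p-1)/e$ (where $(p-1)/e$ is the normalized valuation of the period element $\mathfrak t$). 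Combining the gain and the loss, $\delta_\sigma$ is divisible by $p^n$ in $\widehat{\mathcal R}/p^n\otimes\bar{\mathfrak M}'$ as soon as $p^{s-n+1} > r-(p-1)/e$, which is exactly the hypothesis $s > n-1 + \log_p(r-(p-1)/e)$. Thus $\delta_\sigma = 0$ for every $\sigma\in G_s$, and $f$ is $G_s$-equivariant.

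\textbf{Main obstacle.} The technical heart is the precise valuation bookkeeping inside $\widehat{\mathcal R}/p^n$, where one must correctly balance the Frobenius-iterated Kummer gain scaling as $p^{s-n+1}$ against the height-$r$ loss bounded by $r-(p-1)/e$, so that the two constants combine into the inequality appearing in the hypothesis. Additional care is needed because $T$ is a quotient of two lattices rather than a single lattice: one must check that the short exact sequence $0\to\mathfrak M_1\to\mathfrak M_2\to\bar{\mathfrak M}\to 0$ of Kisin modules and its $(\varphi,\hat G)$-enhancement mod $p^n$ behave functorially, so that all estimates descend correctly to the torsion quotient.
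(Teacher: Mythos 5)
Your Step 2 contains a genuine gap: you assert that a $G_\infty$-equivariant map $T\to T'$ corresponds ``via Kisin's equivalence'' to a morphism of torsion Kisin modules, but the torsion Kisin functor $T_{\mfS}\colon \Mod^r_{/\mfS_\infty}\to \mrm{Rep}_{\mrm{tor}}(G_\infty)$ is only known to be full when $er<p-1$ (Proposition \ref{Kisinfunctor}(2)), whereas Theorem \ref{Main2} is precisely about the regime $e(r-1)\ge p-1$ (the case $e(r-1)<p-1$ is already Corollary \ref{FFTHMtorcris}). So there is no a priori $\vphi$-module $\bar f$ lifting $f$, and the ``defect'' $\delta_\sigma$ you propose to estimate need not even be defined. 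The paper avoids this exactly by \emph{not} lifting $f$ directly; instead it proves a categorical full-faithfulness statement (Theorem \ref{FFTHM}) by d\'evissage to simple objects in the category $\Max^r_{/\mfS_\infty}$ of maximal Kisin modules, using tame quotients and Lemma \ref{FFLEM}, where the map can be identified on irreducible pieces.

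A second missing ingredient is the reduction to algebraically closed residue field. The relevant ideal here is $J=u^pI^{[p^{s-n+1}]}W(R)$, which does \emph{not} contain $u^pI^{[1]}W(R)$ once $s>n-1$. Theorem \ref{FFTHM} then only applies when $k$ is algebraically closed, so the paper first restricts to $G_{L,s}$ with $L$ the completion of $K^{\mrm{ur}}$, uses Proposition \ref{Main2Lem} and Theorem \ref{FFTHM} over $L$, and finally recovers $G_s$-equivariance because $G_s$ is topologically generated by $G_{L,s}$ and $G_\infty$. Your proposal makes no such reduction.

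Finally, the ``Frobenius iteration'' gain in Step 3 is not how the threshold arises. The actual mechanism (Proposition \ref{Main2Lem}) is the binomial expansion $\tau^{p^s}-1=\sum_{i=1}^{p^s}\binom{p^s}{i}(\tau-1)^i$, the GLS-type estimate $(\tau-1)^i(x)\in u^pI^{[i]}W(R)\otimes_{\vphi,\mfS}\mfL$, and the observation that all terms with $s-v_p(i)\ge n$ die because $\mfM$ is $p^n$-torsion; what survives lies in $u^pI^{[p^{s-n+1}]}W(R)\otimes\mfM$. The constant $r-(p-1)/e$ then comes from translating $c_J=p/e+p^{s-n+2}/(p-1)>pr/(p-1)$, where $pr/(p-1)$ is the threshold in Proposition \ref{FFTHMMOD}'s height-$\le r$ contraction argument. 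Your heuristic ``gain scaling as $p^{s-n+1}$ versus loss $r-(p-1)/e$'' gestures in the right direction but does not produce a correct proof without these precise inputs.
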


\noindent
For torsion semi-stable representations, a similar result was shown in 
Theorem 3 of \cite{CL2}, which was a consequence  of  a study of ramification bounds. 
The bound appearing in their theorem was $n-1 + \mrm{log}_p(nr)$. 
By applying our arguments given in this paper,
we can obtain a generalization of their result;
our refined condition is  $n-1 + \mrm{log}_pr$
(see Theorem \ref{Main3}).
Some other consequences of our study are described in subsection \ref{consequences}.
Motivated by the full faithfulness theorem 
(= Corollary \ref{FFTHMtorcris})
and Theorem \ref{Main2}, we raise the following question. 
\begin{question}
Is any $G_{\infty}$-equivariant homomorphism in the category $\mrm{Rep}^{r, \mrm{cris}}_{\mrm{tor}}(G_K)$
in fact $G_s$-equivariant
for $s>\mrm{log}_p(r-(p-1)/e)$?
\end{question}

On the other hand,
there exist counter examples of the full faithfulness theorem
when we ignore the condition
$e(r-1)< p-1$.
Let $\mrm{Rep}_{\mrm{tor}}(G_1)$ be the category of 
torsion $\mbb{Z}_p$-representations
of $G_1$.
\begin{theorem}[= Special case of Corollary \ref{nonfullthm}]
\label{nonfull}
Suppose that $K$ is a finite extension of $\mbb{Q}_p$,
and also suppose $e\mid (p-1)$ or $(p-1)\mid e$.
If $e(r-1)\ge p-1$,
the restriction functor 
$\mrm{Rep}^{r, \mrm{cris}}_{\mrm{tor}}(G_K)\to 
\mrm{Rep}_{\mrm{tor}}(G_1)$
is not full
$($in particular, the restriction functor 
$\mrm{Rep}^{r, \mrm{cris}}_{\mrm{tor}}(G_K)\to 
\mrm{Rep}_{\mrm{tor}}(G_{\infty})$ is not full$)$.
\end{theorem}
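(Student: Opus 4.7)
My plan is to exhibit a pair of objects $T_1, T_2 \in \mrm{Rep}^{r, \mrm{cris}}_{\mrm{tor}}(G_K)$ that are non-isomorphic as $G_K$-representations but become isomorphic after restriction to $G_1$; any $G_1$-equivariant isomorphism $T_1|_{G_1} \xrightarrow{\sim} T_2|_{G_1}$ then exhibits the failure of fullness.

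To produce such a pair I would use the classification of torsion crystalline representations of height $\le r$ via $(\vphi, \hat{G})$-modules as in \cite{Li2} and \cite{Oz2}, and construct two such modules $(\mfM_i, \hat{G}_i)$ whose underlying $\vphi$-modules are isomorphic but whose $\hat{G}$-structures differ. Since the underlying $\vphi$-module governs the restriction to $G_\infty$, the two associated representations automatically agree on $G_\infty$, and it suffices to arrange that the difference in $\hat{G}$-action is detected by $G_K/G_\infty$ but killed upon passing to $G_1/G_\infty$. The quotient $G_K/G_1 \simeq \mathrm{Gal}(K(\pi_1)/K)$ is cyclic of order dividing $p$, so the perturbation encoding the difference should amount to a nontrivial character valued in $\mu_p$.

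The inequality $e(r-1) \ge p-1$, i.e.\ $er \ge e + (p-1)$, ensures that the height budget $[0, er]$ is large enough to accommodate such a Kummer-type perturbation in addition to a baseline height corresponding to an honest crystalline character. The divisibility hypothesis $e \mid (p-1)$ or $(p-1) \mid e$ controls the relation between the Kummer extension $K(\pi_1)/K$ and the cyclotomic extension $K(\mu_p)/K$, making the explicit identification of the perturbation on the Galois side compatible with the $(\vphi,\hat{G})$-parametrization. Concretely, one would build two rank-one modules of the form $\mfS/(p^n)\cdot e$ with $\vphi(e) = a_i u^{h_i} e$, whose heights $h_1, h_2 \in [0, er]$ differ by an amount encoding the Kummer twist.

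The main obstacle I expect is verifying crystallinity — not merely semi-stability or finite height — of the two associated representations. This amounts to checking explicit compatibility conditions on the $\hat{G}$-actions of the two constructed modules, and it is exactly where the divisibility hypothesis $e\mid (p-1)$ or $(p-1)\mid e$ is used. Once crystallinity is established, both the agreement on $G_1$ (because $G_1$ fixes $\pi_1$, so the Kummer-type twist dies on restriction) and the failure on $G_K$ (because the twist is nontrivial on $G_K/G_1$) are built into the construction, and the non-fullness of the restriction functor follows immediately.
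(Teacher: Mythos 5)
Your overall strategy is the right one and is indeed the one the paper uses: exhibit $T_1, T_2 \in \mrm{Rep}^{r,\mrm{cris}}_{\mrm{tor}}(G_K)$ that are non-isomorphic over $G_K$ but become isomorphic after restriction to $G_1$, and deduce non-fullness. Your intuition that the distinguishing feature should be a Kummer twist that dies once $\pi_1$ is adjoined is also correct. But the concrete construction you propose does not work, for two reasons. First, it is internally inconsistent: you begin by insisting the two modules should have isomorphic underlying $\vphi$-modules (so that the associated $G_\infty$-, and a fortiori $G_1$-, representations can agree), but then propose rank-one modules with \emph{different} heights $h_1\neq h_2$; different heights give non-isomorphic $\vphi$-modules, hence different restrictions to $G_\infty$, which destroys the setup. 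Second, and more fundamentally, rank one cannot furnish a counterexample. A rank-one $\mbb{F}_p$-representation of $G_K$ is a character valued in $\mbb{F}_p^\times$, a group of order prime to $p$, so it is tame; by Lemma~\ref{tameres}, tame representations are already determined by their restriction to $G_\infty$, so two such characters agreeing on $G_1\supset G_\infty$ agree on $G_K$. The Kummer class you want to encode lives in an $\mrm{Ext}^1$, i.e.\ in a rank-two object, not in a rank-one twist.

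The paper's construction (Corollary~\ref{nonfullthm}, via Lemma~\ref{roughbound} and Proposition~\ref{Tatecurve}) is rank two: take $T_1=E_\pi[p]$, the $p$-torsion of the Tate curve with parameter $\pi$, sitting in the non-split extension $0\to\mbb{F}_p(1)\to E_\pi[p]\to\mbb{F}_p\to 0$, and $T_2=\mbb{F}_p(1)\oplus\mbb{F}_p$. Over $G_1$ the Kummer class of $\pi$ becomes trivial since $\pi=\pi_1^p\in(K(\pi_1)^\times)^p$, so $T_1|_{G_1}\simeq T_2|_{G_1}$; over $G_K$ the extension is non-split because $p\nmid v_K(\pi)=1$. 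The membership of $T_1$ and $T_2$ in $\mrm{Rep}^{r,\mrm{cris}}_{\mrm{tor}}(G_K)$ is established not by a $(\vphi,\hat G)$-module computation but by the Galois-cohomology lifting argument of Lemma~\ref{2liftlem} (packaged as Lemma~\ref{roughbound}), which gives a crystalline lift with Hodge--Tate weights in $[0,\,1+(p-1)/\gcd(e,p-1)]$; the divisibility hypothesis $e\mid(p-1)$ or $(p-1)\mid e$ enters exactly here, to guarantee $1+(p-1)/\gcd(e,p-1)\le r$ whenever $e(r-1)\ge p-1$. So while your high-level plan and Kummer-theoretic intuition are correct, the rank and the method of verifying crystallinity both need to be replaced by the Tate-curve construction and the cohomological lifting lemma.
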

\noindent
In particular,
if $p=2$,
then the full faithfulness never hold for any 
finite extension $K$ of $\mbb{Q}_2$ and any $r\ge 2$.
Theorem \ref{nonfull} implies that the condition ``$e(r-1)<p-1$'' in
Corollary \ref{FFTHMtorcris} is the best possible 
for many finite extensions $K$ of $\mbb{Q}_p$.

Now we describe the organization of this paper.
In Section 2, we setup notations and summarize facts we need later.
In Section 3, we define variant notions of $(\vphi,\hat{G})$-modules and give some basic properties.
They are needed to study certain classes of potentially crystalline representations 
and restrictions of semi-stable representations.
In Section 4, we study technical torsion $(\vphi,\hat{G})$-modules
which are related with torsion (potentially) crystalline representations.
The key result in this section is the full faithfulness result Theorem \ref{FFTHM} on them, 
which allows us to prove our main results immediately.
Finally, in Section 5, we calculate the smallest integer $r$ 
for a given torsion representation $T$
such that $T$ admits a crystalline lift with Hodge-Tate weights in $[0,r]$. 
We mainly study the rank two case. 
We use our full faithfulness theorem to assure the non-existence of 
crystalline lifts with small Hodge-Tate weights.
Theorem \ref{nonfull} is a consequence of studies of this section.

\begin{acknowledgements}
The author would like to thank Shin Hattori, Naoki Imai and Yuichiro Taguchi 
who gave him many valuable advice.
This work was supported by JSPS KAKENHI Grant Number 25$\cdot$173.
\end{acknowledgements}

\noindent
{\bf Notation and convention:}
Throughout this paper, we fix a prime number $p$.
Except Section 5, we always assume that {\it $p$ is odd}.

For any topological group $H$,
we denote by 
$\mrm{Rep}_{\mrm{tor}}(H)$ 
(resp.\ $\mrm{Rep}_{\mbb{Z}_p}(H)$, resp.\ $\mrm{Rep}_{\mbb{Q}_p}(H)$)
the category of torsion $\mbb{Z}_p$-representations of $H$
(resp.\ the category of free $\mbb{Z}_p$-representations of $H$,
resp.\ the category of  $\mbb{Q}_p$-representations of $H$).
All $\mbb{Z}_p$-representations (resp.\ $\mbb{Q}_p$-representations) 
in this paper are always assumed to be finitely generated 
over $\mbb{Z}_p$ (resp.\ $\mbb{Q}_p$).

For any field $F$, we denote by $G_F$ the absolute Galois group of $F$
(for a fixed separable closure of $F$).

%%%%%%%%%%%%%%%%%%%%%%%%%%%%%%%%%%%%%%%%%%%%%%%%%%%%%%%%%%%%%%%%%%%%%%%%%%%%%%%%%%%%%%%%%%%%%%%%%%%%%%%%%%%
%%%%%%%%%%%%%%%%%%%%%%%%%%%%%%%%%%%%%%%%%%%%%%%%%%%%%%%%%%%%%%%%%%%%%%%%%%%%%%%%%%%%%%%%%%%%%%%%%%%%%%%%%%%
%                           2                              %%%%%%%%%%%%%%%%%%%%%%%%%%%%%%%%%%%%%%%%%%%%%%%%
%%%%%%%%%%%%%%%%%%%%%%%%%%%%%%%%%%%%%%%%%%%%%%%%%%%%%%%%%%%%%%%%%%%%%%%%%%%%%%%%%%%%%%%%%%%%%%%%%%%%%%%%%%%
%%%%%%%%%%%%%%%%%%%%%%%%%%%%%%%%%%%%%%%%%%%%%%%%%%%%%%%%%%%%%%%%%%%%%%%%%%%%%%%%%%%%%%%%%%%%%%%%%%%%%%%%%%%

\section{Preliminaries}

In this section, 
we recall definitions and basic properties for  Kisin modules
and $(\vphi,\hat{G})$-modules.
Throughout Section 2, 3 and 4,
we always assume that $p$ is an odd prime.

\subsection{Basic notations}
Let $k$ be a perfect field of 
characteristic $p$,
$W(k)$ the ring of Witt vectors with coefficients in $k$, 
$K_0=W(k)[1/p]$, $K$ a finite totally 
ramified extension of $K_0$ of degree $e$,
$\overline{K}$ a fixed algebraic closure of $K$.
Throughout this paper,
we fix a uniformizer $\pi$ of $K$. 
Let $E(u)$ be the 
minimal polynomial of $\pi$ over $K_0$.
Then $E(u)$ is an Eisenstein polynomial.
For any integer $n\ge 0$,
we fix a system $(\pi_n)_{n\ge 0}$ of a $p^n$-th root of $\pi$ in $\overline{K}$
such that $\pi^p_{n+1}=\pi_n$.
Let $R=\plim \cO_{\overline{K}}/p$, 
where $\cO_{\overline{K}}$ is 
the integer ring of $\overline{K}$
and the transition maps are 
given by the $p$-th power map.
For any integer $s\ge 0$,
we write
$\underline{\pi_s}:=(\pi_{s+n})_{n\ge 0}\in R$ and 
$\underline{\pi}:=\underline{\pi_0}\in R$.
Note that we have $\underline{\pi_s}^{p^s}=\underline{\pi}$.

Let $L$ be the  completion of an unramified algebraic extension of $K$
with residue field $k_L$.
Then $\pi_s$ is a uniformizer of $L_{(s)}:=L(\pi_s)$ 
and $L_{(s)}$ is a totally ramified degree $ep^s$ extension of 
$L_0:=W(k_L)[1/p]$.
We set $L_{\infty}:=\bigcup_{n\ge 0}L_{(n)}$.
We put $G_{L,s}:=G_{L_{(s)}}=\mrm{Gal}(\overline{L}/L_{(s)})$ and
$G_{L,\infty}:=G_{L_{\infty}}=\mrm{Gal}(\overline{L}/L_{\infty})$.
By definitions,
we have $L=L_{(0)}$ and $G_{L,0}=G_L$.
Put $\mfS_{L,s}=W(k_L)[\![u_s]\!]$ (resp.\ $\mfS_L=W(k_L)[\![u]\!]$) 
with an indeterminate $u_s$ (resp.\ $u$). 
We equip 
a Frobenius endomorphism
$\varphi$ of $\mfS_{L,s}$ (resp.\ $\mfS_L$) 
by $u_s\mapsto u_s^p$ (resp.\ $u\mapsto u^p$) and 
the Frobenius on $W(k_L)$. 
We embed the $W(k_L)$-algebra $W(k_L)[u_s]$ (resp.\ $W(k_L)[u]$) into $W(R)$
via the map $u_s\mapsto [\underline{\pi_s}]$ 
(resp.\ $u\mapsto [\underline{\pi}]$),
where $[\ast]$ stands for the Teichm\"uller
representative.
This embedding extends to an 
embedding $\mfS_{L,s}\hookrightarrow W(R)$
(resp.\ $\mfS_L\hookrightarrow W(R)$).
By identifying $u$ with $u_s^{p^s}$,
we regard $\mfS_L$ as a subalgebra of $\mfS_{L,s}$.  
It is readily seen that the embedding 
$\mfS_L\hookrightarrow \mfS_{L,s} \hookrightarrow W(R)$ is 
compatible with the Frobenius endomorphisms.
If we denote by $E_s(u_s)$
the minimal polynomial of $\pi_s$ over $K_0$, with indeterminate $u_s$, 
then we have  $E_s(u_s)=E(u_s^{p^s})$.
Therefore, we have $E_s(u_s)=E(u)$ in $\mfS_{L,s}$.
We note that the minimal polynomial of $\pi_s$ over $L_0$
is $E_s(u_s)$.

Let $S^{\mrm{int}}_{L_0,s}$ (resp.\ $S^{\mrm{int}}_{L_0})$) be the $p$-adic completion 
of the divided power envelope of $W(k_L)[u_s]$ (resp.\ $W(k_L)[u]$) with respect to the ideal 
generated by $E_s(u_s)$ (resp.\ $E(u)$).  
There exists a unique Frobenius map $\vphi\colon S^{\mrm{int}}_{L_0,s}\to S^{\mrm{int}}_{L_0,s}$
(resp.\ $\vphi\colon S^{\mrm{int}}_{L_0}\to S^{\mrm{int}}_{L_0}$)
and monodromy $N\colon S^{\mrm{int}}_{L_0,s}\to S^{\mrm{int}}_{L_0,s}$ 
defined by $\varphi(u_s)=u_s^p$ (resp.\ $\varphi(u)=u^p$) 
and $N(u_s)=-u_s$ (resp.\ $N(u)=-u$).
Put $S_{L_0,s}=S^{\mrm{int}}_{L_0,s}[1/p]=L_0\otimes_{W(k_L)} S^{\mrm{int}}_{L_0,s}$
(resp.\ $S_{L_0}=S^{\mrm{int}}_{L_0}[1/p]=L_0\otimes_{W(k_L)} S^{\mrm{int}}_{L_0}$).
We equip $S^{\mrm{int}}_{L_0,s}$ and $S_{L_0,s}$ 
(resp.\ $S^{\mrm{int}}_{L_0}$ and $S_{L_0}$) with decreasing filtrations 
$\mrm{Fil}^iS^{\mrm{int}}_{L_0,s}$ and $\mrm{Fil}^iS_{L_0,s}$ 
(resp.\ $\mrm{Fil}^iS^{\mrm{int}}_{L_0,s}$ and $\mrm{Fil}^iS_{L_0,s}$)
by the $p$-adic completion of the ideal generated by $E^j_s(u_s)/j!$ (resp.\ $E^j(u)/j!$) 
for all $j\ge 0$. 
The inclusion $W(k_L)[u_s]\hookrightarrow W(R)$ (resp.\ $W(k_L)[u]\hookrightarrow W(R)$) 
via the map $u_s\mapsto [\underline{\pi_s}]$ (resp.\ $u\mapsto [\underline{\pi}]$)
induces $\vphi$-compatible inclusions 
$\mfS_{L,s}\hookrightarrow S^{\mrm{int}}_{L_0,s}\hookrightarrow A_{\mrm{cris}}$
and $S_{L_0,s}\hookrightarrow B^+_{\mrm{cris}}$
(resp.\ $\mfS_L\hookrightarrow S^{\mrm{int}}_{L_0}\hookrightarrow A_{\mrm{cris}}$
and $S_{L_0}\hookrightarrow B^+_{\mrm{cris}}$).
By these inclusions,
we often regard these rings as subrings of $B^+_{\mrm{cris}}$. 
By identifying $u$ with $u_s^{p^s}$ as before,
we regard $S^{\mrm{int}}_{L_0}$  (resp.\ $S_{L_0}$)
as a $\vphi$-stable (but not $N$-stable) subalgebra of 
$S^{\mrm{int}}_{L_0,s}$ (resp.\ $S_{L_0,s}$).  
By definitions,
we have $\mfS_L=\mfS_{L,0},\ S^{\mrm{int}}_{L_0,0}=S^{\mrm{int}}_{L_0}$
and $S_{L_0,0}= S_{L_0}$.\\

\noindent
{\bf Convention:}
For simplicity, 
if $L=K$, then we often omit the subscript ``$L$'' from various notations 
(e.g. $G_{K_s}=G_s$,
$G_{K_{\infty}}=G_{\infty}$,
$\mfS_K=\mfS, \mfS_{K,s}=\mfS_s$).\\

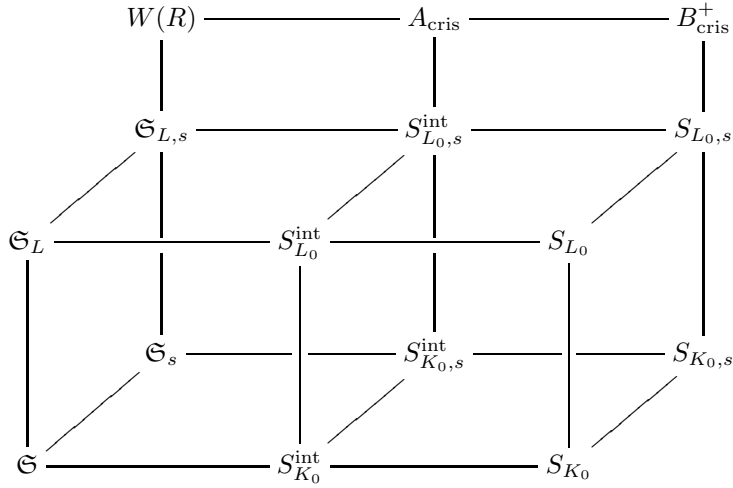
\begin{figure}[htbp]
\begin{center}
$\displaystyle \xymatrix{
& W(R) \ar@{-}[rr]
& 
& A_{\mrm{cris}} \ar@{-}[rr]
& 
& B^+_{\mrm{cris}}
\\
&
\mfS_{L,s} \ar@{-}[rr] \ar@{-}[u]
& 
& S^{\mrm{int}}_{L_0,s}\ar@{-}[rr] \ar@{-}[u]
& 
& S_{L_0,s} \ar@{-}[u]
\\
\mfS_L \ar@{-}[rr] \ar@{-}[ru]
& \ar@{-}[u]
& S^{\mrm{int}}_{L_0}\ar@{-}[rr] \ar@{-}[ru]
& \ar@{-}[u]
& S_{L_0} \ar@{-}[ru]
& 
\\
&
\mfS_s \ar@{-}[r] \ar@{-}[u]
& \ar@{-}[r]
& S^{\mrm{int}}_{K_0,s}\ar@{-}[r] \ar@{-}[u]
& \ar@{-}[r]
& S_{K_0,s} \ar@{-}[uu]
\\
\mfS \ar@{-}[rr] \ar@{-}[uu] \ar@{-}[ru]
& 
& S^{\mrm{int}}_{K_0}\ar@{-}[rr] \ar@{-}[uu] \ar@{-}[ru]
& 
& S_{K_0}  \ar@{-}[uu] \ar@{-}[ru]
&
}$
\end{center}
\caption{Ring extensions}
\end{figure}

%%%%%%%%%%%%%%%%%%%%%%%%%%%%%%%%%%%%%%%%%%%%%%%%%%%%%%%%%%%%%%%%%%%%%%%%%%%%%%%%%%%%%%%%%%%%%%%%%%%%%%%%%%%
%%%%%%%%%%%%%%%%%%%%%%%%%%%%%%%%%%%%%%%%%%%%%%%%%%%%%%%%%%%%%%%%%%%%%%%%%%%%%%%%%%%%%%%%%%%%%%%%%%%%%%%%%%%
%            2.2 Kisin modules                             %%%%%%%%%%%%%%%%%%%%%%%%%%%%%%%%%%%%%%%%%%%%%%%%
%%%%%%%%%%%%%%%%%%%%%%%%%%%%%%%%%%%%%%%%%%%%%%%%%%%%%%%%%%%%%%%%%%%%%%%%%%%%%%%%%%%%%%%%%%%%%%%%%%%%%%%%%%%
%%%%%%%%%%%%%%%%%%%%%%%%%%%%%%%%%%%%%%%%%%%%%%%%%%%%%%%%%%%%%%%%%%%%%%%%%%%%%%%%%%%%%%%%%%%%%%%%%%%%%%%%%%%

\subsection{Kisin modules}
Let $r, s\ge 0$ be  integers.
A {\it $\vphi$-module} {\it over $\mfS_{L,s}$}
is an $\mfS_{L,s}$-module 
$\mfM$ equipped with a $\vphi$-semilinear map 
$\vphi\colon \mfM\to \mfM$.
A morphism between two $\vphi$-modules 
$(\mfM_1,\vphi_1)$ and $(\mfM_2,\vphi_2)$ over $\mfS_{L,s}$
is an $\mfS_{L,s}$-linear map $\mfM_1\to \mfM_2$
compatible with $\vphi_1$ and $\vphi_2$.
Denote by $'\mrm{Mod}^r_{/\mfS_{L,s}}$
the category of $\vphi$-modules $(\mfM,\vphi)$ over $\mfS_{L,s}$
{\it of height $\le r$}  
in the sense that $\mfM$ is of finite type 
over $\mfS_{L,s}$ and the cokernel of 
$1\otimes \vphi\colon \mfS_{L,s}\otimes_{\vphi,\mfS_{L,s}}\mfM\to \mfM$
is killed by $E_s(u_s)^r$.

Let $\mrm{Mod}^r_{/\mfS_{L,s}}$
be the full subcategory of $'\mrm{Mod}^r_{/\mfS_{L,s}}$
consisting of finite free $\mfS_{L,s}$-modules.
We call an object of $\mrm{Mod}^r_{/\mfS_{L,s}}$ 
a {\it free Kisin module} {\it of height $\le r$ $($over $\mfS_{L,s})$}.
%We define the category $\mrm{Mod}^{\infty}_{/\mfS_{L,s}}$ by
%$$
%\mrm{Mod}^{\infty}_{/\mfS_{L,s}}:=\bigcup_{r\ge 0} \mrm{Mod}^r_{/\mfS_{L,s}}.
%$$
%We call an object of $\mrm{Mod}^{\infty}_{/\mfS_{L,s}}$ a
%{\it free Kisin module of finite hight $($over $\mfS_{L,s})$}.

Let $\mrm{Mod}^r_{/\mfS_{L,s,\infty}}$
be the full subcategory of $'\mrm{Mod}^r_{/\mfS_{L,s}}$
consisting of finite $\mfS_{L,s}$-modules 
which are killed by some power of $p$ and have projective dimension $1$ 
in the sense that $\mfM$ has a two term resolution  by
finite free $\mfS_{L,s}$-modules.
We call an object of $\mrm{Mod}^r_{/\mfS_{L,s,\infty}}$ 
a {\it torsion Kisin module of height $\le r$ $($over $\mfS_{L,s})$}.
%We define the category $\mrm{Mod}^{\infty}_{/\mfS_{L,s,\infty}}$ by
%$$
%\mrm{Mod}^{\infty}_{/\mfS_{L,s,\infty}}:=\bigcup_{r\ge 0} \mrm{Mod}^r_{/\mfS_{L,s,\infty}}.
%$$
%We call an object of $\mrm{Mod}^{\infty}_{/\mfS_{L,s,\infty}}$ a
%{\it torsion Kisin module of finite hight $($over $\mfS_{L,s})$}.

For any free or torsion Kisin module $\mfM$ over $\mfS_{L,s}$,
we define a $\mbb{Z}_p[G_{L,\infty}]$-module  $T_{\mfS_{L,s}}(\mfM)$
by
\begin{align*}
T_{\mfS_{L,s}}(\mfM):=
\left\{
\begin{array}{ll}
\mrm{Hom}_{\mfS_{L,s},\vphi}(\mfM,W(R))\hspace{21mm} {\rm if}\ \mfM\ {\rm is}\ {\rm free},  
\cr
\mrm{Hom}_{\mfS_{L,s},\vphi}(\mfM,\mbb{Q}_p/\mbb{Z}_p\otimes_{\mbb{Z}_p} W(R))\quad {\rm if}\ \mfM\ {\rm is}\ {\rm torsion}. 
\end{array}
\right.
\end{align*}
Here a $G_{L,\infty}$-action on 
$T_{\mfS_{L,s}}(\mfM)$ is given by 
$(\sigma.g)(x)=\sigma(g(x))$ 
for $\sigma\in G_{L,\infty}, g\in T_{\mfS}(\mfM), x\in \mfM$.\\

\noindent
{\bf Convention:}
For simplicity, 
if $L=K$, then we often omit the subscript ``$L$'' from various notations 
(e.g. $\mrm{Mod}^r_{/\mfS_{K,s,\infty}}=\mrm{Mod}^r_{/\mfS_{s,\infty}}$,
$T_{\mfS_{K,s}}=T_{\mfS_s}$
).
Also, if $s=0$, we often omit the subscript ``$s$'' from various notations 
(e.g. $\mrm{Mod}^r_{/\mfS_{L,0,\infty}}=\mrm{Mod}^r_{/\mfS_{L,\infty}}$,
$T_{\mfS_{L,0}}=T_{\mfS_L}$,
$\mrm{Mod}^r_{/\mfS_{K,0,\infty}}=\mrm{Mod}^r_{/\mfS_{\infty}}$,
$T_{\mfS_{K,0}}=T_{\mfS}$
).\\

\begin{proposition}
\label{Kisinfunctor}
$(1)$ {\rm (\cite[Corollary 2.1.4 and Proposition 2.1.12]{Kis})}\ 
The functor $T_{\mfS_{L,s}}\colon \mrm{Mod}^r_{/\mfS_{L,s}}\to 
\mrm{Rep}_{\mbb{Z}_p}(G_{\infty})$
is exact and fully faithful.

\noindent
$(2)$ {\rm (\cite[Corollary 2.1.6, 3.3.10 and 3.3.15]{CL1})}\ 
The functor $T_{\mfS_{L,s}}\colon \mrm{Mod}^r_{/\mfS_{L,s,\infty}}\to 
\mrm{Rep}_{\mrm{tor}}(G_{\infty})$
is exact and faithful.
Furthermore, it is full if $er<p-1$.
\end{proposition}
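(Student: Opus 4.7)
The plan is to factor the functor $T_{\mfS_{L,s}}$ through Fontaine's equivalence between étale $\vphi$-modules and $\mbb{Z}_p$-representations of $G_{L,\infty}$. Let $\cO_{\E_{L,s}}$ denote the $p$-adic completion of $\mfS_{L,s}[1/u_s]$; this is a complete discrete valuation ring whose residue field $k_L(\!(u_s)\!)$ is, by the theory of fields of norms, the residue field of the norm field of $L_{(s)}$, and Fontaine's construction identifies the category of étale $\vphi$-modules over $\cO_{\E_{L,s}}$ with $\mrm{Rep}_{\mbb{Z}_p}(G_{L,\infty})$ (with an analogous statement for torsion modules). A direct computation shows that $T_{\mfS_{L,s}}$ agrees with the composition of the base-change functor $\mfM\mapsto \cO_{\E_{L,s}}\otimes_{\mfS_{L,s}}\mfM$ with Fontaine's functor.

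For part (1), exactness follows from flatness of the inclusion $\mfS_{L,s}\hookrightarrow \cO_{\E_{L,s}}$ together with exactness of Fontaine's functor, while faithfulness is immediate since the base-change map $\mfM\hookrightarrow \cO_{\E_{L,s}}\otimes_{\mfS_{L,s}}\mfM$ is injective for finite free $\mfM$. The substantive content is full faithfulness, which reduces to showing that any morphism $f$ of étale $\vphi$-modules between the base changes of two finite free Kisin modules $\mfM_1,\mfM_2$ sends $\mfM_1$ into $\mfM_2$. This is the heart of Kisin's argument in \cite{Kis}: one localizes the $\mfM_i$ over a rigid-analytic open disk via suitable intermediate subrings of $K_0[\![u_s]\!]$, and then uses the height bound $E_s(u_s)^r \mfM_i \subseteq \mfS_{L,s}\cdot\vphi^{\ast}\mfM_i$ to inductively control pole orders of $f$ under Frobenius iteration and conclude that $f$ has no poles on the disk, hence preserves the integral structure.

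For part (2), the existence of a two-term resolution $0\to\mfL_1\to\mfL_0\to\mfM\to 0$ by finite free Kisin modules of height $\le r$ combined with part (1) immediately yields exactness and faithfulness of $T_{\mfS_{L,s}}$ by a long-exact-sequence chase on Galois representations. The main obstacle, and the reason the hypothesis $er<p-1$ enters, is fullness in the torsion case: because torsion Kisin modules may carry $u_s$-torsion, the base-change map $\mfM\to \cO_{\E_{L,s}}\otimes_{\mfS_{L,s}}\mfM$ is in general not injective, and a morphism of étale $\vphi$-modules cannot be descended to a morphism of Kisin modules directly. The approach of Caruso--Liu in \cite{CL1} is to attach to any torsion étale $\vphi$-module $M$ a canonical maximal (resp.\ minimal) height-$\le r$ Kisin submodule $\Max(M)$ (resp.\ $\Min(M)$) generating $M$, and to prove that these constructions are functorial in $M$ alone. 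The bound $er<p-1$ enters exactly here: it forces the $u_s$-torsion of any height-$\le r$ Kisin module to be small enough that $\Max$ and $\Min$ stay inside $\mrm{Mod}^r_{/\mfS_{L,s,\infty}}$, so that an arbitrary $G_{L,\infty}$-equivariant morphism $T_{\mfS_{L,s}}(\mfM_1)\to T_{\mfS_{L,s}}(\mfM_2)$ descends canonically, yielding fullness.
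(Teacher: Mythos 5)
The paper does not prove this proposition; it is stated purely as a citation of results from Kisin and Caruso--Liu, so I am comparing your reconstruction against what is actually in those references rather than against an internal proof. Your overall scaffold is the right one: factoring $T_{\mfS_{L,s}}$ through Fontaine's equivalence via $\cO_{\E_{L,s}}$ (the $p$-adic completion of $\mfS_{L,s}[1/u_s]$), Kisin's rigid-analytic pole-order argument for full faithfulness in the free case, and the Caruso--Liu theory of maximal and minimal models for fullness in the torsion case with $er<p-1$. The two-term-resolution reduction of exactness and faithfulness in part (2) to part (1) is also a legitimate route.

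However, your diagnosis of the obstruction in the torsion case contains two genuine errors. First, you assert that ``torsion Kisin modules may carry $u_s$-torsion,'' so that $\mfM\to\cO_{\E_{L,s}}\otimes_{\mfS_{L,s}}\mfM$ is not injective. This is false for the modules in $\mrm{Mod}^r_{/\mfS_{L,s,\infty}}$: the projective-dimension-one hypothesis forces (via Auslander--Buchsbaum and a d\'evissage to the $p$-torsion layer) that $u_s$ acts injectively, so the base-change map is an injection. Indeed, the paper itself invokes the analogous injectivity of $\mfM\hookrightarrow\wh{\mcal{R}}_{L,s}\otimes_{\vphi,\mfS_{L,s}}\mfM$ right after introducing torsion Kisin modules. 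The real obstruction to fullness is not injectivity but \emph{non-uniqueness}: a given torsion \'etale $\vphi$-module $M$ of height $\le r$ may contain many distinct Kisin lattices of height $\le r$, so a morphism $M_1\to M_2$ need not carry the chosen lattice $\mfM_1$ into the chosen lattice $\mfM_2$. Second, and correlatedly, you say that $er<p-1$ ``forces the $u_s$-torsion \ldots to be small enough that $\Max$ and $\Min$ stay inside $\mrm{Mod}^r_{/\mfS_{L,s,\infty}}$.'' This misstates what the bound does: $\Max^r(\mfM)$ and $\Min^r(\mfM)$ are by construction always objects of $\mrm{Mod}^r_{/\mfS_{L,s,\infty}}$, being the greatest and least elements of the poset of height-$\le r$ Kisin lattices generating $\mfM[1/u_s]$. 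What $er<p-1$ actually buys in Caruso--Liu is that $\Max^r=\Min^r$, i.e.\ the Kisin lattice of height $\le r$ is \emph{unique}, and then functoriality of $\Max^r$ immediately yields that any morphism of \'etale $\vphi$-modules restricts to the lattices, giving fullness. You should replace the $u_s$-torsion story with this uniqueness argument.
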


%%%%%%%%%%%%%%%%%%%%%%%%%%%%%%%%%%%%%%%%%%%%%%%%%%%%%%%%%%%%%%%%%%%%%%%%%%%%%%%%%%%%%%%%%%%%%%%%%%%%%%%%%%%
%%%%%%%%%%%%%%%%%%%%%%%%%%%%%%%%%%%%%%%%%%%%%%%%%%%%%%%%%%%%%%%%%%%%%%%%%%%%%%%%%%%%%%%%%%%%%%%%%%%%%%%%%%%
%          2.3 Liu modules                                 %%%%%%%%%%%%%%%%%%%%%%%%%%%%%%%%%%%%%%%%%%%%%%%%
%%%%%%%%%%%%%%%%%%%%%%%%%%%%%%%%%%%%%%%%%%%%%%%%%%%%%%%%%%%%%%%%%%%%%%%%%%%%%%%%%%%%%%%%%%%%%%%%%%%%%%%%%%%
%%%%%%%%%%%%%%%%%%%%%%%%%%%%%%%%%%%%%%%%%%%%%%%%%%%%%%%%%%%%%%%%%%%%%%%%%%%%%%%%%%%%%%%%%%%%%%%%%%%%%%%%%%%

\subsection{$(\vphi,\hat{G})$-modules}
\label{Liumodule:section}
The notion of $(\vphi,\hat{G})$-modules are introduced by Tong Liu in \cite{Li2}
to classify lattices in semi-stable representations. 
We recall definitions and properties of them.
We continue to use same notations as above.

Let $L_{p^{\infty}}$ be the field obtained by 
adjoining all $p$-power roots of unity to $L$.
We denote by $\hat{L}$ the composite field of $L_{\infty}$ and $L_{p^{\infty}}$.
We define 
$H_L:=\mrm{Gal}(\hat{L}/L_{\infty})$, $H_{L,\infty}:=\mrm{Gal}(\overline{K}/\hat{L})$
$G_{L,p^{\infty}}:=\mrm{Gal}(\hat{L}/L_{p^{\infty}})$ and $\hat{G}_L:=\mrm{Gal}(\hat{L}/L)$.
Furthermore, putting  $L_{(s),p^{\infty}}=L_{(s)}L_{p^{\infty}}$,
we define $\hat{G}_{L,s}=\mrm{Gal}(\hat{L}/L_{(s)})$ and
$G_{L,s,p^{\infty}}:=\mrm{Gal}(\hat{L}/L_{(s), p^{\infty}})$.

\begin{figure}[htbp]
\begin{center}
\[
\xymatrix{
%%%%%%%%%%%%%%%%%%% \bar K %%%%%%%%%%%%%%%%%%%%%%%%%%%%%%
& & & & & \bar{K} \\
%%%%%%%%%%%%%%%%%%  K_{p^{\infty}}K_{\infty} %%%%%%%%%%%%%%%
& & & & & \hat{L} \ar@{-}[u] \ar@{-}[u] \ar@/_1pc/@{-}[u]_{H_{L,\infty}}
%\ar@{-}[uu] 
\\
%%%%%%%%%%%%%%%%%% K_{p^{\infty}} %%%%%%%%%%%%%%%%%%
& & & L_{p^{\infty}} 
\ar@/^1pc/@{-}[rru] ^{G_{L,p^{\infty}}}
\ar@{-}[rru] 
& & & & \\
%%%%%%%%%%%%%%%%%% K_{\infty} %%%%%%%%%%%
& & & & & 
L_{\infty} \ar@/_1pc/@{-}[uu] _{H_L}
\ar@{-}[uu]
\ar@/_4pc/@{-}[uuu] _{G_{L,\infty}}
& & & & \\  
%%%%%%%%%%%%%%%%%% K %%%%%%%%%%%%%%%%%%%% 
& & & L \ar@{-}[rru]  
\ar@/^1pc/@{-}[rruuu] ^{\hat{G}_L}
\ar@{-}[rruuu] 
\ar@/^6pc/@{-}[rruuuu] ^{G_L}
\ar@{-}[uu] 
& & & & \\ 
}
\]
\end{center}
\caption{Galois groups of field extensions}
\end{figure}
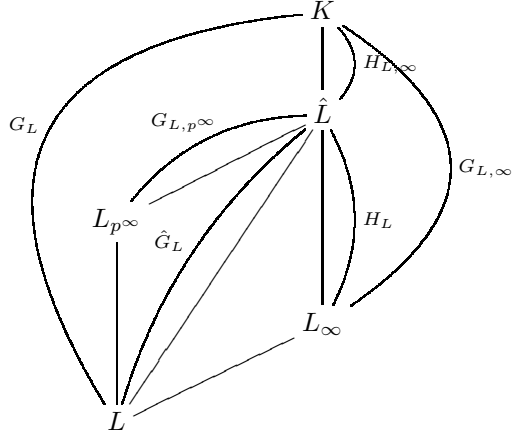

Since $p>2$,
it is known that 
$L_{(s),p^{\infty}}\cap L_{\infty}=L_{(s)}$ and thus 
$\hat{G}_{L,s}=G_{L,s,p^{\infty}}\rtimes H_{L,s}$.
Furthermore, $G_{L,s,p^{\infty}}$ is topologically isomorphic to $\mbb{Z}_p$.

\begin{lemma}
\label{easylemma}
A natural map $G_{L,s,p^{\infty}}\to G_{K,s,p^{\infty}}$ defined by $g\mapsto g|_{\hat{K}}$ is bijective. 
\end{lemma}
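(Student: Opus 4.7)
The plan is to parametrize both $G_{L,s,p^{\infty}}$ and $G_{K,s,p^{\infty}}$ by a common Kummer cocycle built from the tower $(\pi_{s+n})_{n\ge 0}$ and the $p$-power roots of unity, and to observe that the restriction map becomes the identity of $\mbb{Z}_p$ under this parametrization.

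First I would fix a compatible system $(\zeta_{p^n})_{n\ge 0}$ of primitive $p^n$-th roots of unity in $\overline{K}$, and for $g\in G_{L,s,p^{\infty}}$ note that $g(\pi_{s+n})^{p^n}=g(\pi_s)=\pi_s=\pi_{s+n}^{p^n}$, because $\pi_s\in L_{(s)}\subset L_{(s),p^{\infty}}$ is fixed by $g$. Hence $g(\pi_{s+n})/\pi_{s+n}$ is a $p^n$-th root of unity, and can be written as $\zeta_{p^n}^{c_n^L(g)}$ for a unique $c_n^L(g)\in\mbb{Z}/p^n\mbb{Z}$. The compatibilities $\pi_{s+n+1}^{p}=\pi_{s+n}$ and $\zeta_{p^{n+1}}^{p}=\zeta_{p^n}$ force $c_{n+1}^L(g)\equiv c_n^L(g)\pmod{p^n}$, so these assemble into an element $c^L(g)\in\mbb{Z}_p$. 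Since $\hat{L}=L_{(s),p^{\infty}}(\pi_{s+n}\mid n\ge 1)$, the resulting map $c^L$ is injective, and combined with the fact (already quoted above the lemma) that $G_{L,s,p^{\infty}}$ is topologically isomorphic to $\mbb{Z}_p$, one concludes that $c^L\colon G_{L,s,p^{\infty}}\xrightarrow{\sim}\mbb{Z}_p$. The identical recipe yields $c^K\colon G_{K,s,p^{\infty}}\xrightarrow{\sim}\mbb{Z}_p$.

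Second I would verify that the map $g\mapsto g|_{\hat K}$ is well-defined. The field $\hat{K}$ is the Galois closure of $K_{\infty}$ over $K$ inside $\overline{K}$ (adjoining $\mu_{p^\infty}$ accounts for all roots of the polynomials $x^{p^n}-\pi$), so it is Galois over $K$ and hence stable under every element of $G_L\supset G_{L,s,p^{\infty}}$. Moreover $K_{(s),p^{\infty}}\subset L_{(s),p^{\infty}}$ is fixed pointwise by $G_{L,s,p^{\infty}}$. Therefore $g|_{\hat{K}}$ indeed lies in $G_{K,s,p^{\infty}}$ for every $g\in G_{L,s,p^{\infty}}$.

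Finally, since $\pi_{s+n}$ and $\zeta_{p^n}$ all lie in $\hat{K}$, the restriction $g|_{\hat{K}}$ acts on them by exactly the same formulas as $g$, so $c^K(g|_{\hat{K}})=c^L(g)$. Hence the restriction map corresponds to the identity of $\mbb{Z}_p$ under the isomorphisms $c^L$ and $c^K$, and is in particular bijective. There is no deep technical obstacle here; the one point demanding care is the well-definedness, which rests on recognizing $\hat{K}$ as the Galois closure of $K_{\infty}/K$.
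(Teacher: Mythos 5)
The proposal has a genuine gap at the step asserting that $c^L$ is an isomorphism. You correctly show that $c^L\colon G_{L,s,p^\infty}\to\mbb{Z}_p$ is an injective continuous homomorphism, and you invoke the fact that $G_{L,s,p^\infty}$ is topologically isomorphic to $\mbb{Z}_p$, but these two facts together do not force $c^L$ to be surjective: an injective continuous endomorphism of $\mbb{Z}_p$ has closed image $p^{c}\mbb{Z}_p$, which may be a proper subgroup (multiplication by $p$ is the obvious example). Concretely, the image of the reduction $c^L_n\colon G_{L,s,p^\infty}\to\mbb{Z}/p^n\mbb{Z}$ is $\mrm{Gal}(L_{(s),p^\infty}(\pi_{s+n})/L_{(s),p^\infty})$, and surjectivity of $c^L$ is exactly the assertion that $[L_{(s),p^\infty}(\pi_{s+n}):L_{(s),p^\infty}]=p^n$ for all $n$, equivalently that $\pi_{s+1}\notin L_{(s),p^\infty}$. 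That is precisely the nontrivial content of the lemma, and nothing in your argument establishes it. With only injectivity you get $\mrm{Im}(c^L)\subset\mrm{Im}(c^K)$ from $c^K\circ\mrm{res}=c^L$, which recovers injectivity of the restriction map but says nothing about its surjectivity.

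The paper's proof supplies exactly the ingredient you are missing: after reducing to $s=0$, it suffices to show $\hat{K}\cap L_{p^\infty}=K_{p^\infty}$. If not, then $K_{(1)}\subset L_{p^\infty}$, so $\pi_1\in L_{p^\infty}\cap L_\infty$; the cited fact $L_{p^\infty}\cap L_\infty=L$ (valid because $p$ is odd) then gives $\pi_1\in L$, contradicting that $L/K$ is unramified and $\pi$ is a uniformizer of $L$. Your Kummer-cocycle bookkeeping is fine and would complete the argument once you add this step (e.g., show $\pi_{s+1}\notin L_{(s),p^\infty}$ using $L_{(s),p^\infty}\cap L_\infty=L_{(s)}$ and unramifiedness of $L/K$), but as written the surjectivity of $c^L$ and $c^K$ is assumed rather than proved.
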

\begin{proof}
By replacing $L_s$ with $L$,
we may assume $s=0$.
It suffices to prove $\hat{K}\cap L_{p^{\infty}}=K_{p^{\infty}}$.
Since $G_{K,p^{\infty}}$ is isomorphic to $\mbb{Z}_p$, 
we know that any finite subextension of $\hat{K}/K_{p^{\infty}}$ 
is of the form $K_{(s),p^{\infty}}$
for some $s\ge 0$.
Assume that we have $\hat{K}\cap L_{p^{\infty}}\not=K_{p^{\infty}}$.
Then we have $K_{(1)}\subset \hat{K}\cap L_{p^{\infty}}\subset L_{p^{\infty}}$.
Thus $\pi_1$ is contained in $L_{p^{\infty}}\cap L_{\infty}=L$.
However, since $L$ is unramified over $K$, this contradicts the fact that 
$\pi$ is a uniformizer of $L$.
\end{proof}

We fix a topological generator $\tau$ of $G_{K,p^{\infty}}$.
We also denote by  $\tau$ 
the pre-image of $\tau\in G_{K,p^{\infty}}$ for the bijection 
$G_{L,p^{\infty}}\simeq G_{K,p^{\infty}}$ of the above lemma.
Note that $\tau^{p^s}$ is a topological generator of $G_{L,s,p^{\infty}}$.

For any $g\in G_K$, we put $\underline{\e}(g)=g(\underline{\pi})/\underline{\pi}\in R$,
and define $\underline{\e}:=\underline{\e}(\tilde{\tau})$. 
Here, $\tilde{\tau}\in G_K$ is any lift of $\tau\in \hat{G}_K$ 
and then $\underline{\e}(\tilde{\tau})$ is independent of the choice of the lift of $\tau$.
With these notation, we also note that
we have $g(u)=[\underline{\e}(g)]u$ (recall that $\mfS$ is embedded in $W(R)$). 
An easy computation shows that 
$\tau(\underline{\pi})/\underline{\pi}=\tau^{p^s}(\underline{\pi_s})/\underline{\pi_s}=\underline{\e}$.
Therefore, we have $\tau(u)/u=\tau^{p^s}(u_s)/u_s=[\underline{\e}]$.

We put $t=-\mrm{log}([\underline{\e}])\in A_{\mrm{cris}}$.
Denote by $\nu\colon W(R)\to W(\overline{k})$ 
a unique lift of the projection $R\to \overline{k}$,
which extends to a map 
$\nu \colon B^+_{\mrm{cris}}\to W(\overline{k})[1/p]$.
For any subring $A\subset B^+_{\mrm{cris}}$,
we put 
$I_+A=\mrm{Ker}(\nu\ \mrm{on}\  B^+_{\mrm{cris}})\cap A$.
For any integer $n\ge 0$,
let $t^{\{n\}}:=t^{r(n)}\gamma_{\tilde{q}(n)}(\frac{t^{p-1}}{p})$ 
where $n=(p-1)\tilde{q}(n)+r(n)$ with $\tilde{q}(n)\ge 0,\ 0\le r(n) <p-1$
and $\gamma_i(x)=\frac{x^i}{i!}$ is 
the standard divided power.
We define a subring $\mcal{R}_{L_0,s}$ 
(resp.\ $\mcal{R}_{L_0}$)
of $B^+_{\mrm{cris}}$ 
as below:
$$
\mcal{R}_{L_0,s}:=\{\sum^{\infty}_{i=0} f_it^{\{i\}}\mid f_i\in S_{L_0,s}\
\mrm{and}\ f_i\to 0\ \mrm{as}\ i\to \infty\}
$$
$$
({\rm resp}.\quad  \mcal{R}_{L_0}:=\{\sum^{\infty}_{i=0} f_it^{\{i\}}\mid f_i\in S_{L_0}\
\mrm{and}\ f_i\to 0\ \mrm{as}\ i\to \infty\}).
$$

\noindent
Put $\wh{\mcal{R}}_{L,s}=\mcal{R}_{L_0,s}\cap W(R)$
(resp.\ $\wh{\mcal{R}}_{L}=\mcal{R}_{L_0}\cap W(R)$)
and $I_{+,L,s}=I_+\wh{\mcal{R}}_{L,s}$
(resp.\ $I_{+,L}=I_+\wh{\mcal{R}}_L$).
By definitions, we have 
$\mcal{R}_{L_0,0}=\mcal{R}_{L_0}$,
$\wh{\mcal{R}}_{L,0}=\wh{\mcal{R}}_{L}$ and 
$I_{+,L,0}=I_{+,L}$.
Lemma 2.2.1 in \cite{Li2} shows that $\wh{\mcal{R}}_{L,s}$ 
$($resp.\ $\mcal{R}_{L_0,s})$ 
is a $\vphi$-stable $\mfS_{L,s}$-subalgebra of $W(R)$ $($resp.\ $B^+_{\mrm{cris}})$, and $\nu$ induces 
$\mcal{R}_{L_0,s}/I_+\mcal{R}_{L_0,s}\simeq L_0$ and 
$\wh{\mcal{R}}_{L,s}/I_{+,L,s}\simeq S^{\mrm{int}}_{L_0,s}/I_+S^{\mrm{int}}_{L_0,s}
\simeq \mfS_{L,s}/I_+\mfS_{L,s}\simeq W(k_L)$.
Furthermore, $\wh{\mcal{R}}_{L,s}, I_{+,L,s}, \mcal{R}_{L_0,s}$ 
and $I_+\mcal{R}_{L_0,s}$ are $G_{L,s}$-stable, and 
$G_{L,s}$-actions on them
factors through $\hat{G}_{L,s}$.
For any torsion Kisin module $\mfM$ over $\mfS_{L,s}$,
we equip $\whR_{L,s}\otimes_{\vphi,\mfS_{L,s}} \mfM$ with 
a Frobenius by
$\vphi_{\wh{\mcal{R}}_{L,s}}\otimes \vphi_{\mfM}$.
It is known that the natural map
$
\mfM\rightarrow \wh{\mcal{R}}_{L,s}\otimes_{\vphi, \mfS_{L,s}} \mfM
$
given by $x\mapsto 1\otimes x$ is an injection (cf.\ \cite[Corollary 2.12]{Oz1}).
By this injection, 
we regard $\mfM$ as a $\vphi(\mfS_{L,s})$-stable 
submodule of $\whR_{L,s}\otimes_{\vphi,\mfS_{L,s}} \mfM$.

\begin{definition}
\label{Liumod}
A {\it free} (resp.\ {\it torsion}) {\it $(\vphi, \hat{G}_{L,s})$-module of height} $\le r$ 
over $\mfS_{L,s}$
is a triple $\hat{\mfM}=(\mfM, \vphi_{\mfM}, \hat{G}_{L,s})$ where
\begin{enumerate}
\item[(1)] $(\mfM, \vphi_{\mfM})$ is a free (resp.\ torsion)
           Kisin module of height $\le r$ over $\mfS_{L,s}$, 

\vspace{-2mm}
           
\item[(2)] $\hat{G}_{L,s}$ is an $\wh{\mcal{R}}_{L,s}$-semilinear
           $\hat{G}_{L,s}$-action on $\whR_{L,s}\otimes_{\vphi, \mfS_{L,s}} \mfM$  
           which induces a continuous $G_{L,s}$-action on 
           $W(\mrm{Fr}R)\otimes_{\vphi, \mfS_{L,s}} \mfM$
           for the weak topology\footnote{This continuity condition is needed to assure that the $G_{L,s}$-action on 
           $\hat{T}_{L,s}(\hat{\mfM})$, defined after Definition \ref{Liumod},  
           is continuous (here, note that $\hat{T}_{L,s}(\hat{\mfM})$
           is the dual of $(W(\mrm{Fr}R)\otimes_{\vphi, \mfS_{L,s}} \mfM)^{\vphi=1}$;
           see Corollary 3.12 of \cite{Oz1}).},
           
\vspace{-2mm}           
           
\item[(3)] the $\hat{G}_{L,s}$-action commutes with $\vphi_{\wh{\mcal{R}}_{L,s}}\otimes \vphi_{\mfM}$,

\vspace{-2mm}

\item[(4)] $\mfM\subset (\whR_{L,s}\otimes_{\vphi,\mfS_{L,s}} \mfM)^{H_L}$,

\vspace{-2mm}

\item[(5)] $\hat{G}_{L,s}$ acts on the 
           $W(k_L)$-module $(\whR_{L,s}\otimes_{\vphi,\mfS_{L,s}} \mfM)/
            I_{+,L,s}(\whR_{L,s}\otimes_{\vphi,\mfS_{L,s}} \mfM)$ 
           trivially.           
\end{enumerate}
\end{definition}

A morphism 
between two $(\vphi, \hat{G}_{L,s})$-modules $\hat{\mfM}_1=(\mfM_1, \vphi_1, \hat{G})$ 
and $\hat{\mfM}_2=(\mfM_2, \vphi_2, \hat{G})$ is a morphism 
$f\colon \mfM_1\to \mfM_2$ of $\vphi$-modules over $\mfS_{L,s}$
such that 
$\wh{\mcal{R}}_{L,s}\otimes f\colon \whR_{L,s}\otimes_{\vphi,\mfS_{L,s}} \mfM_1\to 
\whR_{L,s}\otimes_{\vphi,\mfS_{L,s}} \mfM_2$
is $\hat{G}_{L,s}$-equivariant.
We denote by $\mrm{Mod}^{r,\hat{G}_{L,s}}_{/\mfS_{L,s}}$ 
(resp.\ $\mrm{Mod}^{r,\hat{G}_{L,s}}_{/\mfS_{L,s,\infty}}$)
the 
category of free (resp.\ torsion) $(\vphi, \hat{G}_{L,s})$-modules of height 
$\le r$ over $\mfS_{L,s}$.
We often regard $\whR_{L,s}\otimes_{\vphi,\mfS_{L,s}} \mfM$ 
as a $G_{L,s}$-module 
via the projection $G_{L,s}\twoheadrightarrow \hat{G}_{L,s}$.

For any free or torsion $(\vphi, \hat{G}_{L,s})$-module $\hat{\mfM}$ over $\mfS_{L,s}$,
we define a $\mbb{Z}_p[G_{L,s}]$-module  $\hat{T}_{L,s}(\hat{\mfM})$
by
\begin{align*}
\hat{T}_{L,s}(\hat{\mfM})=
\left\{
\begin{array}{ll}
\mrm{Hom}_{\wh{\mcal{R}}_{L,s},\vphi}
(\whR_{L,s}\otimes_{\vphi,\mfS_{L,s}} \mfM, W(R))
\hspace{21mm} {\rm if}\ \mfM\ {\rm is}\ {\rm free},  
\cr
\mrm{Hom}_{\wh{\mcal{R}}_{L,s},\vphi}
(\whR_{L,s}\otimes_{\vphi,\mfS_{L,s}} \mfM, \mbb{Q}_p/\mbb{Z}_p\otimes_{\mbb{Z}_p}W(R))
\hspace{3.5mm} {\rm if}\ \mfM\ {\rm is}\ {\rm torsion}.  
\end{array}
\right.
\end{align*}
Here,  $G_{L,s}$ 
acts on $\hat{T}_{L,s}(\hat{\mfM})$ by $(\sigma.f)(x)=\sigma(f(\sigma^{-1}(x)))$
for $\sigma\in G_{L,s},\ f\in \hat{T}_{L,s}(\hat{\mfM}),\ 
x\in \whR_{L,s}\otimes_{\vphi,\mfS_{L,s}}\mfM$. \\
Then, there exists a natural $G_{L,\infty}$-equivariant map 
\[
\theta_{L,s}\colon T_{\mfS_{L,s}}(\mfM)\to \hat{T}_{L,s}(\hat{\mfM})
\]
defined by 
$\theta(f)(a\otimes x)=a\vphi(f(x))$ for 
$f\in T_{\mfS_{L,s}}(\mfM),\ a\in \wh{\mcal{R}}_{L,s}, x\in \mfM$.
We have

\begin{theorem}[{\cite[Theorem 2.3.1 (1)]{Li2},\ \cite[Theorem 3.1.3 (1)]{CL2}}] 
The map $\theta_{L,s}$ is an isomorphism of $\mbb{Z}_p[G_{L,\infty}]$-modules.
\end{theorem}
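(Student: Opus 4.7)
The plan is to construct an explicit inverse to $\theta_{L,s}$. The key observation is that the Frobenius $\vphi$ on $W(R)$ is bijective, since $R$ is a perfect ring of characteristic $p$; the same then holds on $\mbb{Q}_p/\mbb{Z}_p\otimes_{\mbb{Z}_p} W(R)$ by passing to the colimit over the $\vphi$-stable submodules $p^{-n}\mbb{Z}_p/\mbb{Z}_p\otimes_{\mbb{Z}_p} W(R)$. A second ingredient is that $\whR_{L,s}\otimes_{\vphi,\mfS_{L,s}}\mfM$ is generated by $1\otimes\mfM$ as an $\whR_{L,s}$-module, so any $\whR_{L,s}$-linear map out of it is determined by its restriction to $1\otimes\mfM$.

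First, given $g\in\hat{T}_{L,s}(\hat{\mfM})$ and $x\in\mfM$, I would set
\[
\theta_{L,s}^{-1}(g)(x):=\vphi^{-1}\bigl(g(1\otimes x)\bigr),
\]
where $\vphi^{-1}$ is the inverse of Frobenius on $W(R)$ (or on $\mbb{Q}_p/\mbb{Z}_p\otimes_{\mbb{Z}_p}W(R)$ in the torsion case). Then I would check that the resulting map $\mfM\to W(R)$ belongs to $T_{\mfS_{L,s}}(\mfM)$: $\mfS_{L,s}$-linearity follows by unraveling $g(1\otimes sx)=g(\vphi(s)\otimes x)=\vphi(s)g(1\otimes x)$ and applying $\vphi^{-1}$, while $\vphi$-compatibility uses $g(1\otimes\vphi_{\mfM}(x))=g(\vphi(1\otimes x))=\vphi(g(1\otimes x))$ together with $\vphi^{-1}\vphi=\mrm{id}$.

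Next I would verify that $\theta_{L,s}$ and $\theta_{L,s}^{-1}$ are mutually inverse. The composition $\theta_{L,s}^{-1}\circ\theta_{L,s}$ is the identity by $\vphi^{-1}\vphi=\mrm{id}$. For the other direction, any $\whR_{L,s}$-linear extension of a map on generators is unique, so it suffices to evaluate $\theta_{L,s}(\theta_{L,s}^{-1}(g))$ on $1\otimes x$: one obtains $1\cdot\vphi(\vphi^{-1}(g(1\otimes x)))=g(1\otimes x)$, whence the two maps agree on $\whR_{L,s}\otimes_{\vphi,\mfS_{L,s}}\mfM$ by $\whR_{L,s}$-linearity.

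Finally, for $G_{L,\infty}$-equivariance, the point is that any $\sigma\in G_{L,\infty}$ maps into $H_L\subset\hat{G}_{L,s}$ under $G_{L,s}\twoheadrightarrow\hat{G}_{L,s}$, so condition (4) of Definition \ref{Liumod} gives $\sigma(1\otimes x)=1\otimes x$. Combined with the fact that $\sigma$ commutes with $\vphi$ on $W(R)$, hence with $\vphi^{-1}$, the actions match: $\theta_{L,s}^{-1}(\sigma.g)(x)=\vphi^{-1}(\sigma(g(1\otimes x)))=\sigma(\vphi^{-1}(g(1\otimes x)))=(\sigma.\theta_{L,s}^{-1}(g))(x)$. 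The main obstacle is really just book-keeping around the semi-linear $\hat{G}_{L,s}$-action and the difference between the Frobenius-twisted tensor product and the underlying module; once the bijectivity of $\vphi$ on $W(R)$ is in hand, the algebraic content is essentially formal.
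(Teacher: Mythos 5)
Your proof is correct and is, in essence, the standard argument found in the cited references (Liu's Theorem~2.3.1(1) and Caruso--Liu's Theorem~3.1.3(1)): since $R$ is perfect, $\vphi$ is bijective on $W(R)$ (and hence on $\mbb{Q}_p/\mbb{Z}_p\otimes_{\mbb{Z}_p}W(R)$), so one can write down the inverse explicitly as $g\mapsto\bigl(x\mapsto\vphi^{-1}(g(1\otimes x))\bigr)$, and the $G_{L,\infty}$-equivariance reduces, via condition~(4) of Definition~\ref{Liumod} (namely $H_L$-invariance of $1\otimes\mfM$), to the commutation of the Galois action with $\vphi$ on $W(R)$. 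The paper itself gives no proof but simply cites \cite{Li2} and \cite{CL2}; your argument faithfully reproduces the content of those proofs.
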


\noindent
{\bf Convention:}
For simplicity, 
if $L=K$, then we often omit the subscript ``$L$'' from various notations 
(e.g. ``a $(\vphi, \hat{G}_{K,s})$-module'' = ``a $(\vphi, \hat{G}_s)$-module'',
      $\mrm{Mod}^{r,\hat{G}_{K,s}}_{/\mfS_{K,s}}
       =\mrm{Mod}^{r,\hat{G}_s}_{/\mfS_s}$,
      $\mrm{Mod}^{r,\hat{G}_{K,s}}_{/\mfS_{K,s,\infty}}
       =\mrm{Mod}^{r,\hat{G}_s}_{/\mfS_{s,\infty}}$,
      $\hat{T}_{K,s}=\hat{T}_s$, $\theta_{K,s}=\theta_s$).
Furthermore, if $s=0$, we often omit the subscript ``$s$'' from various notations 
(e.g. $\mrm{Mod}^{r,\hat{G}_{L,0}}_{/\mfS_{L,0}}
       =\mrm{Mod}^{r,\hat{G}_L}_{/\mfS_L}$,
      $\mrm{Mod}^{r,\hat{G}_{L,0}}_{/\mfS_{L,0,\infty}}
       =\mrm{Mod}^{r,\hat{G}_L}_{/\mfS_{L,\infty}}$,
      $\hat{T}_{L,0}=\hat{T}_L$, 
       $\mrm{Mod}^{r,\hat{G}_{K,0}}_{/\mfS_{K,0}}
       =\mrm{Mod}^{r,\hat{G}}_{/\mfS}$,
       ``a $(\vphi, \hat{G}_{K,0})$-module'' = ``a $(\vphi, \hat{G})$-module'',
      $\hat{T}_{K,0}=\hat{T}$, $\theta_{K,0}=\theta$).\\

Let $\mrm{Rep}^{r,\mrm{st}}_{\mbb{Q}_p}(G_{L,s})$
    (resp.\ $\mrm{Rep}^{r,\mrm{cris}}_{\mbb{Q}_p}(G_{L,s})$,
     resp.\ $\mrm{Rep}^{r,\mrm{st}}_{\mbb{Z}_p}(G_{L,s})$, 
     resp.\ $\mrm{Rep}^{r,\mrm{cris}}_{\mbb{Z}_p}(G_{L,s})$) 
be the categories of semi-stable $\mbb{Q}_p$-representations of $G_{L,s}$
     with Hodge-Tate weights in $[0,r]$
    (resp.\ the categories of crystalline $\mbb{Q}_p$-representations of $G_{L,s}$
     with Hodge-Tate weights in $[0,r]$,
     resp.\ the categories of lattices in semi-stable $\mbb{Q}_p$-representations of $G_{L,s}$
     with Hodge-Tate weights in $[0,r]$, 
     resp.\ the categories of lattices in crystalline $\mbb{Q}_p$-representations of $G_{L,s}$
     with Hodge-Tate weights in $[0,r]$).

There exists $\mft\in W(R)\smallsetminus pW(R)$ such that 
$\vphi(\mft)=pE(0)^{-1}E(u)\mft$. 
Such $\mft$ is unique up to units of $\mbb{Z}_p$ (cf.\ \cite[Example 2.3.5]{Li2}).  
Now we define the full subcategory 
$\mrm{Mod}^{r,\hat{G},\mrm{cris}}_{/\mfS}$ 
(resp.\ $\mrm{Mod}^{r,\hat{G},\mrm{cris}}_{/\mfS_{\infty}}$) 
of $\mrm{Mod}^{r,\hat{G}}_{/\mfS}$
(resp.\ $\mrm{Mod}^{r,\hat{G}}_{/\mfS_{\infty}}$)  
consisting of objects $\hat{\mfM}$ which satisfy the following condition; 
$
\tau(x)-x\in u^p\vphi(\mft)(W(R)\otimes_{\vphi,\mfS} \mfM)
$
for any $x\in \mfM$.

The following results are  important properties for the functor $\hat{T}_{L,s}$.

\begin{theorem}
\label{Thm1}
\noindent
$(1)$ {\rm (\cite[Theorem 2.3.1 (2)]{Li2})}\
The functor $\hat{T}$ induces an anti-equivalence of categories 
between $\mrm{Mod}^{r,\hat{G}}_{/\mfS}$ and $\mrm{Rep}^{r,\mrm{st}}_{\mbb{Z}_p}(G_K)$.

\noindent
$(2)$ {\rm (\cite[Proposition 5.9]{GLS},\ \cite[Theorem 19]{Oz2})}\ 
The functor $\hat{T}$ induces an anti-equivalence of categories 
between $\mrm{Mod}^{r,\hat{G},\mrm{cris}}_{/\mfS}$ and 
$\mrm{Rep}^{r,\mrm{cris}}_{\mbb{Z}_p}(G_K)$.

\noindent
$(3)$ {\rm (\cite[Corollary 2.8 and 5.34]{Oz1})}\
The functor 
$\hat{T}_{L,s}\colon \mrm{Mod}^{r,\hat{G}_{L,s}}_{/\mfS_{L,s,\infty}}
\to \mrm{Rep}_{\mrm{tor}}(G_{L,s})$ is exact and faithful. 
Furthermore, it is full if $er<p-1$.
\end{theorem}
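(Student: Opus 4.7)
The plan is to treat the three parts separately, building on Kisin's foundational theory. For part (1), I would start with Kisin's classification: the isogeny category of free Kisin modules of height $\le r$ corresponds to the category of semi-stable $\mbb{Q}_p$-representations of $G_K$, via the functor $T_\mfS$ followed by inverting $p$. Given a lattice $T$ in a semi-stable representation, the associated Kisin module $\mfM$ satisfies $T_\mfS(\mfM)\simeq T|_{G_\infty}$. To produce the $\hat{G}$-structure, I would use the $\vphi$-compatible embeddings $\mfS\hookrightarrow\whR\hookrightarrow W(R)\hookrightarrow B^+_{\mrm{cris}}$ together with $S^{\mrm{int}}_{K_0}\hookrightarrow B^+_{\mrm{cris}}$; the natural $G_K$-action on $B^+_{\mrm{cris}}\otimes_{\vphi,\mfS}\mfM$ coming from the crystalline realization must be shown to preserve the subring $\whR\otimes_{\vphi,\mfS}\mfM$ and to factor through $\hat{G}$. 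Then one verifies axioms (1)--(5) of Definition \ref{Liumod}: $H_K$-invariance of $\mfM$ uses that $H_K$ acts trivially on $\mfS$, and the triviality modulo $I_{+,K}$ uses the identification $\whR/I_{+,K}\simeq W(k)$. For the converse functor, one constructs the Galois representation as $\hat{T}(\hat{\mfM})$ and identifies its Breuil module via $S^{\mrm{int}}_{K_0}\otimes_{\vphi,\mfS}\mfM$ to prove semi-stability.

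For part (2), the refinement to the crystalline subcategory rests on the fact that a semi-stable representation is crystalline precisely when the monodromy operator $N$ on its Breuil module vanishes. The condition $\tau(x)-x\in u^p\vphi(\mft)(W(R)\otimes_{\vphi,\mfS}\mfM)$ for $x\in\mfM$ is the $\tau$-side incarnation of $N\equiv 0$: using $\tau(u)/u=[\underline{\e}]$ and the defining relation $\vphi(\mft)=pE(0)^{-1}E(u)\mft$, a formal computation relates the infinitesimal action of $\tau-1$ on $W(R)\otimes_{\vphi,\mfS}\mfM$ to $N$ on $S_{K_0}\otimes_{\vphi,\mfS}\mfM[1/p]$. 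I would verify both directions of the equivalence by unwinding this dictionary, showing that the vanishing of $N$ forces exactly this $u^p\vphi(\mft)$-divisibility, and conversely.

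For part (3), exactness and faithfulness of $\hat{T}_{L,s}$ on torsion modules reduce to the free case through two-term free resolutions, combined with Proposition \ref{Kisinfunctor}(2) and the isomorphism $\theta_{L,s}$ showing that $\hat{T}_{L,s}(\hat{\mfM})$ and $T_{\mfS_{L,s}}(\mfM)$ agree as abelian groups. The hard part is fullness under $er<p-1$. Given a $G_{L,s}$-equivariant map $f\colon\hat{T}_{L,s}(\hat{\mfM}_1)\to\hat{T}_{L,s}(\hat{\mfM}_2)$, its restriction to $G_{L,\infty}$ corresponds, by the fullness part of Proposition \ref{Kisinfunctor}(2), to a morphism $g\colon\mfM_2\to\mfM_1$ of Kisin modules. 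The task is to show that $\whR_{L,s}\otimes g$ is automatically $\hat{G}_{L,s}$-equivariant. Using that $\hat{G}_{L,s}=G_{L,s,p^\infty}\rtimes H_{L,s}$ and $H_{L,s}$-equivariance holds trivially (since $\mfM_i\subset(\whR_{L,s}\otimes_{\vphi,\mfS_{L,s}}\mfM_i)^{H_L}$ and $g$ is $\mfS_{L,s}$-linear), the issue reduces to commutativity with $\tau^{p^s}$. I expect this to be the main obstacle: one must show the obstruction $(\tau^{p^s}\circ g-g\circ\tau^{p^s})$ lies in an ideal that the ramification hypothesis $er<p-1$ forces to be zero, most likely by bounding the $u$-adic valuation of its image against the $E(u)^r$-torsion controlled by the height condition, then invoking a Nakayama-type argument modulo $I_{+,L,s}$ where axiom (5) gives triviality of the $\hat{G}_{L,s}$-action.
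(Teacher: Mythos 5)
This theorem is a compilation of results quoted from the literature: part (1) from \cite{Li2}, part (2) from \cite{GLS} and \cite{Oz2}, part (3) from \cite{Oz1}. The paper does not reprove them; it cites them. So strictly speaking the paper has no proof for you to match, and your attempt is a reconstruction of the cited arguments. Assessed on those terms, your sketches are directionally sound. Part (1) correctly runs through Kisin's classification, the construction of the $\hat{G}$-structure via $B^+_{\mrm{cris}}$, and the Breuil-module comparison for the converse. Part (2) correctly locates the crystalline condition in the vanishing of the monodromy on the filtered $(\vphi,N)$-module (not on the whole Breuil module $\mcal{D}$; it is $N_D=0$, equivalently $N_{\mcal{D}}\equiv 0$ modulo $I_+S_{K_0}\mcal{D}$), and recognizes the $\tau$-divisibility condition as its $W(R)$-side translation, which is exactly the content the paper reuses and generalizes in Theorem \ref{cris}.

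Two places where your account of part (3) should be tightened. Exactness and faithfulness do not really need a detour through two-term resolutions: the isomorphism $\theta_{L,s}$ identifies $\hat{T}_{L,s}(\hat{\mfM})$ with $T_{\mfS_{L,s}}(\mfM)$ as $G_{L,\infty}$-modules, so these two properties are inherited directly from Proposition \ref{Kisinfunctor}(2). For fullness, your reduction to $\tau^{p^s}$-equivariance is correct (and for the reason you give: axiom (4) plus $\mfS_{L,s}$-linearity already gives $H_L$-equivariance of $\whR_{L,s}\otimes g$). But the closing step is not a ``Nakayama-type argument modulo $I_{+,L,s}$.'' The actual mechanism — and it is visible in this paper's own Proposition \ref{FFTHMMOD}, which is the variant the author later proves — is an iterative contraction: set $\Delta=\tau^{p^s}\circ\hat f-\hat f\circ\tau^{p^s}$, show $\Delta(1\otimes x)$ lies in $\mfm_R^{\ge c(0)}(R\otimes_{\vphi,\mfS}\mfN)$ using axiom (5) (which is where the triviality modulo $I_{+}$ enters, supplying the \emph{initial} bound, not a terminal Nakayama step), then use the height $\le r$ condition and the Frobenius relation to bootstrap $c(i)=pc(i-1)-pr$; the condition $er<p-1$ makes the bound diverge, forcing $\Delta=0$. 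A d\'evissage on the length of the torsion module handles the general $p^n$-torsion case. Your phrase ``bounding the $u$-adic valuation ... against the $E(u)^r$-torsion'' is the right intuition, but the argument is a divergent iteration, not Nakayama, and the role of axiom (5) is at the start rather than the end.
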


%%%%%%%%%%%%%%%%%%%%%%%%%%%%%%%%%%%%%%%%%%%%%%%%%%%%%%%%%%%%%%%%%%%%%%%%%%%%%%%%%%%%%%%%%%%%%%%%%%%%%%%%%%%
%%%%%%%%%%%%%%%%%%%%%%%%%%%%%%%%%%%%%%%%%%%%%%%%%%%%%%%%%%%%%%%%%%%%%%%%%%%%%%%%%%%%%%%%%%%%%%%%%%%%%%%%%%%
%           2.4 various modules                              %%%%%%%%%%%%%%%%%%%%%%%%%%%%%%%%%%%%%%%%%%%%
%%%%%%%%%%%%%%%%%%%%%%%%%%%%%%%%%%%%%%%%%%%%%%%%%%%%%%%%%%%%%%%%%%%%%%%%%%%%%%%%%%%%%%%%%%%%%%%%%%%%%%%%%%%
%%%%%%%%%%%%%%%%%%%%%%%%%%%%%%%%%%%%%%%%%%%%%%%%%%%%%%%%%%%%%%%%%%%%%%%%%%%%%%%%%%%%%%%%%%%%%%%%%%%%%%%%%%%

\subsection{$(\vphi, \hat{G})$-modules, Breuil modules and filtered $(\vphi,N)$-modules}
\label{relations}
We recall some relations between Breuil modules and $(\vphi,\hat{G})$-modules.
Here we give a rough sketch only.
For more precise information, see \cite[Section 6]{Br1}, \cite[Section 5]{Li1} and the proof of \cite[Theorem 2.3.1 (2)]{Li2}.

Let $\hat{\mfM}$ be a free $(\vphi,\hat{G}_{L,s})$-module over $\mfS_{L,s}$.
If we put $\mcal{D}:=S_{L_0,s}\otimes_{\vphi,\mfS_{L,s}} \mfM$,
then $\mcal{D}$ has a structure of a Breuil module over $S_{L_0,s}$
which corresponds to the semi-stable representation $\mbb{Q}_p\otimes_{\mbb{Z}_p} \hat{T}_{L,s}(\hat{\mfM})$
of $G_{L,s}$
(for the definition and properties of Breuil modules, see \cite{Br1}).
Thus $\mcal{D}$ is equipped with a Frobenius $\vphi_{\mcal{D}}(=\vphi_{S_{L_0,s}}\otimes \vphi_{\mfM})$,
a decreasing filtration $(\mrm{Fil}^i\mcal{D})_{i\ge 0}$ 
of $S_{L_0,s}$-submodules of $\mcal{D}$ and a $L_0$-linear monodromy operator 
$N\colon \mcal{D}\to \mcal{D}$ which satisfy certain properties (for example, Griffiths transversality).

Putting $D=\mcal{D}/I_+S_{L_0,s}\mcal{D}$,
we can associate a filtered $(\vphi,N)$-module over $L_{(s)}$ as following:
$\vphi_{D}:=\vphi_{\mcal{D}}\ \mrm{mod}\ I_+S_{L_0,s}\mcal{D}$,
$N_D:= N_{\mcal{D}}\ \mrm{mod}\ I_+S_{L_0,s}\mcal{D}$ and 
$\mrm{Fil}^iD_{L_{(s)}}:=f_{\pi_s}(Fil^i(\mcal{D}))$.
Here, $f_{\pi_s}\colon \mcal{D}\to D_{L_{(s)}}$ 
is the projection defined  by
$\mcal{D}\twoheadrightarrow \mcal{D}/\mrm{Fil}^1S_{L_0,s}\mcal{D}\simeq D_{L_{(s)}}$.
Proposition 6.2.1.1 of \cite{Br1} implies that there 
exists a unique $\vphi$-compatible section $s\colon D\hookrightarrow \mcal{D}$ of 
$\mcal{D}\twoheadrightarrow D$.
By this embedding, we regard $D$ as a submodule of $\mcal{D}$. 
Then we have $N_{\mcal{D}}|_{D}=N_D$
and $N_{\mcal{D}}=N_{S_{L_0,s}}\otimes \mrm{Id}_D + \mrm{Id}_{S_{L_0,s}}\otimes N_D$
under the identification $\mcal{D}=S_{L_0,s} \otimes_{L_{(s)}} D$.

The $G_{L,s}$-action on $\whR_{L,s}\otimes_{\vphi,\mfS_{L,s}}\mfM$
extends to 
$B^+_{\mrm{cris}}\otimes_{\whR_{L,s}} (\whR_{L,s}\otimes_{\vphi,\mfS_{L,s}}\mfM)
\simeq B^+_{\mrm{cris}}\otimes_{S_{L_0,s}}\mcal{D}$.
This action is in fact explicitly written as follows:
\begin{equation}
\label{explicit}
g(a\otimes x)=\sum^{\infty}_{i=0}g(a)\gamma_i
(-\mrm{log}(\frac{g[\underline{\pi_s}]}{[\underline{\pi_s}]}))\otimes N^i_{\mcal{D}}(x)\quad
{\rm for}\ g\in G_{L,s}, a\in B^+_{\mrm{cris}}, x\in \mcal{D}.
\end{equation}
By this explicit formula,
we can obtain an easy relation between $N_{\mcal{D}}$ and $\tau^{p^s}$-action on $\hat{\mfM}$ as follows:
first we recall that 
$t=-\mrm{log}(\tau([\underline{\pi}])/[\underline{\pi}])
  =-\mrm{log}(\tau^{p^s}([\underline{\pi_s}])/[\underline{\pi_s}])$.
By the formula, for any $n\ge 0$ and $x\in \mcal{D}$, an induction on $n$ shows that
we have 
$$
(\tau^{p^s}-1)^n(x)=\sum^{\infty}_{m=n}(\sum_{i_1+\cdots i_n=m, i_j\ge 0}\frac{m!}{i_1!\cdots i_n!})
\gamma_m(t)\otimes N^m_{\mcal{D}}(x)
\in B^{+}_{\mrm{cris}}\otimes_{S_{L_0,s}} \mcal{D}
$$
and in particular we see $\frac{(\tau^{p^s}-1)^n}{n}(x)\to 0$ $p$-adically as $n\to \infty$.
Hence we can define 
\[
\mrm{log}(\tau^{p^s})(x):=
\sum^{\infty}_{n=1}(-1)^{n-1}\frac{(\tau^{p^s}-1)^n}{n}(x) \in B^{+}_{\mrm{cris}}\otimes_{S_{L_0,s}} \mcal{D}.
\]
It is not difficult to check the equation 
\begin{equation}
\label{eq1}
\mrm{log}(\tau^{p^s})(x)=t\otimes N_{\mcal{D}}(x).
\end{equation}

%%%%%%%%%%%%%%%%%%%%%%%%%%%%%%%%%%%%%%%%%%%%%%%%%%%%%%%%%%%%%%%%%%%%%%%%%%%%%%%%%%%%%%%%%%%%%%%%%%%%%%%%%%%
%%%%%%%%%%%%%%%%%%%%%%%%%%%%%%%%%%%%%%%%%%%%%%%%%%%%%%%%%%%%%%%%%%%%%%%%%%%%%%%%%%%%%%%%%%%%%%%%%%%%%%%%%%%
%           2.5, 2.6 basse change                              %%%%%%%%%%%%%%%%%%%%%%%%%%%%%%%%%%%%%%%%%%%%
%%%%%%%%%%%%%%%%%%%%%%%%%%%%%%%%%%%%%%%%%%%%%%%%%%%%%%%%%%%%%%%%%%%%%%%%%%%%%%%%%%%%%%%%%%%%%%%%%%%%%%%%%%%
%%%%%%%%%%%%%%%%%%%%%%%%%%%%%%%%%%%%%%%%%%%%%%%%%%%%%%%%%%%%%%%%%%%%%%%%%%%%%%%%%%%%%%%%%%%%%%%%%%%%%%%%%%%

\subsection{Base changes for Kisin modules}
Let $\mfM$ be a free or torsion Kisin module of height $\le r$ over $\mfS_L$ (resp.\ over $\mfS$).
We put 
$\mfM_{L,s}=\mfS_{L,s}\otimes_{\mfS_L} \mfM$
(resp.\ $\mfS_L=\mfS_L\otimes_{\mfS} \mfM$)
and equip $\mfM_{L,s}$ (resp.\ $\mfM_L$) with a Frobenius by 
$\vphi=\vphi_{\mfS_{L,s}}\otimes \vphi_{\mfM}$
(resp.\ $\vphi=\vphi_{\mfS_L}\otimes \vphi_{\mfM}$).
Then it is not difficult to check  that 
$\mfM_{L,s}$ (resp.\ $\mfM_L$) is a free or torsion Kisin module of height $\le r$ 
over $\mfS_{L,s}$ (resp.\ over $\mfS_L$)
(here we recall that $E_s(u_s)=E(u^{p^s}_s)=E(u)$).
Hence we obtained natural functors 
$$
\mrm{Mod}^r_{/\mfS_L}\to \mrm{Mod}^r_{/\mfS_{L,s}} \quad  {\rm and}\quad 
\mrm{Mod}^r_{/\mfS_{L,\infty}}\to \mrm{Mod}^r_{/\mfS_{L,s,\infty}} 
$$
$$
{\rm (resp.}\quad 
\mrm{Mod}^r_{/\mfS}\to \mrm{Mod}^r_{/\mfS_L} \quad  {\rm and}\quad 
\mrm{Mod}^r_{/\mfS_{\infty}}\to \mrm{Mod}^r_{/\mfS_{L,\infty}}). 
$$
By definition,
we immediately see that we have 
$T_{\mfS_L}(\mfM)\simeq T_{\mfS_{L,s}}(\mfM_{L,s})$
(resp.\ $T_{\mfS}(\mfM)|_{G_{L_{\infty}}}\simeq T_{\mfS_L}(\mfM_L)$). 
In particular, it follows from Proposition \ref{Kisinfunctor} (1)
that the following holds:

\begin{proposition}
\label{basechange1:Kisin}
The functor
$\mrm{Mod}^r_{/\mfS_L}\to \mrm{Mod}^r_{/\mfS_{L,s}}$ is fully faithful.
\end{proposition}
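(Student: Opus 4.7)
The plan is to deduce the full faithfulness from the full faithfulness of $T_{\mfS_L}$ and $T_{\mfS_{L,s}}$ on Kisin modules (Proposition \ref{Kisinfunctor}(1)), using the compatibility of these functors with base change on the Galois side.

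First I would make the natural isomorphism $T_{\mfS_L}(\mfM) \simeq T_{\mfS_{L,s}}(\mfM_{L,s})$ (noted in the paragraph preceding the proposition) explicit and verify that it is $G_{L,\infty}$-equivariant and functorial in $\mfM$. Given $g \in \mrm{Hom}_{\mfS_L,\vphi}(\mfM, W(R))$, its image is the unique $\mfS_{L,s}$-linear extension $\tilde{g}\colon \mfS_{L,s}\otimes_{\mfS_L}\mfM \to W(R)$, $a\otimes x \mapsto a\cdot g(x)$; this map is $\vphi$-equivariant since the embedding $\mfS_{L,s}\hookrightarrow W(R)$ from Section 2.1 commutes with $\vphi$, and the inverse bijection is restriction along $\mfM \hookrightarrow \mfM_{L,s}$. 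The bijection is $G_{L,\infty}$-equivariant because $G_{L,\infty}$ fixes every $\pi_n$ and hence fixes $\mfS_L$ and $\mfS_{L,s}$ pointwise inside $W(R)$, and functoriality in $\mfM$ is immediate.

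Given any $\mfM_1, \mfM_2$ in $\mrm{Mod}^r_{/\mfS_L}$, this yields a commutative square
\[
\displaystyle \xymatrix{
\mrm{Hom}_{\mrm{Mod}^r_{/\mfS_L}}(\mfM_1,\mfM_2) \ar[r] \ar[d]_{T_{\mfS_L}}
& \mrm{Hom}_{\mrm{Mod}^r_{/\mfS_{L,s}}}(\mfM_{1,L,s},\mfM_{2,L,s}) \ar[d]^{T_{\mfS_{L,s}}} \\
\mrm{Hom}_{G_{L,\infty}}(T_{\mfS_L}(\mfM_2),T_{\mfS_L}(\mfM_1)) \ar[r]^-{\sim}
& \mrm{Hom}_{G_{L,\infty}}(T_{\mfS_{L,s}}(\mfM_{2,L,s}),T_{\mfS_{L,s}}(\mfM_{1,L,s}))
}
\]
whose top arrow is the base change map on Hom sets and whose bottom arrow is the bijection induced by the isomorphism above. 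By Proposition \ref{Kisinfunctor}(1) both vertical arrows are bijections, so the top arrow is also a bijection, which is exactly the full faithfulness of the base change functor.

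The step that requires the most attention is verifying the $G_{L,\infty}$-equivariance and naturality of the isomorphism $T_{\mfS_L}(\mfM)\simeq T_{\mfS_{L,s}}(\mfM_{L,s})$; however, this is purely formal once one unwinds the definitions, and there is no deeper obstruction since both functors land in representations of the same Galois group $G_{L,\infty}$.
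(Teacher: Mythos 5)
Your proof is correct and matches the paper's argument: the paper likewise deduces the proposition from the natural isomorphism $T_{\mfS_L}(\mfM)\simeq T_{\mfS_{L,s}}(\mfM_{L,s})$ together with the full faithfulness of the two Kisin-module functors from Proposition \ref{Kisinfunctor}(1). You have simply made the paper's one-line justification explicit, including the verification that the isomorphism is $\vphi$- and $G_{L,\infty}$-equivariant (via $G_{L,\infty}$ fixing $\mfS_{L,s}\subset W(R)$ pointwise) and natural in $\mfM$, and you have correctly handled the contravariance in the commutative square.
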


\subsection{Base changes for $(\vphi,\hat{G})$-modules}
Let $\hat{\mfM}$ be a free or torsion $(\vphi,\hat{G}_L)$-module 
(resp.\ $(\vphi,\hat{G})$-module)
of height 
$\le r$ over $\mfS_L$ (resp.\ over $\mfS$).
The $G_{L,s}$ action on $\whR_L\otimes_{\vphi,\mfS_L} \mfM$ 
(resp.\ the $G_L$ action on $\whR\otimes_{\vphi,\mfS} \mfM$)
extends to 
$\whR_{L,s}\otimes_{\whR_L}(\whR_L\otimes_{\vphi,\mfS_L} \mfM)
\simeq \whR_{L,s}\otimes_{\vphi,\mfS_{L,s}} \mfM_{L,s}$ 
(resp.\ 
$\whR_L\otimes_{\whR}(\whR\otimes_{\vphi,\mfS} \mfM)
\simeq \whR_L\otimes_{\vphi,\mfS_L} \mfM_L$),
which factors through $\hat{G}_{L,s}$ (resp.\ $\hat{G}_L$).
Then it is not difficult to check that 
$\mfM_{L,s}$ (resp.\ $\mfM_L$) has a structure of a $(\vphi,\hat{G}_{L,s})$-module
(resp.\ $(\vphi,\hat{G}_L)$-module).
Hence we obtained natural functors 
$$
\mrm{Mod}^{r,\hat{G}_L}_{/\mfS_L}\to \mrm{Mod}^{r,\hat{G}_{L,s}}_{/\mfS_{L,s}} \quad  {\rm and}\quad 
\mrm{Mod}^{r,\hat{G}_L}_{/\mfS_{L,\infty}}\to \mrm{Mod}^{r,\hat{G}_{L,s}}_{/\mfS_{L,s,\infty}} 
$$
$$
{\rm (resp.}\quad 
\mrm{Mod}^{r,\hat{G}}_{/\mfS}\to \mrm{Mod}^{r,\hat{G}_L}_{/\mfS_L} \quad  {\rm and}\quad 
\mrm{Mod}^{r,\hat{G}}_{/\mfS_{\infty}}\to \mrm{Mod}^{r,\hat{G}_L}_{/\mfS_{L,\infty}}). 
$$
By definition,
we immediately see that we have 
$\hat{T}_L(\hat{\mfM})|_{G_{L,s}}\simeq \hat{T}_{L,s}(\hat{\mfM}_{L,s})$
(resp.\ $\hat{T}(\hat{\mfM})|_{G_L}\simeq \hat{T}_L(\hat{\mfM}_L)$).
Similar to Proposition \ref{basechange1:Kisin}, 
we can prove the following.
\begin{proposition}
The functor
$\mrm{Mod}^{r,\hat{G}_L}_{/\mfS_L}\to \mrm{Mod}^{r,\hat{G}_{L,s}}_{/\mfS_{L,s}} $
is fully faithful.
\end{proposition}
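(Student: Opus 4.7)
The plan is to reduce, via the contravariant equivalences of Theorem \ref{Thm1}(1), the assertion to the full faithfulness of the restriction functor $\mathrm{Rep}^{r,\mathrm{st}}_{\mbb{Z}_p}(G_L)\to \mathrm{Rep}^{r,\mathrm{st}}_{\mbb{Z}_p}(G_{L,s})$, and then to deduce this from the compatibility of $D_{\mathrm{st}}$ with totally ramified base change.

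Applying Theorem \ref{Thm1}(1) with the base field $K$ replaced by $L$ and by $L_{(s)}$ (the latter being a complete discrete valuation field of mixed characteristic with perfect residue field $k_L$ and uniformizer $\pi_s$), the functors $\hat{T}_L$ and $\hat{T}_{L,s}$ are anti-equivalences onto $\mathrm{Rep}^{r,\mathrm{st}}_{\mbb{Z}_p}(G_L)$ and $\mathrm{Rep}^{r,\mathrm{st}}_{\mbb{Z}_p}(G_{L,s})$, respectively. By the identification $\hat{T}_L(\hat{\mfM})|_{G_{L,s}}\simeq \hat{T}_{L,s}(\hat{\mfM}_{L,s})$ recorded just before the statement, our base-change functor is identified under these anti-equivalences with the Galois restriction functor, so it suffices to show that restriction is fully faithful.

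Since $L/K$ is unramified, $\pi$ is also a uniformizer of $L$ and $L_{(s)}=L(\pi_s)$ is totally ramified of degree $p^s$ over $L$, so in particular $L_{(s),0}=L_0$. Consequently, for any $V\in \mathrm{Rep}^{r,\mathrm{st}}_{\mbb{Q}_p}(G_L)$ the restriction $V|_{G_{L,s}}$ is again semi-stable, one has $D_{\mathrm{st},L}(V)=D_{\mathrm{st},L_{(s)}}(V|_{G_{L,s}})$ as $(\varphi,N)$-modules over $L_0$, and the $L_{(s)}$-filtration on $D_{\mathrm{dR},L_{(s)}}(V|_{G_{L,s}})=L_{(s)}\otimes_L D_{\mathrm{dR},L}(V)$ is the base change of the $L$-filtration on $D_{\mathrm{dR},L}(V)$. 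Hence any morphism of filtered $(\varphi,N)$-modules over $L_{(s)}$ between $D_{\mathrm{st},L_{(s)}}(V_1|_{G_{L,s}})$ and $D_{\mathrm{st},L_{(s)}}(V_2|_{G_{L,s}})$ is a map of $(\varphi,N)$-modules over $L_0$, and intersecting the target's $L_{(s)}$-filtration with the subspace $D_{\mathrm{dR},L}(V_2)\subset L_{(s)}\otimes_L D_{\mathrm{dR},L}(V_2)$ recovers its $L$-filtration, so the map also respects $L$-filtrations and is thus a morphism over $L$. By full faithfulness of $D_{\mathrm{st},L}$, any $G_{L,s}$-equivariant $\mbb{Q}_p$-linear map between semi-stable $G_L$-representations is automatically $G_L$-equivariant, and the $\mbb{Z}_p$-lattice version follows at once.

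The bulk of the argument is organizational; the one substantive input beyond the cited Theorem \ref{Thm1}(1) is the classical compatibility of $D_{\mathrm{st}}$ (both as $(\varphi,N)$-module and in its filtration) with totally ramified base change, which is routine and presents no real obstacle.
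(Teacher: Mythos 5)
Your proposal is correct and follows essentially the same route as the paper: Theorem \ref{Thm1}(1) (applied over $L$ and over $L_{(s)}$) identifies the base-change functor with Galois restriction on $\mrm{Rep}^{r,\mrm{st}}_{\mbb{Z}_p}$, and the paper then invokes Lemma \ref{totst}, which asserts exactly the full faithfulness of restriction along a totally ramified extension via the compatibility of filtered $(\vphi,N)$-modules with base change. You have simply unpacked the one-line justification of Lemma \ref{totst} (``$K'$ is totally ramified over $K_0$ as same as $K$'') into the explicit observation that the $(\vphi,N)$-module over $L_0$ is unchanged and the $L$-filtration is recovered by intersecting the $L_{(s)}$-filtration with $D_{\mathrm{dR},L}$, which is exactly the intended argument.
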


\noindent
The proposition immediately 
follows from the full faithfulness property of Theorem \ref{Thm1} (1)
and the lemma below.

\begin{lemma}
\label{totst}
Let $K'$ is a finite totally ramified extension of $K$.
Then the restriction functor from the category of 
semi-stable $\mbb{Q}_p$-representations of $G_K$
into the category of 
semi-stable $\mbb{Q}_p$-representations of $G_{K'}$ 
is fully faithful.
\end{lemma}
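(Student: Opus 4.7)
The plan is to invoke the Colmez--Fontaine equivalence of categories between semi-stable $\mbb{Q}_p$-representations of $G_K$ (resp.\ $G_{K'}$) and weakly admissible filtered $(\vphi,N)$-modules over $K$ (resp.\ over $K'$), and then to reduce the full faithfulness to a formal base-change argument on filtrations.

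Since $K'/K$ is totally ramified, the maximal absolutely unramified subfields satisfy $K'_0 = K_0$. For any semi-stable $\mbb{Q}_p$-representation $V$ of $G_K$, the inclusion
\[
D_{\mrm{st},K}(V) = (B_{\mrm{st}}\otimes_{\mbb{Q}_p} V)^{G_K} \subseteq (B_{\mrm{st}}\otimes_{\mbb{Q}_p} V)^{G_{K'}} = D_{\mrm{st},K'}(V|_{G_{K'}})
\]
is an equality of $K_0$-vector spaces by comparing dimensions with $\dim_{\mbb{Q}_p} V$, so $V|_{G_{K'}}$ is semi-stable with the same underlying $(\vphi,N)$-module $D$. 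Since $V$ is in particular de Rham, a standard Hodge--Tate dimension count then shows that the filtration on $K'\otimes_{K_0} D$ coming from the de Rham comparison over $K'$ equals the $K'$-linear extension of the filtration on $K\otimes_{K_0} D$ coming from the comparison over $K$.

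Under the Colmez--Fontaine equivalence, the restriction functor thus corresponds to the functor on weakly admissible filtered $(\vphi,N)$-modules which keeps $D$ together with $\vphi$ and $N$ and replaces $\mrm{Fil}^i(K\otimes_{K_0} D)$ by its $K'$-linear extension. To prove full faithfulness, I would take a $K_0$-linear $f\colon D\to D'$ commuting with $\vphi$ and $N$ such that $\mrm{id}_{K'}\otimes f$ respects the filtrations over $K'$, and show that $\mrm{id}_K\otimes f$ respects the filtrations over $K$.

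This last step is a formal consequence of the injection $K\otimes_{K_0} D'\hookrightarrow K'\otimes_{K_0} D'$: by the base-change description one has $\mrm{Fil}^i(K'\otimes_{K_0} D')\cap (K\otimes_{K_0} D') = \mrm{Fil}^i(K\otimes_{K_0} D')$, and the image $(\mrm{id}_K\otimes f)(\mrm{Fil}^i(K\otimes_{K_0} D))$ lies both in $K\otimes_{K_0} D'$ and in $\mrm{Fil}^i(K'\otimes_{K_0} D')$, hence in $\mrm{Fil}^i(K\otimes_{K_0} D')$. The only mildly subtle point is verifying the base-change description of the filtration in the second paragraph, but this is standard and poses no real obstacle.
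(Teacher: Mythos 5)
Your proposal is correct and takes essentially the same route as the paper: the paper also passes to filtered $(\vphi,N)$-modules, uses that $K'$ and $K$ share the same maximal unramified subfield $K_0$ so the $(\vphi,N)$-module underlying $D_{\mrm{st}}$ is unchanged under restriction, and observes that compatibility of the filtration over $K'$ forces compatibility over $K$. You have simply spelled out the base-change identity $\mrm{Fil}^i(K'\otimes_{K_0}D')\cap(K\otimes_{K_0}D')=\mrm{Fil}^i(K\otimes_{K_0}D')$ that the paper leaves implicit behind the phrase ``we can check without difficulty.''
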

\begin{proof}
Let $V$ and $V'$ be semi-stable $\mbb{Q}_p$-representations of $G_K$
and let $f\colon V\to V'$ be a $G_{K'}$-equivariant homomorphism.
Considering the morphism of filtered $(\vphi, N)$-modules over $K'$ 
corresponding  to $f$, we can check without difficulty that 
$f$ is in fact a morphism of filtered $(\vphi, N)$-modules over $K$.
This is because $K'$ is totally ramified over $K_0$ as same as $K$.
This gives the desired result.
\end{proof}

%%%%%%%%%%%%%%%%%%%%%%%%%%%%%%%%%%%%%%%%%%%%%%%%%%%%%%%%%%%%%%%%%%%%%%%%%%%%%%%%%%%%%%%%%%%%%%%%%%%%%%%%%%%
%%%%%%%%%%%%%%%%%%%%%%%%%%%%%%%%%%%%%%%%%%%%%%%%%%%%%%%%%%%%%%%%%%%%%%%%%%%%%%%%%%%%%%%%%%%%%%%%%%%%%%%%%%%
%                           3                              %%%%%%%%%%%%%%%%%%%%%%%%%%%%%%%%%%%%%%%%%%%%%%%%
%%%%%%%%%%%%%%%%%%%%%%%%%%%%%%%%%%%%%%%%%%%%%%%%%%%%%%%%%%%%%%%%%%%%%%%%%%%%%%%%%%%%%%%%%%%%%%%%%%%%%%%%%%%
%%%%%%%%%%%%%%%%%%%%%%%%%%%%%%%%%%%%%%%%%%%%%%%%%%%%%%%%%%%%%%%%%%%%%%%%%%%%%%%%%%%%%%%%%%%%%%%%%%%%%%%%%%%

\section{Variants of free $(\vphi,\hat{G})$-modules}

In this section, we define some variant notions of 
$(\vphi,\hat{G})$-modules.
We continue to use same notation as in the previous section. 
In particular, $p$ is odd.

\subsection{Definitions}
\label{vardef}

We start with some definitions which are our main concern in this and the next section.

\begin{definition}
\label{varLiumod}
We define the category $\Mod^{r,\hat{G}_{L,s}}_{/\mfS_L}$
(resp.\ $\wtMod^{r,\hat{G}_{L,s}}_{/\mfS_L}$) 
as follows.
An object 
is a triple $\hat{\mfM}=(\mfM, \vphi_{\mfM}, \hat{G}_{L,s})$ where
\begin{enumerate}
\item[(1)] $(\mfM, \vphi_{\mfM})$ is a free
           Kisin module of height $\le r$ over $\mfS_L$, 

\vspace{-2mm}
           
\item[(2)] $\hat{G}_{L,s}$ is an $\wh{\mcal{R}}_L$-semilinear
           $\hat{G}_{L,s}$-action on $\whR_L\otimes_{\vphi, \mfS_L} \mfM$
           (resp.\ an $\wh{\mcal{R}}_{L,s}$-semilinear
           $\hat{G}_{L,s}$-action on $\whR_{L,s}\otimes_{\vphi, \mfS_L} \mfM$)
           which induces a continuous $G_{L,s}$-action on 
           $W(\mrm{Fr}R)\otimes_{\vphi, \mfS_L} \mfM$
           for the weak topology,
\vspace{-2mm}           
           
\item[(3)] the $\hat{G}_{L,s}$-action commutes with 
           $\vphi_{\wh{\mcal{R}}_L}\otimes \vphi_{\mfM}$
           (resp.\ $\vphi_{\wh{\mcal{R}}_{L,s}}\otimes \vphi_{\mfM}$),

\vspace{-2mm}

\item[(4)] $\mfM\subset (\whR_L\otimes_{\vphi,\mfS_L} \mfM)^{H_L}$ 
           (resp.\ $\mfM\subset (\whR_{L,s}\otimes_{\vphi,\mfS_L} \mfM)^{H_L}$),

\vspace{-2mm}

\item[(5)] $\hat{G}_{L,s}$ acts on the 
           $W(k_L)$-module 
           $(\whR_L\otimes_{\vphi,\mfS_L} \mfM)/
            I_{+,L}(\whR_L\otimes_{\vphi,\mfS_L} \mfM)$
           (resp.\ $(\whR_{L,s}\otimes_{\vphi,\mfS_L} \mfM)/
            I_{+,L,s}(\whR_{L,s}\otimes_{\vphi,\mfS_L} \mfM)$) 
           trivially.           
\end{enumerate}
Morphisms are defined by the obvious way.
By replacing ``free'' of (1) with ``torsion''\footnote{As explained in the 
           footnote of Definition \ref{Liumod},
           we may ignore the continuity condition of (2) in the case where $\mfM$ is torsion.},
we define the category $\Mod^{r,\hat{G}_{L,s}}_{/\mfS_{L,\infty}}$ 
(resp.\ $\wtMod^{r,\hat{G}_{L,s}}_{/\mfS_{L,\infty}}$). 
\end{definition}

For any object $\hat{\mfM}$  of $\Mod^{r,\hat{G}_{L,s}}_{/\mfS_L}$ or 
$\Mod^{r,\hat{G}_{L,s}}_{/\mfS_{L,\infty}}$, 
we define a $\mbb{Z}_p[G_{L,s}]$-module $\hat{T}_{L,s}(\hat{\mfM})$
by
\begin{align*}
\hat{T}_{L,s}(\hat{\mfM})=
\left\{
\begin{array}{ll}
\mrm{Hom}_{\wh{\mcal{R}}_L,\vphi}
(\whR_L\otimes_{\vphi,\mfS_L} \mfM, W(R))
\hspace{21mm} {\rm if}\ \mfM\ {\rm is}\ {\rm free},  
\cr
\mrm{Hom}_{\whR_L,\vphi}
(\whR_L\otimes_{\vphi,\mfS_L} \mfM, \mbb{Q}_p/\mbb{Z}_p\otimes_{\mbb{Z}_p}W(R))
\hspace{3.5mm} {\rm if}\ \mfM\ {\rm is}\ {\rm torsion}.  
\end{array}
\right.
\end{align*}
Here,  $G_{L,s}$ 
acts on $\hat{T}_{L,s}(\hat{\mfM})$ by 
$(\sigma.f)(x)=\sigma(f(\sigma^{-1}(x)))$
for $\sigma\in G_{L,s},\ f\in \hat{T}_{L,s}(\hat{\mfM}),\ 
x\in \whR_L\otimes_{\vphi,\mfS_L}\mfM$. 
Similar to the above,
for any object $\hat{\mfM}$  of $\wtMod^{r,\hat{G}_{L,s}}_{/\mfS_L}$ or 
$\wtMod^{r,\hat{G}_{L,s}}_{/\mfS_{L,\infty}}$, 
we define a $\mbb{Z}_p[G_{L,s}]$-module $\hat{T}_{L,s}(\hat{\mfM})$
by
\begin{align*}
\hat{T}_{L,s}(\hat{\mfM})=
\left\{
\begin{array}{ll}
\mrm{Hom}_{\whR_{L,s},\vphi}
(\whR_{L,s}\otimes_{\vphi,\mfS_L} \mfM, W(R))
\hspace{21mm} {\rm if}\ \mfM\ {\rm is}\ {\rm free},  
\cr
\mrm{Hom}_{\wh{\mcal{R}}_{L,s},\vphi}
(\whR_{L,s}\otimes_{\vphi,\mfS_L} \mfM, \mbb{Q}_p/\mbb{Z}_p\otimes_{\mbb{Z}_p}W(R))
\hspace{3.5mm} {\rm if}\ \mfM\ {\rm is}\ {\rm torsion}.  
\end{array}
\right.
\end{align*}

On the other hand,
we obtain functors 
$
\Mod^{r,\hat{G}_L}_{/\mfS_L}\to \Mod^{r,\hat{G}_{L,s}}_{/\mfS_L}\to 
\wtMod^{r,\hat{G}_{L,s}}_{/\mfS_L}\to \Mod^{r,\hat{G}_{L,s}}_{/\mfS_{L,s}}
$
and 
$
\Mod^{r,\hat{G}_L}_{/\mfS_{L,\infty}}\to \Mod^{r,\hat{G}_{L,s}}_{/\mfS_{L,\infty}}\to 
\wtMod^{r,\hat{G}_{L,s}}_{/\mfS_{L,\infty}}\to \Mod^{r,\hat{G}_{L,s}}_{/\mfS_{L,s,\infty}}
$
by natural manners
and it is 
readily seen that
these functors are compatible with $\hat{T}_L$ and $\hat{T}_{L,s}$.
In particular,
the essential images of the functors $\hat{T}_{L,s}$ on $\Mod^{r,\hat{G}_{L,s}}_{/\mfS_L}$ 
and $\wtMod^{r,\hat{G}_{L,s}}_{/\mfS_L}$ has values in $\mrm{Rep}^{r,\mrm{st}}_{\mbb{Z}_p}(G_{L,s})$
since we have an equivalence of categories 
$\hat{T}_{L,s}\colon \Mod^{r,\hat{G}_{L,s}}_{/\mfS_{L,s}}
\overset{\sim}{\rightarrow} \mrm{Rep}^{r,\mrm{st}}_{\mbb{Z}_p}(G_{L,s})$
by Theorem \ref{Thm1}.

In the rest of this section,
we study free cases. 
We leave studies for torsion cases to the next section.\\

\noindent
{\bf Convention:}
For simplicity, 
if $L=K$, then we often omit the subscript ``$L$'' from various notations 
(e.g. $\Mod^{r,\hat{G}_{K,s}}_{/\mfS_K}=\Mod^{r,\hat{G}_s}_{/\mfS},  
\wtMod^{r,\hat{G}_{K,s}}_{/\mfS_K}=\wtMod^{r,\hat{G}_s}_{/\mfS}$).
Furthermore, if $s=0$, we often omit the subscript ``$s$'' from various notations 
(e.g. $\Mod^{r,\hat{G}_{L,0}}_{/\mfS_L}=\Mod^{r,\hat{G}_L}_{/\mfS_L},  
\wtMod^{r,\hat{G}_{L,0}}_{/\mfS_{L,0}}=\wtMod^{r,\hat{G}_L}_{/\mfS_L}$).\\

%%%%%%%%%%%%%%%%%%%%%%%%%%%%%%%%%%%%%%%%%%%%%%%%%%%%%%%%%%%%%%%%%%%%%%%%%%%%%%%%%%%%%%%%%%%%%%%%%%%%%%%%%%%
%%%%%%%%%%%%%%%%%%%%%%%%%%%%%%%%%%%%%%%%%%%%%%%%%%%%%%%%%%%%%%%%%%%%%%%%%%%%%%%%%%%%%%%%%%%%%%%%%%%%%%%%%%%
%                           3.2                              %%%%%%%%%%%%%%%%%%%%%%%%%%%%%%%%%%%%%%%%%%%%%%%%
%%%%%%%%%%%%%%%%%%%%%%%%%%%%%%%%%%%%%%%%%%%%%%%%%%%%%%%%%%%%%%%%%%%%%%%%%%%%%%%%%%%%%%%%%%%%%%%%%%%%%%%%%%%
%%%%%%%%%%%%%%%%%%%%%%%%%%%%%%%%%%%%%%%%%%%%%%%%%%%%%%%%%%%%%%%%%%%%%%%%%%%%%%%%%%%%%%%%%%%%%%%%%%%%%%%%%%%

\subsection{The functors 
$
\Mod^{r,\hat{G}}_{/\mfS}\to \Mod^{r,\hat{G}_s}_{/\mfS}\to 
\wtMod^{r,\hat{G}_s}_{/\mfS}\to \Mod^{r,\hat{G}_s}_{/\mfS_s}
$}

Now we consider the functors 
$
\Mod^{r,\hat{G}}_{/\mfS}\to \Mod^{r,\hat{G}_s}_{/\mfS}\to 
\wtMod^{r,\hat{G}_s}_{/\mfS}\to \Mod^{r,\hat{G}_s}_{/\mfS_s}.
$
At first, by Proposition \ref{basechange1:Kisin},
we see that the functor $\wtMod^{r,\hat{G}_s}_{/\mfS}\to \Mod^{r,\hat{G}_s}_{/\mfS_s}$
is fully faithful. 
It follows from this fact and Theorem \ref{Thm1} (1) that the functor
$\hat{T}_s\colon \wtMod^{r,\hat{G}_s}_{/\mfS}
\to \mrm{Rep}^{r,\mrm{st}}_{\mbb{Z}_p}(G_s)$ is fully faithful.
It is clear that the functor $\Mod^{r,\hat{G}_s}_{/\mfS}\to 
\wtMod^{r,\hat{G}_s}_{/\mfS}$ is fully faithful
and thus so is $\hat{T}_s\colon \Mod^{r,\hat{G}_s}_{/\mfS}
\to \mrm{Rep}^{r,\mrm{st}}_{\mbb{Z}_p}(G_s)$.
Combining this with Theorem \ref{Thm1} (1) and Lemma \ref{totst},
we obtain that the functor 
$\Mod^{r,\hat{G}}_{/\mfS}\to \Mod^{r,\hat{G}_s}_{/\mfS}$
is also fully faithful.
Furthermore, we prove the following.

\begin{proposition}
\label{equal}
The functor $\Mod^{r,\hat{G}_s}_{/\mfS}\to 
\wtMod^{r,\hat{G}_s}_{/\mfS}$
is an equivalence of categories.
\end{proposition}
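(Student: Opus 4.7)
The plan is to verify essential surjectivity; faithfulness is immediate from the definition of morphisms, and fullness follows from the preceding remarks (any $\hat{G}_s$-action on $\whR\otimes_{\vphi,\mfS}\mfM$ extends uniquely $\whR_s$-semilinearly to $\whR_s\otimes_{\vphi,\mfS}\mfM = \whR_s\otimes_{\whR}(\whR\otimes_{\vphi,\mfS}\mfM)$, so morphism data on the two sides are in bijection). So given $\hat{\mfM} = (\mfM,\vphi_\mfM,\hat{G}_s) \in \wtMod^{r,\hat{G}_s}_{/\mfS}$, I will show that the given $\hat{G}_s$-action on $\whR_s\otimes_{\vphi,\mfS}\mfM$ stabilises the submodule $\whR\otimes_{\vphi,\mfS}\mfM$; then the restriction provides the desired preimage in $\Mod^{r,\hat{G}_s}_{/\mfS}$.

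Using the decomposition $\hat{G}_s = G_{s,p^\infty}\rtimes H$, condition (4) (which says $H$ fixes $1\otimes\mfM$ pointwise) together with the $\hat{G}$-stability of $\whR\subset\whR_s$ implies that $H$ preserves $\whR\otimes_{\vphi,\mfS}\mfM$. It thus suffices to show $\tau^{p^s}(1\otimes\mfM)\subset\whR\otimes_{\vphi,\mfS}\mfM$, where $\tau^{p^s}$ is a topological generator of $G_{s,p^\infty}$. Choosing an $\mfS$-basis $\{e_1,\dots,e_d\}$ of $\mfM$ and letting $B = (b_{ji})\in M_d(\mfS)$ be the matrix of $\vphi_\mfM$, write $\tau^{p^s}(1\otimes e_i) = \sum_j c_{ji}\otimes e_j$ with $C = (c_{ji})\in M_d(\whR_s)$. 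Condition (5) forces $C\equiv I\pmod{I_+\whR_s\cdot M_d}$, and the commutativity $\tau^{p^s}\vphi = \vphi\tau^{p^s}$ translates into the matrix relation
\[
C\cdot\tau^{p^s}(\vphi(B)) = \vphi(B)\cdot\vphi(C).
\]

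The key observation is that $\whR$ is $\hat{G}$-stable inside $W(R)$, so $\tau^{p^s}(\vphi(B))$ has entries in $\whR$, as do all $\vphi^n(B)$ and $\vphi^n(\tau^{p^s}(\vphi(B)))$ for $n\ge 0$. Iterating the matrix relation, and using that $C\equiv I\pmod{I_+}$ together with the topological nilpotence of $\vphi$ on $I_+\whR_s$ (so $\vphi^n(C)\to I$), one identifies $C$ with a convergent limit of products of matrices whose entries already lie in $\whR$. Hence $C\in M_d(\whR)$, proving $\tau^{p^s}(1\otimes e_i)\in\whR\otimes_{\vphi,\mfS}\mfM$.

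The main obstacle is justifying the convergent-product argument rigorously, because $\vphi(B)$ is invertible only after inverting $E(u)$, so the matrix relation cannot be directly solved for $C$ by naive iteration. A clean workaround is an induction on powers of $I_+$: one shows that for every $n\ge 1$, $C$ is congruent modulo $(I_+\whR_s)^n\cdot M_d(\whR_s)$ to a matrix in $M_d(\whR)$, correcting the approximation at each step via the relation, and then concludes by the $p$-adic and $u$-adic completeness of $\whR$. Alternatively, one may invoke the Breuil-module formula $(\ref{eq1})$ to write $\tau^{p^s}(1\otimes x) = \sum_{n\ge 0}\gamma_n(t)\otimes N_{\mcal{D}}^n(x)$ in $B^+_{\mrm{cris}}\otimes_{S_{K_0,s}}\mcal{D}$, and verify directly that this series, known to converge in $\whR_s\otimes_{\vphi,\mfS}\mfM$, in fact lies in $\whR\otimes_{\vphi,\mfS}\mfM$ by analysing the images of $N_{\mcal{D}}^n$ on $\mfM$.
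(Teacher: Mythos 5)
Your proposal correctly reduces the problem to showing that $\tau^{p^s}$ preserves $\whR\otimes_{\vphi,\mfS}\mfM$, and your second ``alternative'' suggestion (via the Breuil-module formula) is indeed the route the paper takes. But there is a genuine gap in the sketch.

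Your primary approach (iterating the matrix relation $C\cdot\tau^{p^s}(\vphi(B))=\vphi(B)\cdot\vphi(C)$) does not close: $\vphi(B)$ is invertible only up to a power of $E(u)$, and neither the ``convergent product'' nor the proposed $I_+$-adic induction is actually carried through, so that route remains a sketch of a sketch. You acknowledge this, which is fair, but it means the burden falls entirely on the alternative.

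The alternative is the right idea, and it is what the paper does, but the crucial ingredient is missing from your outline. First, before the formula can be used one needs to \emph{establish} that the $\tau^{p^s}$-action on the finite-dimensional quotient $D=\mcal{D}/I_+S_{K_0}\mcal{D}$ is encoded by a $K_0$-linear operator $N_D$ via a series $\tau^{p^s}=\sum_i N_s^i\gamma_i(t)$ (with $N_s=p^sN_D$); the paper derives this by exploiting the relation $g_0\tau^{p^s}=(\tau^{p^s})^{\chi_p(g_0)}g_0$ for a suitable $g_0\in G_s$ with $\chi_p(g_0)\neq 1$ to show that $\log(\tau^{p^s})$ is \emph{linear} in $t$. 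Your sketch simply invokes formula $(\ref{eq1})$, which pertains to the $\mcal{D}_s$-theory over $S_{K_0,s}$; the structure you need lives over $S_{K_0}$, and one cannot quote it for free. Second, once one has $g(s\otimes y)=\sum_{i\ge 0}\sum_{0\le j\le i}\gamma_i(-\log[\underline{\e}(g)])\binom{i}{j}N_{S_{K_0}}^{i-j}(s)\otimes N_D^j(y)$ (using the Leibniz decomposition $N_{\mcal{D}}=N_{S_{K_0}}\otimes 1+1\otimes N_D$), the decisive observation is that $N_D$ is \emph{nilpotent} (because $N_D\vphi_D=p\vphi_D N_D$). This collapses the outer $j$-sum to a finite sum, leaving for each fixed $j$ a single series $\sum_{i\ge j}\gamma_i(-\log[\underline{\e}(g)])\binom{i}{j}N_{S_{K_0}}^{i-j}(s)$ whose coefficients, written in the $t^{\{i\}}$-basis, are manifestly in $S_{K_0}$ and tend to zero, so the sum lands in $\mcal{R}_{K_0}$. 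Without nilpotence of $N_D$ there is no apparent reason why the series should converge in $\mcal{R}_{K_0}$ rather than only in $\mcal{R}_{K_0,s}$, and ``analysing the images of $N_{\mcal{D}}^n$ on $\mfM$'' does not by itself supply the mechanism. So the proposal is headed in the right direction but does not yet constitute a proof.
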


\noindent
Summary, we obtained the following commutative diagram.
\begin{center}
$\displaystyle \xymatrix{
\Mod^{r,\hat{G}}_{/\mfS}
\ar@{^{(}->}[r] \ar[d]_{\wr} \ar^{\hat{T}}[d]
& 
\Mod^{r,\hat{G}_s}_{/\mfS}
\ar^{\sim}[r] \ar@{^{(}->}[rrd] \ar^{\hat{T}_s}[rrd]
&
\wtMod^{r,\hat{G}_s}_{/\mfS} 
\ar@{^{(}->}[r] \ar@{^{(}->}[rd] \ar^{\hat{T}_s}[rd]
&
\Mod^{r,\hat{G}_s}_{/\mfS_s} 
\ar[d]_{\wr} \ar^{\hat{T}_s}[d]\\
\mrm{Rep}^{r,\mrm{st}}_{\mbb{Z}_p}(G_K) 
\ar@{^{(}->}[rrr]  \ar^{\mrm{restriction}}[rrr]   
& 
&
& 
\mrm{Rep}^{r,\mrm{st}}_{\mbb{Z}_p}(G_s).
}$
\end{center}

\begin{remark}
The functor 
$\wtMod^{r,\hat{G}_s}_{/\mfS}\hookrightarrow \Mod^{r,\hat{G}_s}_{/\mfS_s}$
may not be possibly essentially surjective.
In fact, under some conditions, there exists a 
representation of $G_K$ which is 
crystalline over $K_s$
but not of finite height.
For more precise information, see \cite[Example 4.2.3]{Li2}.
\end{remark}

Before a proof of Proposition \ref{equal},
we give an explicit formula such as (\ref{explicit}) for    
an object of $\wtMod^{r,\hat{G}_s}_{/\mfS}$.
The argument below follows the method of \cite{Li2}.
Let $\hat{\mfM}$
be an object of $\wtMod^{r,\hat{G}_s}_{/\mfS}$.
Let $\hat{\mfM}_s$ be the image of $\hat{\mfM}$ for the functor 
$\wtMod^{r,\hat{G}_s}_{/\mfS}\to \Mod^{r,\hat{G}_s}_{/\mfS_s}$.
Put $\mcal{D}=S_{K_0}\otimes_{\vphi,\mfS} \mfM$ and also put  
$\mcal{D}_s=S_{K_0,s}\otimes_{\vphi,\mfS_s} \mfM_s
=S_{K_0,s}\otimes_{S_{K_0}} \mcal{D}$.
Then $\mcal{D}_s$ has a structure of a Breuil module and also 
$D=\mcal{D}_s/I_+S_{K_0,s}\mcal{D}_s$ has a structure 
of a filtered $(\vphi,N)$-module
corresponding to
$\mbb{Q}_p\otimes_{\mbb{Z}_p} \hat{T}_s(\hat{\mfM}_s)$ (see subsection \ref{relations}),
which is isomorphic to $\mcal{D}/I_+S_{K_0}\mcal{D}$ 
as a $\vphi$-module over $K_0$.
By \cite[Lemma 7.3.1]{Li1},
we have a unique $\vphi$-compatible section $D\hookrightarrow \mcal{D}$
and we regard $D$ as a submodule of $\mcal{D}\subset \mcal{D}_s$ by this section.
Then we have $\mcal{D}=S_{K_0} \otimes_{K_0} D$ and 
$\mcal{D}_s=S_{K_0,s} \otimes_{K_0} D$.
By the explicit formula (\ref{explicit}) for $\hat{\mfM}_s$,
we know that
$$
\hat{G}_s(D)\subset (K_0[\![t]\!]\cap \mcal{R}_{K_0,s})\otimes_{K_0} D.
$$
Hence, taking any $K_0$-basis $e_1,\dots ,e_d$ of $D$,
there exist $A_s(t)\in M_{d\times d}(K_0[\![t]\!])$
such that 
$\tau^{p^s}(e_1,\cdots ,e_d)=(e_1,\dots ,e_d)A_s(t)$.
Since $A_s(0)=\mrm{I}_d$, we see that  
$\mrm{log}(A_s(t))\in M_{d\times d}(K_0[\![t]\!])$
is well-defined.
On the other hand,
choose $g_0\in G_s$ such that $\chi_p(g_0)\not=1$, where $\chi_p$ is the 
$p$-adic cyclotomic character. 
Since $g_0\tau^{p^s}=(\tau^{p^s})^{\chi_p(g_0)}g_0$,
we have $A_s(\chi_p(g_0)t)=A_s(t)^{\chi_p(g_0)}$ and 
thus we also have $\mrm{log}(A_s(\chi_p(g_0)t))=\chi_p(g_0)\mrm{log}(A_s(t))$.
Since $\mrm{log}(A_s(0))=\mrm{log}(I_d)=0$, 
we can write $\mrm{log}(A_s(t))$ as $tB(t)$ for some $B(t)\in M_{d\times d}(K_0[\![t]\!])$.
Then we have $\chi_p(g_0)tB(\chi_p(g_0)t)=\chi_p(g_0)tB(t)$, that is, $B(\chi_p(g_0)t)=B(t)$.
Hence the assumption $\chi_p(g_0)\not= 1$ implies that $B(t)$ is a constant.
Putting $N_s=B(t)\in M_{d\times d}(K_0)$, we obtain 
$$
\tau^{p^s}(e_1,\cdots ,e_d)=(e_1,\cdots ,e_d)(\sum_{i\ge 0}N^i_s\gamma_i(t)).
$$
Now we define $N_D\colon D\to D$ by 
$N(e_1,\cdots ,e_d)=(e_1,\cdots ,e_d)p^{-s}N_s$
and also define 
$N_{\mcal{D}}:=N_{S_{K_0}}\otimes \mrm{Id}_D+\mrm{Id}_{S_{K_0}}\otimes N_D$.
(Note that we have $N_D\vphi_D=p\vphi_D N_D$ and thus $N_D$ is nilpotent.)
It is a routine work to check the following:
\begin{equation}
\label{explicit''}
g(a\otimes x)=\sum^{\infty}_{i=0}g(a)\gamma_i
(-\mrm{log}([\underline{\e}(g)]))\otimes N^i_D(x)\quad
{\rm for}\ g\in G_s, a\in B^+_{\mrm{cris}}, x\in D.
\end{equation}
Since we have 
\begin{equation}
\label{easyeq}
g(f)=\sum_{i\ge 0} \gamma_i(-\mrm{log}([\underline{\e}(g)]))N^i_{S_{K_0}}(f)
\end{equation}
for any $g\in G_K$ and $f\in S_{K_0}$,
we obtain the following explicit formula:
\begin{equation}
\label{explicit'}
g(a\otimes x)=\sum^{\infty}_{i=0}g(a)\gamma_i
(-\mrm{log}([\underline{\e}(g)]))\otimes N^i_{\mcal{D}}(x)\quad
{\rm for}\ g\in G_s, a\in B^+_{\mrm{cris}}, x\in \mcal{D}.
\end{equation}
In particular, as in subsection \ref{relations}, 
we can show that 
\begin{equation}
\label{eq2}
\mrm{log}(\tau^{p^s})(x)=p^st\otimes N_{\mcal{D}}(x)
\end{equation} 
for any $x\in \mcal{D}$.

\begin{proof}[Proof of Proposition \ref{equal}]
We continue to use the above notation.
It suffices to prove that the $G_s$-action on $\whR_s\otimes_{\vphi,\mfS} \mfM$
preserves $\whR\otimes_{\vphi,\mfS}\mfM$.
Take any $g\in G_s$.
We know that $g(\mfM)\subset \whR_s\otimes_{\vphi,\mfS} \mfM\subset W(R)\otimes_{\vphi,\mfS} \mfM$.
Hence it is enough to  prove that 
$g(\mcal{D})\in \mcal{R}_{K_0}\otimes_{\vphi,\mfS} \mfM$.
Let $s\in S^{\mrm{int}}_{K_0}$ and $y\in D$ and 
put $x=s\otimes y\in S^{\mrm{int}}_{K_0}\otimes_{W(k)} D=\mcal{D}$.
By (\ref{explicit''}) or (\ref{explicit'}),
we have 
$$
g(x)=\sum_{i\ge 0}\sum_{0\le j\le i} \gamma_i(-\mrm{log}([\underline{\e}(g)]))
\binom{i}{j}N^{i-j}_{S_{K_0}}(s)\otimes N^j_D(y)
$$
On the other hand, we know that $N_D$ is nilpotent, that is,
there exists $j_0>0$ such that $N^{j_0}_D=0$.
Then we obtain
$$
g(x)=\sum_{0\le j\le j_0} \sum^{\infty}_{i=j} \gamma_i(-\mrm{log}([\underline{\e}(g)]))
     \binom{i}{j}N^{i-j}_{S_{K_0}}(s)\otimes N^j_D(y).
$$
Therefore, it suffices to show that 
$\sum^{\infty}_{i=j} \gamma_i(-\mrm{log}([\underline{\e}(g)]))
     \binom{i}{j}N^{i-j}_{S_{K_0}}(s)$
     is contained in $\mcal{R}_{K_0}$
for each $0\le j\le j_0$.
Taking $\alpha(g)\in \mbb{Z}_p$ such that 
$\mrm{log}([\underline{\e}(g)])=-\alpha(g)t$,
we have
$$
     \sum^{\infty}_{i=j} \gamma_i(-\mrm{log}([\underline{\e}(g)]))
     \binom{i}{j}N^{i-j}_{S_{K_0}}(s)
     =
     \sum^{\infty}_{i=j} (\alpha(g))^i\frac{\tilde{q}(i)!p^{\tilde{q}(i)}}{i!}
     \binom{i}{j}N^{i-j}_{S_{K_0}}(s)t^{\{i\}}.
$$
Since $\frac{\tilde{q}(i)p^{\tilde{q}(i)}}{i!}\to 0$ ($p$-adically) 
as $i\to \infty$, we finish a proof.
\end{proof}

%%%%%%%%%%%%%%%%%%%%%%%%%%%%%%%%%%%%%%%%%%%%%%%%%%%%%%%%%%%%%%%%%%%%%%%%%%%%%%%%%%%%%%%%%%%%%%%%%%%%%%%%%%%
%%%%%%%%%%%%%%%%%%%%%%%%%%%%%%%%%%%%%%%%%%%%%%%%%%%%%%%%%%%%%%%%%%%%%%%%%%%%%%%%%%%%%%%%%%%%%%%%%%%%%%%%%%%
%          3.3 crystalline                              %%%%%%%%%%%%%%%%%%%%%%%%%%%%%%%%%%%%%%%%%%%%%%%%
%%%%%%%%%%%%%%%%%%%%%%%%%%%%%%%%%%%%%%%%%%%%%%%%%%%%%%%%%%%%%%%%%%%%%%%%%%%%%%%%%%%%%%%%%%%%%%%%%%%%%%%%%%%
%%%%%%%%%%%%%%%%%%%%%%%%%%%%%%%%%%%%%%%%%%%%%%%%%%%%%%%%%%%%%%%%%%%%%%%%%%%%%%%%%%%%%%%%%%%%%%%%%%%%%%%%%%%

\subsection{Relations with crystalline representations}

We know that $\mbb{Q}_p\otimes_{\mbb{Z}_p} \hat{T}_s(\hat{\mfM})$
is semi-stable over $K_s$ for any object $\hat{\mfM}$ of 
$\Mod^{r,\hat{G}_s}_{/\mfS}$ or $\wtMod^{r,\hat{G}_s}_{/\mfS}$.
This subsection is devoted to prove a criterion, for $\hat{\mfM}$,
that describes when $\mbb{Q}_p\otimes_{\mbb{Z}_p} \hat{T}_s(\hat{\mfM})$ 
becomes crystalline. 

Following \cite[Section 5]{Fo2}
we set $I^{[m]}B^+_{\mrm{cris}}:=\{x\in B^+_{\mrm{cris}} \mid \vphi^n(x)
\in \mrm{Fil}^mB^+_{\mrm{cris}}\ {\rm for\ all}\ n\ge0 \}$.
For any subring $A\subset B^+_{\mrm{cris}}$,
we put $I^{[m]}A=A\cap I^{[m]}B^+_{\mrm{cris}}$.
Furthermore, we also put $I^{[m+]}A=I^{[m]}A.I_+A$
(here, $I_+A$ is defined in Subsection \ref{Liumodule:section}).
By \cite[Proposition 5.1.3]{Fo2} and the proof of \cite[Lemma 3.2.2]{Li2},
we know that $I^{[m]}W(R)$ is a principal ideal which is generated by $\vphi(\mft)^m$.

Now we recall Theorem \ref{Thm1} (2):
if $\mfM$ is an object of $\Mod^{r,\hat{G}_s}_{/\mfS_s}$,
then $\mbb{Q}_p\otimes_{\mbb{Z}_p} \hat{T}_s(\hat{\mfM})$
is crystalline if and only if 
$\tau^{p^s}(x)-x\in u_s^p(I^{[1]}W(R)\otimes_{\vphi,\mfS_s} \mfM)$ for any $x\in \mfM$.
However,
if such $\mfM$ descends to a Kisin module over $\mfS$,
then we can show the following.

\begin{theorem}
\label{cris}
Let $\hat{\mfM}$ be an object of $\Mod^{r,\hat{G}_s}_{/\mfS}$ or $\wtMod^{r,\hat{G}_s}_{/\mfS}$.
Then the following is equivalent:
\vspace{-2mm}
\begin{enumerate}
\item[$(1)$] $\mbb{Q}_p\otimes_{\mbb{Z}_p} \hat{T}_s(\hat{\mfM})$ is crystalline, 

\vspace{-2mm}
           
\item[$(2)$] $\tau^{p^s}(x)-x\in u^p(I^{[1]}W(R)\otimes_{\vphi,\mfS} \mfM)$ for any $x\in \mfM$,

\vspace{-2mm}           
           
\item[$(3)$] $\tau^{p^s}(x)-x\in I^{[1+]}W(R)\otimes_{\vphi,\mfS} \mfM$ for any $x\in \mfM$.
\end{enumerate}
\end{theorem}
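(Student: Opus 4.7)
I would prove the equivalence via the chain $(2)\Rightarrow(3)\Rightarrow(1)\Rightarrow(2)$. The first step is immediate: since $u\in I_+\mfS\subset I_+W(R)$, we have $u^p\cdot I^{[1]}W(R)\subset I_+W(R)\cdot I^{[1]}W(R)=I^{[1+]}W(R)$, and this inclusion is preserved under $\otimes_{\vphi,\mfS}\mfM$.

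Both remaining implications pivot on the monodromy operator $N_D$ on $D=\mcal{D}/I_+S_{K_0}\mcal{D}$: by the standard criterion for filtered $(\vphi,N)$-modules, $\mbb{Q}_p\otimes_{\mbb{Z}_p}\hat{T}_s(\hat{\mfM})$ is crystalline if and only if $N_D=0$, and the main tool is the identity (\ref{eq2}), $\mrm{log}(\tau^{p^s})(x)=p^st\otimes N_{\mcal{D}}(x)$ in $B^+_{\mrm{cris}}\otimes_{S_{K_0}}\mcal{D}$. For $(3)\Rightarrow(1)$, I would start from $\tau^{p^s}(x)-x\in I^{[1+]}W(R)\otimes_{\vphi,\mfS}\mfM$ and use the $p$-divisibility $\vphi(\mft)\in pW(R)$ (which ensures iterates $(I^{[1+]})^n$ have enough $p$-adic weight to dominate the denominators $n$) to place $\mrm{log}(\tau^{p^s})(x)$ inside $I^{[1+]}B^+_{\mrm{cris}}\otimes_{\vphi,\mfS}\mfM$. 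Since $t$ generates $I^{[1]}B^+_{\mrm{cris}}$ up to a unit, dividing (\ref{eq2}) by $t$ yields $p^s N_{\mcal{D}}(x)\in I_+B^+_{\mrm{cris}}\otimes_{S_{K_0}}\mcal{D}$; projecting via $\nu$ (which kills $I_+$) then forces $N_D$ to vanish on the image of $\mfM$ in $D$. Since this image spans $D$ over $K_0$ (via the surjection $K_0\otimes_{W(k),\vphi}(\mfM/u\mfM)\twoheadrightarrow D$), we conclude $N_D=0$.

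For $(1)\Rightarrow(2)$, assume $N_D=0$. I would expand via (\ref{explicit'}) and the identity $-\mrm{log}[\underline{\e}(\tau^{p^s})]=p^st$ to obtain $\tau^{p^s}(1\otimes x)-(1\otimes x)=\sum_{i\ge1}\gamma_i(p^st)\otimes N^i_{\mcal{D}}(1\otimes x)$ for $x\in\mfM$. The decomposition $N_{\mcal{D}}=N_{S_{K_0}}\otimes\mrm{Id}_D+\mrm{Id}\otimes N_D$ together with $N_D=0$ and $N_{S_{K_0}}(S_{K_0})\subset uS_{K_0}$ gives $N^i_{\mcal{D}}(\mcal{D})\subset u\mcal{D}$ for $i\ge1$, producing an initial bound $\tau^{p^s}(x)-x\in p^stu\cdot(B^+_{\mrm{cris}}\otimes_{S_{K_0}}\mcal{D})$. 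To refine this to the integral condition (2), I would invoke the Frobenius equivariance $(\tau^{p^s}-1)\vphi=\vphi(\tau^{p^s}-1)$ (condition (3) of Definition \ref{varLiumod}) to boost $u$ to $\vphi(u)=u^p$ and $t$ to $\vphi(t)=pt$, combined with the Kisin height condition ($E(u)^r$ kills the cokernel of $1\otimes\vphi\colon\vphi^*\mfM\to\mfM$) to descend from $\vphi(\mfM)$ back to $\mfM$, and the $\hat{G}_s$-stability $\tau^{p^s}(\mfM)\subset\whR\otimes_{\vphi,\mfS}\mfM$ to guarantee $W(R)$-integrality. The main obstacle is precisely this last descent step: the raw monodromy bound naturally yields only $p^stu$-divisibility in $B^+_{\mrm{cris}}$-coefficients, whereas condition (2) demands the sharper $u^p\vphi(\mft)=u^p\cdot pE(0)^{-1}E(u)\mft$-divisibility in $W(R)$-coefficients, and bridging this gap requires careful bookkeeping with Frobenius iteration and the height bound rather than pure monodromy analysis.
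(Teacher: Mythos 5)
Your $(2)\Rightarrow(3)$ step is correct and matches the paper ($u\in I_+W(R)$ since $\nu(u)=0$, so $u^pI^{[1]}W(R)\subset I^{[1+]}W(R)$). The other two directions depart from the paper, and one of them has a genuine gap.

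For $(3)\Rightarrow(1)$ you take a different route from the paper. The paper instead shows that condition $(3)$ on $\hat{\mfM}$ propagates to the base-changed module $\hat{\mfM}_s$ over $\mfS_s$ (via Lemma~\ref{cryslem}: for $a\in\mfS_s$, $\tau^{p^s}(\vphi(a))-\vphi(a)\in u_sI^{[1]}W(R)\subset I^{[1+]}W(R)$), and then invokes the crystallinity criterion for $\Mod^{r,\hat{G}_s}_{/\mfS_s}$ from Theorem~\ref{Thm1}(2). Your alternative via $N_D=0$ and formula~(\ref{eq2}) is plausible in outline, but your stated justification is wrong: $\vphi(\mft)=pE(0)^{-1}E(u)\mft$, and since $E(0)=p\cdot(\text{unit})$ the factor $pE(0)^{-1}$ is a unit of $W(k)$, so $\vphi(\mft)$ is a unit multiple of $E(u)\mft$ and is \emph{not} in $pW(R)$. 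So the ``$p$-adic weight'' argument you invoke for the $n\ge 2$ terms of $\mrm{log}(\tau^{p^s})$ does not work as stated. (A correct membership argument for those terms would instead observe that for $m\ge 2$ one has $\gamma_m(t)=t\cdot(t^{m-1}/m!)$ with $\nu(t^{m-1}/m!)=0$, hence $\gamma_m(t)\in I^{[1+]}B^+_{\mrm{cris}}$, and then appeal to the convergence already established in Section~\ref{relations}; one would also need to address closedness of $I^{[1+]}B^+_{\mrm{cris}}\otimes\mcal{D}$ under the relevant limits.) This is fixable but not what you wrote.

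For $(1)\Rightarrow(2)$ there is a genuine gap, as you yourself flag in your last sentence. The bound you obtain from the monodromy expansion is $\tau^{p^s}(x)-x\in p^stu\,(B^+_{\mrm{cris}}\otimes_{S_{K_0}}\mcal{D})$, and you have no mechanism to upgrade this to the integral $W(R)$-level statement $u^p\,I^{[1]}W(R)\otimes_{\vphi,\mfS}\mfM$. Your suggested remedies --- Frobenius iteration to promote $u$ to $u^p$, the Kisin height condition to descend, $\hat{G}_s$-stability for $W(R)$-integrality --- do not by themselves close the gap: none of them converts a bound in $B^+_{\mrm{cris}}\otimes\mcal{D}$ with a $p^s$ but no genuine $u^p$-divisibility into the required $u^pI^{[1]}W(R)\otimes_{\vphi,\mfS}\mfM$. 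This is exactly where the paper invokes the full mechanism of the latter half of \cite[Proposition~4.7]{GLS}: one writes the change-of-basis matrix $X$ between a $\vphi(\mfS)$-basis $(\hat{e}_i)$ of $\mfM$ and the $\vphi$-compatible lift $(e_i)$ of a $K_0$-basis of $D$, proves $X,X^{-1}\in M_{d\times d}(\tilde{S}[1/p])$ where $\tilde{S}=W(k)[\![u^p,u^{ep}/p]\!]$ (a ring engineered so that $N_{S_{K_0}}(\tilde{S})\subset u^p\tilde{S}$), and then combines the vanishing of $B=N_D$ with this integrality of $X$ to land inside $u^pI^{[1]}W(R)\otimes_{\vphi,\mfS}\mfM$. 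That auxiliary ring $\tilde{S}$ and the matrix argument are the missing ingredient; a pure monodromy-plus-Frobenius bookkeeping argument, as you propose, does not reproduce them.
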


\begin{proof}
(1) $\Rightarrow$ (2):
The proof here mainly follows that of \cite[Proposition 4.7]{GLS}.
We may suppose $\hat{\mfM}$ is an object of $\wtMod^{r,\hat{G}_s}_{/\mfS}$.
Put $\mcal{D}=S_{K_0}\otimes_{\vphi,\mfS} \mfM$ 
and $D=\mcal{D}/I_+S_{K_0} \mcal{D}$ as in the previous subsection.
We fix a $\vphi(\mfS)$-basis $(\hat{e}_1,\dots ,\hat{e}_d)$ of 
$\mfM\subset \mcal{D}$ and denote by $(e_1,\dots ,e_d)$ the image of $(\hat{e}_1,\dots ,\hat{e}_d)$
for the projection $\mcal{D}\to D$. Then $(e_1,\dots ,e_d)$ is a $K_0$-basis of $D$.
As described before the proof of Proposition \ref{equal},
we regard $D$ as a $\vphi$-stable submodule of $\mcal{D}$, and
we have $N_D\colon D\to D$ and $N_{\mcal{D}}\colon D_{\mcal{D}}\to D_{\mcal{D}}$.

Now we consider a matrix $X\in GL_{d\times d}(S_{K_0})$ 
such that $(\hat{e}_1,\dots ,\hat{e}_d)=(e_1,\dots ,e_d)X$. 
We define $\tilde{S}=W(k)[\![u^p, u^{ep}/p]\!]$ as in Section 4 of \cite{GLS},
which is a sub $W(k)$-algebra of $S^{\mrm{int}}_{K_0}$ with the property 
$N_{S_{K_0}}(\tilde{S})\subset u^p\tilde{S}$.
By an easy computation we have 
$U=X^{-1}BX+X^{-1}N_{S_{K_0}}(X)$. Here, 
$B\in M_{d\times d}(K_0)$ and $U\in M_{d\times d}(S_{K_0})$ are defined by 
$N_D(e_1,\dots ,e_d)=(e_1,\dots ,e_d)B$ and 
$N_\mcal{D}(\hat{e}_1,\dots ,\hat{e}_d)=(\hat{e}_1,\dots ,\hat{e}_d)U$. 
By the same proof as in the former half part of the proof of \cite[Proposition 4.7]{GLS},
we obtain $X,X^{-1}\in M_{d\times d}(\tilde{S}[1/p])$.
On the other hand, let 
$\hat{\mfM}_s$ be the image of $\hat{\mfM}$ for the 
functor $\wtMod^{r,\hat{G}_s}_{/\mfS}\to \Mod^{r,\hat{G}_s}_{\mfS_s}$.
Now we recall that 
$\mcal{D}_s=S_{K_0,s}\otimes_{\vphi,\mfS_s} \mfM_s$ 
has a structure of the Breuil module corresponding 
to $\mbb{Q}_p\otimes_{\mbb{Z}_p} \hat{T}_s(\hat{\mfM}_s)$
Denote by $N_{\mcal{D}_s}$ its monodromy operator.
By the formula (\ref{eq1}) for $\hat{\mfM}_s$   
and the formula (\ref{eq2}) for $\hat{\mfM}$,
we see that $p^sN_{\mcal{D}}=N_{\mcal{D}_s}$ on $\mcal{D}$.
Therefore, $\mbb{Q}_p\otimes_{\mbb{Z}_p} \hat{T}_s(\hat{\mfM})$
is crystalline if and only if $N_{\mcal{D}_s}$ mod $I_+S_{K_0,s}\mcal{D}_s$ is zero,
which is equivalent to say that $N_D=(N_{\mcal{D}}$ mod $I_+S_{K_0}\mcal{D})$ is zero, that is, $B=0$.
Therefore, the latter half part of the proof 
\cite[Proposition]{GLS} gives the assertion (2). 

\noindent 
(2) $\Rightarrow$ (3): This is clear.

\noindent
(3) $\Rightarrow$ (1):
Suppose that (3) holds.
We denote by $\hat{\mfM}_s$  the image of $\hat{\mfM}$ for the functor 
$\wtMod^{r,\hat{G}_s}_{/\mfS}\to \Mod^{r,\hat{G}_s}_{\mfS_s}$ as above.
We claim that,
for any $x\in \mfM_s$, we have 
$\tau^{p^s}(x)-x\in I^{[1+]}W(R)\otimes_{\vphi_s} \mfM_s$.
Let $x=a\otimes y\in \mfM_s=\mfS_s\otimes_{\mfS} \mfM$ where $a\in \mfS_s$ and $y\in \mfM$.
Then 
$$
\tau^{p^s}(x)-x=\tau^{p^s}(\vphi(a))(\tau^{p^s}(y)-y)+(\tau^{p^s}(\vphi(a))-\vphi(a))y
$$
and thus it suffices to show $\tau^{p^s}(\vphi(a))-\vphi(a)\in I^{[1+]}W(R)$.
This follows from the lemma below and thus we obtained the claim.
By the claim and Theorem \ref{Thm1} (2),
we know that $\mbb{Q}_p\otimes_{\mbb{Z}_p} \hat{T}_s(\hat{\mfM}_s)\simeq 
\mbb{Q}_p\otimes_{\mbb{Z}_p} \hat{T}_s(\hat{\mfM})$ is crystalline.
\end{proof}

\begin{lemma}
\label{cryslem}
$(1)$ We have $I^{[1]}W(R)\cap u^{\ell}B^+_{\mrm{cris}}
=u^{\ell}I^{[1]}W(R)$ for $\ell\ge 0$.

\noindent
$(2)$
We have $g(a)-a\in uI^{[1]}W(R)$
for $g\in G$ and $a\in \mfS$.
\end{lemma}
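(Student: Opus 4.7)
Part~(2) reduces to a direct computation. Expanding $a \in \mfS = W(k)\llbracket u\rrbracket$ as $a = \sum_{n\ge 0} a_n u^n$ with $a_n\in W(k)$ and using that $g$ fixes $W(k)$ while $g(u) = [\underline{\varepsilon}(g)]\,u$ (recalled before Lemma~\ref{easylemma}), one obtains
\[
g(a) - a \;=\; u\cdot\sum_{n\ge 1} a_n u^{n-1}\bigl([\underline{\varepsilon}(g)]^n - 1\bigr).
\]
The task thus reduces to showing $[\underline{\varepsilon}(g)]^n - 1 \in I^{[1]}W(R)$ for each $n\ge 1$; the inner series can then be divided term-by-term by $\varphi(\mft)$ (the factor $u^{n-1}$ providing $u$-adic convergence in $W(R)$), so that $(g(a)-a)/(u\varphi(\mft)) \in W(R)$. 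Factoring $[\underline{\varepsilon}(g)]^n - 1 = ([\underline{\varepsilon}(g)] - 1)\sum_{j=0}^{n-1}[\underline{\varepsilon}(g)]^j$ and using that $I^{[1]}W(R)$ is an ideal reduces to the case $n = 1$. For that I would compute, for every $k\ge 0$,
\[
\theta\bigl(\varphi^k([\underline{\varepsilon}(g)] - 1)\bigr) \;=\; \theta\bigl([\underline{\varepsilon}(g)^{p^k}]\bigr) - 1 \;=\; (g(\pi)/\pi)^{p^k} - 1 \;=\; 0,
\]
using $\pi\in K$ and $g\in G_K$ so that $g(\pi) = \pi$. This places every $\varphi^k([\underline{\varepsilon}(g)] - 1)$ in $\mathrm{Fil}^1 B^+_{\mathrm{cris}} = \ker\theta$, which is precisely the definition of $[\underline{\varepsilon}(g)] - 1 \in I^{[1]}W(R)$.

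For Part~(1), the inclusion $\supseteq$ is immediate. For $\subseteq$, use the principal-ideal description $I^{[1]}W(R) = \varphi(\mft)W(R)$ recalled just before the lemma to write $x = \varphi(\mft)\,y$ with $y\in W(R)$; since $\varphi(\mft)$ is a non-zero-divisor in $W(R)$, the goal $x \in u^\ell\varphi(\mft)W(R)$ becomes $y \in u^\ell W(R)$. The key intermediate statement I would isolate and prove is
\[
W(R) \cap u^\ell B^+_{\mathrm{cris}} \;=\; u^\ell W(R).
\]
Granted this, applying it to $\varphi(\mft)\,y \in u^\ell B^+_{\mathrm{cris}} \cap W(R)$ gives an equation $\varphi(\mft)\,y = u^\ell x'$ inside $W(R)$; combining this with the explicit relation $\varphi(\mft) = pE(0)^{-1}E(u)\mft$, which controls the $u$-adic behaviour of $\varphi(\mft)$, then forces $y \in u^\ell W(R)$. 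To prove the intermediate statement: given $w\in W(R)$ with $w = u^\ell z$ in $B^+_{\mathrm{cris}}$, clear the denominator in $p$ to get $p^N w = u^\ell z'$ in $A_{\mathrm{cris}}$, and peel off one factor of $u = [\underline{\pi}]$ at a time via the Teichm\"uller multiplication formula $[\underline{\pi}]\cdot(w_0,w_1,\ldots) = (\underline{\pi}w_0,\underline{\pi}^p w_1,\ldots)$ together with the fact that $\underline{\pi}$ is a non-zero-divisor in the perfect domain $R$; finally, remove the introduced factor $p^N$ by using that $p$ is a non-zero-divisor in $W(R)$.

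\textbf{Main obstacle.} The technical heart is the auxiliary identity $W(R)\cap u^\ell B^+_{\mathrm{cris}} = u^\ell W(R)$ in Part~(1). Its analogue inside $A_{\mathrm{cris}}$ is a Witt-vector calculation, but crossing the localization $A_{\mathrm{cris}}\hookrightarrow B^+_{\mathrm{cris}}=A_{\mathrm{cris}}[1/p]$ requires an induction on the introduced power of $p$, leaning on the $p$-torsion-freeness of $W(R)$. By contrast, Part~(2) is essentially unwrapping the definition of $I^{[1]}W(R)$.
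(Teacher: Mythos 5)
Your Part~(2) is correct and takes a genuinely different, more self-contained route than the paper: the paper first establishes $g(a)-a\in I^{[1]}W(R)$ via the Galois action formula $(\ref{easyeq})$, separately establishes $g(a)-a\in uB^+_{\mrm{cris}}$, and then invokes Part~(1) to intersect; you instead expand $a=\sum a_n u^n$ and exhibit the factor $u\varphi(\mft)$ directly, checking $[\underline{\e}(g)]-1\in I^{[1]}W(R)$ by verifying $\theta\varphi^k=0$ from $\underline{\e}(g)^\sharp = g(\pi)/\pi=1$. That all works; the convergence of $\sum_{n\ge 1} a_n u^{n-1} c_n$ in $W(R)$ holds in the weak topology because the Witt coordinates of $u^{n-1}c_n$ have $v_R\ge (n-1)p^j/e$, which is enough to make the partial sums Cauchy.

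Part~(1) has a genuine gap at its final step. After the intermediate statement gives $\varphi(\mft)y = u^\ell x'$ with $y,x'\in W(R)$, you claim that the explicit relation $\varphi(\mft)=pE(0)^{-1}E(u)\mft$ ``forces $y\in u^\ell W(R)$.'' But this would require $\varphi(\mft)$ to be a non-zero-divisor in $W(R)/u^\ell W(R)$, i.e.\ some form of coprimality between $\varphi(\mft)$ and $u$, and the explicit form of $\varphi(\mft)$ actually works against you: since $E(u)\equiv u^e\ (\mathrm{mod}\ p)$ and $v_R(\tilde{\mft})=1/(p-1)$, one has $v_R\bigl(\varphi(\mft)\ \mathrm{mod}\ p\bigr)=1+1/(p-1)>1/e$, so $\varphi(\mft)\ \mathrm{mod}\ p$ is divisible by $\underline{\pi}$ in $R$, and the mod-$p$ valuation count $v_R(\bar{y})\ge \ell/e - p/(p-1)$ gives no usable information for small $\ell$. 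The statement you want is true, but not for the reason you give --- it is in fact a restatement of the lemma you are trying to prove. The paper closes this step differently: once $x=u^\ell x'$ with $x'\in W(R)$, apply $\varphi^n$ to get $u^{\ell p^n}\varphi^n(x')=\varphi^n(x)\in \mathrm{Fil}^1 B^+_{\mrm{cris}}$; since $u$ is a unit of $B^+_{\mrm{dR}}$ and $\mathrm{Fil}^1 B^+_{\mrm{cris}} = B^+_{\mrm{cris}}\cap \mathrm{Fil}^1 B_{\mrm{dR}}$, one can cancel $u^{\ell p^n}$ to get $\varphi^n(x')\in \mathrm{Fil}^1 B^+_{\mrm{cris}}$ for all $n$, i.e.\ $x'\in I^{[1]}W(R)$. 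This uses the $\varphi$-stability built into the definition of $I^{[1]}$ rather than any commutative algebra in $W(R)$, and is the ingredient your argument is missing. As a secondary point, your sketch of the intermediate statement $W(R)\cap u^\ell B^+_{\mrm{cris}}=u^\ell W(R)$ also needs care: after clearing denominators you have $p^N w = u^\ell z'$ with $z'$ only in $A_{\mrm{cris}}$, and the Teichm\"uller coordinate formula $[\underline{\pi}]\cdot(w_0,w_1,\ldots)=(\underline{\pi}w_0,\underline{\pi}^pw_1,\ldots)$ is a formula in $W(R)$ --- it does not by itself tell you that the Witt coordinates of $p^Nw$ are divisible by powers of $\underline{\pi}$, because $z'$ has no Witt coordinates. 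This is exactly the nontrivial content the paper outsources to Lemma~3.2.2 of \cite{Li3}, and your sketch does not yet replace it.
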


\begin{proof}
This is due to \cite[the proof of Proposition 7]{GLS}
but we write a proof here.

\noindent
(1) 
Take $x=u^{\ell}y\in I^{[1]}W(R)$ with $y\in B^+_{\mrm{cris}}$.
By Lemma 3.2.2 of \cite{Li3} we have $y\in W(R)$.
Now we remark that $uz\in \mrm{Fil}^nW(R)$ with $z\in W(R)$ implies 
$z\in \mrm{Fil}^nW(R)$
since $u$ is a unit of $B^+_{\mrm{dR}}$.
Hence $u^{\ell}y\in I^{[1]}W(R)$ implies $y\in I^{[1]}W(R)$.

\noindent
(2) 
By the relation (\ref{easyeq}),
we see that $g(a)-a\in I^{[1]}W(R)$.
On the other hand, if $i>0$, we can write $N^i_{S_{K_0}}(a)=ub_i$ for some $b_i\in \mfS$.
Thus by the relation (\ref{easyeq}) again we obtain 
$g(a)-a\in uB^+_{\mrm{cris}}$.
Then the result follows from (1).
\end{proof}

%%%%%%%%%%%%%%%%%%%%%%%%%%%%%%%%%%%%%%%%%%%%%%%%%%%%%%%%%%%%%%%%%%%%%%%%%%%%%%%%%%%%%%%%%%%%%%%%%%%%%%%%%%%
%%%%%%%%%%%%%%%%%%%%%%%%%%%%%%%%%%%%%%%%%%%%%%%%%%%%%%%%%%%%%%%%%%%%%%%%%%%%%%%%%%%%%%%%%%%%%%%%%%%%%%%%%%%
%                       4                                  %%%%%%%%%%%%%%%%%%%%%%%%%%%%%%%%%%%%%%%%%%%%%%%%
%%%%%%%%%%%%%%%%%%%%%%%%%%%%%%%%%%%%%%%%%%%%%%%%%%%%%%%%%%%%%%%%%%%%%%%%%%%%%%%%%%%%%%%%%%%%%%%%%%%%%%%%%%%
%%%%%%%%%%%%%%%%%%%%%%%%%%%%%%%%%%%%%%%%%%%%%%%%%%%%%%%%%%%%%%%%%%%%%%%%%%%%%%%%%%%%%%%%%%%%%%%%%%%%%%%%%%%

\section{Variants of torsion $(\vphi,\hat{G})$-modules}

In this section,
we mainly study full subcategories of 
$\wtMod^{r,\hat{G}_s}_{/\mfS_{\infty}}$ defined below and 
also study representations associated with them.
As a consequence,
we prove theorems in Introduction.
We use same notation as in Section 2 and 3.
In particular, $p$ is odd.
In below, 
let $v_R$ be the valuation of $R$ normalized such that $v_R(\underline{\pi})=1/e$
and, for any real number $x\ge 0$, we denote by $\mfm^{\ge x}_R$ 
the ideal of $R$ consisting of elements $a$ with $v_R(a)\ge x$.

Let $J$ be an ideal of $W(R)$ which satisfies the following conditions:
\begin{itemize}
\item $J\not\subset pW(R)$,
\item $J$ is a principal ideal, 
\item $J$ is $\vphi$-stable and $G_s$-stable in $W(R)$. 
\end{itemize}
By the above first and second assumptions for $J$, 
the image of $J$ under the projection $W(R)\twoheadrightarrow R$ is of the form 
$\mfm^{\ge c_J}_{R}$ for some real number $c_J\ge 0$.

\begin{definition}
We denote by $\wtMod^{r,\hat{G}_s,J}_{/\mfS_{\infty}}$
the full subcategory 
of $\wtMod^{r,\hat{G}_s}_{/\mfS_{\infty}}$
consisting of objects $\hat{\mfM}$ which satisfy the following condition:
$$
\tau^{p^s}(x)-x\in JW(R)\otimes_{\vphi, \mfS} \mfM\quad {\rm for\ any}\ x\in \mfM.
$$ 
Also, we denote by $\wt{\mrm{Rep}}^{r,\hat{G}_s, J}_{\mrm{tor}}(G_s)$
the essential image of the functor 
$\hat{T}_s\colon \wtMod^{r,\hat{G}_s}_{/\mfS_{\infty}}\to \mrm{Rep}_{\mrm{tor}}(G_s)$
restricted to $\wtMod^{r,\hat{G}_s,J}_{/\mfS_{\infty}}$.
\end{definition}

\noindent
By definition,
we have 
$\wtMod^{r,\hat{G}_s,J}_{/\mfS_{\infty}}\subset \wtMod^{r,\hat{G}_s,J'}_{/\mfS_{\infty}}$
and 
$\wt{\mrm{Rep}}^{r,\hat{G}_s,J}_{\mrm{tor}}(G_s)
\subset \wt{\mrm{Rep}}^{r,\hat{G}_s,J'}_{\mrm{tor}}(G_s)$
for $J\subset J'$.

%%%%%%%%%%%%%%%%%%%%%%%%%%%%%%%%%%%%%%%%%%%%%%%%%%%%%%%%%%%%%%%%%%%%%%%%%%%%%%%%%%%%%%%%%%%%%%%%%%%%%%%%%%%
%%%%%%%%%%%%%%%%%%%%%%%%%%%%%%%%%%%%%%%%%%%%%%%%%%%%%%%%%%%%%%%%%%%%%%%%%%%%%%%%%%%%%%%%%%%%%%%%%%%%%%%%%%%
%                       4.1                                  %%%%%%%%%%%%%%%%%%%%%%%%%%%%%%%%%%%%%%%%%%%%%%%%
%%%%%%%%%%%%%%%%%%%%%%%%%%%%%%%%%%%%%%%%%%%%%%%%%%%%%%%%%%%%%%%%%%%%%%%%%%%%%%%%%%%%%%%%%%%%%%%%%%%%%%%%%%%
%%%%%%%%%%%%%%%%%%%%%%%%%%%%%%%%%%%%%%%%%%%%%%%%%%%%%%%%%%%%%%%%%%%%%%%%%%%%%%%%%%%%%%%%%%%%%%%%%%%%%%%%%%%

\subsection{Full faithfullness for $\wtMod^{r,\hat{G}_s,J}_{/\mfS_{\infty}}$}

For the beginning of a study of $\wtMod^{r,\hat{G}_s,J}_{/\mfS_{\infty}}$,
we prove the following full faithfullness result.
\begin{proposition}
\label{FFTHMMOD}
Let $r$ and $r'$ be non-negative integers with $c_J> pr/(p-1)$.
Let $\hat{\mfM}$ and $\hat{\mfN}$ be objects of 
$\wtMod^{r,\hat{G}_s,J}_{/\mfS_{\infty}}$ and 
$\wtMod^{r',\hat{G}_s,J}_{/\mfS_{\infty}}$,
respectively.
Then we have $\mrm{Hom}(\hat{\mfM}, \hat{\mfN})=\mrm{Hom}(\mfM, \mfN)$.
$($Here, two ``$\mrm{Hom}$''s are defined by obvious manners.$)$

In particular, if $c_J> pr/(p-1)$, then the forgetful functor 
$\wtMod^{r,\hat{G}_s,J}_{/\mfS_{\infty}}\to \Mod^r_{/\mfS_{\infty}}$
is fully faithful.
\end{proposition}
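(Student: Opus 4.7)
The inclusion $\mrm{Hom}(\hat{\mfM},\hat{\mfN})\subset \mrm{Hom}(\mfM,\mfN)$ is tautological; I would show the reverse. Given $f\colon \mfM\to \mfN$ in $\Mod^r_{/\mfS_\infty}$, extend it $\whR_s$-linearly to $\tilde f\colon \whR_s\otimes_{\vphi,\mfS}\mfM\to \whR_s\otimes_{\vphi,\mfS}\mfN$, and verify that $\tilde f$ commutes with the $\hat G_s$-action. Using the semidirect decomposition $\hat G_s=G_{s,p^\infty}\rtimes H$ (where $H=\mrm{Gal}(\hat K/K_\infty)$, since $K_{(s),p^\infty}\cap K_\infty=K_{(s)}$), property~(4) of Definition~\ref{varLiumod} says $H$ fixes the embedded copies of $\mfM,\mfN$, so the $\whR_s$-linear map $\tilde f$ is automatically $H$-equivariant. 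It remains to prove equivariance under $\tau^{p^s}$, the topological generator of $G_{s,p^\infty}$. The ``in particular'' assertion on full faithfulness of the forgetful functor is then the case $r'=r$.

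\smallskip
The strategy for $\tau^{p^s}$-equivariance is to study the defect
$$\delta\colon \mfM\longrightarrow JW(R)\otimes_{\vphi,\mfS}\mfN,\qquad \delta(m):=\tilde f(\tau^{p^s}(m))-\tau^{p^s}(\tilde f(m)).$$
The defect lands in $JW(R)\otimes_{\vphi,\mfS}\mfN$ immediately from the defining condition of $\wtMod^{r,\hat G_s,J}_{/\mfS_\infty}$ applied to both $\hat{\mfM}$ and $\hat{\mfN}$. The map $y\mapsto \tilde f(\tau^{p^s}(y))-\tau^{p^s}(\tilde f(y))$ is $\whR_s$-semilinear through $\tau^{p^s}$, so once one knows $\delta\equiv 0$ on $\mfM$, full $\tau^{p^s}$-equivariance of $\tilde f$ propagates to all of $\whR_s\otimes_{\vphi,\mfS}\mfM$. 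A direct computation using $\tau^{p^s}\vphi=\vphi\tau^{p^s}$ and $f\vphi_\mfM=\vphi_\mfN f$ also gives the Frobenius compatibility $\delta(\vphi_\mfM(m))=\vphi_M(\delta(m))$, where $\vphi_M$ is the Frobenius on $M:=W(R)\otimes_{\vphi,\mfS}\mfN$.

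\smallskip
The crux is then to iterate using the height $\le r$ condition on $\mfM$. For $m\in\mfM$, write $E(u)^r m=\sum_i a_i\vphi(m_i)$; applying $\delta$ (which is $\mfS$-semilinear through $\tau^{p^s}$) together with the Frobenius compatibility yields
$$\tau^{p^s}(E(u))^r\,\delta(m)=\sum_i\tau^{p^s}(a_i)\,\vphi_M(\delta(m_i)).$$
Track the valuations of the $R$-images (normalised so that $v_R(\underline\pi)=1/e$): if $\delta(\mfM)\subset \mfm_R^{\ge a}\cdot M$, then the right-hand side sits in $\mfm_R^{\ge pa}\cdot M$ since $\vphi$ multiplies $v_R$ by $p$. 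A careful accounting for the $\vphi$-twist in the $\mfS$-structure on $M$---which raises the effective $R$-valuation of $E(u)^r$ acting on $M$ from $r$ to $pr$---yields the sharper recursion $a_{k+1}=p(a_k-r)$, whose unstable fixed point is $pr/(p-1)$. The hypothesis $c_J>pr/(p-1)$ is exactly what makes the iterates, starting from $a_0=c_J$, tend to $\infty$. Consequently $\delta(\mfM)\subset \bigcap_N \mfm_R^{\ge N}\cdot M$; this intersection vanishes because $R$ is a valuation ring and $M$ is a finitely presented, $p$-power-torsion $W(R)$-module, forcing $\delta\equiv 0$.

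\smallskip
\noindent\emph{Main obstacle.} The iteration above is the technical heart. The delicate point is to make the $R$-valuation bookkeeping rigorous: one must properly account for the $\vphi$-twist in $M=W(R)\otimes_{\vphi,\mfS}\mfN$, which is precisely what upgrades the threshold from the naive $r/(p-1)$ to the stated $pr/(p-1)$, and one must handle the $p^n$-torsion structure of $M$ carefully enough to conclude the vanishing of $\bigcap_N \mfm_R^{\ge N}\cdot M$. Both points admit treatments in the spirit of~\cite{Li2} and~\cite{Oz2}, but their verification in the present $(\vphi,\hat G_s)$-module setting requires nontrivial work.
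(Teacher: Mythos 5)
Your plan captures the right recursion, but it has a genuine gap: the valuation-tracking argument you sketch only makes direct sense when $\mfN$ is killed by $p$. In that case $W(R)\otimes_{\vphi,\mfS}\mfN$ is an $R$-module (indeed $R\otimes_{\vphi,k[\![u]\!]}\mfN$), so statements like $\delta(\mfM)\subset \mfm_R^{\ge a}\cdot M$ and the jump $a\mapsto pa$ under $\vphi$ are meaningful, and the fixed-point analysis of $a\mapsto p(a-r)$ with threshold $pr/(p-1)$ goes through. But when $\mfN$ is only killed by $p^n$ with $n>1$, the module $W(R)\otimes_{\vphi,\mfS}\mfN$ is a $W_n(R)$-module, and ``$\mfm_R^{\ge a}\cdot M$'' is not a submodule — the ideal $\mfm_R^{\ge a}$ lives in $R$, not in $W(R)$ — so the recursion you write down does not literally apply. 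Your closing remark that ``one must handle the $p^n$-torsion structure of $M$ carefully enough to conclude the vanishing of $\bigcap_N \mfm_R^{\ge N}\cdot M$'' misdiagnoses the issue: the problem is not that the intersection might be nonzero, but that the iteration producing the intersection cannot be set up in the first place.

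The paper closes this gap with an explicit induction on $n$: given $p^n\mfN=0$, it passes to the short exact sequence $0\to \mrm{Ker}(p)\to \mfN\to p\mfN\to 0$ of torsion Kisin modules, shows via Lemma \ref{speciallemma} (which crucially uses that $J$ is principal and not contained in $pW(R)$) that $\mrm{Ker}(p)$ and $p\mfN$ inherit structures of objects of $\wtMod^{r',\hat{G}_s,J}_{/\mfS_{\infty}}$, and then uses the induction hypothesis together with the fact that $(W(R)\otimes_{\vphi,\mfS}\mfN')\cap(JW(R)\otimes_{\vphi,\mfS}\mfN)=JW(R)\otimes_{\vphi,\mfS}\mfN'$ (Lemma 6 of [Oz2]) to push the defect $\Delta(1\otimes x)$ into $JW(R)\otimes_{\vphi,\mfS}\mfN'$, where $p\mfN'=0$ and the $n=1$ valuation argument finishes. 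You should add this dévissage to make the proposal complete; otherwise the $n=1$ portion of your argument matches the paper's in both structure and in the key recursion $c(i)=pc(i-1)-pr$.
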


\begin{proof}
A very similar proof of \cite[Lemma 7]{Oz2} proceeds, and hence we
only give a sketch here.
Let $\hat{\mfM}$ and $\hat{\mfN}$ be objects of 
$\wtMod^{r,\hat{G}_s,J}_{/\mfS_{\infty}}$
and 
$\wtMod^{r',\hat{G}_s,J}_{/\mfS_{\infty}}$,
respectively.
Let $f\colon \mfM\to \mfN$ be a morphism of Kisin modules over $\mfS$.
Put $\hat{f}=W(R)\otimes f\colon W(R)\otimes_{\vphi,\mfS}\mfM\to W(R)\otimes_{\vphi,\mfS}\mfM$.
Choose any lift of $\tau\in \hat{G}$ to $G_K$;
we denote it also by $\tau$.
Since the $\hat{G}_s$-action for $\hat{\mfM}$ is continuous, 
it suffices to prove that 
$\Delta(1\otimes x)=0$  for any $x\in \mfM$
where $\Delta:=\tau^{p^s}\circ \hat{f}-\hat{f}\circ \tau^{p^s}$.
We use induction on $n$ such that $p^n\mfN=0$.

Suppose $n=1$.
Since $\Delta=(\tau^{p^s}-1)\circ \hat{f}-\hat{f}\circ (\tau^{p^s}-1)$,
we obtain the following:
\begin{center}
$(0)$:\quad  For any $x\in \mfM$, 
$\Delta(1\otimes x)\in 
\mfm^{\ge c(0)}_R(R\otimes_{\vphi,\mfS} \mfN)$
\end{center}
where $c(0)=c_J$.
Since $\mfM$ is of height $\le r$,
we further obtain the following for any $i\ge 1$ inductively:
\begin{center}
$(i)$:\quad  For any $x\in \mfM$, 
$\Delta(1\otimes x)\in 
\mfm^{\ge c(i)}_R(R\otimes_{\vphi,\mfS} \mfN)$
\end{center}
where $c(i)=pc(i-1)-pr
=(c_J-pr/(p-1))p^i+pr/(p-1)$.
The condition $c_J>pr/(p-1)$ implies that $c(i)\to \infty$ as $i\to \infty$
and thus $\Delta(1\otimes x)=0$. 

Suppose $n>1$.
Consider the exact sequence 
$0\to \mrm{Ker}(p)\to \mfN\overset{p}{\to} p\mfN\to 0$ of $\vphi$-modules over $\mfS$.
It is not difficult to check that
$\mfN':=\mrm{Ker}(p)$ and $\mfN'':=p\mfN$ are torsion 
Kisin modules of height $\le r'$ over $\mfS$
(cf.\ \cite[Lemma 2.3.1]{Li1}).
Moreover, we can check that $\mfN'$ and $\mfN''$ have natural structures
of objects of $\wtMod^{r',\hat{G}_s}_{/\mfS_{\infty}}$
(which are denoted by  $\hat{\mfN}'$ and $\hat{\mfN}''$, respectively)
such that 
the sequence $0\to \mfN'\to \mfN\overset{p}{\to} \mfN''\to 0$ induces an exact sequence
$0\to \hat{\mfN}'\to \hat{\mfN}\to \hat{\mfN}''\to 0$.
By the lemma below,
we know that  $\hat{\mfN}'$ and $\hat{\mfN}''$ are in fact contained in
$\wtMod^{r',\hat{G}_s,J}_{/\mfS_{\infty}}$.
By the induction hypothesis, we see that 
$\Delta(1\otimes x)$ has values in 
$(W(R)\otimes_{\vphi, \mfS} \mfN')\cap 
(JW(R)\otimes_{\vphi, \mfS} \mfN)$.
By Lemma 6 of \cite{Oz2} and the assumption that $J\not\subset pW(R)$ is principal,
we obtain that $\Delta(1\otimes x)\in JW(R)\otimes_{\vphi, \mfS} \mfN'$.
Since $p\mfN'=0$,
an analogous argument in the case $n=1$ proceeds
and we have $\Delta(1\otimes x)=0$ as desired.
\end{proof}

\begin{lemma}
\label{speciallemma}
Let $0\to \hat{\mfM}'\to \hat{\mfM}\to \hat{\mfM}''\to 0$
be an exact sequence in 
$\wtMod^{r,\hat{G}_s}_{/\mfS_{\infty}}$.
Suppose that $\hat{\mfM}$ is an object of 
$\wtMod^{r,\hat{G}_s,J}_{/\mfS_{\infty}}$.
Then $\hat{\mfM}'$ and $\hat{\mfM}''$ are also objects of 
$\wtMod^{r,\hat{G}_s,J}_{/\mfS_{\infty}}$.
\end{lemma}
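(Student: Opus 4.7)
The plan is to verify the single defining inclusion ``$\tau^{p^s}(x)-x\in JW(R)\otimes_{\vphi,\mfS}\mfM^{?}$ for all $x\in \mfM^{?}$'' for $?\in\{',\,''\}$, since all the other axioms of membership in $\wtMod^{r,\hat{G}_s}_{/\mfS_{\infty}}$ are inherited from the ambient short exact sequence by functoriality. Throughout I will use that, because the objects of $\mrm{Mod}^r_{/\mfS_{\infty}}$ have projective dimension one over $\mfS$, the functor $W(R)\otimes_{\vphi,\mfS}-$ carries the given exact sequence to a short exact sequence
\[
0\to W(R)\otimes_{\vphi,\mfS}\mfM'\to W(R)\otimes_{\vphi,\mfS}\mfM\to W(R)\otimes_{\vphi,\mfS}\mfM''\to 0
\]
of $\hat{G}_s$-modules, and similarly for $\whR\otimes_{\vphi,\mfS}-$.

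For the quotient $\hat{\mfM}''$ the argument is almost tautological. Given $y\in\mfM''$, lift it to some $x\in\mfM$. By hypothesis $\tau^{p^s}(x)-x\in JW(R)\otimes_{\vphi,\mfS}\mfM$, and the surjection in the displayed sequence is $\hat{G}_s$-equivariant and sends this submodule onto $JW(R)\otimes_{\vphi,\mfS}\mfM''$. The image of $\tau^{p^s}(x)-x$ being $\tau^{p^s}(y)-y$ gives the required inclusion.

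For the submodule $\hat{\mfM}'$ one reasons as follows. For $x\in\mfM'\subset\mfM$, two conditions hold simultaneously: (a) $\tau^{p^s}(x)-x\in JW(R)\otimes_{\vphi,\mfS}\mfM$ by the hypothesis on $\hat{\mfM}$; (b) $\tau^{p^s}(x)-x\in W(R)\otimes_{\vphi,\mfS}\mfM'$ because by definition of a morphism in the category the $\hat{G}_s$-action on $\whR\otimes_{\vphi,\mfS}\mfM'$ is the restriction of the action on $\whR\otimes_{\vphi,\mfS}\mfM$, and $\whR\subset W(R)$. The proof therefore reduces to the identity
\[
\bigl(JW(R)\otimes_{\vphi,\mfS}\mfM\bigr)\cap\bigl(W(R)\otimes_{\vphi,\mfS}\mfM'\bigr)=JW(R)\otimes_{\vphi,\mfS}\mfM'
\]
computed inside $W(R)\otimes_{\vphi,\mfS}\mfM$. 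This is exactly the ingredient provided by Lemma 6 of \cite{Oz2}, invoked together with the principality of $J$ and $J\not\subset pW(R)$ — the very same combination used in the inductive step of the proof of Proposition \ref{FFTHMMOD} above.

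The only delicate point, and hence the main obstacle, is this intersection identity; the rest of the lemma is purely formal. The hypotheses ``$J$ principal'' and ``$J\not\subset pW(R)$'' were in fact built into the definition of the subcategory precisely to make the cited lemma applicable: writing $J=(j)$, principality reduces the identity to a statement that multiplication by $j$ on $W(R)\otimes_{\vphi,\mfS}(\mfM/\mfM')$ has no unexpected kernel, and $J\not\subset pW(R)$ rules out the problematic $p$-torsion. Once the intersection formula is granted, both halves of the lemma close immediately by the case analysis above.
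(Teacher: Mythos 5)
Your proof is correct and takes essentially the same route as the paper: the quotient case is handled by lifting and equivariance, and the subobject case reduces to the intersection identity $(JW(R)\otimes_{\vphi,\mfS}\mfM)\cap(W(R)\otimes_{\vphi,\mfS}\mfM')=JW(R)\otimes_{\vphi,\mfS}\mfM'$, settled by Lemma 6 of \cite{Oz2} using that $J$ is principal and $J\not\subset pW(R)$. The paper simply declares the quotient case ``clear'' and writes the subobject argument in two lines; your version adds no new idea, only more explicit bookkeeping.
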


\begin{proof}
The fact $\hat{\mfM}''\in \wtMod^{r,\hat{G}_s,J}_{/\mfS_{\infty}}$
is clear.
Take any $x\in \mfM'$.
Then we have 
$\tau^{p^s}(x)-x\in (JW(R)\otimes_{\vphi,\mfS}\mfM)\cap
(W(R)\otimes_{\vphi,\mfS}\mfM')$.
Since $J$ is a principal ideal which is not contained in $pW(R)$,
we obtain 
$\tau^{p^s}(x)-x\in JW(R)\otimes_{\vphi,\mfS}\mfM'$
by Lemma 6 of \cite{Oz2}.
This implies $\hat{\mfM}'\in \wtMod^{r,\hat{G}_s,J}_{/\mfS_{\infty}}$.
\end{proof}

%%%%%%%%%%%%%%%%%%%%%%%%%%%%%%%%%%%%%%%%%%%%%%%%%%%%%%%%%%%%%%%%%%%%%%%%%%%%%%%%%%%%%%%%%%%%%%%%%%%%%%%%%%%
%%%%%%%%%%%%%%%%%%%%%%%%%%%%%%%%%%%%%%%%%%%%%%%%%%%%%%%%%%%%%%%%%%%%%%%%%%%%%%%%%%%%%%%%%%%%%%%%%%%%%%%%%%%
%                       4.2                                  %%%%%%%%%%%%%%%%%%%%%%%%%%%%%%%%%%%%%%%%%%%%%%%%
%%%%%%%%%%%%%%%%%%%%%%%%%%%%%%%%%%%%%%%%%%%%%%%%%%%%%%%%%%%%%%%%%%%%%%%%%%%%%%%%%%%%%%%%%%%%%%%%%%%%%%%%%%%
%%%%%%%%%%%%%%%%%%%%%%%%%%%%%%%%%%%%%%%%%%%%%%%%%%%%%%%%%%%%%%%%%%%%%%%%%%%%%%%%%%%%%%%%%%%%%%%%%%%%%%%%%%%

\subsection{The category $\wt{\mrm{Rep}}^{r,\hat{G}_s,J}_{\mrm{tor}}(G_s)$}

In this subsection, 
we study some categorical properties of 
$\wt{\mrm{Rep}}^{r,\hat{G}_s,J}_{\mrm{tor}}(G_s)$.

Let $\hat{\mfM}$ be an object of $\wtMod^{r,\hat{G}_s}_{/\mfS_{\infty}}$.
Following Section 3.2 of \cite{Li2} (note that arguments in 
\cite{Li2} is the ``free case''),
we construct a map $\hat{\iota}_s$ which connects 
$\hat{\mfM}$ and $\hat{T}_s(\hat{\mfM})$ as follows.
Observe that there exists a natural isomorphism of 
$\mbb{Z}_p[G_s]$-modules
\[
\hat{T}_s(\hat{\mfM})
\simeq
\mrm{Hom}_{W(R),\vphi}(W(R)\otimes_{\vphi,\mfS}\mfM, \mbb{Q}_p/\mbb{Z}_p\otimes_{\mbb{Z}_p} W(R))
\]
where $G_s$ acts on 
$\mrm{Hom}_{W(R),\vphi}(W(R)\otimes_{\vphi,\mfS}\mfM,\mbb{Q}_p/\mbb{Z}_p\otimes_{\mbb{Z}_p} W(R))$
by $(\sigma.f)(x)=\sigma(f(\sigma^{-1}(x)))$ for 
$\sigma\in G_s,
f\in \mrm{Hom}_{W(R),\vphi}(W(R)\otimes_{\vphi,\mfS}\mfM,\mbb{Q}_p/\mbb{Z}_p\otimes_{\mbb{Z}_p} W(R)),
x\in W(R)\otimes_{\vphi,\mfS}\mfM=W(R)\otimes_{\whR_s} (\whR_s\otimes_{\vphi,\mfS}\mfM)$.
Thus we can define a morphism 
$\hat{\iota}'_s\colon W(R)\otimes_{\vphi, \mfS}\mfM \to 
\mrm{Hom}_{\mbb{Z}_p}(\hat{T}_s(\hat{\mfM}),\mbb{Q}_p/\mbb{Z}_p\otimes_{\mbb{Z}_p} W(R))$
by
$$
x\mapsto (f\mapsto f(x)),\quad 
x\in W(R)\otimes_{\vphi,\mfS}\mfM, f\in \hat{T}_s(\hat{\mfM}). 
$$
Since $\hat{T}_s(\hat{\mfM})\simeq \oplus_{i\in I}\mbb{Z}_p/p^{n_i}\mbb{Z}_p$ 
as $\mbb{Z}_p$-modules,
we have a natural isomorphism 
$\mrm{Hom}_{\mbb{Z}_p}(\hat{T}_s(\hat{\mfM}),\mbb{Q}_p/\mbb{Z}_p\otimes_{\mbb{Z}_p} W(R))\simeq 
W(R)\otimes_{\mbb{Z}_p}\hat{T}_s^{\vee}(\hat{\mfM})$ 
where $\hat{T}_s^{\vee}(\hat{\mfM})=\mrm{Hom}_{\mbb{Z}_p}(\hat{T}_s(\hat{\mfM}),\mbb{Q}_p/\mbb{Z}_p)$
is the dual representation of $\hat{T}_s(\hat{\mfM})$.
Composing this isomorphism with $\hat{\iota}'_s$, we obtain the desired map 
$$
\hat{\iota}_s\colon W(R)\otimes_{\vphi,\mfS}\mfM\to 
W(R)\otimes_{\mbb{Z}_p}\hat{T}_s^{\vee}(\hat{\mfM}).
$$
It follows from a direct calculation  that
$\hat{\iota}_s$ is $\vphi$-equivariant and $G_s$-equivariant.
If we denote by $\hat{\mfM}_s$ the image of $\hat{\mfM}$ for the functor 
$\wtMod^{r,\hat{G}_s}_{/\mfS_{\infty}}\to 
\Mod^{r,\hat{G}_s}_{/\mfS_{s,\infty}}$ (cf.\ Section \ref{vardef}),
then the above $\hat{\iota}_s$ is isomorphic to 
``$\hat{\iota}$ for $\hat{\mfM}_s$ in Section 4.1 of \cite{Oz1}''.
Hence Lemma 4.2 (4) in {\it loc}.\ {\it cit}. implies that 
$$
W(\mrm{Fr}\ R)\otimes \hat{\iota}_s\colon 
W(\mrm{Fr}\ R)\otimes_{W(R)}(W(R)\otimes_{\vphi,\mfS}\mfM)
\to 
W(\mrm{Fr}\ R)\otimes_{W(R)}(W(R)\otimes_{\mbb{Z}_p}\hat{T}_s^{\vee}(\hat{\mfM}))
$$
is  bijective.

\begin{proposition}
Let $(R) \colon 0\to T'\to T\to T''\to 0$ be an exact sequence in $\mrm{Rep}_{\mrm{tor}}(G_s)$.
Assume that there exists $\hat{\mfM}\in \wtMod^{r,\hat{G}_s,J}_{/\mfS_{\infty}}$
such that $\hat{T}_s(\hat{\mfM})\simeq T$.
Then there exists an exact sequence 
$(M) \colon 0\to \hat{\mfM}''\to \hat{\mfM}\to \hat{\mfM}'\to 0$
in $\wtMod^{r,\hat{G}_s,J}_{/\mfS_{\infty}}$
such that $\hat{T}_s((M))\simeq (R)$. 
\end{proposition}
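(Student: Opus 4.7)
The strategy is to realize the sub $T' \subset T$ and the quotient $T \twoheadrightarrow T''$ via suitable constructions at the level of $\hat{\mfM}$. Since $\hat{T}_s$ is contravariant, a surjection $T \twoheadrightarrow T''$ must come from an injection $\hat{\mfM}'' \hookrightarrow \hat{\mfM}$, and an injection $T' \hookrightarrow T$ from a surjection $\hat{\mfM} \twoheadrightarrow \hat{\mfM}'$. Dualizing $(R)$ yields a short exact sequence $0 \to T''^{\vee} \to T^{\vee} \to T'^{\vee} \to 0$ of dual representations, and I use the $\vphi$- and $G_s$-equivariant comparison map
\[
\hat{\iota}_s \colon W(R) \otimes_{\vphi, \mfS} \mfM \longrightarrow W(R) \otimes_{\mbb{Z}_p} \hat{T}_s^{\vee}(\hat{\mfM})
\]
to pull back the submodule $W(R) \otimes_{\mbb{Z}_p} T''^{\vee} \subset W(R) \otimes_{\mbb{Z}_p} T^{\vee}$. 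Concretely, I set
\[
\mfM'' := \mfM \cap \hat{\iota}_s^{-1}\bigl(W(R) \otimes_{\mbb{Z}_p} T''^{\vee}\bigr), \qquad \mfM' := \mfM / \mfM'',
\]
where $\mfM$ is regarded as a $\vphi(\mfS)$-submodule of $W(R) \otimes_{\vphi, \mfS} \mfM$ via $x \mapsto 1 \otimes x$. Both are $\vphi$-stable, and they are the natural candidates for the underlying Kisin modules.

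The first technical step is to verify that $\mfM''$ and $\mfM'$ are torsion Kisin modules of height $\le r$ over $\mfS$. Since $\mfM$ has projective dimension $1$ over $\mfS$ (equivalently, is $u$-torsion-free), the $\mfS$-submodule $\mfM''$ is also $u$-torsion-free. To handle $\mfM'$ and the height condition on both, I would base-change to $\mfS_s$ and use the image $\hat{\mfM}_s \in \Mod^{r, \hat{G}_s}_{/\mfS_{s,\infty}}$: by the full theory over $\mfS_s$, together with the fact that $W(\mrm{Fr}\, R) \otimes \hat{\iota}_s$ is an isomorphism, there exists a short exact sequence $0 \to \mfM''_s \to \mfM_s \to \mfM'_s \to 0$ of torsion Kisin modules of height $\le r$ over $\mfS_s$ realizing the dualized sequence. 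Faithful flatness of $\mfS \to \mfS_s$, combined with the equality $E_s(u_s) = E(u)$, then transfers the projective dimension and height statements down to $\mfS$, and shows $\mfS_s \otimes_{\mfS} \mfM'' \simeq \mfM''_s$ and $\mfS_s \otimes_{\mfS} \mfM' \simeq \mfM'_s$.

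For the second step, I equip $\mfM''$ and $\mfM'$ with the $(\vphi, \hat{G}_s)$-structure. Since $W(R) \otimes_{\mbb{Z}_p} T''^{\vee}$ is $G_s$-stable and $\hat{\iota}_s$ is $G_s$-equivariant, the $\hat{G}_s$-action on $\whR_s \otimes_{\vphi, \mfS} \mfM$ restricts to $\whR_s \otimes_{\vphi, \mfS} \mfM''$ and descends to $\whR_s \otimes_{\vphi, \mfS} \mfM'$, and the axioms (2)--(5) of Definition \ref{varLiumod} are inherited from $\hat{\mfM}$ essentially formally. Lemma \ref{speciallemma} then guarantees that the stronger condition $\tau^{p^s}(x) - x \in JW(R) \otimes_{\vphi, \mfS}(-)$ defining $\wtMod^{r, \hat{G}_s, J}_{/\mfS_{\infty}}$ is preserved for both the sub- and the quotient object. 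To check that $\hat{T}_s$ applied to $0 \to \hat{\mfM}'' \to \hat{\mfM} \to \hat{\mfM}' \to 0$ recovers $(R)$, I unwind the definition of $\hat{\iota}_s$: an element $f \in \hat{T}_s(\hat{\mfM})$ factors through $\hat{\mfM}'$ (resp.\ restricts from $\hat{\mfM}''$) precisely when the dual $\hat{\iota}_s^{\vee}$-image lies in $T'^{\vee}$ (resp.\ $T''^{\vee}$); combined with the exactness of $\hat{T}_s$ (Theorem \ref{Thm1} (3)) this identifies the two short exact sequences.

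The principal obstacle I anticipate is the descent in the first step: verifying height $\le r$ and projective dimension $1$ for $\mfM''$ and $\mfM'$ directly over $\mfS$, rather than over $\mfS_s$. The $u$-torsion-freeness of $\mfM'$ is the delicate point, as it is a quotient; I expect it to follow from the fact that after applying the faithfully flat base change $\mfS_s \otimes_{\mfS} -$, the quotient becomes $\mfM'_s$, which is $u_s$-torsion-free, combined with the relation $u = u_s^{p^s}$ which forces $u$-torsion-freeness over $\mfS$.
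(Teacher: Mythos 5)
Your overall strategy matches the paper's: define $\mfM''$ by pulling back $W(R)\otimes_{\mbb{Z}_p}T''^{\vee}$ through the comparison map $\hat{\iota}_s$, set $\mfM'=\mfM/\mfM''$, and then use Lemma \ref{speciallemma} to transfer the $J$-condition to the sub and the quotient. This is exactly what the paper does, except the paper compresses the construction into a citation of \cite[Theorem 4.5]{Oz1} run with $\hat{\iota}_s$ in place of $\hat{\iota}$.

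The place where your argument has a genuine gap is the verification that $\mfM''$ and $\mfM'$ are torsion Kisin modules of height $\le r$ over $\mfS$, which you propose to carry out by passing to $\mfS_s$. You want to identify $\mfS_s\otimes_{\mfS}\mfM''$ with the analogous intersection $\mfM''_s := \mfM_s\cap\hat{\iota}_s^{-1}\bigl(W(R)\otimes_{\mbb{Z}_p}T''^{\vee}\bigr)$ and then descend the Kisin-module properties by faithful flatness. But this identification is not automatic: writing $P:=\hat{\iota}_s^{-1}(W(R)\otimes T''^{\vee})$ (a $W(R)$-submodule of $W(R)\otimes_{\vphi,\mfS}\mfM$) and $\mfM_s=\bigoplus_{i=0}^{p^s-1}u_s^{pi}\cdot\mfM$ inside $W(R)\otimes_{\vphi,\mfS}\mfM$, one needs to know that $\sum_i u_s^{pi}(1\otimes x_i)\in P$ forces each $1\otimes x_i\in P$, i.e.\ that $\vphi(\mfS_s)\cdot\mfM''=\mfM''_s$ rather than merely $\subset$. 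The elements $u_s^p,\dots,u_s^{p(p^s-1)}$ are $\mfS$-linearly independent but not $W(R)$-linearly independent (e.g.\ $u_s^p\cdot u_s^p - 1\cdot u_s^{2p}=0$), and the defining map is $\vphi$-semilinear into a fixed $W(R)$-module rather than an $\mfS$-linear map into an $\mfS$-module, so the usual flat-base-change-commutes-with-kernels argument does not apply. The cited \cite[Theorem 4.5]{Oz1} sidesteps this by establishing finiteness, projective dimension $1$, and height $\le r$ for $\mfM''$ and $\mfM/\mfM''$ directly over $\mfS$, using that $W(\mrm{Fr}\,R)\otimes\hat{\iota}_s$ is an isomorphism together with the structure theory of torsion $\vphi$-modules over $\mfS[1/u]$; replacing your $\mfS_s$-detour by that direct verification closes the gap and the rest of your argument (Lemma \ref{speciallemma}, the identification of $\hat T_s$ of the sequence with $(R)$) goes through.
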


\begin{proof}
The same proof as \cite[Theorem 4.5]{Oz1},
except using not $\hat{\iota}$ in the proof of {\it loc.}\ {\it cit.}
but $\hat{\iota}_s$ as above,
gives an exact sequence $(M) \colon 0\to \hat{\mfM}''\to \hat{\mfM}\to \hat{\mfM}'\to 0$
in $\wtMod^{r,\hat{G}_s}_{/\mfS_{\infty}}$
such that $\hat{T}_s((M))\simeq (R)$.
Therefore, Lemma \ref{speciallemma},
gives the desired result.
\end{proof}

\begin{corollary}
\label{stability}
The full subcategory 
$\wt{\mrm{Rep}}^{r,\hat{G}_s,J}_{\mrm{tor}}(G_s)$
of $\mrm{Rep}_{\mrm{tor}}(G_s)$
is stable  under subquotients.
\end{corollary}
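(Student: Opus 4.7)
The plan is to derive Corollary \ref{stability} essentially as a direct consequence of the preceding proposition, which already does the heavy lifting. By definition, a representation $T$ lies in $\wt{\mrm{Rep}}^{r,\hat{G}_s,J}_{\mrm{tor}}(G_s)$ precisely when there exists $\hat{\mfM}\in \wtMod^{r,\hat{G}_s,J}_{/\mfS_{\infty}}$ with $\hat{T}_s(\hat{\mfM})\simeq T$, so I only need to produce such a $\hat{\mfM}$ for each subquotient.

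First I would reduce to the two separate cases of subobjects and quotient objects, using that any subquotient $U$ of $T$ fits into a chain $T \supset T'' \supset T'$ with $U = T''/T'$; it then suffices to show that (a) every subrepresentation of an object of $\wt{\mrm{Rep}}^{r,\hat{G}_s,J}_{\mrm{tor}}(G_s)$ lies in the category, and (b) every quotient likewise lies in it, since then $T''$ is in the category by (a) and $U = T''/T'$ is in the category by (b).

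For step (a), given $T \simeq \hat{T}_s(\hat{\mfM})$ with $\hat{\mfM}\in \wtMod^{r,\hat{G}_s,J}_{/\mfS_{\infty}}$ and a subrepresentation $T' \subset T$, I form the short exact sequence $0 \to T' \to T \to T/T' \to 0$ in $\mrm{Rep}_{\mrm{tor}}(G_s)$ and apply the preceding proposition. This produces an exact sequence $0 \to \hat{\mfM}'' \to \hat{\mfM} \to \hat{\mfM}' \to 0$ in $\wtMod^{r,\hat{G}_s,J}_{/\mfS_{\infty}}$ whose image under $\hat{T}_s$ recovers the given one; in particular $T' \simeq \hat{T}_s(\hat{\mfM}'')$ with $\hat{\mfM}'' \in \wtMod^{r,\hat{G}_s,J}_{/\mfS_{\infty}}$, so $T' \in \wt{\mrm{Rep}}^{r,\hat{G}_s,J}_{\mrm{tor}}(G_s)$. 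Step (b) is the symmetric statement, with $T/T' \simeq \hat{T}_s(\hat{\mfM}')$ from the same exact sequence.

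There is essentially no obstacle, since the real content is already packaged in the preceding proposition (which in turn rests on \cite[Theorem 4.5]{Oz1} together with Lemma \ref{speciallemma} to ensure the subobject and quotient inherit the $J$-condition). The only minor care point is confirming the convention of which factor of the exact sequence corresponds to the subrepresentation under $\hat{T}_s$, but the proposition is stated so that an arbitrary short exact sequence $(R)$ in $\mrm{Rep}_{\mrm{tor}}(G_s)$ with middle term $T$ lifts, which covers both orderings needed.
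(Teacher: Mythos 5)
Your proposal is correct and follows the same route the paper intends: the corollary is a direct consequence of the immediately preceding proposition, which lifts any short exact sequence in $\mrm{Rep}_{\mrm{tor}}(G_s)$ with middle term in the essential image to an exact sequence in $\wtMod^{r,\hat{G}_s,J}_{/\mfS_{\infty}}$, so both the sub and the quotient land in $\wt{\mrm{Rep}}^{r,\hat{G}_s,J}_{\mrm{tor}}(G_s)$. The one thing to tidy is the labeling: since $\hat{T}_s$ is contravariant, the subrepresentation $T'$ is $\hat{T}_s(\hat{\mfM}')$ and the quotient $T''$ is $\hat{T}_s(\hat{\mfM}'')$, not the other way around, but as you note this does not affect the conclusion since both terms of the lifted sequence lie in $\wtMod^{r,\hat{G}_s,J}_{/\mfS_{\infty}}$.
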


Let $L$ be as in Section 2, that is,
the completion of an unramified algebraic extension of $K$ with residue field $k_L$.
We prove the following base change lemma.
\begin{lemma}
\label{bc}
Assume that $J\supset u^pI^{[1]}W(R)$ or $L$ is a finite unramified extension of $K$.
If $T$ is an object of $\wt{\mrm{Rep}}^{r,\hat{G}_s,J}_{\mrm{tor}}(G_s)$,
then $T|_{G_{L,s}}$ is an object of $\wt{\mrm{Rep}}^{r,\hat{G}_{L,s},J}_{\mrm{tor}}(G_{L,s})$.
\end{lemma}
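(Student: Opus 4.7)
The plan is to construct a natural base-change $\hat{\mfN} \in \wtMod^{r,\hat{G}_{L,s},J}_{/\mfS_{L,\infty}}$ from some $\hat{\mfM} \in \wtMod^{r,\hat{G}_s,J}_{/\mfS_\infty}$ realizing $T$. Set $\mfN := \mfS_L \otimes_{\mfS} \mfM$. Via the canonical identification
\[
W(R) \otimes_{\vphi,\mfS_L} \mfN \;=\; W(R) \otimes_{\vphi,\mfS} \mfM,
\]
the restriction to $G_{L,s}$ of the ambient $G_s$-action on the right-hand side endows $\mfN$ with a $(\vphi,\hat{G}_{L,s})$-structure. The axioms (1)--(5) of Definition~\ref{Liumod} (height, continuity for the weak topology, $H_L$-invariance, triviality on $(\whR_{L,s}\otimes\mfN)/I_{+,L,s}(\whR_{L,s}\otimes\mfN)$) are inherited from those of $\hat{\mfM}$ in the same routine way as in the base-change constructions of subsections~2.6 and~2.7, and one verifies $\hat{T}_{L,s}(\hat{\mfN}) \simeq T|_{G_{L,s}}$ directly. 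Everything thus reduces to the one nontrivial containment
\[
\tau^{p^s}(x) - x \in JW(R) \otimes_{\vphi,\mfS_L} \mfN \quad \text{for every } x \in \mfN.
\]

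By linearity it suffices to take $x = a \otimes y$ with $a \in \mfS_L$ and $y \in \mfM$; under the identification above, $1 \otimes x$ corresponds to $\vphi(a) \otimes y$, and I would expand
\[
\tau^{p^s}(\vphi(a)\otimes y) - \vphi(a)\otimes y \;=\; \tau^{p^s}(\vphi(a))\bigl(\tau^{p^s}(y) - y\bigr) \;+\; \bigl(\tau^{p^s}(\vphi(a)) - \vphi(a)\bigr)\cdot y.
\]
The first summand lies in $JW(R) \otimes_{\vphi,\mfS} \mfM$ by the hypothesis on $\hat{\mfM}$ together with the fact that $J$ is an ideal of $W(R)$. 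Hence the whole problem collapses to the scalar containment $\vphi(\tau^{p^s}(a) - a) = \tau^{p^s}(\vphi(a)) - \vphi(a) \in J$ for every $a \in \mfS_L$.

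When $L/K$ is finite unramified this is immediate: $\mfS_L = W(k_L)\otimes_{W(k)} \mfS$, so it is enough to treat $x = c \otimes y$ with $c \in W(k_L)$; then $\vphi(c) \in W(k_L)$ is fixed by $\tau^{p^s} \in G_{L,s,p^\infty} \subset G_L$, so the error term vanishes identically. When instead $J \supset u^p I^{[1]}W(R)$ (and $L$ is arbitrary), I would prove the direct analogue of Lemma~\ref{cryslem}(2) over $\mfS_L$: for $g \in G_L$ and $a \in \mfS_L$, one has $g(a) - a \in u I^{[1]}W(R)$. The proof carries over verbatim because the Fontaine-style expansion (\ref{easyeq}) persists over $S_{L_0}$---indeed, $g \in G_L$ fixes $W(k_L) \hookrightarrow W(R)$ and sends $u$ to $[\underline{\e}(g)]u$, while $N^i_{S_{L_0}}(a) \in u\mfS_L$ for $i \ge 1$ by the derivation property---and Lemma~\ref{cryslem}(1) is insensitive to $L$. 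Applying $\vphi$ and using the $\vphi$-stability of $I^{[1]}W(R)$ then yields $\vphi(\tau^{p^s}(a) - a) \in \vphi(uI^{[1]}W(R)) \subset u^p I^{[1]}W(R) \subset J$.

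The main technical obstacle is the extension of Lemma~\ref{cryslem}(2) to $\mfS_L$ in the second case; this is however essentially automatic, because the original proof uses no feature of $\mfS$ beyond the fact that the acting group fixes the constant ring $W(k)$, and this is replaced cleanly by the tautological fact that $G_L$ fixes $W(k_L)$. The first case is the new genuinely geometric input, where the vanishing of the error term reflects precisely the unramifiedness of $L/K$.
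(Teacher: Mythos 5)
Your proof is correct and follows essentially the same route as the paper: base change to $\mfS_L\otimes_{\mfS}\mfM$, the same Leibniz-type decomposition of $\tau^{p^s}(\vphi(a)\otimes y)-\vphi(a)\otimes y$, and reduction to the scalar term. The only minor deviations are that in the unramified case you observe directly that the error term vanishes for $a\in W(k_L)$ (rather than the paper's device of writing $a=\sum a_jc_j$ and reapplying the identity over $\mfS$), and you correctly flag that Lemma~\ref{cryslem}(2) must be extended from $\mfS$ to $\mfS_L$ before it can be invoked, a point the paper passes over silently.
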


\noindent
By an obvious way,
we define a functor $\wtMod^{r,\hat{G}_s}_{/\mfS_{\infty}}\to 
\wtMod^{r,\hat{G}_{L,s}}_{/\mfS_{L,\infty}}$.
The underlying Kisin module of the image of 
$\hat{\mfM}\in \wtMod^{r,\hat{G}_s}_{/\mfS_{\infty}}$ for this functor 
is $\mfM_L=\mfS_L\otimes_{\mfS} \mfM$.
Lemma \ref{bc} immediately follows from the lemma below.
\begin{lemma}
Assume that $J\supset u^pI^{[1]}W(R)$ or $L$ is a finite unramified extension of $K$.
Then the functor 
$\wtMod^{r,\hat{G}_s}_{/\mfS_{\infty}}\to \wtMod^{r,\hat{G}_{L,s}}_{/\mfS_{L,\infty}}$
induces a functor 
$\wtMod^{r,\hat{G}_s,J}_{/\mfS_{\infty}}\to \wtMod^{r,\hat{G}_{L,s},J}_{/\mfS_{L,\infty}}$.
\end{lemma}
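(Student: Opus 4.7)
The plan is to verify the defining condition of $\wtMod^{r,\hat{G}_{L,s},J}_{/\mfS_{L,\infty}}$ for the base-changed object $\hat{\mfM}_L$, namely that $\tau^{p^s}(x) - x \in JW(R) \otimes_{\vphi,\mfS_L} \mfM_L$ for every $x \in \mfM_L$. Since $\mfM_L = \mfS_L \otimes_{\mfS} \mfM$, it suffices by additivity to handle $x = a \cdot y$ with $a \in \mfS_L$ and $y \in \mfM$. Viewing $x$ inside $\whR_L \otimes_{\vphi,\mfS_L} \mfM_L$ via $1 \otimes x = \vphi(a)\otimes y$ and using the $\whR_L$-semilinearity of $\tau^{p^s}$ together with $\tau^{p^s}\circ\vphi = \vphi\circ\tau^{p^s}$, a direct expansion gives
\[
\tau^{p^s}(x) - x = \vphi\bigl(\tau^{p^s}(a)\bigr)\bigl(\tau^{p^s}(y) - y\bigr) + \vphi\bigl(\tau^{p^s}(a) - a\bigr)\cdot y.
\]

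The first summand already lies in $JW(R) \otimes_{\vphi,\mfS_L}\mfM_L$: by the hypothesis on $\hat{\mfM}$ the factor $\tau^{p^s}(y) - y$ lies in $JW(R) \otimes_{\vphi,\mfS}\mfM$, which maps into $JW(R)\otimes_{\vphi,\mfS_L}\mfM_L$ under base change, and this submodule is preserved by multiplication by $\vphi(\tau^{p^s}(a)) \in \whR_L$ since $J$ is a principal ideal of $W(R)$. Hence the lemma reduces to verifying that $\vphi(\tau^{p^s}(a)-a) \in JW(R)$ for every $a\in\mfS_L$, and I would dispose of the two alternative hypotheses separately.

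If $J \supset u^p I^{[1]}W(R)$, I would extend Lemma \ref{cryslem}(2) from $\mfS$ to $\mfS_L$. Through the bijection $G_{L,p^\infty}\simeq G_{K,p^\infty}$ of Lemma \ref{easylemma}, $\tau^{p^s}$ lifts canonically to $G_{L,p^\infty}$, where it fixes $L_{p^\infty}$ and hence $W(k_L)$ pointwise and acts on $u$ by $u\mapsto[\underline{\e}]u$. The proof of Lemma \ref{cryslem}(2) then runs verbatim over $\mfS_L$, with the monodromy operator $N$ continuing to kill $W(k_L)$ and to send $u$ to $-u$, yielding $\tau^{p^s}(a) - a \in u I^{[1]}W(R)$; applying $\vphi$ and using $\vphi$-stability of $I^{[1]}W(R)$ then gives $\vphi(\tau^{p^s}(a)-a)\in u^p I^{[1]}W(R)\subset J$. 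If instead $L/K$ is finite unramified, then $\mfS_L = W(k_L)\otimes_{W(k)}\mfS$ and $\mfM_L = W(k_L)\otimes_{W(k)}\mfM$, so any $x\in\mfM_L$ is a finite sum $\sum_i c_i y_i$ with $c_i\in W(k_L)$ and $y_i\in\mfM$. Since $\tau^{p^s}$ fixes $W(k_L)\subset L\subset L_{p^\infty}$ pointwise, one finds $\tau^{p^s}(x) - x = \sum_i c_i(\tau^{p^s}(y_i) - y_i)$, and each summand lies in $JW(R)\otimes_{\vphi,\mfS_L}\mfM_L$ directly by hypothesis on $\hat{\mfM}$.

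The hard part is confirming that the derivation-theoretic estimate of Lemma \ref{cryslem}(2) genuinely transposes to $\mfS_L$ in the first case, where $W(k_L)$ may be the completion of an infinite unramified extension of $W(k)$. The key point is that the original proof only uses (i) the structural relation $g(f) = \sum_i \gamma_i(-\mrm{log}([\underline{\e}(g)]))N^i(f)$, which extends verbatim to $f\in S_{L_0}$ once $g\in G_{L,p^\infty}$ fixes $W(k_L)$, and (ii) that $N^i(a)\in u\mfS_L$ for $a\in\mfS_L$ and $i\ge1$, which is immediate from $N(u)=-u$ and $N(W(k_L))=0$. Apart from this technical verification, the argument is a formal consequence of the base-change machinery for Kisin and $(\vphi,\hat{G})$-modules set up earlier in the paper.
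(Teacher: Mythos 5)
Your proof is correct and follows essentially the same strategy as the paper's: the same two-term expansion of $\tau^{p^s}(1\otimes ax)-1\otimes ax$, the same reduction of the first summand to the hypothesis on $\hat{\mfM}$, and the same reliance on Lemma \ref{cryslem}(2) in the case $J\supset u^pI^{[1]}W(R)$ together with the $W(k_L)$-linearity trick in the finite unramified case. The two minor differences are improvements: you make explicit the extension of Lemma \ref{cryslem}(2) from $\mfS$ to $\mfS_L$ (which the paper invokes silently even when $L/K$ is infinite), and in the finite unramified case you decompose $x\in\mfM_L$ directly as $\sum_i c_i y_i$ rather than decomposing $a\in\mfS_L$ and redoing the expansion, which shortens the argument.
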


\begin{proof}
Let $\hat{\mfM}$ be an object of 
$\wtMod^{r,\hat{G}_s}_{/\mfS_{\infty}}$ and 
let $\hat{\mfM}_L$ be the image of $\hat{\mfM}$ for the functor 
$\wtMod^{r,\hat{G}_s}_{/\mfS_{\infty}}\to \wtMod^{r,\hat{G}_{L,s}}_{/\mfS_{L,\infty}}$.
In the rest of this proof,  to avoid confusions,
we denote the image of $x\in \mfM_L$  in $W(R)\otimes_{\vphi,\mfS_L} \mfM_L$ by $1\otimes x$.
Recall that 
we abuse notations by 
writing $\tau$  for
the pre-image of $\tau\in G_{K,p^{\infty}}$ via the bijection 
$G_{L,p^{\infty}}\simeq G_{K,p^{\infty}}$ of lemma \ref{easylemma}.
Then $\tau^{p^s}$ is a topological generator of $G_{L,s,p^{\infty}}$.
It suffices to show the following: 
if $\hat{\mfM}$ is an object of 
$\wtMod^{r,\hat{G}_s,J}_{/\mfS_{\infty}}$,
then we have $\tau^{p^s}(1\otimes x)-(1\otimes x)\in JW(R)\otimes_{\vphi,\mfS_L} \mfM_L$ 
for any $x\in \mfM_L$. 
Now we suppose $\hat{\mfM}\in \wtMod^{r,\hat{G}_s,J}_{/\mfS_{\infty}}$.
Take any $a\in \mfS_L$ and $x\in \mfM$.
Note that we have 
$\tau^{p^s}(1\otimes ax)-(1\otimes ax)
=\tau^{p^s}(\vphi(a))(\tau^{p^s}(1\otimes x)-(1\otimes x))
+(\tau^{p^s}(\vphi(a))-\vphi(a))(1\otimes x)$
in $W(R)\otimes_{\vphi,\mfS_L} \mfM_L$.
Since $\hat{\mfM}$ is an object of $\wtMod^{r,\hat{G}_s,J}_{/\mfS_{\infty}}$,
we have $\tau^{p^s}(\vphi(a))(\tau^{p^s}(1\otimes x)-(1\otimes x))\in 
JW(R)\otimes_{\vphi,\mfS_L} \mfM_L$.
Therefore, it is enough to show 
$(\tau^{p^s}(\vphi(a))-\vphi(a))(1\otimes x)\in JW(R)\otimes_{\vphi,\mfS_L} \mfM_L$.
This follows from Lemma \ref{cryslem} immediately
in the case where $J\supset u^pI^{[1]}W(R)$.
Next we consider the case where $L$ is a finite unramified extension of $K$.
Let $c_1,\dots ,c_{\ell}\in W(k_L)$ be generators of $W(k_L)$ as a $W(k)$-module.
Then we have $\mfS_L=\sum^{\ell}_{j=1} c_j \mfS$ and thus
we can write $a=\sum^{\ell}_{j=1}a_jc_j$ for some $a_j\in \mfS$.
Hence it suffices to show 
$(\tau^{p^s}(\vphi(a_j))-\vphi(a_j))(1\otimes x)\in JW(R)\otimes_{\vphi,\mfS_L} \mfM_L$
but this in fact immediately follows from the equation
$(\tau^{p^s}(\vphi(a_j))-\vphi(a_j))(1\otimes x)
=(\tau^{p^s}(1\otimes a_jx)-(1\otimes a_jx))-
(\tau^{p^s}(\vphi(a_j))(\tau^{p^s}(1\otimes x)-(1\otimes x)))$.
\end{proof}

%%%%%%%%%%%%%%%%%%%%%%%%%%%%%%%%%%%%%%%%%%%%%%%%%%%%%%%%%%%%%%%%%%%%%%%%%%%%%%%%%%%%%%%%%%%%%%%%%%%%%%%%%%%
%%%%%%%%%%%%%%%%%%%%%%%%%%%%%%%%%%%%%%%%%%%%%%%%%%%%%%%%%%%%%%%%%%%%%%%%%%%%%%%%%%%%%%%%%%%%%%%%%%%%%%%%%%%
%                       4.3                                %%%%%%%%%%%%%%%%%%%%%%%%%%%%%%%%%%%%%%%%%%%%%%%%
%%%%%%%%%%%%%%%%%%%%%%%%%%%%%%%%%%%%%%%%%%%%%%%%%%%%%%%%%%%%%%%%%%%%%%%%%%%%%%%%%%%%%%%%%%%%%%%%%%%%%%%%%%%
%%%%%%%%%%%%%%%%%%%%%%%%%%%%%%%%%%%%%%%%%%%%%%%%%%%%%%%%%%%%%%%%%%%%%%%%%%%%%%%%%%%%%%%%%%%%%%%%%%%%%%%%%%%

\subsection{Full faithfulness theorem for $\wt{\mrm{Rep}}^{r,\hat{G}_s,J}_{\mrm{tor}}(G_s)$}

Our goal in this subsection is to prove the following full faithfulness theorem,
which plays an important roll in our proofs of main theorems.

\begin{theorem}
\label{FFTHM}
Assume that $J\supset u^pI^{[1]}W(R)$ or $k$ is algebraically closed.
If $p^{s+2}/(p-1)\ge c_J>pr/(p-1)$,
then the restriction functor 
$\wt{\mrm{Rep}}^{r,\hat{G}_s,J}_{\mrm{tor}}(G_s)\to \mrm{Rep}_{\mrm{tor}}(G_{\infty})$
is fully faithful.
\end{theorem}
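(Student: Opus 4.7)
The faithfulness of the restriction functor is immediate since $G_\infty \subset G_s$, so I focus on fullness. Let $T = \hat{T}_s(\hat{\mfM})$ and $T' = \hat{T}_s(\hat{\mfN})$ with $\hat{\mfM}, \hat{\mfN} \in \wtMod^{r,\hat{G}_s,J}_{/\mfS_\infty}$, and let $f\colon T \to T'$ be a $G_\infty$-equivariant map. The plan is to construct a morphism $\hat{g}\colon \hat{\mfN} \to \hat{\mfM}$ in $\wtMod^{r,\hat{G}_s,J}_{/\mfS_\infty}$ with $\hat{T}_s(\hat{g}) = f$; since $\hat{T}_s(\hat{g})$ is automatically $G_s$-equivariant, and restriction to $G_\infty$ is faithful (it is injective on $\mathrm{Hom}$-groups because $G_\infty \subset G_s$), this forces $f$ itself to be $G_s$-equivariant.

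First I would reduce to the case where the residue field $k$ is algebraically closed. If this is already the hypothesis, nothing needs doing; otherwise the hypothesis $J \supset u^p I^{[1]}W(R)$ together with Lemma \ref{bc} permits base-changing all the data to $L = \widehat{K^{\mrm{ur}}}$, whose residue field is algebraically closed. The bounds $c_J > pr/(p-1)$ and $c_J \le p^{s+2}/(p-1)$ depend only on $J \subset W(R)$ and are preserved, and establishing $G_s$-equivariance reduces (via the canonical $\mathrm{Gal}(L/K)$-equivariance of the rings $\mfS$, $\whR$, $W(R)$) to $G_{L,s}$-equivariance after base change, which is what we would prove.

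With $k$ algebraically closed, the heart of the proof is to produce a morphism $g\colon \mfN \to \mfM$ of the underlying torsion Kisin modules over $\mfS$ which, under the isomorphism $\theta_s$, realizes the $G_\infty$-equivariant map induced by $f$. Once $g$ is available, Proposition \ref{FFTHMMOD} (using the lower bound $c_J > pr/(p-1)$) lifts it uniquely to a morphism $\hat{g}\colon \hat{\mfN} \to \hat{\mfM}$ in $\wtMod^{r,\hat{G}_s,J}_{/\mfS_\infty}$; then $\hat{T}_s(\hat{g})$ is $G_s$-equivariant and coincides with $f$ on $G_\infty$-actions, hence equals $f$ by faithfulness. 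The main obstacle is the extraction of $g$: the functor $T_\mfS$ on torsion Kisin modules is faithful but in general fails to be full outside the range $er < p-1$ (Proposition \ref{Kisinfunctor}(2)), so this step cannot follow from abstract generalities. I would approach it by resolving $\hat{\mfM}$ and $\hat{\mfN}$ by free $(\vphi,\hat{G}_s)$-modules, applying the full faithfulness on the free side (Theorem \ref{Thm1}(1) combined with Lemma \ref{totst}) to match the corresponding morphism at the free level, and then descending to the torsion level using the stability under subquotients proved in Corollary \ref{stability}. It is precisely the upper bound $c_J \le p^{s+2}/(p-1)$ that is expected to enter here: it quantitatively controls the $\tau^{p^s}$-obstruction and ensures that the dévissage lands in the category of Kisin modules of height $\le r$ rather than only in étale $\vphi$-modules without height control.
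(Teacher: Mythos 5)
Your framing of the problem is right up to a point: faithfulness is trivial, Lemma \ref{bc} handles the reduction to $k$ algebraically closed when $J\supset u^pI^{[1]}W(R)$, and Proposition \ref{FFTHMMOD} shows that once one has a morphism $g\colon\mfN\to\mfM$ of underlying Kisin modules realizing $f$, the $(\vphi,\hat{G}_s)$-compatibility comes for free and $f$ is automatically $G_s$-equivariant. You also correctly isolate the genuine obstacle: $T_\mfS\colon\Mod^r_{/\mfS_\infty}\to\mrm{Rep}_{\mrm{tor}}(G_\infty)$ is not full when $er\geq p-1$, so the existence of $g$ is not automatic.

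However, your proposed remedy --- resolve $\hat{\mfM}$, $\hat{\mfN}$ by free $(\vphi,\hat{G}_s)$-modules and ``apply full faithfulness on the free side'' --- does not close this gap. The full faithfulness you cite (Theorem \ref{Thm1}(1) via Lemma \ref{totst}, i.e.\ $\hat{T}_s$ on $\Mod^{r,\hat{G}_s}_{/\mfS}$) matches $G_s$-equivariant morphisms with morphisms of $(\vphi,\hat{G}_s)$-modules, but $f$ is only $G_\infty$-equivariant at this stage, so that statement is not applicable. If instead you mean to use full faithfulness of $T_\mfS$ on free Kisin modules (Proposition \ref{Kisinfunctor}(1)), you would first need to lift $f$ to a $G_\infty$-equivariant map between the lattices presenting $T$ and $T'$; such a lift is obstructed in $\mrm{Ext}^1$ and there is no reason it exists. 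And the descent from free to torsion level is also not addressed. So the central step of producing $g$ remains unproved.

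Your guess about the role of the upper bound $c_J\le p^{s+2}/(p-1)$ is also off. It is not there to control which category the dévissage lands in. It enters only in Lemma \ref{Lem1}: when $k$ is algebraically closed, the simple objects of $\Max^r_{/\mfS_\infty}$ are the explicit modules $\mfM(\mfn)$ with $\vphi(e_i)=u^{n_i}e_{i+1}$, and one puts a $\hat{G}_s$-action on $\whR_s\otimes_{\vphi,\mfS}\mfM(\mfn)$ by $\tau^{p^s}(e_i)=[\eta]^{p^s m_i}e_i$, where $m_i=\sum_{j=0}^{d-1}n_{i+j}p^{d-j}$. Because every $m_i$ is divisible by $p$, one gets $v_R([\eta]^{p^sm_i}-1)\geq p^{s+2}/(p-1)$, which is exactly what is needed for the $J$-condition $\tau^{p^s}(x)-x\in JW(R)\otimes_{\vphi,\mfS}\mfM(\mfn)$. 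This explicit construction on simple objects is the engine of the whole proof.

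The paper's actual argument is a Jordan--H\"older dévissage, not a direct lift. Corollary \ref{stability} shows $\wt{\mrm{Rep}}^{r,\hat{G}_s,J}_{\mrm{tor}}(G_s)$ is stable under subquotients and hence an exact category. Lemma \ref{FFLEM} then handles the base case: $T$ tame and killed by $p$, where $T|_{G_\infty}$ is irreducible, so $\Max^r(\mfM)\cong\mfM(\mfn)$, and one compares $\mrm{Hom}_{G_s}$ and $\mrm{Hom}_{G_\infty}$ through the chain of bijections $\mrm{Hom}(\hat{\mfM}',\hat{\mfM}(\mfn))\cong\mrm{Hom}(\mfM',\mfM(\mfn))\cong\mrm{Hom}(\Max^r(\mfM'),\mfM(\mfn))\cong\mrm{Hom}_{G_\infty}(T,T')$ (forgetful, maximality of $\mfM(\mfn)$, full faithfulness of $T_\mfS$ on $\Max^r_{/\mfS_\infty}$). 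Finally, a comparison of long exact Hom--Ext sequences for $0\to T_{i-1}\to T_i\to T_i/T_{i-1}\to 0$ in the two exact categories propagates the equality of Hom groups from the simple case to the general one. This dévissage through simple objects and the explicit model on $\mfM(\mfn)$ is precisely the missing idea in your proposal.
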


First we give a very rough sketch of the theory of maximal models for Kisin modules
(cf.\ \cite{CL1}). 
For any $\mfM\in \mrm{Mod}^r_{/\mfS_{\infty}}$,
put $\mfM[1/u]=\mfS[1/u]\otimes_{\mfS} \mfM$ and 
denote by $F^r_{\mfS}(\mfM[1/u])$
the (partially) ordered set (by inclusion)
of torsion Kisin modules $\mfN$ of  height $\le r$ which are contained in $\mfM[1/u]$ 
and $\mfN[1/u]=\mfM[1/u]$ as $\vphi$-modules.
The set $F^r_{\mfS}(\mfM[1/u])$ has a greatest element (cf.\ {\it loc}.\ {\it cit}., Corollary 3.2.6).
We denote this element by $\Max^r(\mfM)$.
We say that $\mfM$ is {\it maximal of height $\le r$} (or, {\it maximal} for simplicity)
if it is the greatest element of $F^r_{\mfS}(\mfM[1/u])$. 
The implication $\mfM\mapsto \Max^r(\mfM)$ defines a functor ``$\Max^r$''
from the category $\mrm{Mod}^r_{/\mfS_{\infty}}$ of torsion Kisin modules of height $\le r$ 
into the category $\Max^r_{/\mfS_{\infty}}$ of maximal Kisin modules of height $\le r$.
The category $\Max^r_{/\mfS_{\infty}}$ is abelian (cf.\ {\it loc}.\ {\it cit}., Theorem 3.3.8).
Furthermore, the functor $T_{\mfS}\colon \Max^r_{/\mfS_{\infty}}\to \mrm{Rep}_{\mrm{tor}}(G_{\infty})$,
defined by 
$T_{\mfS}(\mfM)=\mrm{Hom}_{\mfS,\vphi}(\mfM,\mbb{Q}_p/\mbb{Z}_p\otimes_{\mbb{Z}_p} W(R))$,
is exact and fully faithful (cf.\ {\it loc}.\ {\it cit}., Corollary 3.3.10). 
It is not difficult to check that
$T_{\mfS}(\Max^r(\mfM))$ is canonically isomorphic to $T_{\mfS}(\mfM)$ as representations of $G_{\infty}$
for any torsion Kisin module $\mfM$ of height $\le r$. 

\begin{definition}[{\rm \cite[Section 3.6.1]{CL1}}]
\label{Def1}
Let $d$ be a positive integer.
Let $\mfn=(n_i)_{i\in \mbb{Z}/d\mbb{Z}}$ be a sequence
of non-negative integers of smallest period $d$.
We define a torsion Kisin module $\mfM(\mfn)$ as below:
\begin{itemize}
\item as a $\ku$-module, $\mfM(\mfn)=\bigoplus_{i\in \mbb{Z}/d\mbb{Z}} \ku e_i$;
\item for all $i\in \mbb{Z}/d\mbb{Z}$, $\vphi(e_i)=u^{n_i}e_{i+1}$.
\end{itemize}
\end{definition}
We denote by $\mcal{S}^r_{\mrm{max}}$ the set of sequences $\mfn=(n_i)_{i\in \mbb{Z}/d\mbb{Z}}$
of integers $0\le n_i\le \mrm{min}\{er, p-1\}$ with smallest period $d$ for some integer $d$
except the constant sequence with value $p-1$ (if necessary).  
By definition, we see that 
$\mfM(\mfn)$ is of height $\le r$ for any $\mfn\in \mcal{S}^r_{\mrm{max}}$.
Putting $r_0=\mrm{max}\{r'\in \mbb{Z}_{\ge 0};e(r'-1)<p-1 \}$,
we also see that 
$\mfM(\mfn)$ is of height $\le r_0$ for any $\mfn\in \mcal{S}^r_{\mrm{max}}$.
It is known that
$\mfM(\mfn)$ is 
maximal for any $\mfn\in \mcal{S}^r_{\mrm{max}}$ (\cite[Proposition 3.6.7]{CL1}).
If $k$ is algebraically closed,
then $\mfM(\mfn)$ is 
simple in $\Max^r_{/\mfS_{\infty}}$  for any $\mfn\in \mcal{S}^r_{\mrm{max}}$ 
(cf.\ {\it loc.\ cit.}, Propositions 3.6.7 and 3.6.12)
and furthermore, the converse holds;
any simple object in $\Max^r_{/\mfS_{\infty}}$ is of the form
$\mfM(\mfn)$ for some $\mfn\in \mcal{S}^r_{\mrm{max}}$
(cf.\ {\it loc.\ cit.}, Propositions 3.6.8 and 3.6.12).

%\begin{proposition}
%\label{Prop1}
%Let $r=\infty$.
%ssume that $k$ is algebraically closed. 
%Then any $d$-dimensional irreducible $\mbb{F}_p$-representation $T$ of $G_{\infty}$ is of the form 
%T_{\mfS}(\mfM(\mfn))$ for some $\mfn\in \mcal{S}_{\mrm{max}}$ of period $d$.
%\end{proposition}
%\begin{proof}
%By Proposition 5.6 of \cite{CL2},
%we see that $T$ is of the form $T_{\mfS}(\mfM)$ for some 
%torsion Kisin module $\mfM$. Replacing $\mfM$ with $\Max(\mfM)$, we may suppose that
%$\mfM$ is maximal.
%Since $T$ is irreducible and the functor $T_{\mfS}$ on $\Max_{/\mfS_{\infty}}$
%is exact and fully faithful, 
%$\mfM$ is a simple object of $\Max_{/\mfS_{\infty}}$. 
%By Proposition 3.6.8 and 3.6.12 of {\it loc.\ cit.}, we finish a proof.
%\end{proof}

\begin{lemma}
\label{Lem1}
Assume that $p^{s+2}/(p-1)\ge c_J$.
Let $d$ be a positive integer.
Let $\mfn=(n_i)_{i\in \mbb{Z}/d\mbb{Z}}$ be a sequence
of non-negative integers  of smallest period $d$.
If $\mfM(\mfn)$ is of height $\le r$,
then $\mfM(\mfn)$ has a structure of an object of 
$\wtMod^{r,\hat{G}_s,J}_{/\mfS_{\infty}}$.
\end{lemma}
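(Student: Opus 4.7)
The plan is to equip $\mfM(\mfn)$ with an explicit $(\vphi, \hat{G}_s, J)$-structure by directly defining the $\hat{G}_s$-action on the basis $f_i := 1 \otimes e_i$ of $N := \whR_s \otimes_{\vphi, \mfS} \mfM(\mfn)$ (this is a basis over $\whR_s/p$, since $\mfM(\mfn)$ is killed by $p$), on which $\vphi$ acts via $\vphi(f_i) = u^{pn_i} f_{i+1}$. Since $\hat{G}_s = G_{s,p^{\infty}} \rtimes H_K$ (using $\hat{K_{(s)}} = \hat{K}$ and hence $H_{K_{(s)}} = H_K$), it suffices to prescribe compatible actions of $H_K$ and of the topological generator $\tau^{p^s}$ of $G_{s,p^{\infty}}$. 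I set $h(f_i) = f_i$ for $h \in H_K$, which is $\vphi$-compatible because $H_K \subset G_\infty$ fixes $u$, and I take $\tau^{p^s}(f_i) = \lambda_i f_i$ for scalars $\lambda_i$ to be determined.

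Using $\tau^{p^s}(u) = [\underline{\e}]^{p^s} u$, commutation with $\vphi$ forces the recursion $\vphi(\lambda_i) = [\underline{\e}]^{p^{s+1} n_i} \lambda_{i+1}$, and closing the cycle of length $d$ yields $\vphi^d(\lambda_0) = [\underline{\e}]^{p^{s+1} m} \lambda_0$ with $m := \sum_{j=0}^{d-1} p^{d-1-j} n_j$. Reducing modulo $p$ in $R$, the equation becomes $\lambda_0^{p^d - 1} = \underline{\e}^{p^{s+1} m}$; since $p^d - 1$ is a unit in $\mbb{Z}_p$, Hensel's lemma in $1 + \mfm_R$ produces a unique solution $\lambda_0 = \underline{\e}^\alpha$ with $\alpha := p^{s+1} m/(p^d - 1) \in p^{s+1}\mbb{Z}_p$, and the remaining $\lambda_i$ follow from the recursion. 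The axioms of Definition \ref{varLiumod} are then essentially mechanical: condition (4) holds because $H_K$ fixes $\vphi(\mfS)$, condition (5) because $\lambda_i - 1 \in \mfm_R \subset I_{+,s}$, and the semi-direct product relation $h\tau^{p^s}h^{-1} = (\tau^{p^s})^{\chi_p(h)}$ follows from $h([\underline{\e}]) = [\underline{\e}]^{\chi_p(h)}$ together with the multiplicative form of $\lambda_i$.

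For the $J$-condition, from $\alpha \in p^{s+1}\mbb{Z}_p$ and $v_R(\underline{\e}^{p^{s+1}} - 1) = p^{s+2}/(p-1)$ I get $v_R(\lambda_i - 1) \ge p^{s+2}/(p-1) \ge c_J$; for a general $x = be_i$ with $b \in \mfS$, the difference $\tau^{p^s}(1 \otimes x) - 1 \otimes x$ equals $(\tau^{p^s}(\vphi(b))\lambda_i - \vphi(b))f_i$, and the split
\begin{equation*}
\tau^{p^s}(\vphi(b))\lambda_i - \vphi(b) = \tau^{p^s}(\vphi(b))(\lambda_i - 1) + (\tau^{p^s}(\vphi(b)) - \vphi(b))
\end{equation*}
places both summands in $JW(R)$ modulo $p$: the first via the $\lambda_i$-estimate, and the second via a direct expansion using $\tau^{p^s}(u^{pj}) = [\underline{\e}]^{p^{s+1}j} u^{pj}$, which yields $v_R(\tau^{p^s}(\vphi(b)) - \vphi(b)) \ge p^{s+2}/(p-1) + p/e > c_J$. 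The main obstacle is confirming that $\lambda_0$, a priori only defined in $R$, actually lies in $\whR_s/p$ so that the scalar prescription really gives an $\whR_s$-semilinear action; I plan to realize $\lambda_0$ as a $\mfm_R$-adic limit of elements $\vphi^{s+1}([\underline{\e}])^{\beta_i}$ for integers $\beta_i \to \beta := m/(p^d - 1) \in \mbb{Z}_p$, each lying in $\whR_s$ by $\vphi$-stability and $[\underline{\e}] \in \whR$, and invoke closedness of the image of $\whR_s$ in $R$.
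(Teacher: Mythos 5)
Your construction is substantively the same as the paper's: you posit a diagonal $\tau^{p^s}$-action on the basis $1\otimes e_i$ with scalars $\lambda_i\in 1+\mfm_R$ forced by $\vphi$-compatibility, and these $\lambda_i$ coincide with the paper's $\bar x_i^{p^s}=\eta^{p^s m_i}$. The valuation estimates you give for the $J$-condition also match. The genuine gap is exactly the point you flag at the end: you must show that the solution $\lambda_0\in R$ of $\lambda_0^{p^d-1}=\underline{\e}^{p^{s+1}m}$ lies in the image of $\whR_s$, since the $\hat{G}_s$-action is required to preserve $\whR_s\otimes_{\vphi,\mfS}\mfM(\mfn)$ rather than merely $W(R)\otimes_{\vphi,\mfS}\mfM(\mfn)$. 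Your proposed fix --- write $\lambda_0$ as an $\mfm_R$-adic limit of integer powers of $\vphi^{s+1}([\underline{\e}])$ and ``invoke closedness of the image of $\whR_s$ in $R$'' --- does not hold up: there is no reason $\whR_s/p\whR_s$ should be a closed subset of $R$ for the $\mfm_R$-adic topology. The ring $\whR_s=\mcal{R}_{K_0,s}\cap W(R)$ is carved out by a $p$-adic convergence condition on the coefficients of the $t^{\{i\}}$, which has nothing to do with the valuation topology of $R$, and nothing in the paper establishes such a closedness.

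The paper avoids the limit entirely by reparametrizing the scalar. It picks a $(p^d-1)$-th root $\eta\in R$ of $\underline{\e}$ and normalizes it so that $[\eta]=\exp(-t/(p^d-1))$: the product $[\eta]\cdot\exp(t/(p^d-1))$ is a $(p^d-1)$-th root of unity in $W(R)$, hence equals $[a]$ for some $a\in\mbb{F}_{p^d}^\times$, and replacing $\eta$ by $\eta a^{-1}$ gives the claimed equality. The key point is then that $\exp(-t/(p^d-1))$ lies in $\mcal{R}_{K_0}\cap W(R)=\whR$ by inspection: writing $t^n/n!=\bigl(p^{\tilde{q}(n)}\tilde{q}(n)!/n!\bigr)t^{\{n\}}$, the coefficients tend to $0$ $p$-adically, so the exponential series is in $\mcal{R}_{K_0}$. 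Hence $[\eta]\in\whR^\times$, and each scalar is the genuine \emph{integer} power $x_i^{p^s}=[\eta]^{p^s m_i}$ with $m_i=\sum_{j=0}^{d-1}n_{i+j}p^{d-j}$, automatically in $\whR^\times\subset\whR_s^\times$. To repair your argument, replace the $\mbb{Z}_p$-power $\underline{\e}^{p^{s+1}m/(p^d-1)}$ by the equal integer power $\eta^{p^{s+1}m}$ of this normalized $\eta$; the rest of your verification then goes through unchanged.
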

\begin{proof}
Choose any $(p^d-1)$-th root $\eta\in R$ 
of $\underline{\e}$.
Since $[\eta]\cdot \mrm{exp}(t/(p^d-1))$ is a $(p^d-1)$-th 
root of unity, 
it is of the form $[a]$ for some $a\in \mbb{F}^{\times}_{p^d}$.
Replacing  $\eta a^{-1}$ with $\eta$,
we obtain $[\eta]=\mrm{exp}(-t/(p^d-1))\in \whR^{\times}$. 
Put $x_i=[\eta]^{m_i}\in \whR^{\times}$ and  
$\bar{x}_i=\eta^{m_i}\in (\whR/p\whR)^{\times}\subset R^{\times}$ 
for any $i\in \mbb{Z}/d\mbb{Z}$,
where $m_i=\sum^{d-1}_{j=0}n_{i+j}p^{d-j}$.
We see that $x_i-1$ is contained in $I_+\whR$.
In the rest of this proof, to avoid confusions,
we denote the image of $x\in  \mfM(\mfn)$ in 
$\whR_s\otimes_{\vphi,\mfS} \mfM(\mfn)\subset R\otimes_{\vphi,k[\![u]\!]} \mfM(\mfn)$ 
by $1\otimes x$.
Now we define a $\hat{G}_s$-action on $\whR_s\otimes_{\vphi,\mfS} \mfM(\mfn)$ by
$\tau^{p^s}(1\otimes e_i):=x^{p^s}_i(1\otimes e_i)$ for the basis 
$\{e_i\}_{i\in \mbb{Z}/d\mbb{Z}}$ 
of $\mfM(\mfn)$
as in Definition \ref{Def1}.
It is not difficult to check that 
$\mfM(\mfn)$ has a structure of an object of 
$\wtMod^{r,\hat{G}_s}_{/\mfS_{\infty}}$
via this $\hat{G}_s$-action; we denote it by $\hat{\mfM}(\mfn)$.
It suffices to prove that 
$\hat{\mfM}(\mfn)$ is in fact an object of 
$\wtMod^{r,\hat{G}_s,J}_{/\mfS_{\infty}}$.
Recall that $v_R$ is the valuation of $R$ normalized such that $v_R(\underline{\pi})=1/e$.
Define $\tilde{\mft}=\mft\ \mrm{mod}\ pW(R)$ an element of $R$.
We denote by $v_p$ the usual $p$-adic valuation normalized by $v_p(p)=1$.
Note that we have $v_R(\underline{\e}-1)=p/(p-1)$ and 
$v_R(\tilde{\mft})=1/(p-1)$
(here, the latter equation follows from the relation
$\vphi(\mft)=pE(0)^{-1}E(u)\mft$).
We see that 
$$
v_R(\bar{x}^{p^s}_i-1)=p^{s+v_p(m_i)}\cdot p/(p-1)\ge p^{s+2}/(p-1).
$$
Since $p^{s+2}/(p-1)\ge c_J$ and the image of $J$ in $R$
is $\mfm_R^{\ge c_J}$, we obtain
$$
\tau^{p^s}(1\otimes e_i)-(1\otimes e_i)\in 
\mfm_R^{\ge c_J}R\otimes_{\vphi,k[\![u]\!]} \mfM(\mfn)
\simeq JW(R)\otimes_{\vphi,\mfS} \mfM(\mfn).
$$
Finally we have to show that $\tau^{p^s}(1\otimes ae_i)-(1\otimes ae_i)\in 
\mfm_R^{\ge c_J}R\otimes_{\vphi,k[\![u]\!]} \mfM(\mfn)$ 
for any $a\in k[\![u]\!]$.
Since $\tau^{p^s}(1\otimes ae_i)-(1\otimes ae_i)=\tau^{p^s}(\vphi(a))
(\tau^{p^s}(1\otimes e_i)-(1\otimes e_i))+
(\tau^{p^s}(\vphi(a))-\vphi(a))(1\otimes e_i)$,
it suffices to show $\tau^{p^s}(\vphi(a))-\vphi(a)\in \mfm_R^{\ge c_J}$.
Write $\vphi(a)=\sum_{i\ge 0}a_iu^{pi}$ for some $a_i\in k$.
Then we have $\tau^{p^s}(\vphi(a))-\vphi(a)=
\sum_{i\ge 1}a_i(\underline{\e}^{p^{s+1}i}-1)u^{pi}$.
Since we have
$$
v_R((\underline{\e}^{p^{s+1}i}-1)u^{pi})
=p^{s+1}v_R(\underline{\e}^i-1)+v_R(u^{pi})
> p^{s+2}/(p-1)\ge c_J
$$
for any $i\ge 1$, we have done.
\end{proof}

Recall that $r_0=\mrm{max}\{r'\in \mbb{Z}_{\ge 0}; e(r'-1)<p-1\}$.
Put $r_1:=\mrm{min}\{r,r_0\}$.

\begin{corollary}
\label{Cor1}
Assume that $p^{s+2}/(p-1)\ge c_J$.
If $\mfn\in \mcal{S}^r_{\mrm{max}}$, 
then 
$\mfM(\mfn)$ has a structure of an object of 
$\wtMod^{r',\hat{G}_s,J}_{/\mfS_{\infty}}$
for any $r'\ge r_1$.
Furthermore, if $c_J>pr_1/(p-1)$,
it is uniquely determined.
We denote this object by $\hat{\mfM}(\mfn)$.
\end{corollary}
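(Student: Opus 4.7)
The plan is to derive both the existence and uniqueness directly from Lemma \ref{Lem1} and Proposition \ref{FFTHMMOD}, the real content being a height check needed to align the hypotheses with the bound $c_J > pr_1/(p-1)$ (which only involves $r_1$, not $r'$).

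First I would verify that $\mfM(\mfn)$ is in fact a Kisin module of height $\le r_1 = \min\{r, r_0\}$, and therefore of height $\le r'$ for every $r' \ge r_1$. Since $\mfn \in \mcal{S}^r_{\mrm{max}}$ we have $0 \le n_i \le \min\{er, p-1\}$ for each $i$. If $r_1 = r$, then $n_i \le er = er_1$ directly; otherwise $r_1 = r_0$, and the maximality in the definition $r_0 = \mrm{max}\{r'\in \mbb{Z}_{\ge 0}; e(r'-1)<p-1\}$ forces $er_0 \ge p-1$, so that $n_i \le p-1 \le er_1$. With the height bound in hand, Lemma \ref{Lem1} (whose hypothesis $p^{s+2}/(p-1) \ge c_J$ is precisely our standing assumption) equips $\mfM(\mfn)$ with a structure of an object of $\wtMod^{r_1,\hat{G}_s,J}_{/\mfS_{\infty}}$. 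Since the defining conditions of $\wtMod^{r',\hat{G}_s,J}_{/\mfS_{\infty}}$ only involve the height bound and the universal condition $\tau^{p^s}(x) - x \in JW(R) \otimes_{\vphi,\mfS} \mfM$, the same structure is also an object of $\wtMod^{r',\hat{G}_s,J}_{/\mfS_{\infty}}$ for every $r' \ge r_1$, giving the existence statement.

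For uniqueness, assume $c_J > pr_1/(p-1)$, and let $\hat{\mfM}_1, \hat{\mfM}_2$ be any two $\hat{G}_s$-structures on the same underlying Kisin module $\mfM(\mfn)$, each belonging to $\wtMod^{r',\hat{G}_s,J}_{/\mfS_{\infty}}$ for some $r' \ge r_1$. Because the underlying Kisin module has height $\le r_1$, both $\hat{\mfM}_i$ in fact lie in $\wtMod^{r_1,\hat{G}_s,J}_{/\mfS_{\infty}}$. Then Proposition \ref{FFTHMMOD}, applied with source and target both at level $r_1$ and using the bound $c_J > pr_1/(p-1)$, yields $\mrm{Hom}(\hat{\mfM}_1, \hat{\mfM}_2) = \mrm{Hom}(\mfM(\mfn), \mfM(\mfn))$. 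In particular, the identity on the underlying Kisin module upgrades to a morphism $\hat{\mfM}_1 \to \hat{\mfM}_2$, which unpacks to the assertion that the two $\hat{G}_s$-actions on $\whR_s \otimes_{\vphi, \mfS} \mfM(\mfn)$ literally coincide, i.e.\ $\hat{\mfM}_1 = \hat{\mfM}_2$. The only delicate point in the whole argument is the height verification in the first paragraph, especially the inequality $er_0 \ge p-1$ extracted from the ``max'' in the definition of $r_0$; once that is in place, the corollary is an immediate corollary of Lemma \ref{Lem1} and Proposition \ref{FFTHMMOD}.
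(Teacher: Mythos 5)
Your proof is correct and follows the same route as the paper: the paper's (very terse) proof simply remarks that $\mfM(\mfn)$ has height $\le r_1$ and that uniqueness follows from Proposition~\ref{FFTHMMOD}, exactly the two points you unpack. Your expansion of the height verification (in particular extracting $er_0\ge p-1$ from the maximality defining $r_0$) and of the uniqueness argument (identity on $\mfM(\mfn)$ lifting to a morphism of $\hat{G}_s$-structures) is accurate and fills in the details the paper leaves implicit.
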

\begin{proof}
We should remark that  $\mfM(\mfn)$ is of height $\le r_1$ 
for any $\mfn\in \mcal{S}^r_{\mrm{max}}$.
The uniqueness assertion follows from 
Proposition \ref{FFTHMMOD}. 
\end{proof}

\begin{lemma}
\label{tameres}
The functor from tamely ramified $\mbb{Z}_p$-representations of $G_K$
to $\mbb{Z}_p$-representations of $G_{\infty}$,
obtained by restricting the action of $G_K$ to $G_{\infty}$, is fully faithful.
\end{lemma}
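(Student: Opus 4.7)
The plan is to reduce the statement to the group-theoretic fact that $G_K = G_\infty \cdot P_K$, where $P_K \subset G_K$ denotes the wild inertia subgroup. Since a tamely ramified $\mathbb{Z}_p$-representation of $G_K$, by definition, has its action factor through $G_K/P_K$, for any two tame representations $T, T'$ and any $G_\infty$-equivariant homomorphism $f \colon T \to T'$, I would argue as follows: given $g \in G_K$, write $g = h\cdot p$ with $h \in G_\infty$ and $p \in P_K$ (using the decomposition $G_K = G_\infty P_K$, which makes sense because $P_K$ is normal in $G_K$). Then since $p$ acts trivially on both $T$ and $T'$, one gets $f(gx) = f(hx) = h f(x) = h p f(x) = g f(x)$. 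This yields fullness, while faithfulness of the restriction functor is formal.

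The content of the proof therefore reduces to establishing $G_K = G_\infty \cdot P_K$, equivalently that $G_\infty$ surjects onto the tame quotient $G_K/P_K = \mathrm{Gal}(K^{\mathrm{tame}}/K)$, equivalently that
\[
K^{\mathrm{tame}} \cap K_\infty = K.
\]
For this, the key observation is that every finite subextension $K(\pi_n)/K$ is totally ramified of degree $p^n$, hence totally \emph{wildly} ramified; passing to the union, $K_\infty/K$ is a totally wildly ramified pro-$p$ extension. Any subextension $F/K$ of $K_\infty/K$ is thus totally wildly ramified, while any subextension of $K^{\mathrm{tame}}/K$ is tamely ramified. Their intersection must then be trivial, giving $K^{\mathrm{tame}} \cap K_\infty = K$, and hence by elementary Galois theory $G_\infty \twoheadrightarrow \mathrm{Gal}(K^{\mathrm{tame}}/K)$ is surjective. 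From this surjectivity and the normality of $P_K$ in $G_K$, the equality $G_K = G_\infty \cdot P_K$ follows immediately.

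There is no real obstacle here; the only point that requires mild care is verifying that $K_\infty/K$ is totally wildly ramified, but this is immediate from the fact that $[K(\pi_n):K] = p^n = e(K(\pi_n)/K)$ since $\pi_n^{p^n} = \pi$ is a uniformizer of $K$. All other steps are purely formal manipulations with Galois groups and with the condition that tame representations kill $P_K$.
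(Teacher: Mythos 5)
Your proof is correct and takes essentially the same route as the paper: the paper simply asserts that $G_K$ is topologically generated by $G_\infty$ and the wild inertia subgroup (equivalent to your $G_K = G_\infty\cdot P_K$ since $P_K$ is closed and normal), and you supply the short verification of that fact via $K^{\mathrm{tame}}\cap K_\infty = K$, together with the routine deduction of fullness.
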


\begin{proof}
The result immediately follows from the fact that 
$G_K$ is topologically generated by $G_{\infty}$ and the wild inertia subgroup of $G_K$.
\end{proof}
\noindent
We remark that any semi-simple $\mbb{F}_p$-representation of $G_K$ is automatically tame.

\begin{lemma}
\label{FFLEM}
Assume that $J\supset u^pI^{[1]}W(R)$ or $k$ is algebraically closed.
Let $T\in \mrm{Rep}_{\mrm{tor}}(G_s)$ and 
$T'\in \wt{\mrm{Rep}}^{r,\hat{G}_s,J}_{\mrm{tor}}(G_s)$.
Suppose that $T$ is tame, $pT=0$ and $T|_{G_{\infty}}\simeq T_{\mfS}(\mfM)$ 
for some $\mfM\in \Mod^r_{/\mfS_{\infty}}$. 
Furthermore,
we suppose 
$p^{s+2}/(p-1)\ge c_J>pr/(p-1)$.
Then all $G_{\infty}$-equivariant homomorphisms $T\to T'$ are in fact $G_s$-equivariant.
\end{lemma}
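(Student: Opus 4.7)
The strategy is to construct $\tilde{\mfM}\in \wtMod^{r,\hat{G}_s,J}_{/\mfS_{\infty}}$ such that $T\simeq \hat{T}_s(\tilde{\mfM})$ as $G_s$-representations, after which Proposition~\ref{FFTHMMOD} quickly yields $G_s$-equivariance of $f$. Concretely, granting such $\tilde{\mfM}$ with underlying maximal Kisin module $\mfM$, and writing $T'=\hat{T}_s(\hat{\mfN})$ for some $\hat{\mfN}\in \wtMod^{r,\hat{G}_s,J}_{/\mfS_{\infty}}$ with underlying Kisin module $\mfN$, the restriction $f|_{G_{\infty}}$ corresponds to a $G_{\infty}$-equivariant map $T_{\mfS}(\mfM)\to T_{\mfS}(\mfN)$; composing with the canonical isomorphism $T_{\mfS}(\mfN)\simeq T_{\mfS}(\Max^r(\mfN))$ and invoking the full faithfulness of $T_{\mfS}$ on maximal Kisin modules (\cite[Corollary 3.3.10]{CL1}), I lift it to a Kisin morphism $\Max^r(\mfN)\to\mfM$, and composing with the natural inclusion $\mfN\hookrightarrow\Max^r(\mfN)$ produces a Kisin morphism $g\colon \mfN\to\mfM$ with $T_{\mfS}(g)=f|_{G_{\infty}}$. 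Since $c_J>pr/(p-1)$, Proposition~\ref{FFTHMMOD} automatically promotes $g$ to a morphism $\hat{\mfN}\to\tilde{\mfM}$ in $\wtMod^{r,\hat{G}_s,J}_{/\mfS_{\infty}}$, so $\hat{T}_s(g)\colon T\to T'$ is $G_s$-equivariant, has the same $G_{\infty}$-restriction as $f$, and therefore equals $f$ as a set map; in particular $f$ is $G_s$-equivariant.

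To construct $\tilde{\mfM}$, I first reduce to the case where $k$ is algebraically closed. Let $L=\widehat{K^{\mrm{ur}}}$; under the hypothesis $J\supset u^pI^{[1]}W(R)$, Lemma~\ref{bc} ensures $T'|_{G_{L,s}}\in \wt{\mrm{Rep}}^{r,\hat{G}_{L,s},J}_{\mrm{tor}}(G_{L,s})$, and the hypotheses on $T$ also pass to $G_{L,s}$, so proving the lemma over $L$ yields $G_{L,s}$-equivariance of $f|_{G_{L,s}}$; combined with $G_{\infty}$-equivariance and the identity $G_s=G_{L,s}\cdot G_{\infty}$ (which holds because $L/K$ is unramified while $K_{\infty}/K$ is totally ramified, so that $L_{(s)}\cap K_{\infty}=K_{(s)}$), this gives $G_s$-equivariance for the original $f$. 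The other alternative hypothesis of the lemma already has $k$ algebraically closed. With $k$ algebraically closed, since $T$ is tame and $pT=0$, $T|_{G_{\infty}}=T_{\mfS}(\mfM)$ is semisimple as an $\mbb{F}_p[G_{\infty}]$-module, so we may replace $\mfM$ by $\Max^r(\mfM)$ without affecting $T_{\mfS}(\mfM)$. By \cite[Propositions 3.6.8 and 3.6.12]{CL1}, $\mfM$ then decomposes as $\bigoplus_i \mfM(\mfn_i)$ with $\mfn_i\in \mcal{S}^r_{\mrm{max}}$, and Corollary~\ref{Cor1} (whose hypothesis $p^{s+2}/(p-1)\ge c_J$ is given) equips each $\mfM(\mfn_i)$ with a unique structure $\hat{\mfM}(\mfn_i)\in \wtMod^{r,\hat{G}_s,J}_{/\mfS_{\infty}}$; set $\tilde{\mfM}:=\bigoplus_i \hat{\mfM}(\mfn_i)$.

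The main obstacle is to upgrade the automatic $G_{\infty}$-equivariant isomorphism $\hat{T}_s(\tilde{\mfM})|_{G_{\infty}}\simeq T_{\mfS}(\mfM)\simeq T|_{G_{\infty}}$ (coming from the standard comparison map) to a $G_s$-equivariant isomorphism. The plan is to verify that $\hat{T}_s(\tilde{\mfM})$ is a tame $G_s$-representation and then invoke the analogue of Lemma~\ref{tameres} with $K$ replaced by $K_{(s)}$: that analogue holds because $K_{\infty}/K_{(s)}$ is a pro-$p$ totally ramified extension, so $K_{(s)}^{\mrm{tame}}\cap K_{\infty}=K_{(s)}$, and consequently the composite $G_{\infty}\hookrightarrow G_s\twoheadrightarrow G_s^{\mrm{tame}}$ is surjective, showing that tame $G_s$-representations are determined by their restriction to $G_{\infty}$. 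Tameness of $\hat{T}_s(\tilde{\mfM})$ should follow from the explicit $\hat{G}_s$-action constructed on each $\hat{\mfM}(\mfn_i)$ in the proof of Lemma~\ref{Lem1} (namely $\tau^{p^s}(1\otimes e_i)=[\eta]^{p^s m_i}(1\otimes e_i)$ with $\eta$ a $(p^d-1)$-th root of $\underline{\e}$), which realizes the building blocks as representations of fundamental-character type; this tameness verification is the principal technical hurdle of the argument.
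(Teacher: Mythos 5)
Your overall structure matches the paper's: reduce to algebraically closed $k$ via Lemma~\ref{bc} and the identity $G_s = G_{L,s}\cdot G_{\infty}$, build a $\hat{G}_s$-structure on (a maximal model of) $\mfM$ out of the simple pieces $\mfM(\mfn_i)$ via Corollary~\ref{Cor1}, identify $T\simeq \hat{T}_s(\tilde\mfM)$ as $G_s$-representations, and then deduce $G_s$-equivariance from Proposition~\ref{FFTHMMOD} together with full faithfulness of $T_{\mfS}$ on maximal Kisin modules. The one organizational difference is that you work with the full semisimple decomposition $\bigoplus\mfM(\mfn_i)$ at once, whereas the paper first passes to a single irreducible constituent of $T$ and argues by d\'evissage inside Theorem~\ref{FFTHM}; this is cosmetic.

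There is, however, a genuine gap at exactly the point you flag as ``the principal technical hurdle'': you never establish that $\hat{T}_s(\tilde{\mfM})$ is tame, which is what is needed to apply the $K_{(s)}$-analogue of Lemma~\ref{tameres} and promote the $G_{\infty}$-isomorphism $T|_{G_\infty}\simeq \hat{T}_s(\tilde{\mfM})|_{G_\infty}$ to a $G_s$-isomorphism. Trying to read tameness directly off the formula $\tau^{p^s}(1\otimes e_i)=[\eta]^{p^s m_i}(1\otimes e_i)$ is not a simple computation (one would have to unwind how this $\hat{G}_s$-action on $\whR_s\otimes_{\vphi,\mfS}\mfM(\mfn)$ translates into the wild inertia action on $\hat{T}_s$). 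The paper closes this gap more cheaply: it first shows (this is the ``claim'' in its proof) that $T|_{G_\infty}$ is irreducible when $T$ is, then cites \cite[Theorem 3.6.11]{CL1} for irreducibility of $\hat{T}_s(\hat{\mfM}(\mfn))$ as a $G_s$-module, and then uses the general fact (the remark after Lemma~\ref{tameres}) that any irreducible $\mbb{F}_p$-representation of a local Galois group is automatically tame, since the wild inertia is pro-$p$. Alternatively, you could avoid the external citation altogether: each $\hat{T}_s(\hat{\mfM}(\mfn_i))|_{G_\infty}\simeq T_\mfS(\mfM(\mfn_i))$ is irreducible because $\mfM(\mfn_i)$ is simple in $\Max^r_{/\mfS_\infty}$ and $T_\mfS$ is exact and fully faithful there, and a $G_s$-representation whose restriction to $G_\infty$ is irreducible is itself irreducible, hence tame. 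Either way, some such argument must be supplied; without it the key isomorphism $T\simeq\hat{T}_s(\tilde\mfM)$ as $G_s$-representations has not been justified.
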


\begin{proof}
Let $L$ be the completion of the 
maximal unramified extension $K^{\mrm{ur}}$ of $K$.
By identifying $G_L$ with $G_{K^{\mrm{ur}}}$, 
we may regard $G_L$ as a subgroup of $G_K$.
Note that $L_{(s)}=K_{(s)}L$ is the completion of the maximal unramified extension of $K_{(s)}$,
and $G_s$ is topologically generated by $G_{L,s}$ and $G_{\infty}$.
Consider the following commutative diagram:
\begin{center}
$\displaystyle \xymatrix{
\mrm{Hom}_{G_{L,s}}(T,T')\ar@{^{(}->}[rr] & &   
\mrm{Hom}_{G_{L,\infty}}(T,T') \\
\mrm{Hom}_{G_s}(T,T') \ar@{^{(}->}[u] \ar@{^{(}->}[rr] & &   
\mrm{Hom}_{G_{\infty}}(T,T'). \ar@{^{(}->}[u]
}$
\end{center}
\noindent
Since $T'|_{G_{L,s}}$ is contained in 
$\wt{\mrm{Rep}}^{r,\hat{G}_{L,s},J}_{\mrm{tor}}(G_{L,s})$
if $J\supset u^pI^{[1]}W(R)$ (cf.\ Lemma \ref{bc}),
the above diagram allows us to reduce a proof to the case where $k$ is algebraically closed.
In the rest of this proof, we assume that $k$ is algebraically closed. 
Under this assumption, an $\mbb{F}_p$-representation of $G_s$ is 
tame if and only if it is semi-simple by Maschke's theorem.
Thus we may also assume that $T$ is irreducible
(here, we remark that any subquotient of $T$ is tame and,
also remark that the essential image of 
$T_{\mfS}\colon \Mod^r_{/\mfS_{\infty}}\to \mrm{Rep}_{\mrm{tor}}(G_{\infty})$
is stable under subquotients in $\mrm{Rep}_{\mrm{tor}}(G_{\infty})$).
We claim that 
$T|_{G_{\infty}}$ is also irreducible.
If not, there exists a non-zero irreducible 
$\mbb{F}_p[G_{\infty}]$-submodule $W$ of $T|_{G_{\infty}}$.
Let $K^{\mrm{t}}_{(s)}$ be the maximal tamely ramified extension of $K_{(s)}$
and $I_{p,s}:=\mrm{Gal}(\overline{K}/K^{\mrm{t}}_{(s)})$ the wild inertia subgroup of $G_s$.
We see that $K^{\mrm{t}}_{(s)}\cap K_{\infty}=K_{(s)}$. 
Since $G_{\infty}\cap I_{p,s}$ acts on $W$ trivially,
the $G_{\infty}$-action on $W$ extends to $G_s$ via the composition map
$G_s\twoheadrightarrow \mrm{Gal}(K^{\mrm{t}}_{(s)}/K_{(s)})
\simeq G_{\infty}/(G_{\infty}\cap I_{p,s})$.
Thus we can regard $W$ as an irreducible $\mbb{F}_p[G_s]$-module.
By Lemma \ref{tameres}, we see that $W$ is a sub $\mbb{F}_p[G_s]$-module of $T$. 
This contradicts the irreducibility of $T$ and the claim follows.

By the assumption on $T$, we have
$T|_{G_{\infty}}\simeq T_{\mfS}(\mfM)\simeq T_{\mfS}(\Max^r(\mfM))$ 
for some $\mfM\in \Mod^r_{/\mfS_{\infty}}$.
Since $T|_{G_{\infty}}$ is irreducible and 
$T_{\mfS}\colon \Max^r_{/\mfS_{\infty}}\to \mrm{Rep}_{\mrm{tor}}(G_{\infty})$
is exact and fully faithful,
we know that $\Max^r(\mfM)$ is a simple object in 
the abelian category $\Max^r_{/\mfS_{\infty}}$.
Therefore, since $k$ is algebraically closed,
we have $\Max^r(\mfM)\simeq \mfM(\mfn)$ for some
$\mfn\in \mcal{S}^r_{\mrm{max}}$ (cf.\ \cite[Propositions 3.6.8 and 3.6.12]{CL1}).
Let $\hat{\mfM}(\mfn)$  
be the object of $\wtMod^{r,\hat{G},J}_{/\mfS_{\infty}}$ 
as in Corollary \ref{Cor1}.
We recall that $T_{\mfS}(\mfM(\mfn))$ is isomorphic to 
$\hat{T}_s(\hat{\mfM}(\mfn))|_{G_{\infty}}$ (see Theorem \ref{Thm1} (1)),
and hence we have an isomorphism 
$T|_{G_{\infty}}\simeq \hat{T}_s(\hat{\mfM}(\mfn))|_{G_{\infty}}$.
Here, we note that $T$ and $\hat{T}_s(\hat{\mfM}(\mfn))$ are irreducible 
as representations of $G_s$ (cf.\ \cite[Theorem 3.6.11]{CL1}).  
Applying Lemma \ref{tameres} again,
we obtain an isomorphism  $T\simeq \hat{T}_s(\hat{\mfM}(\mfn))$
as representations of $G_s$.
On the other hand, we can take 
$\hat{\mfM}'=(\mfM',\vphi,\hat{G}_s)\in  \wtMod^{r,\hat{G}_s,J}_{/\mfS_{\infty}}$
such that $T'\simeq \hat{T}_s(\hat{\mfM}')$.
We consider the following commutative diagram:
\begin{center}
$\displaystyle \xymatrix{
\mrm{Hom}_{G_s}(T,T')\ar@{^{(}->}[rr] & &   
\mrm{Hom}_{G_{\infty}}(T,T') \\
\mrm{Hom}(\hat{\mfM}',\hat{\mfM}(\mfn)) \ar^{\hat{T}_s}[u] \ar^{\mrm{forgetful}\ }[r] &
\mrm{Hom}(\mfM',\mfM(\mfn)) \ar^{\Max^r\quad \ \ }[r] & \mrm{Hom}(\Max^r(\mfM'),\mfM(\mfn)). 
\ar^{T_{\mfS}}[u]. 
}$
\end{center}
Here, $\mrm{Hom}(\hat{\mfM}',\hat{\mfM}(\mfn))$ 
(resp.\ $\mrm{Hom}(\mfM',\mfM(\mfn))$, resp.\ $\mrm{Hom}(\Max^r(\mfM'),\mfM(\mfn))$)
is the set of morphisms 
$\hat{\mfM}'\to \hat{\mfM}(\mfn)$ 
(resp.\ $\mfM'\to \mfM(\mfn)$, resp.\ $\Max^r(\mfM')\to \mfM(\mfn)$)
in $\wtMod^{r_1,\hat{G}_s,J}_{/\mfS_{\infty}}$
(resp.\ $\Mod^{r_1}_{/\mfS_{\infty}}$, resp.\ $\Max^r_{/\mfS_{\infty}}$).
The first bottom horizontal arrow is bijective by Theorem \ref{Thm1} (3) and 
so is the second (this follows from the fact that $\mfM(\mfn)$ is maximal by \cite[Proposition 3.6.7]{CL1}).
Since the right vertical arrow is bijective,
the top horizontal arrow must be bijective. 
\end{proof}

Now we are ready to prove Theorem \ref{FFTHM}.

\begin{proof}[Proof of Theorem \ref{FFTHM}]
Let $T$ and $T'$ be objects of $\wt{\mrm{Rep}}^{r,\hat{G}_s,J}_{\mrm{tor}}(G_s)$.
Take any Jordan-H\"ollder sequence $0=T_0\subset T_1\subset \cdots \subset T_n=T$ 
of $T$ in $\mrm{Rep}_{\mrm{tor}}(G_s)$.
By Corollary \ref{stability},
we know that $T_i$ and $T_i/T_{i-1}$ are contained in $\wt{\mrm{Rep}}^{r,\hat{G}_s,J}_{\mrm{tor}}(G_s)$
for any $i$.
By Corollary \ref{stability} again,
the category $\wt{\mrm{Rep}}^{r,\hat{G}_s,J}_{\mrm{tor}}(G_s)$ 
is an exact category in the sense of Quillen (\cite[Section 2]{Qu}).
Hence short exact sequences in $\wt{\mrm{Rep}}^{r,\hat{G}_s,J}_{\mrm{tor}}(G_s)$ give rise
to exact sequences of Hom's and Ext's in the usual way.
(This property holds for any exact category.)
On the other hand, by Lemma \ref{FFLEM},
if an exact sequence 
$0\to T'\to V\to T_i/T_{i-1}\to 0$ in $\wt{\mrm{Rep}}^{r,\hat{G}_s,J}_{\mrm{tor}}(G_s)$
splits as representation of $G_{\infty}$, then 
it splits as a sequence of representations of $G_s$.
Therefore, comparing exact sequences 
of Hom's and Ext's arising from 
$0\to T_{i-1}\to T_i\to T_i/T_{i-1}\to 0$ in the category
$\wt{\mrm{Rep}}^{r,\hat{G}_s,J}_{\mrm{tor}}(G_s)$
with that in the category $\mrm{Rep}_{\mrm{tor}}(G_{\infty})$,
we obtain the following implication (here, use Lemma \ref{FFLEM} again):
if we have $\mrm{Hom}_{G_s}(T_{i-1}, T')=\mrm{Hom}_{G_{\infty}}(T_{i-1}, T')$,
then it gives the equality $\mrm{Hom}_{G_s}(T_i, T')=\mrm{Hom}_{G_{\infty}}(T_i, T')$.
Hence a d\'evissage argument works and the desired full faithfulness follows. 
\end{proof}

%%%%%%%%%%%%%%%%%%%%%%%%%%%%%%%%%%%%%%%%%%%%%%%%%%%%%%%%%%%%%%%%%%%%%%%%%%%%%%%%%%%%%%%%%%%%%%%%%%%%%%%%%%%
%%%%%%%%%%%%%%%%%%%%%%%%%%%%%%%%%%%%%%%%%%%%%%%%%%%%%%%%%%%%%%%%%%%%%%%%%%%%%%%%%%%%%%%%%%%%%%%%%%%%%%%%%%%
%                       4.4                                %%%%%%%%%%%%%%%%%%%%%%%%%%%%%%%%%%%%%%%%%%%%%%%%
%%%%%%%%%%%%%%%%%%%%%%%%%%%%%%%%%%%%%%%%%%%%%%%%%%%%%%%%%%%%%%%%%%%%%%%%%%%%%%%%%%%%%%%%%%%%%%%%%%%%%%%%%%%
%%%%%%%%%%%%%%%%%%%%%%%%%%%%%%%%%%%%%%%%%%%%%%%%%%%%%%%%%%%%%%%%%%%%%%%%%%%%%%%%%%%%%%%%%%%%%%%%%%%%%%%%%%%

\subsection{Proof of Theorem \ref{Main1}}

Now we are ready to prove our main theorems.
First we prove Theorem \ref{Main1}.

Recall that $\mrm{Rep}^{r,\mrm{ht},\mrm{pcris}(s)}_{\mrm{tor}}(G_K)$
is the category of torsion $\mbb{Z}_p$-representations $T$ of $G_K$
which satisfy the following:
there exist free $\mbb{Z}_p$-representations 
$L$ and $L'$ of $G_K$, of height $\le r$, such that
\begin{itemize}
\item  $L|_{G_s}$ is a subrepresentation of $L'|_{G_s}$.
Furthermore, $L|_{G_s}$ and $L'|_{G_s}$ are lattices in some 
crystalline $\mbb{Q}_p$-representation of $G_s$ with Hodge-Tate weights in $[0,r]$;
\item  $T|_{G_s} \simeq (L'|_{G_s})/(L|_{G_s})$.
\end{itemize}

\noindent
We apply our arguments given in previous subsections with the following $J$: 
$$
J=u^pI^{[1]}W(R)=u^p\vphi(\mft)W(R).
$$
Then we have $c_J=p/e+p/(p-1)$ and thus the inequalities 
$p^{s+2}/(p-1)\ge c_J>pr/(p-1)$
are satisfied if $e(r-1)<p-1$.
Therefore, 
Theorem \ref{Main1} is an easy consequence of the following proposition and Theorem \ref{FFTHM}.
\begin{proposition}
\label{proofMain1}
If $T$ is an object of $\mrm{Rep}^{r,\mrm{ht},\mrm{pcris}(s)}_{\mrm{tor}}(G_K)$,
then $T|_{G_s}$ is contained in $\wt{\mrm{Rep}}^{r,\hat{G}_s,J}_{\mrm{tor}}(G_s)$.
\end{proposition}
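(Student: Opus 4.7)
The plan is to lift the witnessing data $(L,L')$ for $T$ to the Kisin-module level, producing a torsion $(\vphi,\hat{G}_s)$-module over $\mfS$ that realizes $T|_{G_s}$ and satisfies the $J$-condition. Since $L$ and $L'$ are free $\mbb{Z}_p$-representations of $G_K$ of height $\le r$, Proposition \ref{Kisinfunctor}(1) supplies free Kisin modules $\mfL,\mfL'\in \mrm{Mod}^r_{/\mfS}$ with $T_{\mfS}(\mfL)\simeq L|_{G_{\infty}}$ and $T_{\mfS}(\mfL')\simeq L'|_{G_{\infty}}$. The $G_{\infty}$-equivariant inclusion $L|_{G_{\infty}}\hookrightarrow L'|_{G_{\infty}}$ corresponds, via the full faithfulness of $T_{\mfS}$, to a unique morphism $\iota\colon \mfL'\to \mfL$. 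Setting $\mfM:=\ker(\iota)$ produces a torsion Kisin module of height $\le r$ (cf.\ \cite[Lemma 2.3.1]{Li1}) fitting in $0\to \mfM\to \mfL'\to \mfL\to 0$, whose image under $T_{\mfS}$ is $0\to L|_{G_{\infty}}\to L'|_{G_{\infty}}\to T|_{G_{\infty}}\to 0$; in particular $T_{\mfS}(\mfM)\simeq T|_{G_{\infty}}$.

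Next I upgrade $\mfL$ and $\mfL'$ to objects of $\wtMod^{r,\hat{G}_s}_{/\mfS}$. Because $L|_{G_s}$ is crystalline, hence semi-stable, over $K_{(s)}$ with Hodge-Tate weights in $[0,r]$, Theorem \ref{Thm1}(1) applied over $K_{(s)}$ (using $K_{(s),\infty}=K_\infty$) endows the base change $\mfL_s=\mfS_s\otimes_{\mfS}\mfL$ with a structure in $\mrm{Mod}^{r,\hat{G}_s}_{/\mfS_s}$. Since canonically $\whR_s\otimes_{\vphi,\mfS_s}\mfL_s=\whR_s\otimes_{\vphi,\mfS}\mfL$, the associated $\hat{G}_s$-action on the right-hand side together with the inclusion $\mfL\subset \mfL_s$ immediately verifies conditions (1)--(5) of Definition \ref{varLiumod}, so $\mfL$ becomes an object of $\wtMod^{r,\hat{G}_s}_{/\mfS}$ (free variant); likewise for $\mfL'$. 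The crystallinity of $L|_{G_s}$ then translates, via Theorem \ref{cris}, into
$$
\tau^{p^s}(x)-x\in u^p\bigl(I^{[1]}W(R)\otimes_{\vphi,\mfS}\mfL\bigr)=JW(R)\otimes_{\vphi,\mfS}\mfL
$$
for every $x\in \mfL$, and analogously for $\mfL'$.

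Finally I transfer the structure to $\mfM$. The map $\iota$ corresponds contravariantly to the $G_s$-equivariant inclusion $L|_{G_s}\hookrightarrow L'|_{G_s}$, so functoriality of Theorem \ref{Thm1}(1) ensures that $\iota$ is a morphism in the $(\vphi,\hat{G}_s)$-category. Equipping $\mfM$ with the induced $\hat{G}_s$-action on $\whR_s\otimes_{\vphi,\mfS}\mfM$ turns it into an object $\hat{\mfM}$ of $\wtMod^{r,\hat{G}_s}_{/\mfS_\infty}$, and the resulting exact sequence $0\to \hat{\mfM}\to \hat{\mfL}'\to \hat{\mfL}\to 0$ is mapped by $\hat{T}_s$ (Theorem \ref{Thm1}(3), together with its free counterpart) to $0\to L|_{G_s}\to L'|_{G_s}\to \hat{T}_s(\hat{\mfM})\to 0$, whence $\hat{T}_s(\hat{\mfM})\simeq T|_{G_s}$ as $G_s$-representations. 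The $J$-condition passes to $\hat{\mfM}$ by the argument of Lemma \ref{speciallemma}: for $x\in \mfM\subset \mfL'$, the element $\tau^{p^s}(x)-x$ lies in $(JW(R)\otimes_{\vphi,\mfS}\mfL')\cap (W(R)\otimes_{\vphi,\mfS}\mfM)$, which by Lemma 6 of \cite{Oz2} (applicable since $J=u^p\vphi(\mft)W(R)$ is principal and not contained in $pW(R)$) coincides with $JW(R)\otimes_{\vphi,\mfS}\mfM$.

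The main obstacle I anticipate is the descent step in the second paragraph: one must carefully confirm that the $(\vphi,\hat{G}_s)$-structure inhabiting $\mfL_s$ over $\mfS_s$ genuinely refines to a structure on $\mfL$ in the sense of $\wtMod^{r,\hat{G}_s}_{/\mfS}$, and that $\iota$ lifts compatibly. Both reduce to the identity $\whR_s\otimes_{\vphi,\mfS}\mfL=\whR_s\otimes_{\vphi,\mfS_s}\mfL_s$ and to the uniqueness of the $(\vphi,\hat{G}_s)$-structure attached to a semi-stable representation, but the bookkeeping with the embedding $\mfL\subset \mfL_s$ is the one place where the argument is not purely formal.
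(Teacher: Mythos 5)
Your overall strategy matches the paper's: realize $T|_{G_s}$ via a torsion $(\vphi,\hat G_s)$-module over $\mfS$ built from the witnessing lattices $L, L'$, verify the $J$-condition, and use the descent of the $\hat G_s$-structure from $\mfS_s$ to $\mfS$ via Proposition \ref{Kisinfunctor}(1). The descent step in your second paragraph is essentially what the paper does, and your identification of the subtlety there is correct.

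However, there is a substantive error in the third step. Since $T_{\mfS}$ is \emph{contravariant}, the inclusion $L|_{G_{\infty}}\hookrightarrow L'|_{G_{\infty}}$ (an isogeny of lattices) corresponds to an \emph{injection} $\iota\colon \mfL'\hookrightarrow \mfL$ with torsion cokernel; the kernel of $\iota$ is zero. Your definition $\mfM:=\ker(\iota)$ therefore gives the zero module, and the claimed short exact sequence $0\to\mfM\to\mfL'\to\mfL\to 0$ cannot exist: a nonzero $p$-power-torsion module cannot embed into a finite free $\mfS$-module, and a surjection $\mfL'\to\mfL$ between free modules of equal rank would be an isomorphism. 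The correct definition is $\mfM:=\mrm{coker}(\iota)=\mfL/\mfL'$, giving $0\to\mfL'\to\mfL\to\mfM\to 0$; the passage to $0\to L|_{G_s}\to L'|_{G_s}\to T|_{G_s}\to 0$ then requires the connecting-homomorphism argument of \cite[Lemma 3.1.4]{CL2} (which the paper invokes), not a naive application of exactness, because the free and torsion cases of $\hat T_s$ are defined with different coefficients. This sign error also invalidates your $J$-condition argument: $\mfM$ is a quotient of $\mfL$, not a submodule of $\mfL'$, so the intersection and \cite[Lemma 6]{Oz2} are not applicable. The correct and simpler argument is the quotient one: for $x\in\mfM$, lift to $\tilde x\in\mfL$; by Theorem \ref{cris} applied to the free module $\hat{\mfL}$ (whose $\mbb Q_p$-representation $\mbb Q_p\otimes L|_{G_s}$ is crystalline), $\tau^{p^s}(\tilde x)-\tilde x\in JW(R)\otimes_{\vphi,\mfS}\mfL$, and projecting along the surjection $W(R)\otimes_{\vphi,\mfS}\mfL\twoheadrightarrow W(R)\otimes_{\vphi,\mfS}\mfM$ lands in $JW(R)\otimes_{\vphi,\mfS}\mfM$.
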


\begin{proof}
Take free $\mbb{Z}_p$-representations 
$L$ and $L'$ of $G_K$, of height $\le r$, such that
\begin{itemize}
\item  $L|_{G_s}$ is a subrepresentation of $L'|_{G_s}$.
Furthermore, $L|_{G_s}$ and $L'|_{G_s}$ are lattices in some 
crystalline $\mbb{Q}_p$-representation of $G_s$ with Hodge-Tate weights in $[0,r]$;
\item  $T|_{G_s} \simeq (L'|_{G_s})/(L|_{G_s})$.
\end{itemize}
By Theorem \ref{Thm1} (1), there exists an injection $\hat{\mfL}'\hookrightarrow \hat{\mfL}$ of 
$(\vphi,\hat{G}_s)$-modules over $\mfS_s$ which corresponds to
the injection $L|_{G_s}\hookrightarrow L'|_{G_s}$. 
On the other hand, there exist $\mfN$ and $\mfN'$ in $\Mod^r_{/\mfS}$ such that 
$T_{\mfS}(\mfN)\simeq L|_{G_{\infty}}$ and $T_{\mfS}(\mfN')\simeq L'|_{G_{\infty}}$.
Then Proposition \ref{Kisinfunctor} (1) implies that $\mfS_s\otimes_{\mfS} \mfN\simeq \mfL$
and $\mfS_s\otimes_{\mfS} \mfN'\simeq \mfL'$ as $\vphi$-modules over $\mfS_s$.
Therefore,
we see that $\mfN$ and $\mfN'$ have structures of objects of $\wtMod^{r,\hat{G}_s}_{/\mfS}$;
denote them by $\hat{\mfN}$ and $\hat{\mfN}'$, respectively.
Since the functor $\wtMod^{r,\hat{G}_s}_{/\mfS}\to \Mod^{r,\hat{G}_s}_{/\mfS_s}$
is fully faithful (cf.\ Section \ref{vardef}), the injection $\hat{\mfL}'\hookrightarrow \hat{\mfL}$
descends to an injection $\hat{\mfN}'\hookrightarrow \hat{\mfN}$.
Now we put $\mfM=\mfN/\mfN'$. 
Since $(L'|_{G_s})/(L|_{G_s})$ is killed by a power of $p$, 
it is an object of $\Mod^r_{/\mfS_{\infty}}$.
We equip a $\hat{G}_s$-action with $\whR_s\otimes_{\vphi,\mfS} \mfM$ 
by a natural isomorphism $\whR_s\otimes_{\vphi,\mfS} \mfM\simeq 
(\whR_s\otimes_{\vphi,\mfS} \mfN)/(\whR_s\otimes_{\vphi,\mfS} \mfN')$.
Then we see that $\mfM$ has a structure of an object of 
$\wtMod^{r,\hat{G}_s}_{/\mfS_{\infty}}$; denote it by $\hat{\mfM}$.
Moreover, Theorem \ref{cris} implies that 
$\hat{\mfM}$ is in fact contained in  
$\wtMod^{r,\hat{G}_s,J}_{/\mfS_{\infty}}$.
By a similar argument to the proof of  Lemma 3.1.4 of  \cite{CL2},
we have an exact sequence 
$0\to \hat{T}_s(\hat{\mfN})\to \hat{T}_s(\hat{\mfN}')\to \hat{T}_s(\hat{\mfM})\to 0$
in $\mrm{Rep}_{\mrm{tor}}(G_s)$, which is isomorphic to 
$0\to L|_{G_s}\to L'|_{G_s}\to T|_{G_s}\to 0$. 
This finishes a proof.
\end{proof}

%%%%%%%%%%%%%%%%%%%%%%%%%%%%%%%%%%%%%%%%%%%%%%%%%%%%%%%%%%%%%%%%%%%%%%%%%%%%%%%%%%%%%%%%%%%%%%%%%%%%%%%%%%%
%%%%%%%%%%%%%%%%%%%%%%%%%%%%%%%%%%%%%%%%%%%%%%%%%%%%%%%%%%%%%%%%%%%%%%%%%%%%%%%%%%%%%%%%%%%%%%%%%%%%%%%%%%%
%                       4.5                                %%%%%%%%%%%%%%%%%%%%%%%%%%%%%%%%%%%%%%%%%%%%%%%%
%%%%%%%%%%%%%%%%%%%%%%%%%%%%%%%%%%%%%%%%%%%%%%%%%%%%%%%%%%%%%%%%%%%%%%%%%%%%%%%%%%%%%%%%%%%%%%%%%%%%%%%%%%%
%%%%%%%%%%%%%%%%%%%%%%%%%%%%%%%%%%%%%%%%%%%%%%%%%%%%%%%%%%%%%%%%%%%%%%%%%%%%%%%%%%%%%%%%%%%%%%%%%%%%%%%%%%%

\subsection{Proof of Theorem \ref{Main2}}

We give a proof of Theorem \ref{Main2}.
If $s\ge n-1$, then we put
$$
J=u^pI^{[p^{s-n+1}]}W(R)=u^p\vphi(\mft)^{p^{s-n+1}}W(R).
$$
Note that we have $c_J=p/e+p^{s-n+2}/(p-1)$ and thus the inequalities 
$p^{s+2}/(p-1)\ge c_J>pr/(p-1)$
are satisfied
if $s>n-1+\mrm{log}_p(r-e/(p-1))$.
\begin{proposition}
\label{Main2Lem}
Suppose $s\ge n-1$.
If $T$ is an object of $\mrm{Rep}^{r,\mrm{cris}}_{\mrm{tor}}(G_K)$ which is killed by $p^n$,
then $T|_{G_s}$ is contained in $\wt{\mrm{Rep}}^{r,\hat{G}_s,J}_{\mrm{tor}}(G_s)$.
\end{proposition}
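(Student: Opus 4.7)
The plan is to mirror the proof of Proposition~\ref{proofMain1}, refining the crystalline bound of Theorem~\ref{cris} with the $p^n$-torsion hypothesis on $T$. First, since $T\in\mrm{Rep}^{r,\mrm{cris}}_{\mrm{tor}}(G_K)$, I would write $T\simeq L'/L$ with $L\subset L'$ being $G_K$-stable lattices in a crystalline $\mbb{Q}_p$-representation of $G_K$ with Hodge--Tate weights in $[0,r]$ and $p^nL'\subset L$. Theorem~\ref{Thm1}(2) produces free $(\vphi,\hat{G})$-modules $\hat{\mfN}',\hat{\mfN}\in\mrm{Mod}^{r,\hat{G},\mrm{cris}}_{/\mfS}$ with $\hat{T}(\hat{\mfN}')\simeq L$ and $\hat{T}(\hat{\mfN})\simeq L'$; the inclusion $L\hookrightarrow L'$ corresponds to an inclusion $\hat{\mfN}'\hookrightarrow\hat{\mfN}$, and the quotient $\hat{\mfM}=\hat{\mfN}/\hat{\mfN}'$, with $\hat{G}$-action restricted to $\hat{G}_s\subset\hat{G}$, defines an object of $\wtMod^{r,\hat{G}_s}_{/\mfS_\infty}$ with $\hat{T}_s(\hat{\mfM})\simeq T|_{G_s}$. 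The hypothesis $p^nT=0$ forces $p^n\mfN\subset\mfN'$, so $p^n\mfM=0$.

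To deduce $\hat{\mfM}\in\wtMod^{r,\hat{G}_s,J}_{/\mfS_\infty}$, using the inclusion $p^nW(R)\otimes_{\vphi,\mfS}\mfN\subset W(R)\otimes_{\vphi,\mfS}\mfN'$, it suffices to show that for every $\tilde{y}\in\mfN$,
\[
\tau^{p^s}(\tilde{y})-\tilde{y}\in u^p\vphi(\mft)^{p^{s-n+1}}W(R)\otimes_{\vphi,\mfS}\mfN+p^nW(R)\otimes_{\vphi,\mfS}\mfN.
\]
The approach is the binomial expansion $\tau^{p^s}-1=\sum_{k=1}^{p^s}\binom{p^s}{k}(\tau-1)^k$ as operators, valid since the identity commutes with $(\tau-1)$. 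By Kummer's theorem $v_p\binom{p^s}{k}=s-v_p(k)$, so modulo $p^n$ only the terms with $k\ge p^{s-n+1}$ survive. The proposition therefore reduces to the sub-lemma
\[
(\tau-1)^k(\mfN)\subset u^p\vphi(\mft)^kW(R)\otimes_{\vphi,\mfS}\mfN\qquad(k\ge1),
\]
whose base case $k=1$ is Theorem~\ref{cris}(2) with $s=0$.

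I would derive the sub-lemma for general $k$ from the explicit formula~(\ref{explicit}) on the Breuil module $\mcal{D}=S_{K_0}\otimes_{\vphi,\mfS}\mfN$. Since $\tau(t)=t$ (as $\tau$ fixes roots of unity) and the crystalline assumption forces $N_D=0$ (so $N_{\mcal{D}}=N_{S_{K_0}}\otimes\mrm{Id}_D$), iterating $(\tau-1)(x)=\sum_{i\ge1}\gamma_i(t)\otimes N^i_{\mcal{D}}(x)$ yields
\[
(\tau-1)^k(\tilde{x})=\sum_{m\ge k}k!\,S(m,k)\,\gamma_m(t)\otimes N^m_{\mcal{D}}(\tilde{x})
\]
in $B^+_{\mrm{cris}}\otimes\mcal{D}$, with $S(m,k)$ a Stirling number of the second kind. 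The two numerical ingredients are: (i) $N^m_{\mcal{D}}(\mfN)\subset u^p\mcal{D}$ for $m\ge1$, via the Gee--Liu--Savitt-type analysis in the proof of Theorem~\ref{cris} using $\tilde{S}=W(k)[\![u^p,u^{ep}/p]\!]$ with $N_{S_{K_0}}(\tilde{S})\subset u^p\tilde{S}$; and (ii) the relation $pt=\vphi(\mft)\cdot\alpha$ for some unit $\alpha\in A_{\mrm{cris}}^\times$, converting divided-power factors of $t$ into powers of $\vphi(\mft)$.

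The main obstacle I foresee is transferring the $B^+_{\mrm{cris}}$-divisibility visible in this formula into honest $W(R)$-divisibility, since $\gamma_m(t)$ and $\alpha$ naturally live in $A_{\mrm{cris}}$ rather than $W(R)$. I expect this to follow from a saturation-type statement generalizing Lemma~\ref{cryslem}(1): if an element of $W(R)\otimes_{\vphi,\mfS}\mfN$ lies in $u^p\vphi(\mft)^k\cdot(A_{\mrm{cris}}\otimes_{S_{K_0}}\mcal{D})$, then it already lies in $u^p\vphi(\mft)^kW(R)\otimes_{\vphi,\mfS}\mfN$. Once the sub-lemma is secured, the proposition follows by assembling the binomial terms modulo $p^n$.
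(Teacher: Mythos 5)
Your proposal follows essentially the same route as the paper's proof: binomial expansion of $\tau^{p^s}-1=\sum_k\binom{p^s}{k}(\tau-1)^k$, Kummer's theorem for $v_p\binom{p^s}{k}=s-v_p(k)$, and the key input $(\tau-1)^k(x)\in u^pI^{[k]}W(R)\otimes_{\vphi,\mfS}\mfN$ for a crystalline free Kisin module $\mfN$. The only substantive difference is that the paper simply cites this last input from the latter half of the proof of Proposition~4.7 of GLS (it is exactly the ``sub-lemma'' you isolate), whereas you attempt a re-derivation via the explicit formula, Stirling numbers, and a divisibility transfer from $B^+_{\mrm{cris}}$ to $W(R)$; you correctly flag the latter as the open step, but the gap is moot because the cited result already gives precisely what is needed, so the re-derivation is unnecessary. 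One minor bookkeeping slip: since $\hat{T}$ is contravariant, the inclusion $L\hookrightarrow L'$ of lattices corresponds to a reversed inclusion $\hat{\mfN}\hookrightarrow\hat{\mfN}'$ of Kisin modules, and the torsion module is $\hat{\mfM}=\hat{\mfN}'/\hat{\mfN}$, opposite to what you wrote; this has no effect on the argument.
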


\begin{proof}
Let $L$ be an object of $\mrm{Rep}^{r,\mrm{cris}}_{\mbb{Z}_p}(G_K)$.
Take a $(\vphi,\hat{G})$-module $\hat{\mfL}$ over $\mfS$ such that 
$L\simeq \hat{T}(\hat{\mfL})$.
It is known that $(\tau-1)^i(x)\in u^pI^{[i]}W(R)\otimes_{\vphi,\mfS} \mfL$ for any $i\ge 1$ and any $x\in \mfL$
(cf.\ the latter half part of the proof of \cite[Proposition 4.7]{GLS}).
Take any $x\in \mfL$.
Since $(\tau^{p^s}-1)(x)=\sum^{p^s}_{i=1}\binom{p^s}{i}(\tau-1)^i(x)$,
we obtain that 
\begin{equation}
\label{relat}
(\tau^{p^s}-1)(x)\in \sum^{p^s}_{i=1} p^{s-v_p(i)}u^pI^{[i]}W(R)\otimes_{\vphi,\mfS} \mfL.
\end{equation}
Now let $T$ be an object of $\mrm{Rep}^{r,\mrm{cris}}_{\mrm{tor}}(G_K)$ which is killed by $p^n$.
Take an exact sequence $(R)\colon 0\to L_1\to L_2\to T\to 0$ of $\mbb{Z}_p$-representations of $G_K$
with $L_1,L_2\in \mrm{Rep}^{r,\mrm{cris}}_{\mbb{Z}_p}(G_K)$.
By Theorem 3.1.3 and Lemma 3.1.4 of  \cite{CL2},
there exists an exact sequence 
$(M)\colon 0\to \hat{\mfL}_2\to \hat{\mfL}_1\to \hat{\mfM}\to 0$
of $(\vphi,\hat{G})$-modules over $\mfS$ such that 
$\hat{T}((M))\simeq (R)$.
By (\ref{relat}),
we see that
$$
(\tau^{p^s}-1)(x)\in \sum^{p^s}_{i=1} p^{s-v_p(i)}u^pI^{[i]}W(R)\otimes_{\vphi,\mfS} \mfM
$$
for any $x\in \mfM$.
Since $\mfM$ is killed by $p^n$ and $s\ge n-1$,
we have 
\begin{align*}
\sum^{p^s}_{i=1} p^{s-v_p(i)}u^pI^{[i]}W(R)\otimes_{\vphi,\mfS} \mfM
&= \sum_{i=1,\dots ,p^s, s-v_p(i)<n} p^{s-v_p(i)}u^pI^{[i]}W(R)\otimes_{\vphi,\mfS} \mfM\\
&= \sum^{n-1}_{\ell=0} p^{\ell}u^pI^{[p^{s-\ell}]}W(R)\otimes_{\vphi,\mfS} \mfM\\
& \subset u^pI^{[p^{s-n+1}]}W(R)\otimes_{\vphi,\mfS} \mfM.
\end{align*}
Therefore, we obtained the desired result.
\end{proof}

\begin{proof}[Proof of Theorem \ref{Main2}]
By Corollary \ref{FFTHMtorcris},
we may suppose $\mrm{log}_p(r-(p-1)/e)\ge 0$, that is, $e(r-1)\ge p-1$ .
Suppose $s>  n-1+\mrm{log}_p(r-(p-1)/e)$.
Note that the condition $s\ge n-1$ is now satisfied.
Let $T$ and $T'$ be as in the statement of Theorem \ref{Main2}.
Let $f\colon T\to T'$ be a $G_{\infty}$-equivariant homomorphism. 
Denote by $L$ the completion of $K^{\mrm{ur}}$
and identify $G_L$ with the inertia subgroup of $G_K$. 
We note that $T|_{G_L}$ and $T'|_{G_L}$ are object of 
$\mrm{Rep}^{r,\mrm{cris}}_{\mrm{tor}}(G_L)$.
By Proposition \ref{Main2Lem},
$T|_{G_{L,s}}$ and $T'|_{G_{L,s}}$ are objects of 
$\wt{\mrm{Rep}}^{r,\hat{G}_{L,s},J}_{\mrm{tor}}(G_{L,s})$.
Hence we have that $f$ is $G_{L,s}$-equivariant by Theorem \ref{FFTHM}.
Since $G_s$ is topologically generated by $G_{L,s}$ and $G_{\infty}$, 
we see that $f$ is $G_s$-equivariant.
\end{proof}

%%%%%%%%%%%%%%%%%%%%%%%%%%%%%%%%%%%%%%%%%%%%%%%%%%%%%%%%%%%%%%%%%%%%%%%%%%%%%%%%%%%%%%%%%%%%%%%%%%%%%%%%%%%
%%%%%%%%%%%%%%%%%%%%%%%%%%%%%%%%%%%%%%%%%%%%%%%%%%%%%%%%%%%%%%%%%%%%%%%%%%%%%%%%%%%%%%%%%%%%%%%%%%%%%%%%%%%
%                       4.6                                %%%%%%%%%%%%%%%%%%%%%%%%%%%%%%%%%%%%%%%%%%%%%%%%
%%%%%%%%%%%%%%%%%%%%%%%%%%%%%%%%%%%%%%%%%%%%%%%%%%%%%%%%%%%%%%%%%%%%%%%%%%%%%%%%%%%%%%%%%%%%%%%%%%%%%%%%%%%
%%%%%%%%%%%%%%%%%%%%%%%%%%%%%%%%%%%%%%%%%%%%%%%%%%%%%%%%%%%%%%%%%%%%%%%%%%%%%%%%%%%%%%%%%%%%%%%%%%%%%%%%%%%

\subsection{Galois equivariance for torsion semi-stable representations}
\label{torsemi}

In this subsection, we prove a Galois equivariance theorem for 
torsion semi-stable representations. 
A torsion $\mbb{Z}_p$-representation $T$ of $G_K$ 
is {\it torsion semi-stable 
with Hodge-Tate weights in $[0,r]$}
if it can be written as the quotient of lattices in some semi-stable 
$\mbb{Q}_p$-representation of $G_K$ with Hodge-Tate weights in $[0,r]$. 
We denote by $\mrm{Rep}^{r, \mrm{st}}_{\mrm{tor}}(G_K)$
the category of them.
Note that $\mrm{Rep}^{0, \mrm{st}}_{\mrm{tor}}(G_K)=
\mrm{Rep}^{0, \mrm{cris}}_{\mrm{tor}}(G_K)$.
Similar to Theorem \ref{Main2},
we show the following, which is the main result of this subsection.

\begin{theorem}
\label{Main3}
Suppose that $s> n-1 + \mrm{log}_pr$.
Let $T$ and $T'$ be objects of $\mrm{Rep}^{r, \mrm{st}}_{\mrm{tor}}(G_K)$
which are killed by $p^n$.
Then any $G_{\infty}$-equivariant homomorphism $T\to T'$ is in fact $G_s$-equivariant.
\end{theorem}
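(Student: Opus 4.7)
The plan is to adapt the proof of Theorem \ref{Main2} with a different ideal $J \subset W(R)$ reflecting the weaker estimate available for torsion semi-stable, as opposed to torsion crystalline, representations. Set $J = I^{[p^{s-n+1}]} W(R) = \vphi(\mft)^{p^{s-n+1}} W(R)$; this is a principal, $\vphi$- and $G_s$-stable ideal not contained in $pW(R)$. From $v_R(\vphi(\tilde{\mft})) = p/(p-1)$ one computes $c_J = p^{s-n+2}/(p-1)$, and the inequalities $p^{s+2}/(p-1) \ge c_J > pr/(p-1)$ of Theorem \ref{FFTHM} become precisely the hypothesis $s > n-1 + \mrm{log}_p r$ (which in particular forces $s \ge n-1$).

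Next I prove the semi-stable analogue of Proposition \ref{Main2Lem}: for $T \in \mrm{Rep}^{r,\mrm{st}}_{\mrm{tor}}(G_K)$ killed by $p^n$, $T|_{G_s}$ lies in $\wt{\mrm{Rep}}^{r,\hat{G}_s,J}_{\mrm{tor}}(G_s)$. Writing $T = L_2/L_1$ for $G_K$-stable lattices $L_1 \subset L_2$ in a semi-stable representation, I lift $0 \to L_1 \to L_2 \to T \to 0$ to an exact sequence $0 \to \hat{\mfL}_2 \to \hat{\mfL}_1 \to \hat{\mfM} \to 0$ of $(\vphi,\hat{G})$-modules over $\mfS$ via Theorem 3.1.3 and Lemma 3.1.4 of \cite{CL2}. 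The key input is the semi-stable estimate
\[
(\tau - 1)^i(x) \in I^{[i]} W(R) \otimes_{\vphi, \mfS} \mfL_1 \qquad \text{for all } x \in \mfL_1,\ i \ge 1,
\]
the analogue of the bound used in the proof of Proposition \ref{Main2Lem}, but without the $u^p$ factor that reflects crystallinity. Expanding $(\tau^{p^s} - 1)(x) = \sum_{i=1}^{p^s} \binom{p^s}{i}(\tau - 1)^i(x)$ and using $p^n \mfM = 0$ to discard summands with $s - v_p(i) \ge n$ leaves only terms with $v_p(i) \ge s - n + 1$; each such term lies in $I^{[p^{s-n+1}]} W(R) \otimes_{\vphi, \mfS} \mfM = JW(R) \otimes_{\vphi, \mfS} \mfM$, exactly as in the crystalline computation of Proposition \ref{Main2Lem}.

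Since my $J$ does not contain $u^p I^{[1]} W(R)$, to apply Theorem \ref{FFTHM} I reduce to algebraically closed residue field. Take $L$ to be the completion of $K^{\mrm{ur}}$, identify $G_L$ with the inertia subgroup of $G_K$, and note $k_L = \bar{k}$. Then $T|_{G_L}$ and $T'|_{G_L}$ remain in $\mrm{Rep}^{r,\mrm{st}}_{\mrm{tor}}(G_L)$ with the same annihilator, so the previous step applied over $L$ places $T|_{G_{L,s}}$ and $T'|_{G_{L,s}}$ in $\wt{\mrm{Rep}}^{r,\hat{G}_{L,s},J}_{\mrm{tor}}(G_{L,s})$. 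Theorem \ref{FFTHM} then shows that any $G_{L,\infty}$-equivariant homomorphism between them is $G_{L,s}$-equivariant, and since $G_s$ is topologically generated by $G_{L,s}$ and $G_\infty$, the theorem follows.

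The main obstacle is establishing the semi-stable estimate $(\tau - 1)^i(x) \in I^{[i]} W(R) \otimes \mfL_1$ used above. In the crystalline case the argument in \cite[Proposition 4.7]{GLS} obtains an additional $u^p$ factor by working inside the subring $\tilde{S} = W(k)[\![u^p, u^{ep}/p]\!]$ satisfying $N_S(\tilde{S}) \subset u^p \tilde{S}$, which exploits the vanishing of the residual monodromy $N_D$; in the semi-stable case this vanishing is lost and one must argue directly from the explicit formula (\ref{explicit}) for the $\tau$-action on $\mcal{D} = S_{K_0} \otimes_{\vphi,\mfS} \mfL_1$ in terms of the full monodromy $N_{\mcal{D}}$. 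The loss of the $u^p$ factor is precisely what prevents a bound sharper than $s > n-1 + \mrm{log}_p r$.
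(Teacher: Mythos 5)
Your proposal follows exactly the same route as the paper: the same ideal $J = I^{[p^{s-n+1}]}W(R) = \vphi(\mft)^{p^{s-n+1}}W(R)$, the same value $c_J = p^{s-n+2}/(p-1)$ matching the hypothesis $s > n-1+\log_p r$, the same semi-stable analogue of Proposition~\ref{Main2Lem} (this is Proposition~\ref{Main3Lem} in the paper), and the same reduction to algebraically closed residue field before invoking Theorem~\ref{FFTHM}. Two small remarks: the estimate $(\tau-1)^i(x)\in I^{[i]}W(R)\otimes_{\vphi,\mfS}\mfL$ that you flag as the main obstacle is not something to re-derive from the explicit $\tau$-formula; the paper simply cites it to the proof of \cite[Proposition 2.4.1]{Li4}, and you should do likewise since it is a nontrivial integrality statement. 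Also, your parenthetical ``(which in particular forces $s\ge n-1$)'' only holds when $r\ge 1$; for $r=0$ the exponent $p^{s-n+1}$ in your $J$ may be ill-defined, so the paper first disposes of $r=0$ via $\mrm{Rep}^{0,\mrm{st}}_{\mrm{tor}}(G_K)=\mrm{Rep}^{0,\mrm{cris}}_{\mrm{tor}}(G_K)$ and Corollary~\ref{FFTHMtorcris}, then assumes $r\ge 1$.
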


If $s\ge n-1$, then we put
$$
J=I^{[p^{s-n+1}]}W(R)=\vphi(\mft)^{p^{s-n+1}}W(R).
$$
Then we have $c_J=p^{s-n+2}/(p-1)$.
To show Theorem \ref{Main3},
we use similar arguments to those in the proof of Theorem \ref{Main2}.

\begin{proposition}
\label{Main3Lem}
Suppose $s\ge n-1$.
If $T$ is an object of $\mrm{Rep}^{r,\mrm{st}}_{\mrm{tor}}(G_K)$ 
which is killed by $p^n$,
then $T|_{G_s}$ is contained in $\wt{\mrm{Rep}}^{r,\hat{G}_s,J}_{\mrm{tor}}(G_s)$.
\end{proposition}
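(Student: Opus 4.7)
The plan is to follow the proof of Proposition \ref{Main2Lem} essentially verbatim, with a single modification: replace every occurrence of $u^p I^{[i]} W(R)$ by $I^{[i]} W(R)$. This accounts for the change from $J = u^p I^{[p^{s-n+1}]} W(R)$ in Proposition \ref{Main2Lem} to $J = I^{[p^{s-n+1}]} W(R)$ here, and for the corresponding disappearance of the $p/e$ summand in $c_J$.

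The key technical input I would establish first is the semi-stable estimate: for any $L \in \mrm{Rep}^{r,\mrm{st}}_{\mbb{Z}_p}(G_K)$ with associated $\hat{\mfL} \in \mrm{Mod}^{r,\hat{G}}_{/\mfS}$ via Theorem \ref{Thm1} (1), one has
$$
(\tau-1)^i(x) \in I^{[i]} W(R) \otimes_{\vphi,\mfS} \mfL
\qquad \text{for all } i \ge 1 \text{ and } x \in \mfL.
$$
This is the semi-stable counterpart of the crystalline bound quoted in the proof of Proposition \ref{Main2Lem} from \cite[Proposition 4.7]{GLS}, but now without the extra factor $u^p$. The $i=1$ case is essentially built into Liu's theory, via Definition \ref{Liumod}~(5) combined with the containment $I_{+}\whR \subset I^{[1]} W(R)$ that underlies the construction in \cite{Li2}. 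The higher-$i$ cases follow by induction, since $\tau$ preserves the principal ideal $I^{[i]} W(R) = \vphi(\mft)^i W(R)$ because $\tau(\vphi(\mft))/\vphi(\mft) \in W(R)^{\times}$.

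Once this estimate is in hand, the rest is arithmetic bookkeeping parallel to Proposition \ref{Main2Lem}. I would expand
$$
(\tau^{p^s}-1)(x) = \sum_{i=1}^{p^s} \binom{p^s}{i}(\tau-1)^i(x)
$$
and use $v_p\!\bigl(\binom{p^s}{i}\bigr) = s - v_p(i)$ to conclude
$$
(\tau^{p^s}-1)(x) \in \sum_{i=1}^{p^s} p^{s-v_p(i)} I^{[i]} W(R) \otimes_{\vphi,\mfS} \mfL.
$$
For a torsion $T$ killed by $p^n$, I would realise $T$ as a quotient $L_2/L_1$ of lattices in a semi-stable representation, lift to an exact sequence $0 \to \hat{\mfL}_2 \to \hat{\mfL}_1 \to \hat{\mfM} \to 0$ of $(\vphi,\hat{G})$-modules over $\mfS$ via \cite[Theorem 3.1.3 and Lemma 3.1.4]{CL2}, transfer the above bound to $\mfM$, and then use $p^n \mfM = 0$ to kill every summand with $s - v_p(i) \ge n$. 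The surviving terms satisfy $v_p(i) \ge s-n+1$, hence $p^{s-n+1} \mid i$ and $I^{[i]} \subset I^{[p^{s-n+1}]}$, giving $(\tau^{p^s}-1)(\mfM) \subset JW(R) \otimes_{\vphi,\mfS} \mfM$ as required. The identification $\hat{T}_s(\hat{\mfM}) \simeq T|_{G_s}$ is argued exactly as in Proposition \ref{Main2Lem}, placing $T|_{G_s}$ in $\wt{\mrm{Rep}}^{r,\hat{G}_s,J}_{\mrm{tor}}(G_s)$. The only genuine obstacle is verifying the Step~1 estimate in the semi-stable (as opposed to crystalline) setting; no other new ingredient is needed.
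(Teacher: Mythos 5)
Your overall strategy — obtain the semi-stable bound $(\tau-1)^i(x)\in I^{[i]}W(R)\otimes_{\vphi,\mfS}\mfL$ and then repeat the binomial/devissage bookkeeping from Proposition~\ref{Main2Lem} verbatim — is exactly what the paper does. The difference is that the paper simply \emph{cites} that bound from Liu's work on lattices in filtered $(\vphi,N)$-modules (\cite[Proposition 2.4.1]{Li4}), whereas you try to reprove it, and that reproof does not go through.

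The base case $i=1$ is the problem. Condition~(5) of Definition~\ref{Liumod} gives $\tau(x)-x\in I_+\whR\cdot(\whR\otimes_{\vphi,\mfS}\mfL)$, and you then invoke a containment $I_+\whR\subset I^{[1]}W(R)$. That containment is false: $u=[\underline{\pi}]$ lies in $I_+\whR$ (its image under $\nu$ is zero), but $I^{[1]}W(R)=\vphi(\mft)W(R)$ with $v_R(\vphi(\mft)\bmod p)=p/(p-1)$, whereas $v_R(u\bmod p)=1/e<p/(p-1)$, so $u\notin I^{[1]}W(R)$. The genuine content of the $i=1$ bound is the extra cancellation coming from the explicit description of the $\tau$-action in terms of $t\otimes N_{\mcal D}$ (the formulae of Section~\ref{relations}); it is not a formal consequence of the $I_+$-triviality in condition~(5). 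Your inductive step for $i\ge2$ has a related issue: expanding $(\tau-1)(a\otimes x)$ produces a term $(\tau(a)-a)\otimes x$, and for $a\in I^{[i-1]}W(R)$ one only gets $\tau(a)-a\in I^{[i-1]}W(R)$ (from $\tau(\vphi(\mft))/\vphi(\mft)\in W(R)^\times$), not $I^{[i]}W(R)$; the improvement by one filtration step again comes from the monodromy structure, not from generalities about $\tau$ preserving a principal ideal. So the right move — and the paper's move — is to take $(\tau-1)^i(\mfL)\subset I^{[i]}W(R)\otimes_{\vphi,\mfS}\mfL$ as a black box from Liu and then run the rest of your argument, which is correct.
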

\begin{proof}
Let $L$ be a lattice in a semi-stable $\mbb{Q}_p$-representation of $G_K$
with Hodge-Tate weights in $[0,r]$.
Take a $(\vphi,\hat{G})$-module $\hat{\mfL}$ over $\mfS$ such that 
$L\simeq \hat{T}(\hat{\mfL})$.
It is known that $(\tau-1)^i(x)\in I^{[i]}W(R)\otimes_{\vphi,\mfS} \mfL$ for any $i\ge 1$ and any $x\in \mfL$
(cf.\ the proof of \cite[Proposition 2.4.1]{Li4}).
Thus the same proof proceeds as that of Proposition \ref{Main2Lem}.
\end{proof}

\begin{proof}[Proof of Theorem \ref{Main3}]
We have the equality 
$\mrm{Rep}^{0, \mrm{st}}_{\mrm{tor}}(G_K)=\mrm{Rep}^{0, \mrm{cris}}_{\mrm{tor}}(G_K)$
and thus Theorem \ref{Main2} for $r=0$ is an easy consequence of Corollary \ref{FFTHMtorcris}.
Hence we may assume $r\ge 1$.
The rest of a proof is similar to the proof of Theorem \ref{Main2}.
\end{proof}

\subsection{Some consequences}
\label{consequences}

In this subsection, we generalize some results proved in Section 3.4 of \cite{Br3}.
First of all, we show the following elementary lemma,
which should be well-known to experts, 
but we include a proof here for the sake of completeness.

\begin{lemma} 
\label{stability'} 
The full subcategories $\mrm{Rep}^{r,\mrm{cris}}_{\mrm{tor}}(G_K)$ 
and $\mrm{Rep}^{r,\mrm{st}}_{\mrm{tor}}(G_K)$ of 
$\mrm{Rep}_{\mrm{tor}}(G_K)$ are stable under formation of 
subquotients, direct sums and the association $T\mapsto T^{\vee}(r)$. 
Here $T^{\vee}=\mrm{Hom}_{\mbb{Z}_p}(T,\mbb{Q}_p/\mbb{Z}_p)$ is the dual representation of $T$. 
\end{lemma}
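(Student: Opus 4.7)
The plan is to handle the three operations separately, since each reduces to well-known stability properties of crystalline and semi-stable $\mbb{Q}_p$-representations once we unwind the definition.

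First, for direct sums, given $T_i = L_i'/L_i$ with $L_i \hookrightarrow L_i'$ lattices in a crystalline (resp.\ semi-stable) $\mbb{Q}_p$-representation $V_i$ with Hodge-Tate weights in $[0,r]$ for $i = 1, 2$, I would simply take $L_1 \oplus L_2 \hookrightarrow L_1' \oplus L_2'$ inside $V_1 \oplus V_2$, which is crystalline (resp.\ semi-stable) with Hodge-Tate weights in $[0,r]$.

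For subquotients, it suffices to treat subrepresentations and quotients separately. Given $T = L'/L$ with $L \subset L'$ lattices in a crystalline (resp.\ semi-stable) $\mbb{Q}_p$-representation $V$, and given a subrepresentation $T'' \subset T$, I would set $L''$ equal to the preimage of $T''$ under $L' \twoheadrightarrow T$. Then $L \subset L'' \subset L'$, so $L''$ is still a $G_K$-stable $\mbb{Z}_p$-lattice in $V$, and $T'' = L''/L$. For quotients $T \twoheadrightarrow T/T''$, the same $L''$ gives $T/T'' = L'/L''$. Since $V$ itself is unchanged, nothing needs to be checked about the crystalline/semi-stable property.

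The duality step is where there is something to verify. Writing $L^* := \mrm{Hom}_{\mbb{Z}_p}(L,\mbb{Z}_p)$ and $V^* := \mrm{Hom}_{\mbb{Q}_p}(V, \mbb{Q}_p)$, applying $\mrm{Hom}_{\mbb{Z}_p}(-,\mbb{Z}_p)$ to $0 \to L \to L' \to T \to 0$ and using the long exact sequence together with $\mrm{Ext}^1_{\mbb{Z}_p}(T,\mbb{Z}_p) \simeq \mrm{Hom}_{\mbb{Z}_p}(T,\mbb{Q}_p/\mbb{Z}_p) = T^\vee$ (from the injective resolution $0 \to \mbb{Z}_p \to \mbb{Q}_p \to \mbb{Q}_p/\mbb{Z}_p \to 0$), I would identify $T^\vee \simeq L^*/L'^*$ with $L'^* \hookrightarrow L^*$ lattices in $V^*$. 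Twisting by $r$, I get $T^\vee(r) \simeq L^*(r)/L'^*(r)$, and $V^*(r)$ is crystalline (resp.\ semi-stable) with Hodge-Tate weights in $[0,r]$, using the standard fact that duality negates Hodge-Tate weights (so $V^*$ has weights in $[-r,0]$) and that $\mbb{Q}_p(r)$ shifts them by $r$.

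The main (but mild) obstacle is bookkeeping in the duality step: making sure that the Pontryagin dual of $T$ is really the torsion module $L^*/L'^*$ rather than something involving $\mbb{Q}_p/\mbb{Z}_p$, and keeping the sign conventions on Hodge-Tate weights consistent so that twisting by $r$ lands back in $[0,r]$; once these are settled the statement is immediate.
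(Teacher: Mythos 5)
Your proof is correct, and it diverges from the paper's at two points worth noting. For subobjects you argue directly: given $T=L'/L$ and $T''\subset T$, the preimage $L''$ of $T''$ in $L'$ is a $G_K$-stable lattice sandwiched between $L$ and $L'$ in the same crystalline (resp.\ semi-stable) $V$, so $T''=L''/L$ and $T/T''=L'/L''$, and no new $\mbb{Q}_p$-representation has to be produced. The paper instead proves the duality assertion first and then handles subobjects by bidualization: from the already-constructed surjection $L_1^\vee\twoheadrightarrow T^\vee\twoheadrightarrow (T')^\vee$ it builds a lattice $L'_2$ with $(L'_2)^\vee=\ker$, reruns the same argument, and arrives at $L'_2\twoheadrightarrow T'$. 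Your direct pullback is shorter and avoids this detour; the paper's route has the feature that subobjects become a formal consequence of duality plus quotients. For the duality step itself, you identify $T^\vee\simeq L^*/L'^*$ via the $\mrm{Ext}_{\mbb{Z}_p}$ long exact sequence and the vanishing of $\mrm{Hom}_{\mbb{Z}_p}(T,\mbb{Z}_p)$ and $\mrm{Hom}_{\mbb{Z}_p}(T,\mbb{Q}_p)$, whereas the paper applies the snake lemma to multiplication by $p^n$ on $0\to L_1\to L_2\to T\to 0$, Pontryagin-dualizes the resulting four-term sequence of finite modules, and splices in $L_1^\vee\twoheadrightarrow L_1^\vee/p^nL_1^\vee\simeq (L_1/p^nL_1)^\vee$ to get a surjection $L_1^\vee\twoheadrightarrow T^\vee$. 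Both produce the same conclusion after twisting by $r$; your $\mrm{Ext}$ version presents $T^\vee(r)$ directly as a quotient of lattices $L^*(r)/L'^*(r)$, while the paper produces only a surjection from a single lattice, which is of course an equivalent characterization of membership in $\mrm{Rep}^{r,\mrm{cris}}_{\mrm{tor}}(G_K)$.
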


\begin{proof}
We prove the statement only for $\mrm{Rep}^{r,\mrm{cris}}_{\mrm{tor}}(G_K)$.
Let $T\in \mrm{Rep}^{r,\mrm{cris}}_{\mrm{tor}}(G_K)$ be killed by $p^n$ for some $n>0$.
Assertions for quotients and direct sums are clear. 
We prove that $T^{\vee}(r)$ is contained in $\mrm{Rep}^{r,\mrm{cris}}_{\mrm{tor}}(G_K)$.
There exist lattices $L_1\subset L_2$ 
in some crystalline $\mbb{Q}_p$-representation of $G_K$
and an exact sequence 
$0\to L_1\to L_2\to T\to 0$ of $\mbb{Z}_p[G_K]$-modules.
This exact sequence induces an exact sequence 
$0\to T\to L_1/p^nL_1\to L_2/p^nL_2\to T\to 0$ of finite 
$\mbb{Z}_p[G_K]$-modules.
By duality,
we obtain an exact sequence 
$0\to T^{\vee}\to (L_2/p^nL_2)^{\vee}\to 
(L_1/p^nL_1)^{\vee}\to T^{\vee}\to 0$ of finite 
$\mbb{Z}_p[G_K]$-modules.
Then we obtain a $G_K$-equivariant surjection $L_1^{\vee}\twoheadrightarrow T^{\vee}$
by the composite
$L_1^{\vee}\twoheadrightarrow L_1^{\vee}/p^nL_1^{\vee}\overset{\sim}{\to} 
(L_1/p^nL_1)^{\vee}\twoheadrightarrow T^{\vee}$ of natural maps
(here, for any free $\mbb{Z}_p$-representation $L$ of $G_K$,
$L^{\vee}:=\mrm{Hom}_{\mbb{Z}_p}(L,\mbb{Z}_p)$ stands for the dual of $L$).
Therefore, we obtain $L_1^{\vee}(r)\twoheadrightarrow T^{\vee}(r)$
and thus $T^{\vee}(r)\in \mrm{Rep}^{r,\mrm{cris}}_{\mrm{tor}}(G_K)$.
Finally, we prove the stability assertion for subobjects.
Let $T'$ be a $G_K$-stable submodule of $T$.  
We have a $G_K$-equivariant surjection
$f\colon L_1^{\vee}\twoheadrightarrow T^{\vee}\twoheadrightarrow (T')^{\vee}$.
Let $L'_2$ be a free $\mbb{Z}_p$-representation of $G_K$
such that its dual is the kernel of $f$.
We have an exact sequence $0\to (L'_2)^{\vee}\to L^{\vee}_1\overset{f}{\to} (T')^{\vee}\to 0$
of $\mbb{Z}_p[G_K]$-modules.
Repeating the construction of the surjection $L_1^{\vee}\twoheadrightarrow T^{\vee}$,
we obtain a $G_K$-equivariant surjection $L'_2=(L'_2)^{\vee \vee}\twoheadrightarrow (T')^{\vee \vee}=T'$
and thus we have $T'\in \mrm{Rep}^{r,\mrm{cris}}_{\mrm{tor}}(G_K)$.
\end{proof}

In the case where $r=1$,
the assertion (1) of the following corollary was shown in Theorem 3.4.3 of \cite{Br3}.
\begin{corollary}
\label{imagestable}
Let $T$ be an object of $\mrm{Rep}^{r,\mrm{cris}}_{\mrm{tor}}(G_K)$ which is killed by $p^n$ for some $n>0$.
Let $T'$ be a $G_{\infty}$-stable subquotient of $T$. 

\noindent
$(1)$ If $e(r-1)<p-1$, then $T'$ is $G_K$-stable $($with respect to $T)$.

\noindent
$(2)$ If $s>n-1+\mrm{log}_p(r-(p-1)/e)$, 
then $T'$ is $G_s$-stable $($with respect to $T)$. 
\end{corollary}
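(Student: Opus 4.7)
The plan is to reduce both parts to the following submodule assertion: for $T\in\mrm{Rep}^{r,\mrm{cris}}_{\mrm{tor}}(G_K)$ killed by $p^n$, any $G_{\infty}$-stable $\mbb{Z}_p$-submodule $T_2\subset T$ is automatically $G_K$-stable (resp.\ $G_s$-stable for part~(2)). Given a general $G_{\infty}$-stable subquotient $T'=T_2/T_1$ with $T_1\subset T_2\subset T$, I would apply the submodule assertion separately to $T_1\subset T$ and to $T_2\subset T$, so that both endpoints become stable in the appropriate sense; the quotient $T'$ then inherits the desired subquotient structure. Note that this route avoids needing $T/T_1$ to lie in the category, which would require $T_1$ to be $G_K$-stable and so be circular.

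For the submodule assertion, I would lift $T_2$ to the $(\vphi,\hat{G})$-module side. By Proposition~\ref{proofMain1} for part~(1) (taking $s=0$), resp.\ Proposition~\ref{Main2Lem} for part~(2), $T|_{G_s}$ is realized as $\hat{T}_s(\hat{\mfM})$ for some $\hat{\mfM}\in\wtMod^{r,\hat{G}_s,J}_{/\mfS_{\infty}}$, where $J=u^pI^{[1]}W(R)$ in part~(1) and $J=u^pI^{[p^{s-n+1}]}W(R)$ in part~(2). Using the exactness and full faithfulness of Caruso--Liu's maximal Kisin module functor $T_{\mfS}\colon\Max^r_{/\mfS_{\infty}}\to\mrm{Rep}_{\mrm{tor}}(G_{\infty})$, the $G_{\infty}$-stable submodule $T_2\subset T|_{G_{\infty}}$ corresponds to a quotient $\Max^r(\mfM)\twoheadrightarrow\mfM_2$ of maximal Kisin modules of height~$\le r$. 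If this promotes to a quotient $\hat{\mfM}\twoheadrightarrow\hat{\mfM}_2$ inside $\wtMod^{r,\hat{G}_s,J}_{/\mfS_{\infty}}$, then $\hat{T}_s(\hat{\mfM}_2)$ equips $T_2$ with a $G_s$-action (a $G_K$-action when $s=0$) extending the given $G_{\infty}$-action, and Theorem~\ref{FFTHM} forces the $G_{\infty}$-equivariant inclusion $T_2\hookrightarrow T$ to be $G_s$-equivariant, yielding the submodule assertion.

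The main obstacle is verifying the descent step: one must check that the $\hat{G}_s$-action on $\whR_s\otimes_{\vphi,\mfS}\Max^r(\mfM)$ preserves the submodule $\whR_s\otimes_{\vphi,\mfS}\ker(\Max^r(\mfM)\to\mfM_2)$. Once this compatibility is established, Lemma~\ref{speciallemma} transfers the defining condition $\tau^{p^s}(x)-x\in JW(R)\otimes_{\vphi,\mfS}\mfM_2$ automatically, placing $\hat{\mfM}_2$ in the target category. This descent is the technical heart of the argument and will require a careful analysis of how the $\hat{G}_s$-action of Proposition~\ref{proofMain1} (resp.\ Proposition~\ref{Main2Lem}), which is constructed from lifts of $T$ to $(\vphi,\hat{G})$-modules attached to lattices in crystalline $\mbb{Q}_p$-representations, interacts with sub-Kisin-modules encoding arbitrary $G_{\infty}$-subrepresentations; the stability under subquotients of $\wt{\mrm{Rep}}^{r,\hat{G}_s,J}_{\mrm{tor}}(G_s)$ provided by Corollary~\ref{stability} should play a decisive role here.
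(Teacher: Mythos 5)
Your reduction of the subquotient case to the submodule case (handling $T_1$ and $T_2$ separately) is sound and slightly cleaner than the paper's duality reduction. But the submodule assertion, where all the content lies, is not proved: the ``descent step'' you flag is not a technical loose end, it is a circularity. To promote $\Max^r(\mfM)\twoheadrightarrow\mfM_2$ to a quotient $\hat{\mfM}\twoheadrightarrow\hat{\mfM}_2$ of $(\vphi,\hat{G}_s)$-modules, you must show that the kernel (the Kisin module of $T/T_2$) is $\hat{G}_s$-stable inside $\whR_s\otimes_{\vphi,\mfS}\mfM$; but that is precisely equivalent to the $G_s$-stability of $T_2$ in $T$, which is the thing you are trying to prove. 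Corollary~\ref{stability} and the exactness Proposition preceding it only apply to exact sequences that are already $G_s$-equivariant, so neither supplies what you need.

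The paper closes this gap with an idea your proposal does not touch. Rather than confronting an arbitrary $G_\infty$-stable submodule directly, one chooses a filtration $T'=T_0\subset T_1\subset\cdots\subset T_m=T$ by $G_\infty$-stable submodules with irreducible $\mbb{F}_p[G_\infty]$-graded pieces. Because an irreducible $\mbb{F}_p[G_\infty]$-module is killed by wild inertia, its $G_\infty$-action factors through the tame quotient and therefore extends \emph{canonically} to $G_s$ (resp.\ $G_K$); Theorem~\ref{tamelift} then places that extension in $\mrm{Rep}^{r_0,\mrm{cris}}_{\mrm{tor}}$. Now the top projection $T\twoheadrightarrow T/T_{m-1}$ is a $G_\infty$-map between objects of the relevant category, so full faithfulness (Corollary~\ref{FFTHMtorcris} for part~(1), Theorem~\ref{FFTHM} with $J=u^pI^{[p^{s-n+1}]}W(R)$ for part~(2)) forces it to be $G_s$-equivariant, hence $T_{m-1}$ is $G_s$-stable. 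Lemma~\ref{stability'} (resp.\ Corollary~\ref{stability}) then keeps $T_{m-1}$ inside the category, and one descends the filtration inductively. The device of filtering by $G_\infty$-irreducibles and exploiting tameness to \emph{produce} the missing $G_s$-structure on the graded pieces is exactly what your proposal is missing, and without it the argument does not close. (For part~(2) you would also need the extra step of first passing to $k$ algebraically closed via the completion of $K^{\mrm{ur}}$ and then descending by topological generation of $G_s$, which is also absent from your sketch.)
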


\begin{proof}
By the duality assertion of Lemma \ref{stability'},
it is enough to show the case where 
$T'$ is a $G_{\infty}$-stable submodule of $T$.
Take any sequence
$T'=T_0\subset T_1\subset \cdots \subset T_m=T$
of torsion  $G_{\infty}$-stable submodules of $T$
such that $T_i/T_{i-1}$ is irreducible for any $i$.
As  explained in the proof of Proposition \ref{FFLEM},
the $G_{\infty}$-action on $T_i/T_{i-1}$ can be 
(uniquely) extended to $G_K$.
By Theorem \ref{tamelift} given in the next section,
we know that 
$T_i/T_{i-1}$ is an object of 
$\mrm{Rep}^{r_0,\mrm{cris}}_{\mrm{tor}}(G_K)$
where $r_0:=\mrm{max}\{ r'\in \mbb{Z}_{\ge 0}; e(r'-1)<p-1 \}$.

\noindent 
(1) We may suppose $r=r_0$.
The $G_{\infty}$-equivariant projection $T=T_m\twoheadrightarrow T_m/T_{m-1}$ is $G_K$-stable by 
the full faithfulness theorem (= Corollary \ref{FFTHMtorcris}).
Thus we know that $T_{m-1}$ is $G_K$-stable in $T$, 
and also know that $T_{m-1}$ is contained in $\mrm{Rep}^{r,\mrm{cris}}_{\mrm{tor}}(G_K)$  
by Lemma \ref{stability'}.
By the same argument for the $G_{\infty}$-equivariant 
projection $T_{m-1}\twoheadrightarrow T_{m-1}/T_{m-2}$,
we know that 
$T_{m-2}$ is $G_K$-stable in $T$, 
and also know that $T_{m-2}$ is contained in $\mrm{Rep}^{r,\mrm{cris}}_{\mrm{tor}}(G_K)$.
Repeating this argument,
we have that $T'=T_0$ is $G_K$-stable in $T$.

\noindent
(2) Put $J=u^pI^{[p^{s-n+1}]}W(R)$. 
By (1) we may assume $e(r-1)\ge p-1$.
Under this assumption  we have $r\ge r_0$ and $s>n-1+\mrm{log}_p(r-(p-1)/e)\ge n-1$.
In particular,
$T|_{G_s}$ and $(T_i/T_{i-1})|_{G_s}$, for any $i$, are contained in $\wt{\mrm{Rep}}^{r,\hat{G}_s,J}_{\mrm{tor}}(G_s)$
by Proposition \ref{Main2Lem}.
First we consider the case where $k$ is algebraically closed. 
By Theorem \ref{FFTHM}, the $G_{\infty}$-equivariant projection
$T=T_m\twoheadrightarrow T_m/T_{m-1}$ is $G_s$-stable.
Thus we know that $T_{m-1}$ is $G_s$-stable in $T$, 
and also know that $T_{m-1}$ is contained in $\wt{\mrm{Rep}}^{r,\hat{G}_s,J}_{\mrm{tor}}(G_s)$  
by Corollary \ref{stability}.
By the same argument for the $G_{\infty}$-equivariant 
projection $T_{m-1}\twoheadrightarrow T_{m-1}/T_{m-2}$,
we know that 
$T_{m-2}$ is $G_s$-stable in $T$, 
and also know that $T_{m-2}$ is contained in  $\wt{\mrm{Rep}}^{r,\hat{G}_s,J}_{\mrm{tor}}(G_s)$.
Repeating this argument,
we have that $T'=T_0$ is $G_s$-stable in $T$.
Next we consider the case where $k$ is not necessary algebraically closed.
Let $L$ be the completion of the 
maximal unramified extension $K^{\mrm{ur}}$ of $K$, and 
we identify $G_L$ with the inertia subgroup of $G_K$.
Clearly $T|_{G_L}$ is contained in $\mrm{Rep}^{r,\mrm{cris}}_{\mrm{tor}}(G_L)$ 
and $T'$ is $G_{L_{\infty}}$-stable submodule of $T$.
We have already shown that $T'$ is $G_{L,s}$-stable in $T$.
Since $G_s$ is topologically generated by $G_{L,s}$ and $G_{\infty}$,
we conclude that $T'$ is $G_s$-stable in $T$.
\end{proof}

%\begin{corollary}
%\label{imagestable'}
%Let $T$ be an object of $\mrm{Rep}^{r,\mrm{st}}_{\mrm{tor}}(G_K)$ which is killed by $p^n$ for some $n>0$.
%Let $T'$ be a $G_{\infty}$-stable subquotient of $T$. 
%If $s>n-1+\mrm{log}_pr$, 
%then $T'$ is $G_s$-stable $($with respect to $T)$. 
%\end{corollary}
%\begin{proof}
%The same proof proceeds as that of Corollary \ref{imagestable}.
%\end{proof}

Now let $V$ be a $\mbb{Q}_p$-representation of $G_K$
and $T$ a $\mbb{Z}_p$-lattice of $V$ which is stable under $G_{\infty}$.
Then we know that $T$ is automatically $G_s$-stable for some $s\ge 0$.
Indeed we can check this as follows.
Take any $G_K$-stable $\mbb{Z}_p$-lattice $T'$ of $V$ which contains $T$,
and take an integer $n>0$ with the property that $p^nT'\subset T$.
Furthermore, we take a finite extension $K'$ of $K$ 
such that $G_{K'}$ acts trivially on $T'/p^nT'$.
Then $T/p^nT'$ is $G_{\infty}$-stable and also $G_{K'}$-stable in $T'/p^nT'$.
If we take any integer $s\ge 0$ with the property $K'\cap K_{\infty}\subset K_{(s)}$,
we know that $T/p^nT'$ is $G_s$-stable.
This implies that $T$ is $G_s$-stable in $T'$.

The following corollary, which was shown in Corollary 3.4.4 of \cite{Br3}
in the case where $r=1$,
is related with the above property.
\begin{corollary}
\label{stablelattice}
Let $V$ be a crystalline $\mbb{Q}_p$-representation of $G_K$
with Hodge-Tate weights in $[0,r]$ and $T$ a finitely generated $\mbb{Z}_p$-submodule
of $V$ which is stable under $G_{\infty}$.
If $e(r-1)<p-1$, then $T$ is stable under $G_K$.
\end{corollary}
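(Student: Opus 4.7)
The plan is to reduce the statement to its torsion analogue (Corollary \ref{imagestable}(1)) by working modulo $p^n$ inside a $G_K$-stable ambient lattice, and then to pass to the limit using the Hausdorff property of a finitely generated $\mbb{Z}_p$-module for its $p$-adic topology.

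First, I would fix a $G_K$-stable $\mbb{Z}_p$-lattice $L$ of $V$, which exists by compactness of $G_K$. Since $T$ is a finitely generated $\mbb{Z}_p$-submodule of $V$, it is bounded, so $p^N T \subset L$ for some sufficiently large $N$. Replacing $T$ by $p^N T$ does not affect whether $T$ is $G_K$-stable (because $V$ is $p$-torsion free, so $g(p^N t) = p^N g(t) \in p^N T$ implies $g(t) \in T$), and hence I may assume from the outset that $T \subset L$.

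Next, for each $n \ge 1$, the quotient $L/p^n L$ is the cokernel of the inclusion $p^n L \hookrightarrow L$ of two lattices in the crystalline $\mbb{Q}_p$-representation $V$, so it is an object of $\mrm{Rep}^{r,\mrm{cris}}_{\mrm{tor}}(G_K)$ killed by $p^n$. The image $\bar T_n := (T + p^n L)/p^n L$ of $T$ is a $G_{\infty}$-stable $\mbb{Z}_p$-submodule of $L/p^n L$. Invoking Corollary \ref{imagestable}(1), whose hypothesis $e(r-1) < p-1$ is precisely the one we are assuming, I conclude that $\bar T_n$ is $G_K$-stable in $L/p^n L$. Translating this back, $g(T) \subset T + p^n L$ for every $g \in G_K$ and every $n \ge 1$.

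Finally, for any fixed $g \in G_K$ and $t \in T$, I obtain $g(t) \in \bigcap_{n \ge 1}(T + p^n L)$. Since $L/T$ is a finitely generated $\mbb{Z}_p$-module, it is Hausdorff for the $p$-adic topology, which gives $\bigcap_{n \ge 1}(T + p^n L) = T$. Hence $g(t) \in T$, so $T$ is $G_K$-stable. The substantive input is Corollary \ref{imagestable}(1); no genuine obstacle is expected in the remaining steps, which are elementary.
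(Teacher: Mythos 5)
Your proof is correct and follows essentially the same route as the paper's, which in turn follows Breuil's Corollary 3.4.4: reduce to a $G_K$-stable lattice $L$ (the paper phrases this as choosing a $G_K$-stable lattice $T'\supset T$, while you scale $T$ by $p^N$ to fit inside $L$, an equivalent normalization), apply Corollary \ref{imagestable}(1) to the images $(T+p^nL)/p^nL$ inside $L/p^nL\in\mrm{Rep}^{r,\mrm{cris}}_{\mrm{tor}}(G_K)$, and conclude via $\bigcap_n(T+p^nL)=T$.
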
  

\begin{proof}
We completely follow the method of the proof of \cite[Corollary 3.4.4]{Br3}. 
Take any $G_K$-stable $\mbb{Z}_p$-lattice $T'$ of $V$ which contains $T$.
Since $T'/p^nT'$ is contained in $\mrm{Rep}^{r,\mrm{cris}}_{\mrm{tor}}(G_K)$ for any $n>0$,
Corollary \ref{imagestable} (1) implies that any $G_{\infty}$-stable submodule of $T'/p^nT'$
is in fact $G_K$-stable.
Thus $(T+p^nT')/p^nT'$ is $G_K$-stable in $T'/p^nT'$.
Therefore, we obtain $g(T)\subset \bigcap_{n>0}\ (T+p^nT')=T$ for any $g\in G_K$.    
\end{proof}

%%%%%%%%%%%%%%%%%%%%%%%%%%%%%%%%%%%%%%%%%%%%%%%%%%%%%%%%%%%%%%%%%%%%%%%%%%%%%%%%%%%%%%%%%%%%%%%%%%%%%%%%%%%
%%%%%%%%%%%%%%%%%%%%%%%%%%%%%%%%%%%%%%%%%%%%%%%%%%%%%%%%%%%%%%%%%%%%%%%%%%%%%%%%%%%%%%%%%%%%%%%%%%%%%%%%%%%
%                       5                                  %%%%%%%%%%%%%%%%%%%%%%%%%%%%%%%%%%%%%%%%%%%%%%%%
%%%%%%%%%%%%%%%%%%%%%%%%%%%%%%%%%%%%%%%%%%%%%%%%%%%%%%%%%%%%%%%%%%%%%%%%%%%%%%%%%%%%%%%%%%%%%%%%%%%%%%%%%%%
%%%%%%%%%%%%%%%%%%%%%%%%%%%%%%%%%%%%%%%%%%%%%%%%%%%%%%%%%%%%%%%%%%%%%%%%%%%%%%%%%%%%%%%%%%%%%%%%%%%%%%%%%%%

\section{Crystalline lifts and c-weights}

We continue to use the same notation except for that 
we may allow $p=2$.
We remark that a torsion $\mbb{Z}_p$-representation of $G_K$ is 
torsion crystalline with Hodge-Tate weights in $[0,r]$
if there exist a lattice $L$ in some crystalline $\mbb{Q}_p$-representation of $G_K$
with Hodge-Tate weights in $[0,r]$ and a $G_K$-equivariant surjection 
$f\colon L\twoheadrightarrow T$.  
We call $f$ a {\it crystalline lift} ({\it of $T$}) 
{\it of weight $\le r$}.
Our interest in this section is to determine the minimum integer $r$ (if it exists)
such that $T$ admits crystalline lifts of weight $\le r$.
We call this minimum integer 
the {\it c-weight of $T$} and 
denote it by $w_c(T)$. 
If $T$ does not have crystalline lifts of weight $\le r$ for any integer $r$, 
then we define the c-weight $w_c(T)$ of $T$ to be $\infty$. 
Motivated by \cite[Question 5.5]{CL2}, we raise the following question.
\begin{question}
For a torsion $\mbb{Z}_p$-representation $T$ of $G_K$,
is the c-weight $w_c(T)$ of $T$  finite?
Furthermore, 
can we calculate $w_c(T)$?
\end{question}

%%%%%%%%%%%%%%%%%%%%%%%%%%%%%%%%%%%%%%%%%%%%%%%%%%%%%%%%%%%%%%%%%%%%%%%%%%%%%%%%%%%%%%%%%%%%%%%%%%%%%%%%%%%
%%%%%%%%%%%%%%%%%%%%%%%%%%%%%%%%%%%%%%%%%%%%%%%%%%%%%%%%%%%%%%%%%%%%%%%%%%%%%%%%%%%%%%%%%%%%%%%%%%%%%%%%%%%
%                       5.1                                %%%%%%%%%%%%%%%%%%%%%%%%%%%%%%%%%%%%%%%%%%%%%%%%
%%%%%%%%%%%%%%%%%%%%%%%%%%%%%%%%%%%%%%%%%%%%%%%%%%%%%%%%%%%%%%%%%%%%%%%%%%%%%%%%%%%%%%%%%%%%%%%%%%%%%%%%%%%
%%%%%%%%%%%%%%%%%%%%%%%%%%%%%%%%%%%%%%%%%%%%%%%%%%%%%%%%%%%%%%%%%%%%%%%%%%%%%%%%%%%%%%%%%%%%%%%%%%%%%%%%%%%

\subsection{General properties of c-weights}
We study general properties of c-weights.
At first, by ramification estimates,
it is known that c-weights may have 
infinitely large values (\cite[Theorem 5.4]{CL2});
for any $c>0$,
there exists a torsion $\mbb{Z}_p$-extension $T$ of $G_K$ with $w_c(T)>c$.
In this paper,
we mainly consider representations with ``small'' c-weights. 
If c-weights are ``small'',
they are closely related with {\it tame inertia weights}.
Now
we recall the definition of tame inertia weights.
Let $I_K$ be the inertia subgroup of $G_K$. 
Let $T$ be a $d$-dimensional irreducible $\mbb{F}_p$-representation of $I_K$.
Then $T$ is isomorphic to 
$$
\mbb{F}_{p^d}(\theta^{n_1}_{d,1}\cdots \theta^{n_d}_{d,d})
$$
for one sequence of integers between $0$ and $p-1$, periodic of period $d$.
Here, $\theta_{d,1},\dots , \theta_{d,d}$ are the fundamental characters of
level $d$.
The integers $n_1/e,\dots ,n_d/e$ are called the tame inertia weights of $T$.
For any $\mbb{F}_p$-representation $T$ of $G_K$,
the tame inertia weights of $T$ are the tame inertia weights of the Jordan-H\"older quotients 
of $T|_{I_K}$.

Let $\chi_p\colon G_K\to \mbb{Z}^{\times}_p$ be the $p$-adic cyclotomic character 
and $\bar{\chi}_p\colon G_K\to \mbb{F}^{\times}_p$ the mod $p$ cyclotomic character.
It is well-known that $\bar{\chi}_p|_{I_K}=\theta_1^e$
where $\theta_1\colon I_K\twoheadrightarrow \mbb{F}^{\times}_p$ is the fundamental character of level $1$.
In particular, 
denoting by $K^{\mrm{ur}}$
the maximal unramified extension of $K$,
we have $[K^{\mrm{ur}}(\mu_p):K^{\mrm{ur}}]=(p-1)/\mrm{gcd}(e,p-1)$.

\begin{proposition}
\label{poly}
$(1)$ Minimum c-weights are invariant under finite unramified extensions of the base field $K$. 

\noindent
$(2)$ The c-weight of an unramified torsion 
$\mbb{Z}_p$-representation of $G_K$ is $0$.

\noindent
$(3)$ Put $\nu=(p-1)/\mrm{gcd}(e,p-1)$. 
If $\nu (s-1)<w_c(T)\le \nu s$, then we have 
$\nu (s-1)<w_c(T^{\vee})\le \nu s$. 
In particular, if $(p-1)\mid e$, then we have $w_c(T)=w_c(T^{\vee})$.

\noindent
$(4)$ Let $T$ be an $\mbb{F}_p$-representation of $G_K$ and $i$ the largest tame inertia weight of $T$.
Then we have $w_c(T)\geq i$. 
\end{proposition}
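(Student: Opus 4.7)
The plan is to argue that a crystalline lift bounds the tame inertia weights of its mod-$p$ reduction, and hence those of any $G_K$-equivariant quotient of that reduction.

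Suppose $r := w_c(T)$ is finite, as otherwise the conclusion is trivial. By the definition of $w_c$, there is a crystalline lift $\pi\colon \Lambda \twoheadrightarrow T$, where $\Lambda$ is a lattice in some crystalline $\mbb{Q}_p$-representation of $G_K$ with Hodge-Tate weights in $[0,r]$. Since $pT = 0$, the map $\pi$ factors through $\Lambda/p\Lambda$, yielding a $G_K$-equivariant surjection $\Lambda/p\Lambda \twoheadrightarrow T$. Consequently every Jordan-H\"older factor of $T|_{I_K}$ is a Jordan-H\"older factor of $(\Lambda/p\Lambda)|_{I_K}$, and in particular every tame inertia weight of $T$ is a tame inertia weight of $\Lambda/p\Lambda$. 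It therefore suffices to show that every tame inertia weight of $\Lambda/p\Lambda$ lies in $[0, r]$.

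This ``Hodge bounds tame inertia'' statement I would prove by unwinding the Kisin-module theory already developed in the paper. Tame inertia characters factor through $I_K^{\mrm{tame}}$, and since $K_\infty/K$ is a pro-$p$ (hence wildly ramified) extension, such characters are insensitive to restriction from $G_K$ to $G_\infty$. By Proposition \ref{Kisinfunctor}(1), $\Lambda|_{G_\infty}$ corresponds to a free Kisin module $\mfL$ of height $\le r$, so $(\Lambda/p\Lambda)|_{G_\infty} \simeq T_\mfS(\mfL/p\mfL)$ with $\mfL/p\mfL \in \mrm{Mod}^r_{/\mfS_\infty}$. After passing to the completion of $K^{\mrm{ur}}$, whose residue field $\bar{k}$ is algebraically closed and whose absolute Galois group is $I_K$, the Jordan-H\"older factors of $\Max^r(\mfL/p\mfL)$ in the abelian category $\Max^r_{/\mfS_\infty}$ are simple objects of the form $\mfM(\mfn)$ with $\mfn = (n_i) \in \mcal{S}^r_{\mrm{max}}$, by the classification recalled just before Lemma \ref{Lem1}. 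Since $I_K^{\mrm{tame}}$ is abelian, tame characters are additive along filtrations, so the tame inertia weights of $\Lambda/p\Lambda$ are exactly the union of the tame inertia weights of the $T_\mfS(\mfM(\mfn))$ appearing. By the constraint $0 \le n_i \le \min\{er, p-1\}$ defining $\mcal{S}^r_{\mrm{max}}$, these weights are of the form $n_i/e \le r$.

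The main technical point is the final clause---explicitly identifying the tame inertia character of $T_\mfS(\mfM(\mfn))$ with a product $\theta_{d,1}^{n_1}\cdots\theta_{d,d}^{n_d}$ of fundamental characters, so that its tame inertia weights are precisely $n_i/e$. I would carry this out by choosing a $(p^d-1)$-th root $\eta \in R$ of $\underline{\pi}$, writing down an explicit $\mbb{F}_{p^d}$-basis of $T_\mfS(\mfM(\mfn))$ using the relations $\vphi(e_i) = u^{n_i}e_{i+1}$ of Definition \ref{Def1}, and reading off the $I_K^{\mrm{tame}}$-action from the formula $g(u) = [\underline{\varepsilon}(g)]u$ in $W(R)$, in a manner parallel to the computation performed in the proof of Lemma \ref{Lem1}.
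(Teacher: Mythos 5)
Your proposal addresses only part (4) of the proposition; parts (1), (2), and (3) are not touched at all, and they do require an argument. The paper proves (1) by induction of representations: given a crystalline lift $f\colon L\twoheadrightarrow T|_{G_{K'}}$ with $K'/K$ finite unramified, the module $\mrm{Ind}^{G_K}_{G_{K'}}L$ is again a lattice in a crystalline representation of $G_K$ with the same Hodge--Tate range, and $\sigma\otimes x\mapsto \sigma(f(x))$ gives a $G_K$-equivariant surjection onto $T$; the converse direction is restriction, which is trivial. Part (2) then follows by choosing $K'$ unramified so that $T|_{G_{K'}}$ is trivial, and part (3) follows by combining (1) with the duality stability of Lemma \ref{stability'} after an unramified extension arranging $[K'(\mu_p):K']=[K^{\mrm{ur}}(\mu_p):K^{\mrm{ur}}]$. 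None of this is in your writeup, so as it stands the proof is incomplete.

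For part (4) your route is genuinely different from the paper's and, as far as I can tell, correct. The paper first disposes of the case $ew_c(T)\ge p-1$ (where the bound holds vacuously, since tame inertia weights are at most $(p-1)/e$), and in the remaining range cites Caruso--Savitt [CS, Th\'eor\`eme 1] directly: the tame inertia polygon of a crystalline lattice lies on or above its Hodge polygon, so the largest tame inertia slope is bounded by the largest Hodge--Tate weight. You instead argue entirely inside the paper's Kisin-module machinery: pass to $L=\wh{K^{\mrm{ur}}}$ so that $k$ is algebraically closed, reduce $\mfL$ modulo $p$, take $\Max^r$, decompose into simples $\mfM(\mfn)$ with $\mfn\in\mcal{S}^r_{\mrm{max}}$, and read off the tame inertia weights $n_i/e\le \min\{r,(p-1)/e\}\le r$ directly from the constraint $n_i\le\min\{er,p-1\}$. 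This is self-contained and avoids the initial dichotomy, but costs you the non-trivial verification, which you sketch rather than carry out, that $T_{\mfS}(\mfM(\mfn))$ restricted to tame inertia is the character $\theta^{n_1}_{d,1}\cdots\theta^{n_d}_{d,d}$ up to cyclic reordering. The paper's citation is considerably shorter; your route buys independence from [CS] at the price of that computation.
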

\begin{proof}
(1)  Let $T$ be a torsion $\mbb{Z}_p$-representations of $G_K$.
Let $K'$ be a finite unramified extension of $K$.
It suffices to prove that $T$ has crystalline lifts of weight $\le r$ 
if and only if $T|_{G_{K'}}$ has crystalline lifts of weight $\le r$.
The ``only if'' assertion is clear and thus 
it is enough to prove the ``if'' assertion.
Let $f\colon L\twoheadrightarrow T|_{G_{K'}}$ be a crystalline
lift of $T|_{G_{K'}}$ of weight $\le r$.
Since $K'/K$ is unramified,
$\mrm{Ind}^{G_K}_{G_{K'}}L$ is a lattice in some crystalline 
$\mbb{Q}_p$-representation of $G_K$ with Hodge-Tate weights in $[0,r]$.
Furthermore, the map
$$
\mrm{Ind}^{G_K}_{G_{K'}}L=\mbb{Z}_p[G_K]\otimes_{\mbb{Z}_p[G_{K'}]} L\to T,\quad
\sigma\otimes x\mapsto \sigma(f(x))
$$
is a $G_K$-equivariant surjection and hence we have done.

\noindent
(2) The result follows from (1) immediately.

\noindent
(3) Taking a finite unramified extension $K'$ of $K$ with the property 
$[K^{\mrm{ur}}(\mu_p):K^{\mrm{ur}}]=[K'(\mu_p):K']$,
it follows from Lemma \ref{stability'}
that we have $\nu (s-1)<w_c(T|_{G_K'})\le \nu s$ if and only if  
we have $\nu (s-1)<w_c((T^{\vee})|_{G_K'})\le \nu s$.
Thus the result follows from the assertion (1).

\noindent
(4)
If $ew_c(T)\ge p-1$, then there is nothing to prove, and thus
we may suppose that $ew_c(T)< p-1$.
Let $L\twoheadrightarrow T$ be a crystalline lift of $T$ of weight $\le w_c(T)$.
Since the tame inertia polygon of $L$ lies on the Hodge polygon of $L$
(\cite[Th\'eor\`eme 1]{CS}), 
the largest slope of the former polygon is less than or equal to that of the latter polygon.
This implies $w_c(T)\geq i$.
\end{proof}

\begin{theorem}
\label{tamelift}
Let $T$ be a tamely ramified $\mbb{F}_p$-representation of $G_K$.
Let $i$ be the largest tame inertia weight of $T$.
Then we have $w_c(T)=\mrm{min}\{ h\in \mbb{Z}_{\ge 0}; h\ge i \}$. 
\end{theorem}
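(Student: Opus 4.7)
The plan is to prove the two inequalities separately, where $h:=\mrm{min}\{h'\in\mbb{Z}_{\ge 0}:h'\ge i\}$. The lower bound $w_c(T)\ge h$ is immediate: Proposition \ref{poly}(4) gives $w_c(T)\ge i$, and since $w_c(T)$ lies in $\mbb{Z}_{\ge 0}\cup\{\infty\}$ by definition, this forces $w_c(T)\ge \lceil i\rceil=h$.

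For the upper bound I must exhibit a $G_K$-equivariant surjection $L\twoheadrightarrow T$ with $L$ a lattice in some crystalline $\mbb{Q}_p$-representation of $G_K$ having Hodge-Tate weights in $[0,h]$. I first extend coefficients: choose a finite extension $E/\mbb{Q}_p$ with ring of integers $\mcal{O}$ and residue field $\mbb{F}$ large enough that every character $\chi$ of $I_K$ appearing in $T\otimes_{\mbb{F}_p}\overline{\mbb{F}}_p$ takes values in $\mbb{F}^\times$; any $\mcal{O}$-lattice in a crystalline $E$-representation is (by restriction of scalars) a $\mbb{Z}_p$-lattice in a crystalline $\mbb{Q}_p$-representation with the same Hodge-Tate weights, so it suffices to produce a crystalline $\mcal{O}$-lattice surjecting onto $T\otimes_{\mbb{F}_p}\mbb{F}$. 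Since $T$ is tame, the image of $I_K$ on $T$ has order coprime to $p$, so $T|_{I_K}$ is semisimple by Maschke and we have the isotypic decomposition $T\otimes\mbb{F}=\bigoplus_\chi M_\chi$. The Frobenius generator of $G_K/I_K$ permutes the set of characters via $\chi\mapsto\chi^q$ (with $q=|k|$); grouping isotypic summands into Frobenius-orbits of length $d$ produces a $G_K$-stable direct sum decomposition $T\otimes\mbb{F}=\bigoplus_{[\chi]}T_{[\chi]}$, and Clifford theory identifies each summand as $\mrm{Ind}_{G_{K_d}}^{G_K}(N_\chi)$, where $K_d/K$ is the unramified degree-$d$ extension and $N_\chi$ is a $G_{K_d}$-representation whose restriction to $I_{K_d}=I_K$ is $\chi$-isotypic. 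Since unramified induction preserves crystallinity and Hodge-Tate weights, and since direct sums of crystalline lifts are crystalline lifts, I am reduced to constructing, for each orbit, a crystalline $\mcal{O}$-lattice lifting $N_\chi$ as a $G_{K_d}$-representation with Hodge-Tate weights in $[0,h]$.

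The key technical input is the existence of a crystalline character $\tilde\chi\colon G_{K_d}\to \mcal{O}^\times$ lifting $\chi$ and having Hodge-Tate weight $\lceil n/e\rceil$, where $n\in[0,p-1]$ is determined by $\chi|_{I_{K_d}}=\theta_1^n$; for $K=\mbb{Q}_p$ this is just a power of the cyclotomic character times an unramified twist, and for general $K$ it is supplied by Lubin-Tate theory (whose characters restrict on inertia to the level-one fundamental character modulo the maximal ideal) combined with unramified twists chosen to match the prescribed residue. Granting $\tilde\chi$, the twist $N_\chi\otimes_{\mbb{F}}(\tilde\chi\,\mrm{mod}\,\mfm_{\mcal{O}})^{-1}$ becomes unramified as a $G_{K_d}$-representation; every unramified $\mbb{F}$-representation of $G_{K_d}$ admits an unramified (hence crystalline of Hodge-Tate weight zero) $\mcal{O}$-lift, by lifting the image of Frobenius from $\mrm{GL}(\mbb{F})$ to $\mrm{GL}(\mcal{O})$ and extending by continuity to the procyclic subgroup generated. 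Tensoring back with $\tilde\chi$ yields the desired crystalline lift of $N_\chi$ with Hodge-Tate weights $\le\lceil n/e\rceil\le h$. The main obstacle is the explicit Lubin-Tate construction of $\tilde\chi$ with precisely the right Hodge-Tate weight and prescribed inertia reduction modulo $p$; once that is in place, the remaining steps (twist to unramified, lift the unramified part, untwist, induce, direct-sum over orbits, and restrict scalars to $\mbb{Z}_p$) are essentially formal.
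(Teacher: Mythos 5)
Your lower bound (Proposition \ref{poly}(4) plus integrality of $w_c$) matches the paper. Your upper-bound route is genuinely different from the paper's: you extend coefficients, decompose $T\otimes_{\mbb{F}_p}\mbb{F}$ into $I_K$-isotypic pieces, group Frobenius orbits, use Clifford theory to write each orbit as $\mrm{Ind}_{G_{K_d}}^{G_K}(N_\chi)$, and try to lift $N_\chi$ as a crystalline character $\tilde\chi$ tensored with an unramified lift. The paper instead works directly with the irreducible $\mbb{F}_p$-constituents $\mbb{F}_{p^d}(\theta_{d,1}^{n_1}\cdots\theta_{d,d}^{n_d})$ of $T|_{I_K}$, splits each exponent as $n_j=em_j+n'_j$ so that the constituent is an $\mbb{F}_{p^d}$-tensor product of $i_0$ characters of the same shape with all exponents in $[0,e]$, lifts each factor to a crystalline lattice of Hodge--Tate weight $\le 1$ via Raynaud's finite flat group scheme classification (and embedding into a $p$-divisible group), then upgrades the resulting $I_K$-equivariant lift to $G_{K''}$-equivariance for a finite unramified $K''/K$ and concludes by Proposition \ref{poly}(1).

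There is, however, a genuine gap at precisely the step you flag as ``the main obstacle,'' and the bookkeeping around it is also incorrect. The assertion that $\chi|_{I_{K_d}}=\theta_1^n$ with $n\in[0,p-1]$ is false in general: a character appearing in a Frobenius orbit of size $d$ has order dividing $q^d-1$ (with $q=|k|$), so it is a power $\theta^a$ of the appropriate fundamental character with $a$ any integer in $[0,q^d-2]$, not merely $[0,p-1]$. The quantity that governs $i_0$ is the maximum base-$p$ digit of $a$ (divided by $e$), not $a$ itself; if you take $n=a$ your bound $\lceil n/e\rceil$ overshoots badly, and if you take $n$ to be the largest digit then the identity $\chi=\theta_1^n$ fails. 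The substantive point is to produce a crystalline character $\tilde\chi$ of $G_{K_d}$ whose mod-$p$ restriction to inertia is $\chi$ and all of whose labelled Hodge--Tate weights lie in $[0,i_0]$ (these are what become the $\mbb{Q}_p$-Hodge--Tate weights after restriction of scalars). You do not carry this out; it is exactly what Raynaud's theorem supplies in the paper's proof, and a Lubin--Tate-based substitute would, after correcting the above, still need a careful verification of essentially the same content.
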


\begin{proof}
The proof below is essentially due to Caruso and Liu \cite[Theorem 5.7]{CL2},
but we give a proof here for the sake of completeness. 
Put $i_0=\mrm{min}\{ h\in \mbb{Z}_{\ge 0}; h\ge i \}$.
By Proposition \ref{poly} (4),
we have $w_c(T)\ge i_0$.
Thus it suffices to show $w_c(T)\le i_0$.
We note that $T|_{I_K}$ is semi-simple.
Any irreducible component $T_0$ of $T|_{I_K}$ is of the form 
$
\mbb{F}_{p^d}(\theta^{n_1}_{d,1}\cdots \theta^{n_d}_{d,d})
$
for one sequence of integers between $0$ and $p-1$, periodic of period $d$.
We decompose $n_j=em_j+n'_j$ by integers $0\le m_j\le i_0$ and $0\le n'_j<e$.
Now we define an integer $k_{j, \ell}$ by
\begin{align*}
k_{j, \ell}:=
\left\{
\begin{array}{ll}
e\quad \hspace{2.0mm} {\rm if}\ 1\le \ell\le m_j,  
\cr
n'_j\quad {\rm if}\ \ell=m_j+1, 
\cr
0\quad \hspace{2.0mm} {\rm if}\ \ell>m_j+1. 
\end{array}
\right.
\end{align*}
Note that we have $n_j=\sum^{i_0}_{\ell=1} k_{j, \ell}$, and
also have an $I_K$-equivariant surjection
$$
T_0=\mbb{F}_{p^d}(\theta^{n_1}_{d,1}\dots \theta^{n_d}_{d,d})
=\bigotimes_{\ell=1,\dots i_0, \mbb{F}_{p^d}} 
\mbb{F}_{p^d}(\theta^{k_{1,\ell}}_{d,1}\dots \theta^{k_{d,\ell}}_{d,d})
\twoheadleftarrow \bigotimes_{\ell=1,\dots i_0, \mbb{F}_p} 
\mbb{F}_{p^d}(\theta^{k_{1,\ell}}_{d,1}\dots \theta^{k_{d,\ell}}_{d,d}).
$$
By a classical result of Raynaud,
each $\mbb{F}_{p^d}(\theta^{k_{1,\ell}}_{d,1}\cdots \theta^{k_{d,\ell}}_{d,d})$
comes from a finite flat group scheme defined over $K^{\mrm{ur}}$.
We should remark that such a finite flat group scheme is 
in fact defined over a finite unramified extension of $K$.
Since any finite flat group scheme
can be embedded in a $p$-divisible group,
the above observation implies the following:
there exist a finite unramified extension $K'$ over $K$,
a lattice $L$ in some crystalline $\mbb{Q}_p$-representation of
$G_{K'}$ with Hodge-Tate weights in $[0, i_0]$ 
and an $I_K$-equivariant surjection $f\colon L\twoheadrightarrow T$.
The map $f$ induces an $I_K$-equivariant surjection $\tilde{f}\colon L/pL\twoheadrightarrow T$. 
Since $L/pL$ and $T$ is finite,
we see that $\tilde{f}$ is in fact $G_{K''}$-equivariant for some 
finite unramified extension $K''$ over $K'$, and then so is $f$. 
Therefore, we obtain $w_c(T|_{G_{K''}})\le i_0$.
By Proposition \ref{poly} (1), we obtain $w_c(T)\le i_0$. 
\end{proof}

%%%%%%%%%%%%%%%%%%%%%%%%%%%%%%%%%%%%%%%%%%%%%%%%%%%%%%%%%%%%%%%%%%%%%%%%%%%%%%%%%%%%%%%%%%%%%%%%%%%%%%%%%%%
%%%%%%%%%%%%%%%%%%%%%%%%%%%%%%%%%%%%%%%%%%%%%%%%%%%%%%%%%%%%%%%%%%%%%%%%%%%%%%%%%%%%%%%%%%%%%%%%%%%%%%%%%%%
%                       5.2                                %%%%%%%%%%%%%%%%%%%%%%%%%%%%%%%%%%%%%%%%%%%%%%%%
%%%%%%%%%%%%%%%%%%%%%%%%%%%%%%%%%%%%%%%%%%%%%%%%%%%%%%%%%%%%%%%%%%%%%%%%%%%%%%%%%%%%%%%%%%%%%%%%%%%%%%%%%%%
%%%%%%%%%%%%%%%%%%%%%%%%%%%%%%%%%%%%%%%%%%%%%%%%%%%%%%%%%%%%%%%%%%%%%%%%%%%%%%%%%%%%%%%%%%%%%%%%%%%%%%%%%%%

\subsection{Rank $2$ cases}
We give some computations of c-weights related with torsion representations of rank $2$.
We prove the following lemma by an almost identical method with 
\cite[Lemma 9.4]{GLS}.

\begin{lemma}
\label{2liftlem}
Let $K$ be a finite extension of $\mbb{Q}_p$.
Let $E$ be a finite extension of $\mbb{Q}_p$ with residue field $\mbb{F}$.
Let $i$ and $\nu$ be integers such that
$\nu$ is divisible by $[K(\mu_p):K]$.
Suppose that $T$ is an $\mbb{F}$-representation of $G_K$
which sits in an exact sequence
$(\ast)\colon 0\to \mbb{F}(i)\to T\to \mbb{F}\to 0$
of $\mbb{F}$-representations
of $G_K$.
Then there exist a ramified degree at most $2$ extension
$E'$ over $E$, with integer ring $\cO_{E'}$,
and an unramified continuous character $\chi\colon G_K\to \mbb{F}^{\times}$
with trivial reduction
such that $(\ast)$ is  the reduction of some exact sequence
$0\to \cO_{E'}(\chi \chi_p^{i+\nu})\to \Lambda\to \cO_{E'}\to 0$
of free $\cO_{E'}$-representations of $G_K$.
Furthermore, we have the followings:

\noindent
$(1)$ If $i+\nu=1$ or $\bar{\chi}_p^{1-i}\not= 1$, then we can take $E'=E$ and $\chi=1$.

\noindent
$(2)$ If $i+\nu=0$ and $T$ is unramified,
then we can take $E'=E$, $\chi=1$ and $\Lambda$ to be unramified.  
\end{lemma}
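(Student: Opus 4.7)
The extension $(\ast)$ gives a cohomology class $c \in H^1(G_K, \mbb{F}(i))$, and the problem is to realize $c$ as the reduction modulo $\pi_{E'}$ of a class in $H^1(G_K, \cO_{E'}(\chi\chi_p^{i+\nu}))$. Since $\nu$ is divisible by $[K(\mu_p):K]$, the order of $\bar\chi_p$, one has $\bar\chi_p^{\nu}=1$; combined with the hypothesis that $\chi$ has trivial reduction, this ensures the mod-$\pi_{E'}$ reduction of $\cO_{E'}(\chi\chi_p^{i+\nu})$ is exactly $\mbb{F}(i)$. The short exact sequence
\[
0 \to \cO_{E'}(\chi\chi_p^{i+\nu}) \xrightarrow{\pi_{E'}} \cO_{E'}(\chi\chi_p^{i+\nu}) \to \mbb{F}(i) \to 0
\]
yields a reduction map $H^1(G_K, \cO_{E'}(\chi\chi_p^{i+\nu})) \to H^1(G_K, \mbb{F}(i))$, and the obstruction to $c$ lying in its image is its image $\delta(c)$ in $H^2(G_K, \cO_{E'}(\chi\chi_p^{i+\nu}))$.

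To handle the sub-case $\bar\chi_p^{1-i}\ne 1$ of (1), I take $E'=E$, $\chi=1$, and apply Tate local duality to identify $H^2(G_K, \cO_E(\chi_p^{i+\nu}))$ with the Pontryagin dual of $H^0(G_K, (E/\cO_E)(\chi_p^{1-i-\nu}))$; the residue representation of the latter is $\mbb{F}(\bar\chi_p^{1-i})$, which has no nontrivial $G_K$-fixed vector, so $H^2 = 0$ and the obstruction vanishes. For the sub-case $i+\nu=1$, the surjectivity of $H^1(G_K, \cO_E(\chi_p)) \to H^1(G_K, \mbb{F}(1))$ is Kummer theory, a consequence of Hilbert 90 applied to $0\to \mu_{p^n}\to \mbb{G}_m\xrightarrow{p^n}\mbb{G}_m\to 0$. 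For case (2), we have $\chi_p^{i+\nu}=1$, and the requirement that $\Lambda$ be unramified means that the lifting takes place inside $H^1(G_K/I_K, \cO_E) \simeq H^1(\hat{\mbb{Z}}, \cO_E)$; since $\hat{\mbb{Z}}$ has cohomological dimension one, $H^2(\hat{\mbb{Z}}, \cO_E)=0$, and the reduction map $H^1(\hat{\mbb{Z}}, \cO_E)\to H^1(\hat{\mbb{Z}}, \mbb{F})$ is automatically surjective.

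The remaining case---when neither hypothesis of (1) nor of (2) is met---has $\bar\chi_p^{1-i}=1$ and $i+\nu\ne 1$, so $H^2(G_K, \cO_E(\chi_p^{i+\nu}))$ may be nonzero and $\delta(c)$ may not vanish for the naive choice $\chi=1,\ E'=E$. The plan, following the argument of \cite[Lemma 9.4]{GLS}, is to enlarge $E$ to a ramified quadratic extension $E'$ if necessary, and to introduce a nontrivial unramified character $\chi\colon G_K\to 1+\pi_{E'}\cO_{E'}$ whose value at a Frobenius lift can be tuned so that $\delta(c)=0$. The main obstacle is to arrange this annihilation explicitly: I expect to do so by a Bockstein-type iterative lifting of $c$ modulo successive powers of $\pi_{E'}$, identifying each step's obstruction in $H^2(G_K, \mbb{F}(i))$ (one-dimensional under $\bar\chi_p^{1-i}=1$ by Tate duality), and verifying that the deformation freedom provided by $\chi(\mrm{Frob})\in 1+\pi_{E'}\cO_{E'}$, together with the extra unit afforded by the ramified extension $E'/E$, is rich enough to cancel the obstruction at each stage.
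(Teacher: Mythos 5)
Your treatment of parts (1) and (2) is correct and essentially matches the paper: for $i+\nu=1$ Kummer theory gives surjectivity (the paper phrases this as $H^2(K,\cO_E(1))\simeq\cO_E$ being torsion-free, so the connecting map vanishes); for $\bar\chi_p^{1-i}\ne 1$ you correctly use local duality to see $H^2(K,\cO_E(\chi_p^{i+\nu}))=0$; and for (2) the cohomological dimension one argument for $\hat{\mbb{Z}}$ is fine.

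However, the main case---$i+\nu\ne 1$ and $\bar\chi_p^{1-i}=1$---is where the lemma has real content, and there you have only written a plan, not a proof. You say the ``main obstacle is to arrange this annihilation explicitly'' and that you ``expect to do so by a Bockstein-type iterative lifting,'' ``verifying that the deformation freedom \dots is rich enough to cancel the obstruction at each stage.'' That verification is precisely what is missing, and it is not a routine check: in an iterative Bockstein argument the obstruction at stage $k$ depends on the choice of lift made at stage $k-1$, and it is not at all obvious that tuning $\chi(\mathrm{Frob})\in 1+\varpi\cO_{E'}$ gives enough freedom to kill every successive obstruction compatibly. The paper avoids this entirely by applying Tate local duality once and for all: it shows that the class $L\in H^1(K,\mbb{F}(i))$ lifts to $H^1(K,\cO_{E'}(\chi\chi_p^{i+\nu}))$ if and only if its annihilator line $H\subset H^1(K,\mbb{F})$ under the Tate pairing is contained in the image of the dual connecting map $\delta^0_\chi$, computes that image explicitly as the line spanned by the reduction $\bar\alpha_\chi$ of $(\chi^{-1}\chi_p^{1-i-\nu}-1)/\varpi^n$ (where $n$ is the maximal power dividing $\chi^{-1}\chi_p^{1-i-\nu}-1$), and then chooses $\chi$ (and if necessary $E'=E(\sqrt\varpi)$) so that $\bar\alpha_\chi$ spans $H$, with two cases according to whether $H$ is the unramified line or not. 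That duality-based reduction to a single one-dimensional containment is the key idea, and it is what your sketch lacks. Without it (or a completed version of the iterative argument), the proposal does not establish the statement.
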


\begin{proof}
Suppose $i+\nu=1$ (resp.\ $\bar{\chi}_p^{1-i}\not= 1$).
Then the map $H^1(K, \cO_E(i+\nu))\to H^1(K,\mbb{F}(i))$
arising from the exact sequence 
$0\to \cO_E(i+\nu)\overset{\varpi}{\to} \cO_E(i+\nu)\to \mbb{F}(i)\to 0$
is surjective since 
$H^2(K, \cO_E(1))\simeq \cO_E$ (resp.\ $H^2(K, \cO_E(i+\nu))=0$),
where $\varpi$ is a uniformizer of $E$.
Hence we obtained a proof of (1).
The assertion (2)  follows immediately from the fact that the natural map
$H^1(G_K/I_K, \cO_E)\to H^1(G_K/I_K, \mbb{F})$ is surjective.

In the rest of this proof, we always assume that $i+\nu\not=1$ and $\bar{\chi}_p^{1-i}= 1$.
Let $L\in H^1(K,\mbb{F}(i))$ be 
a $1$-cocycle corresponding to 
$(\ast)$. 
We may suppose $L \not=0$.
For any unramified continuous character $\chi\colon G_K\to \mbb{F}^{\times}$
with trivial reduction,
we denote by 
\begin{align*}
&\delta^1_{\chi}\colon H^1(K,\mbb{F}(i))\to H^2(K, \cO_E(\chi\chi_p^{i+\nu}))\\
(\mrm{resp.}\ &\delta^0_{\chi}\colon 
H^0(K, E/\cO_E(\chi^{-1}\chi_p^{1-i-\nu}))\to H^1(K, \mbb{F}))
\end{align*}
the connection map arising from the exact sequence 
$0\to \cO_E(\chi\chi_p^{i+\nu})\overset{\varpi}{\to} 
\cO_E(\chi\chi_p^{i+\nu}) \to \mbb{F}(i)\to 0$
(resp.\ 
$0\to \mbb{F}\to E/\cO_E(\chi^{-1}\chi_p^{1-i-\nu})\overset{\varpi}{\to} 
E/\cO_E(\chi^{-1}\chi_p^{1-i-\nu})\to 0$) 
of $\cO_E[G_K]$-modules.
Consider the following commutative diagram:
\begin{center}
$\displaystyle \xymatrix{
H^1(K,\mbb{F}(i)) \ar_{\delta^1_{\chi}}[d] 
& \times 
& H^1(K,\mbb{F}) \ar[rrr] 
& &
& E/\cO_E \ar@{=}[d] \\
H^2(K, \cO_E(\chi\chi_p^{i+\nu})) 
& \times 
& H^0(K, E/\cO_E(\chi^{-1}\chi_p^{1-i-\nu})) \ar^{\delta^0_{\chi}}[u] \ar[rrr] 
& &
& E/\cO_E
}$
\end{center}
Since the above two pairings are perfect,
we see that $L$ lifts to
$H^1(G_K, \cO_E(\chi\chi_p^{i+\nu}))$
if and only if $H$ is contained in the image of $\delta^0_{\chi}$.
Here, $H\subset H^1(K,\mbb{F})$ is the annihilator of $L$
under the local Tate pairing 
$H^1(K,\mbb{F}(i)) \times H^1(K,\mbb{F}) \to E/\cO_E$.
Let  $n\ge 1$ be the largest integer with the property that
$\chi^{-1}\chi_p^{1-i-\nu}\equiv 1\ \mrm{mod}\ \varpi^n$
(such $n$ exists since $\bar{\chi}_p^{1-i}=1$ and $1-i-\nu\not=0$). 
We define $\alpha_{\chi}\colon G_K\to \cO_E$ by the relation
$\chi^{-1}\chi_p^{1-i-\nu}=1+\varpi^n\alpha_{\chi}$, and denote 
$(\alpha_{\chi}\ \mrm{mod}\ \varpi)\colon G_K\to \mbb{F}$ by
$\bar{\alpha}_{\chi}$. 
By definition, $\bar{\alpha}_{\chi}$ 
is a non-zero element of $H^1(K,\mbb{F})$,
and it is not difficult to check that 
the image of $\delta^0_{\chi}$ is generated by $\bar{\alpha}_{\chi}$.
If $\bar{\alpha}_{\chi}$ is contained in $H$ for some $\chi$,
we are done. Suppose this is not the case.
 
Suppose that $H$ is not contained in the unramified line in 
$H^1(K,\mbb{F})$.
We claim that we can choose $\chi$ such that $\bar{\alpha}_{\chi}$
is ramified.
Let $m$ be the largest integer 
with the property that
$(\chi^{-1}\chi_p^{1-i-\nu})|_{I_K}\equiv 1\ \mrm{mod}\ \varpi^n$.
Clearly, we have $m\ge n$.
If $m=n$, then we are done and thus we may assume $m>n$.
Fix a lift $g\in G_K$ of the Frobenius of $K$.
We see that $\bar{\alpha}_{\chi}(g)\not= 0$. 
Let $\chi'$ be the unramified character sending $g$ to $1+\varpi^n\alpha_{\chi}(g)$.
Then $\chi'$ has trivial reduction.
After replacing $\chi$ with $\chi\chi'$,
we reduce the case where $m=n$ and thus the claim follows.
Suppose $\bar{\alpha}_{\chi}$
is ramified. 
Then there exists a unique $\bar{x}\in \mbb{F}^{\times}$
such that $\bar{\alpha}_{\chi}+u_{\bar x}\in H$ where 
$u_{\bar x}\colon G_K\to \mbb{F}$ is the unramified character 
sending $g$ to $\bar x$.
Denote by $\chi''$ the unramified character sending 
$g$ to $1+\varpi^n\alpha_{\chi}(g)$.
Replacing $\chi$ with  $\chi\chi''$, we have done.

Suppose that $H$ is contained in the unramified line in 
$H^1(K,\mbb{F})$ (thus $H$ and the unramified line  coincide with each other).
By replacing $E$ with $E(\sqrt{\varpi})$,
we may assume that $n>1$.
Let $\chi_0$ be a character defined by $\chi$ times the unramified character sending our fixed 
$g$ to $1+\varpi$.
Since $n>1$, we see that $\chi_0^{-1}\chi_p^{1-i-\nu}\equiv 1\ \mrm{mod}\ \varpi$
and $\chi_0^{-1}\chi_p^{1-i-\nu}\not\equiv 1\ \mrm{mod}\ \varpi^2$.
We define $\alpha_{\chi_0}\colon G_K\to \cO_E$ by the relation
$\chi_0^{-1}\chi_p^{1-i-\nu}=1+\varpi\alpha_{\chi_0}$, and denote 
$(\alpha_{\chi_0}\ \mrm{mod}\ \varpi)\colon G_K\to \mbb{F}$ by
$\bar{\alpha}_{\chi_0}$. 
By definition and the assumption $n>1$, 
$\bar{\alpha}_{\chi_0}$ 
is a non-zero unramified element of $H^1(K,\mbb{F})$,
hence it is contained in $H$. 
Therefore, we have done.
\end{proof}

Let $K$ be a finite extension of $\mbb{Q}_p$, 
$n\ge 2$ an integer and 
$\chi\colon G_K\to E^{\times}$ an unramified character.
Since any $E$-representation of $G_K$
which is an extension of $E$ by $E(\chi\chi_p^n)$
is automatically crystalline,
we obtain the following.

\begin{proposition}
\label{rank2}
Suppose $p>2$.
Let $K$ be a finite unramified extension of $\mbb{Q}_p$.
Let $T\in \mrm{Rep}_{\mrm{tor}}(G_K)$ be killed by $p$ and  
sit in an exact sequence
$0\to \mbb{F}_p(i)\to T\to \mbb{F}_p\to 0$ of $\mbb{F}_p$-representations
of $G_K$. Then we have the followings:

\noindent
$(1)$ If $i=0$ and $T$ is unramified, 
then we have
$w_c(T)=0$.

\noindent
$(2)$ If $i=0$ and $T$ is not unramified, 
then we have
$w_c(T)=p-1$.

\noindent
$(3)$ If $i=2,\dots, p-2$, 
then we have
$w_c(T)=i$. 
\end{proposition}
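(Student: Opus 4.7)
The plan is to treat the three cases of Proposition~\ref{rank2} separately, in each case using Lemma~\ref{2liftlem} to manufacture crystalline lifts for the upper bounds and combining Proposition~\ref{poly} with Corollary~\ref{FFTHMtorcris} for the lower bounds. Case~(1) is immediate from Proposition~\ref{poly}(2), which asserts that unramified torsion representations have c-weight zero.

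For case~(3), I first compute tame inertia weights: since $K$ is unramified we have $e=1$ and $\bar{\chi}_p|_{I_K}=\theta_1$, so $\mathbb{F}_p(i)$ has tame inertia weight $i$ and $\mathbb{F}_p$ has tame inertia weight $0$; Proposition~\ref{poly}(4) then gives $w_c(T)\ge i$. For the matching upper bound, I apply Lemma~\ref{2liftlem} with $E=\mathbb{Q}_p$ and $\nu=0$: the hypotheses $i+\nu=i\ne 1$ and $\bar{\chi}_p^{1-i}\ne 1$ both hold, the latter because $1-i\not\equiv 0\pmod{p-1}$ for $i\in\{2,\ldots,p-2\}$. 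Part~(1) of the lemma then produces an exact sequence $0\to\mathbb{Z}_p(i)\to\Lambda\to\mathbb{Z}_p\to 0$ of free $\mathbb{Z}_p[G_K]$-modules lifting the given extension, and since $i\ge 2$ the remark preceding Proposition~\ref{rank2} ensures $\Lambda$ is automatically crystalline with Hodge--Tate weights in $[0,i]$; thus $w_c(T)\le i$.

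In case~(2) the upper bound is analogous: apply Lemma~\ref{2liftlem} with $\nu=p-1$, which is admissible because $[K(\mu_p):K]=p-1$ when $K$ is unramified over $\mathbb{Q}_p$ and $p>2$. The hypotheses of part~(1) hold ($i+\nu=p-1\ne 1$ and $\bar{\chi}_p\ne 1$, again since $\mu_p\not\subset K$), yielding a lift $0\to\mathbb{Z}_p(p-1)\to\Lambda\to\mathbb{Z}_p\to 0$ that is automatically crystalline of weight $\le p-1$, so $w_c(T)\le p-1$. The main obstacle is the matching lower bound. I argue by contradiction: suppose $T\in\mathrm{Rep}^{r,\mathrm{cris}}_{\mathrm{tor}}(G_K)$ for some $r\le p-2$. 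Then $e(r-1)\le p-3<p-1$, so Corollary~\ref{FFTHMtorcris} applies. Combined with Fontaine--Laffaille theory, which (for $e=1$ and $r\le p-2$) identifies this category with torsion Fontaine--Laffaille modules of weights in $[0,r]$, the hypothesis that $T$ is an extension of two copies of the trivial module forces the associated Fontaine--Laffaille module to have $\mathrm{Fil}^1=0$ and hence to correspond to an unramified representation, contradicting the ramification hypothesis on $T$. The hardest step is this Fontaine--Laffaille analysis (or an equivalent height-zero Kisin-module computation within the $(\varphi,\hat{G})$-module framework developed earlier in the paper), since the tame inertia weight bound of Proposition~\ref{poly}(4) only yields $w_c(T)\ge 0$ and full faithfulness alone does not immediately rule out ramified torsion crystalline extensions of trivial by trivial.
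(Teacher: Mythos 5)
Your cases (1) and (3) match the paper's proof essentially verbatim: case (1) is the unramified-lift statement, and case (3) combines the tame-inertia lower bound from Proposition \ref{poly}(4) with the lift from Lemma \ref{2liftlem}(1) (taking $\nu=0$; your verification that $\bar{\chi}_p^{1-i}\ne 1$ for $i\in\{2,\dots,p-2\}$ is exactly what is needed). The upper bound in case (2), via Lemma \ref{2liftlem}(1) with $\nu=p-1$, is also the same as the paper's.

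Where you genuinely diverge is the lower bound in case (2). The paper argues by ramification bounds: since $T$ is ramified and killed by $p$, the splitting field $K_T/K$ is totally ramified of degree $p$, so the upper-numbering group $G^1=\mathrm{Gal}(K_T/K)$ acts nontrivially on $T$; on the other hand, by the bounds of Fontaine and Abrashkin (\cite{Fo1}, \cite{Ab1}), any $T\in\mrm{Rep}^{p-2,\mrm{cris}}_{\mrm{tor}}(G_K)$ has $G^j$ acting trivially for $j>(p-2)/(p-1)$, a contradiction. You instead invoke Fontaine--Laffaille theory and argue that the associated FL module would have to have $\mrm{Fil}^1=0$, hence be \'etale. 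This alternative can be made to work, but as written it is underspecified: you need (i) that the essential image of the Fontaine--Laffaille functor is stable under subobjects, so that the two-step filtration on $T$ by copies of $\mbb{F}_p$ transfers to a two-step filtration of the FL module $M$ by copies of the trivial module $M_0$; and (ii) the observation that $\mrm{Fil}^1 M\cap M_0=0$ (from the sub) together with $\mrm{Fil}^1 M\subset M_0$ (from the quotient) forces $\mrm{Fil}^1 M=0$, whence $\varphi_0$ is bijective and $M$ is a unit-root $\varphi$-module corresponding to an unramified representation. You correctly flag this as the hard step and note that full faithfulness alone is insufficient, but the paper's ramification-bound argument is shorter and avoids reference to the FL category entirely; I would recommend either filling in points (i)--(ii) or adopting the paper's ramification argument.
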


\begin{proof}
(1), (2)
By Lemma \ref{2liftlem} (2),
it suffices to prove  that $T$ is not torsion crystalline with Hodge-Tate weights in $[0,p-2]$
if $T$ is not unramified. 
Let $K_T$ be the definition field of the representation $T$ of $G_K$
and put $G=\mrm{Gal}(K_T/K)$.
Let $G^j$ be the upper numbering $j$-th ramification subgroup of $G$ (in the sense of \cite{Se}). 
Since $T$ is not unramified and killed by $p$,
we see that $K_T$ is a totally ramified degree $p$ extension over $K$.
Thus $G^1$ is the wild inertia subgroup of $G$ and $G^1=G$, 
which does not act on $T$ trivial by the definition of $G$.
Thus we obtain the desired result by ramification estimates of \cite{Fo1}
(or \cite{Ab1}) for torsion crystalline representations with Hodge-Tate weights in $[0,p-2]$:
if $T$ is torsion crystalline with Hodge-Tate weights in $[0,p-2]$,
then $G^j$ acts on $T$ trivial for any $j>(p-2)/(p-1)$.

\noindent
(3) The result follows immediately from Proposition \ref{poly} (4) and Lemma \ref{2liftlem}.
\end{proof}

\begin{corollary}
\label{2lift}
Let $K$ be a finite unramified extension of $\mbb{Q}_p$.
Then any $2$-dimensional $\mbb{F}_p$-representation of $G_K$
is torsion crystalline with Hodge-Tate weights in $[0,2p-2]$.
\end{corollary}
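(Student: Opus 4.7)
The plan is to treat separately the irreducible and reducible cases for $T$ as a $G_K$-representation.

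If $T$ is irreducible, then the subspace $T^P$ fixed by the wild inertia subgroup $P$ is non-zero (as $P$ is pro-$p$ and $T$ is a non-zero $\mbb F_p$-vector space) and $G_K$-stable, so irreducibility forces $T^P = T$. Thus $T$ is tamely ramified, and since $e=1$ its tame inertia weights lie in $\{0,1,\ldots,p-1\}$; Theorem \ref{tamelift} then yields $w_c(T) \le p-1 \le 2p-2$.

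Suppose instead $T$ fits in $0 \to \mbb F_p(\chi_1) \to T \to \mbb F_p(\chi_2) \to 0$ and write $\chi_j = \eta_j \bar\chi_p^{i_j}$ with $\eta_j$ unramified and $0 \le i_j \le p-2$. Choose a finite unramified extension $K_1/K$ with $\eta_1|_{G_{K_1}} = \eta_2|_{G_{K_1}}$ (the fixed field of $\ker(\eta_1\eta_2^{-1})$ works). Restricting to $G_{K_1}$ and twisting by the $\mbb F_p$-character $\chi_2^{-1}|_{G_{K_1}}$ produces an extension
\[
0 \to \mbb F_p(i_0) \to T' \to \mbb F_p \to 0
\]
of $G_{K_1}$-representations with $i_0 := (i_1-i_2) \bmod (p-1) \in \{0,\ldots,p-2\}$. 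Set $\nu = 0$ if $i_0 \ge 2$ and $\nu = p-1$ otherwise; then $\nu$ is a multiple of $[K_1(\mu_p):K_1] = p-1$ and $i_0+\nu \ge 2$. Applying Lemma \ref{2liftlem} to $T'$ over $K_1$ yields a lift
\[
0 \to \cO_{E'}(\chi\chi_p^{i_0+\nu}) \to \Lambda' \to \cO_{E'} \to 0,
\]
which is automatically crystalline by the remark before Proposition \ref{rank2}, since $i_0+\nu \ge 2$.

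Tensoring by the Teichm\"uller crystalline lift $\tilde\chi_2|_{G_{K_1}} := \tilde\eta_2|_{G_{K_1}}\chi_p^{i_2}$ of $\chi_2|_{G_{K_1}}$ (of Hodge-Tate weight $i_2$) gives a crystalline $\cO_{E'}$-lattice $\Lambda := \Lambda' \otimes \tilde\chi_2|_{G_{K_1}}$ whose reduction modulo the maximal ideal of $\cO_{E'}$ is $T|_{G_{K_1}}$ and whose Hodge-Tate weights lie in $\{i_2,\, i_0+\nu+i_2\}$. A short case check gives $i_0+\nu+i_2 \le (p-2)+(p-2) = 2p-4$ when $\nu=0$ and $i_0+\nu+i_2 \le 1+(p-1)+(p-2) = 2p-2$ when $\nu=p-1$. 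Hence $w_c(T|_{G_{K_1}}) \le 2p-2$, and Proposition \ref{poly}(1) transfers this bound to $w_c(T) \le 2p-2$. The principal obstacle is the reducible case: the base change to $K_1$ is essential to put the sub-character into the exact form (a pure power of $\bar\chi_p$) required by Lemma \ref{2liftlem}, and the two-way split in the choice of $\nu$ is tuned precisely so that the Hodge-Tate shift produced by twisting $\Lambda'$ back by $\tilde\chi_2|_{G_{K_1}}$ does not exceed $2p-2$.
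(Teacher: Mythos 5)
Your proof is correct and takes essentially the same approach as the paper: handle the irreducible case via Theorem \ref{tamelift}, and in the reducible case pass to an unramified extension, twist to put the extension in the form $0\to \mbb F_p(i)\to T'\to \mbb F_p\to 0$, invoke Lemma \ref{2liftlem} with a suitable $\nu \in \{0,\,p-1\}$, and twist back. The only cosmetic difference is that the paper records the final step as the subadditivity $w_c(T)\le w_c(T(-j))+w_c(\mbb F_p(j))\le p+(p-2)$, whereas you make the same tensor-of-lattices construction explicit and thereby track the resulting Hodge--Tate weights a bit more precisely.
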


\begin{proof}
If $T$ is irreducible,
the result follows from Theorem \ref{tamelift}.
Assume that $T$ is reducible.
Since $K$ is unramified over $\mbb{Q}_p$,
any continuous character $G_K\to \mbb{F}^{\times}_p$
is of the form $\chi \bar{\chi}^i_p$ for some unramified character $\chi$ and some integer $i$.
Replacing $K$ with its finite unramified extension, 
we may assume that $T$ sits in an exact sequence
$0\to \mbb{F}_p(i)\to T\to \mbb{F}_p(j)\to 0$ 
of $\mbb{F}_p$-representations
of $G_K$,
where $i$ and $j$ are integers in the range $[0,p-2]$
(we remark that $w_c(T)$ is invariant under unramified extensions of $K$
by Proposition \ref{poly} (1)).
It follows from Lemma \ref{2liftlem}
that $w_c(T(-j))\le p$.
Therefore, we obtain 
$w_c(T)=w_c(T(-j)\otimes_{\mbb{F}_p} \mbb{F}_p(j))
\le w_c(T(-j))+ w_c(\mbb{F}_p(j))\le p+(p-2)
=2p-2$.
\end{proof}

%%%%%%%%%%%%%%%%%%%%%%%%%%%%%%%%%%%%%%%%%%%%%%%%%%%%%%%%%%%%%%%%%%%%%%%%%%%%%%%%%%%%%%%%%%%%%%%%%%%%%%%%%%%
%%%%%%%%%%%%%%%%%%%%%%%%%%%%%%%%%%%%%%%%%%%%%%%%%%%%%%%%%%%%%%%%%%%%%%%%%%%%%%%%%%%%%%%%%%%%%%%%%%%%%%%%%%%
%                       5.3                                  %%%%%%%%%%%%%%%%%%%%%%%%%%%%%%%%%%%%%%%%%%%%%%%%
%%%%%%%%%%%%%%%%%%%%%%%%%%%%%%%%%%%%%%%%%%%%%%%%%%%%%%%%%%%%%%%%%%%%%%%%%%%%%%%%%%%%%%%%%%%%%%%%%%%%%%%%%%%
%%%%%%%%%%%%%%%%%%%%%%%%%%%%%%%%%%%%%%%%%%%%%%%%%%%%%%%%%%%%%%%%%%%%%%%%%%%%%%%%%%%%%%%%%%%%%%%%%%%%%%%%%%%

\subsection{Extensions of $\mbb{F}_p$ by $\mbb{F}_p(1)$ and non-fullness theorems}
By Lemma \ref{2liftlem},
we know that the c-weight $w_c(T)$ of an $\mbb{F}_p$-representation $T$ of $G_K$
which sits in an exact sequence
$0\to \mbb{F}_p(1)\to T\to \mbb{F}_p\to 0$ of $\mbb{F}_p$-representations
of $G_K$, is less than or equal to $p$.
Let us calculate $w_c(T)$ for such $T$ more precisely.
We should remark that such $T$ is written as $p$-torsion points of a Tate curve.
Hence we consider  torsion representations coming from Tate curves. 

Let $v_K$ be the valuation of $K$ normalized such that $v_K(K^{\times})=\mbb{Z}$,
and take any $q\in K^{\times}$ with $v_K(q)>0$.
Let $E_q$ be the Tate curve over $K$ 
associated with $q$ and $E_q[p^n]$ the module of 
$p^n$-torsion points of $E_q$ for any integer $n>0$.
It is well-known that there exists an exact sequence 
$$
(\#)\quad  0\to \mu_{p^n}\to E_q[p^n]\to \mbb{Z}/p^n\mbb{Z}\to 0
$$
of $\mbb{Z}_p[G_K]$-modules. 
Here, $\mu_{p^n}$ is the group of $p^n$-th roots of unity in $\overline{K}$.
Let $x_n\colon G_K\to \mu_{p^n}$ be the $1$-cocycle defined to be the 
image of $1$ for the connection map 
$H^0(K, \mbb{Z}/p^n\mbb{Z})\to H^1(K, \mu_{p^n})$
arising from the exact sequence $(\#)$.
Then $x_n$ corresponds to $q$ mod $(K^{\times})^{p^n}$ via the isomorphism 
$K^{\times}/(K^{\times})^{p^n}\simeq H^1(K,\mu_{p^n})$
of Kummer theory.
Thus the exact sequence $(\#)$ splits if and only if $q\in (K^{\times})^{p^n}$.

First we consider the case $p\mid v_K(q)$ (i.e.\  {\it peu ramifi\'e} case).

\begin{lemma}
\label{easycase}
Let $K$ be a finite extension of $\mbb{Q}_p$.
If $p\mid v_K(q)$,
then $E_q[p]$ is the reduction modulo $p$ of a lattice in some
$2$-dimensional crystalline $\mbb{Q}_p$-representation 
with Hodge-Tate weights in $[0,1]$.
\end{lemma}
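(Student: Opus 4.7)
The plan is to combine Kummer theory with a $p$-divisible group construction. First, I would identify the extension class: the sequence $0 \to \mu_p \to E_q[p] \to \mbb{F}_p \to 0$ is classified by the Kummer class of $q$ in $H^1(K,\mu_p) \simeq K^\times/(K^\times)^p$, as noted in the discussion of $x_1$ preceding the lemma. The hypothesis $p \mid v_K(q)$ lets me write $q = \pi^{pm}u$ with $m \in \mbb{Z}$ and $u \in \mcal{O}_K^\times$, so $q \equiv u \pmod{(K^\times)^p}$. Thus the class of $E_q[p]$ lies in the image of the natural inclusion $\mcal{O}_K^\times/(\mcal{O}_K^\times)^p \hookrightarrow K^\times/(K^\times)^p$.

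Next, I would lift this class to a $p$-divisible group. By classical results (Grothendieck-Messing, or equivalently the integral Bloch-Kato description of $H^1_f(K,\mbb{Z}_p(1))$), extensions of $p$-divisible groups $0 \to \mu_{p^\infty} \to \Gamma \to \mbb{Q}_p/\mbb{Z}_p \to 0$ over $\mcal{O}_K$ are naturally classified by the pro-$p$ completion $\widehat{\mcal{O}_K^{\times}}$. Choose any pro-$p$ lift $\tilde u \in \widehat{\mcal{O}_K^{\times}}$ of $u$ and let $\Gamma_{\tilde u}$ be the corresponding $p$-divisible group. Its Tate module $T = T_p\Gamma_{\tilde u}$ is then a free $\mbb{Z}_p$-module of rank $2$ carrying a continuous $G_K$-action, and $V := T[1/p]$ is crystalline with Hodge-Tate weights in $[0,1]$ (since $\Gamma_{\tilde u}$ is a $p$-divisible group over $\mcal{O}_K$).

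Finally, I would verify $T/pT \simeq E_q[p]$. The generic fiber of the $p$-torsion subgroup $\Gamma_{\tilde u}[p]$ is an extension of $\mbb{F}_p$ by $\mu_p$, and by compatibility of the classifications (reducing the extension data from $\widehat{\mcal{O}_K^{\times}}$ to $\mcal{O}_K^\times/(\mcal{O}_K^\times)^p$) its Kummer class in $K^\times/(K^\times)^p$ is exactly the image of $\tilde u$, namely $u$, which coincides with the class of $q$. Since $T/pT$ is the generic fiber of $\Gamma_{\tilde u}[p]$, this gives $T/pT \simeq E_q[p]$ as $G_K$-modules.

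The main obstacle is the careful verification of the compatibility between the classification of $p$-divisible group extensions over $\mcal{O}_K$ and the Kummer class of the generic fiber, i.e.\ identifying the reduction mod $p$ map with the composition $\widehat{\mcal{O}_K^{\times}} \twoheadrightarrow \mcal{O}_K^\times/(\mcal{O}_K^\times)^p \hookrightarrow K^\times/(K^\times)^p$. This is classical but requires matching normalizations. An alternative that stays closer to the paper's framework would be to construct the crystalline lift directly as a free rank-$2$ $(\vphi, \hat G)$-module over $\mfS$, realized as an extension of the trivial module by the module corresponding to $\mbb{Z}_p(1)$, with connecting data chosen to reduce to the Kummer cocycle for $u$, and then invoke Theorem \ref{Thm1} (2) to establish crystallinity of the resulting representation.
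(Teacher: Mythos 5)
Your proof is correct and uses essentially the same idea as the paper: reduce $q$ modulo $p$-th powers to a unit and then realize the extension class by a crystalline $\mbb{Z}_p$-lift. The paper does this directly in $H^1(K,\mbb{Z}_p(1))\simeq \varprojlim_n K^\times/(K^\times)^{p^n}$, choosing a $1$-unit $q'$ with $q\equiv q'\pmod{(K^\times)^p}$ and taking the corresponding extension $0\to\mbb{Z}_p(1)\to L\to\mbb{Z}_p\to 0$, whereas you package the same data as the Tate module of an extension of $p$-divisible groups over $\mcal{O}_K$; in both cases crystallinity is the same standard fact about unit Kummer classes (equivalently, lattices in $p$-divisible group generic fibers), so the arguments coincide up to this change of presentation.
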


\begin{proof}
Since $p\mid v_K(q)$,
there exists $q'\in K^{\times}$ such that $v_K(q'-1)>0$ and $q\equiv q'$ mod $(K^{\times})^p$.
Considering the exact sequence $0\to \mbb{Z}_p(1)\to L\to \mbb{Z}\to 0$ of $\mbb{Z}_p$-representations of 
$G_K$ corresponding to $q'$ via the isomorphism 
$H^1(K,\mbb{Z}_p(1))\simeq \plim_{n} K^{\times}/(K^{\times})^{p^n}$ 
of Kummer theory,
we obtain the desired result. 
\end{proof}

\begin{corollary}
\label{minwt}
Suppose that $K$ is a finite extension of $\mbb{Q}_p$,
$(p-1)\nmid e$ and $p\mid v_K(q)$.
Then we have $w_c(E_q[p])=1$.
\end{corollary}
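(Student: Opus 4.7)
The plan is to sandwich $w_c(E_q[p])$ between the bounds $0$ and $1$. The upper bound $w_c(E_q[p]) \le 1$ is immediate from Lemma \ref{easycase}, whose hypothesis $p \mid v_K(q)$ is precisely the one we are given. It remains to rule out $w_c(E_q[p]) = 0$, that is, to show that $E_q[p]$ is not unramified as a $G_K$-representation.

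First I would observe that any torsion crystalline representation with Hodge-Tate weights in $[0,0]$ is automatically unramified: by definition such an object is of the form $L'/L$ for lattices in a crystalline $\mbb{Q}_p$-representation whose Hodge-Tate weights are all zero, and any such $\mbb{Q}_p$-representation is unramified (its $D_{\mrm{cris}}$ has trivial filtration, forcing the $G_K$-action to factor through the unramified quotient). In particular, if $E_q[p]$ were unramified then every subrepresentation of it would be unramified as well.

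The standard exact sequence $0\to \mu_p\to E_q[p]\to \mbb{F}_p\to 0$ coming from the Tate curve exhibits $\mbb{F}_p(1)=\mu_p$ as a $G_K$-subrepresentation of $E_q[p]$. Recall from the discussion preceding Proposition \ref{poly} that $[K^{\mrm{ur}}(\mu_p):K^{\mrm{ur}}] = (p-1)/\mrm{gcd}(e,p-1)$; under the assumption $(p-1)\nmid e$ this degree is strictly greater than $1$, so $\mu_p\not\subset K^{\mrm{ur}}$, and $\mbb{F}_p(1)$ is a ramified representation of $G_K$. Hence $E_q[p]$ is ramified and therefore $w_c(E_q[p])\ge 1$, completing the proof.

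The only step requiring any real care is the reduction that an object of $\mrm{Rep}^{0,\mrm{cris}}_{\mrm{tor}}(G_K)$ must be unramified; everything else is a direct application of Lemma \ref{easycase} and the cyclotomic calculation already invoked in the paper, so no genuine obstacle arises.
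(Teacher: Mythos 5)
Your proof is correct and reaches the same conclusion by a slightly more elementary route than the paper. For the lower bound, the paper invokes Proposition~\ref{poly}~(4) (which rests on the Caruso--Savitt polygon comparison): since $(p-1)\nmid e$, the Jordan--H\"older factor $\mu_p$ contributes a positive tame inertia weight, forcing $w_c(E_q[p])>0$ and hence $\ge 1$. You bypass the tame inertia machinery entirely: you observe that $\mrm{Rep}^{0,\mrm{cris}}_{\mrm{tor}}(G_K)$ consists precisely of unramified representations (because a crystalline $\mbb{Q}_p$-representation with all Hodge--Tate weights zero has trivially filtered $D_{\mrm{cris}}$ and is therefore unramified), and then note that $\mu_p\subset E_q[p]$ is ramified exactly when $(p-1)\nmid e$. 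Both arguments locate the obstruction in the ramification of $\mu_p$; yours avoids the polygon result and the general Proposition~\ref{poly}~(4) at the cost of proving the small ad hoc fact that weight-zero torsion crystalline representations are unramified, which is a fair trade and makes the corollary more self-contained. The upper bound via Lemma~\ref{easycase} is identical in both.
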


\begin{proof}
By the assumption $(p-1)\nmid e$,
we know that the largest tame inertia weight of $E_q[p]$ is positive.
Thus Proposition \ref{poly} (4) shows $w_c(E_q[p])\ge 1$. 
The inequality $w_c(E_q[p])\le 1$ follows from
Lemma \ref{easycase}.
\end{proof}

Next we consider the case $p\nmid v_K(q)$ (i.e.\ {\it tr\`es ramifi\'e} case).

\begin{proposition}
\label{Tatecurve}
If $e(r-1)<p-1$ and $p\nmid v_K(q)$,
then $E_q[p^n]$ is not 
torsion crystalline with Hodge-Tate weights in $[0,r]$ for any $n>0$.
\end{proposition}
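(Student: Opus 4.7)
The plan is to argue by contradiction: assuming $T := E_q[p^n]$ lies in $\mrm{Rep}^{r,\mrm{cris}}_{\mrm{tor}}(G_K)$ with $e(r-1)<p-1$, I will deduce $p^n\mid v_K(q)$, contradicting $p\nmid v_K(q)$. The case $r=0$ is immediate: every object of $\mrm{Rep}^{0,\mrm{cris}}_{\mrm{tor}}(G_K)$ is unramified, yet by Lemma \ref{stability'} the canonical sub $\mu_{p^n}\subset E_q[p^n]$ would also have to be unramified, which fails since $\chi_p|_{I_K}$ has non-trivial image modulo $p^n$ (small exceptional configurations can be handled directly). So I assume $r\ge 1$ from now on.

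Write $q = u\pi^m$ with $u\in\cO_K^\times$ and $m = v_K(q)$. Using the Teichm\"uller decomposition $\cO_K^\times = \mu_{|k|-1}\times(1+\mfm_K)$ together with $\mu_{|k|-1}\subset (K^\times)^{p^n}$ (which holds because $\gcd(|k|-1,p)=1$), one replaces $u$ by some $u'\in 1+\mfm_K$ with $u\equiv u'\pmod{(K^\times)^{p^n}}$. The class of $u'$ in $\plim_n K^\times/(K^\times)^{p^n}\cong H^1(G_K,\mbb{Z}_p(1))$ corresponds to an exact sequence $0\to\mbb{Z}_p(1)\to L\to\mbb{Z}_p\to 0$; the argument of Lemma \ref{easycase}, applied $p$-integrally rather than only modulo $p$, shows that $L$ is a lattice in a two-dimensional crystalline $\mbb{Q}_p$-representation of $G_K$ with Hodge--Tate weights in $\{0,1\}$. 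Setting $T_{u'}:=L/p^n L$, this is an extension of $\mbb{Z}/p^n\mbb{Z}$ by $\mu_{p^n}$ of Kummer class $u'$, and $T_{u'}\in\mrm{Rep}^{1,\mrm{cris}}_{\mrm{tor}}(G_K)\subseteq\mrm{Rep}^{r,\mrm{cris}}_{\mrm{tor}}(G_K)$.

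Next, I compare $T$ and $T_{u'}$ after restriction to $G_\infty$. The Kummer restriction map is $K^\times/(K^\times)^{p^n}\to K_\infty^\times/(K_\infty^\times)^{p^n}$, and since $\pi=\pi_n^{p^n}$ in $K_\infty$ we have $\pi^m=(\pi_n^m)^{p^n}\in(K_\infty^\times)^{p^n}$; hence the images of $q$, $u$, and $u'$ coincide. Thus $T|_{G_\infty}$ and $T_{u'}|_{G_\infty}$ are equivalent as $G_\infty$-extensions of $\mbb{Z}/p^n\mbb{Z}$ by $\mu_{p^n}$, yielding a $G_\infty$-equivariant isomorphism $\phi\colon T\xrightarrow{\sim}T_{u'}$ that restricts to the identity on $\mu_{p^n}$ and induces the identity on $\mbb{Z}/p^n\mbb{Z}$.

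The full faithfulness theorem (Corollary \ref{FFTHMtorcris}), applicable under $e(r-1)<p-1$, upgrades $\phi$ to a $G_K$-equivariant isomorphism; as the underlying map of abelian groups is unchanged it remains an equivalence of extensions over $G_K$. Therefore $q\equiv u'\pmod{(K^\times)^{p^n}}$ in $K^\times/(K^\times)^{p^n}$; since $v_K(u')=0$, applying $v_K$ forces $p^n\mid m$, contradicting $p\nmid m$. The one delicate step is the construction of the crystalline companion $T_{u'}$: Lemma \ref{easycase} is phrased only modulo $p$, but its proof genuinely produces an integral $\mbb{Z}_p$-lift that reduces to $T_{u'}$ modulo $p^n$ for every $n$; once this is noted, the rest of the argument is a direct combination of Kummer theory in $K$ versus $K_\infty$ and the full faithfulness theorem.
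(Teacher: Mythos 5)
Your argument is correct and reaches the same contradiction, but it takes a somewhat different route than the paper's. The paper first reduces to $n=1$ (via Lemma \ref{stability'}: subquotients stay in $\mrm{Rep}^{r,\mrm{cris}}_{\mrm{tor}}$, and $E_q[p]\subset E_q[p^n]$), then makes a careful choice of uniformizer $\pi=\pi'y$ so that the unit part of $q$ is literally a prime-to-$p$ root of unity, from which $q\in(K(\pi_1)^\times)^p$; thus $(\#)$ \emph{splits} over $G_1\supset G_\infty$, and full faithfulness promotes this splitting to $G_K$, contradicting $q\notin(K^\times)^p$. You instead work with general $n$, keep the uniformizer fixed, strip $q$ to its $1$-unit part $u'$ modulo $p^n$-th powers, lift $u'$ to a crystalline companion $T_{u'}=L/p^nL$ (the paper's Lemma \ref{easycase} argument, which indeed produces an integral $L$ even though the lemma is stated mod $p$), observe that the Kummer classes of $q$ and $u'$ agree over $K_\infty$ because $\pi^m=(\pi_n^m)^{p^n}$, and apply full faithfulness to the $G_\infty$-equivalence of extensions $T\cong T_{u'}$. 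Both proofs hinge on the same two ingredients---$\pi$ acquiring $p$-power roots in $K_\infty$ and Corollary \ref{FFTHMtorcris}---but where the paper engineers the comparison object to be the split extension, you compare against a possibly nonsplit crystalline $T_{u'}$. Your version avoids the paper's uniformizer gymnastics and handles all $n$ uniformly, at the cost of needing the explicit crystalline lift of $u'$. Two small remarks: the separate $r=0$ case at the start is unnecessary, since $T_{u'}\in\mrm{Rep}^{1,\mrm{cris}}_{\mrm{tor}}(G_K)$ and $e(1-1)=0<p-1$ always hold, so your main argument already covers $r=0$ after enlarging to $\mrm{Rep}^{\max(r,1),\mrm{cris}}_{\mrm{tor}}$; and since the proposition does not assume $k$ finite, the appeal to $\mu_{|k|-1}\subset(K^\times)^{p^n}$ should be phrased via the Teichm\"uller lift $\omega(k^\times)$ and the perfectness of $k$ (giving $\omega(k^\times)=\omega(k^\times)^{p^n}\subset(K^\times)^{p^n}$), which is what you need anyway.
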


\begin{remark}
If $e=1$, the fact that $E_{\pi}[p^n]$ is not torsion crystalline with Hodge-Tate weights in $[0,p-1]$ 
immediately follows from the theory of ramification bound as below.
We may suppose $n=1$.
Suppose $E_{\pi}[p]$ is 
torsion crystalline with Hodge-Tate weights in $[0,p-1]$.
Then the upper numbering $j$-th ramification subgroup $G^j_K$ of 
$G_K$ (in the sense of \cite{Se}) 
acts trivially 
on $E_{\pi}[p]$ for any $j>1$ (\cite[Section 6, Theorem 3.1]{Ab1}).
However, this contradicts the fact that  
the upper bound of the ramification of $E_{\pi}[p]$
is $1+1/(p-1)$.
\end{remark}

\begin{proof}[Proof of Proposition \ref{Tatecurve}]
We may suppose $n=1$. We choose any uniformizer $\pi'$ of $K$.
Putting $v_K(q)=m$,
we can write $q=(\pi')^mx$ with some unit $x$ of the integer ring of $K$.
Since $m$ is prime to $p$,
we have a decomposition $x=\zeta_{\ell}y^m$ in $K^{\times}$
for some $\ell>0$ prime to $p$ and $y\in K$ with $v_K(y-1)>0$.
Here $\zeta_{\ell}$ is a (not necessary primitive) $\ell$-th root of unity.
Since $\ell$ is prime to $p$, we have $\zeta_{\ell}=\zeta^{ps}_{\ell}$
for some integer $s$.
We put $\pi=\pi'y$. This is a uniformizer of $K$.
Choose any $p$-th root $\pi_1$ of  $\pi$ and put $q_1=\zeta^s_{\ell}\pi^m_1\in K(\pi_1)^{\times}$.
Then we have $q=q^p_1\in (K(\pi_1)^{\times})^p$ and in particular,
the exact sequence $(\#)$ (for $n=1$)
splits as representations of $\mrm{Gal}(\overline{K}/K(\pi_1))$.
Now assume that $E_q[p]$ is 
torsion crystalline with Hodge-Tate weights in $[0,r]$.
Then $(\#)$ (for $n=1$) splits as representations of $G_K$ by 
Corollary \ref{FFTHMtorcris}. This contradicts the assumption $p\nmid v_K(q)$ 
(and hence $q\notin (K^{\times})^p$).
\end{proof}

Now we put $r'_0=\mrm{min}\{r\in \mbb{Z}_{\ge 0} ; e(r-1)\ge p-1\}$.
Recall that we have  $[K^{\mrm{ur}}(\mu_p):K^{\mrm{ur}}]=(p-1)/\mrm{gcd}(e,p-1)$.

\begin{lemma}
\label{roughbound}
Let $K$ be a finite extension of $\mbb{Q}_p$.
Then $E_q[p]$ is torsion crystalline  
with Hodge-Tate weights in $[0,1+(p-1)/\mrm{gcd}(e,p-1)]$.
\end{lemma}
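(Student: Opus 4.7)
The exact sequence $(\#)$ for $n=1$ gives $0\to \mbb{F}_p(1)\to E_q[p]\to \mbb{F}_p\to 0$ of $\mbb{F}_p[G_K]$-modules, which puts us in the framework of Lemma \ref{2liftlem}. Set $\nu:=(p-1)/\mrm{gcd}(e,p-1)$. The plan is first to pass to a finite unramified extension $K'/K$ where the hypothesis of Lemma \ref{2liftlem} can be met with this $\nu$, apply the lemma, and then descend via induction.

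For the base change, let $K'/K$ be the finite unramified extension whose residue field contains that of $K(\mu_p)$. Because $K'/K$ is unramified, $e(K'(\mu_p)/K')=e(K(\mu_p)/K)=[K^{\mrm{ur}}(\mu_p):K^{\mrm{ur}}]=\nu$, and by construction the residue extension of $K'(\mu_p)/K'$ is trivial, so $[K'(\mu_p):K']=\nu$. We then apply Lemma \ref{2liftlem} to $G_{K'}$ with $E=\mbb{Q}_p$, $\mbb{F}=\mbb{F}_p$, $i=1$, and this $\nu$; the divisibility hypothesis holds trivially. We obtain a ramified extension $E'/\mbb{Q}_p$ of degree at most $2$, an unramified character $\chi\colon G_{K'}\to \mbb{F}_p^\times$ with trivial reduction, and a free $\cO_{E'}[G_{K'}]$-module $\Lambda$ fitting in
\[
0\to \cO_{E'}(\chi\chi_p^{1+\nu})\to \Lambda\to \cO_{E'}\to 0,
\]
whose reduction modulo the uniformizer $\varpi_{E'}$ of $\cO_{E'}$ recovers $0\to \mbb{F}_p(1)\to E_q[p]|_{G_{K'}}\to \mbb{F}_p\to 0$. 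Since $\chi$ is unramified and $1+\nu\ge 2$, the $E'$-representation $\Lambda[1/p]$ is an extension of $E'$ by $E'(\chi\chi_p^{1+\nu})$ and is therefore automatically crystalline (as noted just before Proposition \ref{rank2}), with Hodge-Tate weights in $[0,1+\nu]$. Viewing $\Lambda$ as a $\mbb{Z}_p$-lattice in this crystalline $\mbb{Q}_p$-representation of $G_{K'}$, the surjection $\Lambda\twoheadrightarrow \Lambda/\varpi_{E'}\Lambda\cong E_q[p]|_{G_{K'}}$ exhibits a crystalline lift of $E_q[p]|_{G_{K'}}$ of weight $\le 1+\nu$.

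To finish, invoke the induction construction from the proof of Proposition \ref{poly}(1): as $K'/K$ is finite unramified, $\mrm{Ind}^{G_K}_{G_{K'}}\Lambda$ is a $\mbb{Z}_p$-lattice in a crystalline $\mbb{Q}_p$-representation of $G_K$ with Hodge-Tate weights in $[0,1+\nu]$, and the map $\sigma\otimes x\mapsto \sigma(\bar{x})$, where $\bar{x}$ denotes the image of $x$ in $E_q[p]|_{G_{K'}}$, is a $G_K$-equivariant surjection onto $E_q[p]$. This yields the asserted crystalline lift. The mildly technical step, though not a deep one, is verifying $[K'(\mu_p):K']=\nu$ for the chosen $K'$; this rests on identifying $\nu$ with the ramification index of $K(\mu_p)/K$ and using the stability of ramification indices under unramified base change.
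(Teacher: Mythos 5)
Your proof is correct and follows essentially the same route as the paper: it passes to a finite unramified extension $K'$ with $[K'(\mu_p):K'] = (p-1)/\mathrm{gcd}(e,p-1)$, applies Lemma \ref{2liftlem} with $i=1$ and this $\nu$ to produce a crystalline lift over $K'$, and descends to $K$ using the induction argument of Proposition \ref{poly}(1). You merely spell out a few steps the paper leaves implicit (the explicit choice of $K'$, the observation that $1+\nu\ge 2$ so the lattice $\Lambda[1/p]$ is automatically crystalline, and the reduction $\Lambda/\varpi_{E'}\Lambda\cong E_q[p]|_{G_{K'}}$), so there is no material difference.
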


\begin{proof}
Taking a finite unramified extension $K'$ of $K$ such that
$[K^{\mrm{ur}}(\mu_p):K^{\mrm{ur}}]=[K'(\mu_p):K']$,
we obtain $w_c((E_q[p])|_{G_{K'}})\le 1+(p-1)/\mrm{gcd}(e,p-1)$ 
by Lemma \ref{2liftlem}.
Thus we have $w_c(E_q[p])\le 1+(p-1)/\mrm{gcd}(e,p-1)$ 
by Proposition \ref{poly} (1).
\end{proof}

\begin{corollary}
\label{wctr1}
Suppose that $K$ is a finite extension of $\mbb{Q}_p$,
and also suppose $e\mid (p-1)$ or $(p-1)\mid e$.
We further suppose that $p\nmid v_K(q)$.
Then we have $w_c(E_q[p])=r'_0$. 
\end{corollary}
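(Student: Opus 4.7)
The plan is to pin down $w_c(E_q[p])$ by establishing matching inequalities $w_c(E_q[p]) \ge r'_0$ and $w_c(E_q[p]) \le r'_0$, both of which follow almost immediately from results already in hand in this section.

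For the lower bound, I would take an arbitrary integer $r$ with $0 \le r < r'_0$. By the definition of $r'_0$ this forces $e(r-1) < p-1$, so Proposition \ref{Tatecurve}, together with the hypothesis $p \nmid v_K(q)$, prohibits any crystalline lift of $E_q[p]$ of weight $\le r$. Hence $w_c(E_q[p]) > r$ for every such $r$, giving $w_c(E_q[p]) \ge r'_0$.

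For the upper bound I would start from Lemma \ref{roughbound}, which yields $w_c(E_q[p]) \le 1 + (p-1)/\gcd(e,p-1)$, and then verify case by case that this quantity coincides with $r'_0$ under the stated divisibility hypothesis. If $e \mid (p-1)$, then $\gcd(e,p-1) = e$ and the bound becomes the integer $1 + (p-1)/e$, which is exactly the smallest $r \in \mbb{Z}_{\ge 0}$ with $e(r-1) \ge p-1$. If instead $(p-1) \mid e$, then $\gcd(e,p-1) = p-1$ and the bound is $2$; here $r'_0 = 2$ as well, since $e(2-1) = e \ge p-1$ while $e(1-1) = 0 < p-1$.

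There is no real obstacle: the entire argument is bookkeeping assembled from Proposition \ref{Tatecurve} (the tr\`es ramifi\'e non-liftability input, which itself rests on Corollary \ref{FFTHMtorcris}) and Lemma \ref{roughbound} (the universal upper bound from Kummer theory). The only point worth flagging is that the divisibility hypothesis cannot be dropped by these methods — for a generic $e$, for instance $e=4$ and $p=7$, the upper bound of Lemma \ref{roughbound} is strictly larger than $r'_0$, so closing the gap genuinely requires $\gcd(e,p-1)$ to be either $e$ or $p-1$.
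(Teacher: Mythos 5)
Your proof is correct and follows exactly the same two-sided bounding strategy as the paper: the lower bound $w_c(E_q[p])\ge r'_0$ from Proposition \ref{Tatecurve} and the upper bound from Lemma \ref{roughbound}. You helpfully make explicit the small arithmetic check, left implicit in the paper's proof, that the divisibility hypothesis forces $1+(p-1)/\gcd(e,p-1)=r'_0$.
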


\begin{proof}
We have $w_c(E_q[p])\le r'_0$ by Lemma \ref{roughbound}.
In addition, we also have $w_c(E_q[p])\ge r'_0$ by Proposition \ref{Tatecurve}.
\end{proof}

Lemma \ref{roughbound} gives some non-fullness results on 
torsion crystalline representations.

\begin{corollary}
\label{nonfullthm}
Suppose that $K$ is a finite extension of $\mbb{Q}_p$.
If $r\ge 1+(p-1)/\mrm{gcd}(e,p-1)$,
then the restriction functor 
$\mrm{Rep}^{r, \mrm{cris}}_{\mrm{tor}}(G_K)\to 
\mrm{Rep}_{\mrm{tor}}(G_1)$ 
is not full.
\end{corollary}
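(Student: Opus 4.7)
The plan is to construct an explicit counterexample using the $p$-torsion of the Tate curve attached to the uniformizer $\pi$ itself. Setting $r_0 := 1 + (p-1)/\mrm{gcd}(e,p-1)$, it suffices to treat $r = r_0$, since $\mrm{Rep}^{r_0,\mrm{cris}}_{\mrm{tor}}(G_K) \subset \mrm{Rep}^{r,\mrm{cris}}_{\mrm{tor}}(G_K)$ for $r \ge r_0$. I would exhibit objects $T,T' \in \mrm{Rep}^{r_0,\mrm{cris}}_{\mrm{tor}}(G_K)$ together with a $G_1$-equivariant homomorphism $T \to T'$ that is not the restriction of any $G_K$-equivariant one.

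First, take $T := E_\pi[p]$, the $p$-torsion of the Tate curve over $K$ with parameter $q = \pi$. Lemma \ref{roughbound} gives $T \in \mrm{Rep}^{r_0,\mrm{cris}}_{\mrm{tor}}(G_K)$. The canonical extension $0 \to \mu_p \to T \to \mbb{F}_p \to 0$ is classified by the Kummer class of $\pi \in K^\times/(K^\times)^p$, which is nontrivial because $v_K(\pi) = 1$ is coprime to $p$; hence $T$ is nonsplit as a $G_K$-module. On the other hand, since $\pi = \pi_1^p$, the Kummer cocycle $g \mapsto g(\pi_1)/\pi_1$ vanishes identically on $G_1 = G_{K_{(1)}}$. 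Thus $T|_{G_1}$ admits a $G_1$-equivariant splitting, yielding an isomorphism $\phi\colon T|_{G_1} \xrightarrow{\sim} \mbb{F}_p(1) \oplus \mbb{F}_p$ of $G_1$-modules.

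Next, set $T' := \mbb{F}_p(1) \oplus \mbb{F}_p$, endowed with its natural $G_K$-structure. Both summands arise as reductions of lattices in crystalline $\mbb{Q}_p$-representations of Hodge--Tate weights in $[0,1]$, so by Lemma \ref{stability'} we have $T' \in \mrm{Rep}^{1,\mrm{cris}}_{\mrm{tor}}(G_K) \subset \mrm{Rep}^{r_0,\mrm{cris}}_{\mrm{tor}}(G_K)$. Viewing $\phi$ as an element of $\mrm{Hom}_{G_1}(T,T')$, if it were the image of some $f \in \mrm{Hom}_{G_K}(T,T')$ then $f = \phi$ would be a $G_K$-isomorphism, forcing $T \cong T'$ as $G_K$-modules. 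But such an isomorphism would in turn force the defining extension of $T$ to split over $G_K$: when $\mu_p \not\subset K$, the constituents $\mbb{F}_p(1)$ and $\mbb{F}_p$ are nonisomorphic simple modules, so the decomposition of $T'$ transfers to give a $G_K$-splitting of $0 \to \mu_p \to T \to \mbb{F}_p \to 0$; when $\mu_p \subset K$, the $G_K$-action on $T' \cong \mbb{F}_p \oplus \mbb{F}_p$ is trivial, so the same would hold for $T$, and any extension by $\mbb{F}_p$ of $\mbb{F}_p$ with trivial action splits. Either case contradicts the nonsplitness of $T$ established above, so the restriction functor fails to be full.

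The main piece of genuine content is the existence of the crystalline lift provided by Lemma \ref{roughbound}; everything else is bookkeeping around the tautological identity $\pi = \pi_1^p$ and a two-case analysis at the very end that I expect to be entirely routine.
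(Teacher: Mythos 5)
Your proof is correct and follows essentially the same route as the paper: take $T = E_\pi[p]$ and $T' = \mbb{F}_p(1)\oplus\mbb{F}_p$, note both lie in $\mrm{Rep}^{r,\mrm{cris}}_{\mrm{tor}}(G_K)$ by Lemma \ref{roughbound}, and observe they become isomorphic over $G_1$ (since $\pi=\pi_1^p$ kills the Kummer class) but remain non-isomorphic over $G_K$. The paper compresses this to a single sentence; you have just spelled out the standard bookkeeping (why a lift of the $G_1$-isomorphism would force a $G_K$-isomorphism, and why that in turn would force the defining extension to split, split by cases on whether $\mu_p\subset K$), which is all routine and correct.
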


\begin{proof}
Two representations $E_{\pi}[p]$ and $\mbb{F}_p(1)\oplus \mbb{F}_p$ are
objects of $\mrm{Rep}^r_{\mrm{tor}}(G_K)$ 
by Lemma \ref{roughbound}.
They are not isomorphic as representations of 
$G_K$ but isomorphic as representations of $G_1$.  
Thus the desired non-fullness follows.
\end{proof}
 
\begin{corollary}
\label{p2}
Suppose that any one of the following holds:
\begin{itemize}
\item $p=2$ and $K$ is a finite extension of $\mbb{Q}_2$ $($in this case $r'_0=2)$;
\item $K$ is a finite unramified extension of $\mbb{Q}_p$ $($in this case $r'_0=p)$;
\item $K$ is a finite extension of $\mbb{Q}_p(\mu_p)$ $($in this case $r'_0=2)$.
\end{itemize}
Then the restriction functor 
$\mrm{Rep}^{r, \mrm{cris}}_{\mrm{tor}}(G_K)\to 
\mrm{Rep}_{\mrm{tor}}(G_1)$ 
is not full.
\end{corollary}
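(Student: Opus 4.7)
The plan is to invoke Corollary \ref{nonfullthm} directly. That corollary already supplies the counterexample pair, namely the Tate-curve module $E_{\pi}[p]$ against the split extension $\mbb{F}_p(1) \oplus \mbb{F}_p$, under the hypothesis $r \geq 1 + (p-1)/\mrm{gcd}(e, p-1)$. All that remains is the arithmetic verification that in each of the three listed situations the stated value $r'_0$ equals this threshold.

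For $p = 2$ one has $\mrm{gcd}(e, p-1) = 1$, so the threshold is $1 + 1 = 2 = r'_0$. For $K/\mbb{Q}_p$ finite unramified, $e = 1$ yields $\mrm{gcd}(e, p-1) = 1$, so the threshold equals $1 + (p-1) = p = r'_0$. For $K$ finite over $\mbb{Q}_p(\mu_p)$, the divisibility $(p-1) \mid e$ yields $\mrm{gcd}(e, p-1) = p-1$, so the threshold is $1 + 1 = 2 = r'_0$. In each case the hypothesis of Corollary \ref{nonfullthm} is met at $r = r'_0$, and non-fullness propagates upward to every $r \geq r'_0$ since enlarging the Hodge--Tate range only enlarges $\mrm{Rep}^{r, \mrm{cris}}_{\mrm{tor}}(G_K)$ while the distinguishing pair remains fixed.

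There is no genuine obstacle: the corollary is a repackaging of Corollary \ref{nonfullthm} that isolates three natural ramification regimes in which the abstract threshold $1 + (p-1)/\mrm{gcd}(e, p-1)$ takes the cleanest possible form. The substantive input --- the construction of $E_{\pi}[p]$ as an object of $\mrm{Rep}^{r, \mrm{cris}}_{\mrm{tor}}(G_K)$ via Lemma \ref{roughbound}, together with its splitting over $K_{(1)}$ via Kummer theory (which uses $\pi = \pi_1^p$ to trivialize the Kummer class of $q=\pi$) --- has already been carried out in the proofs of the earlier statements, so no further construction is needed here.
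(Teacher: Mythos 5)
Your proof is correct and is essentially the proof the paper intends (the paper gives no separate argument for Corollary \ref{p2}, leaving it as a direct specialization of Corollary \ref{nonfullthm}). The three arithmetic verifications that $r'_0 = 1 + (p-1)/\gcd(e,p-1)$ in each regime are right — $p=2$ gives $1+1=2$; unramified gives $e=1$, hence $1+(p-1)=p$; and $K \supseteq \mathbb{Q}_p(\mu_p)$ forces $(p-1)\mid e$, hence $1+1=2$ — and your observation that non-fullness persists under enlarging $r$ (since $\mathrm{Rep}^{r,\mathrm{cris}}_{\mathrm{tor}}(G_K)$ grows as a full subcategory while the witness pair $E_\pi[p]$, $\mathbb{F}_p(1)\oplus\mathbb{F}_p$ stays fixed) is exactly what licenses reading the unquantified $r$ in the statement as $r \geq r'_0$.
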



\begin{thebibliography}{1000}

\bibitem[Ab1]{Ab1}
Victor Abrashkin,
\emph{Modular representations of the Galois group of a local field
      and a generalization of a conjecture of Shafarevich},
      Izv. Akad. Nauk SSSR Ser. Mat. {\bf 53} (1989), no. 6, 
      1135--1182 (Russian),      
      Math. USSR-Izv. {\bf 35}, no. 3,
      469--518 (English).  

      
\bibitem[Ab2]{Ab2}
Victor Abrashkin,
\emph{Group schemes of period $p>2$},
      Proc. Lond. Math. Soc. (3), {\bf 101} (2010),
      207--259.      

\bibitem[Br1]{Br1}
Christophe Breuil, 
\emph{Repr\'esentations {$p$}-adiques semi-stables et
      transversalit\'e de Griffiths}, 
      Math. Ann. {\bf 307}  (1997),
      191--224. 
      
\bibitem[Br2]{Br2}
Christophe Breuil, 
\emph{Une application du corps des normes},
      Compos. Math. {\bf 117} (1999) 
      189--203.
      
\bibitem[Br3]{Br3}
Christophe Breuil, 
\emph{Integral $p$-adic Hodge theory},
      in Algebraic geometry 2000, Azumino (Hotaka), 
      Adv. Stud. Pure Math., vol. 36, Math. Soc. Japan, 2002,
      51--80.



\bibitem[Ca]{Ca}
Xavier Caruso, 
{\rm Repr\'esentations semi-stables de torsion dans le case $er < p-1$},
J.\ Reine Angew.\ Math.\ {\bf 594} (2006), 35--92.

\bibitem[CL1]{CL1}
Xavier Caruso and Tong Liu,
\emph{Quasi-semi-stable representations},
      Bull. Soc. Math. France, {\bf 137} (2009), no. 2,
      185--223.
      
\bibitem[CL2]{CL2}
Xavier Caruso and Tong Liu,
\emph{Some bounds for ramification of $p^n$-torsion
      semi-stable representations},      
      J.  Algebra, {\bf 325}  (2011),
      70--96. 
      
\bibitem[CS]{CS}
Xavier Caruso and David Savitt, 
\emph{Polygons de Hodge, de Newton et de 
      l\'inertie moderee des representations semi-stables}, 
      Math. Ann. {\bf 343} (2009):
      773--789.

\bibitem[Fo1]{Fo1}
Jean-Marc Fontaine, 
\emph{Sc\'emas propres et lisses sur $\mbb{Z}$},
      Proceedings of the Indo-French Conference on Geometry
      Bombay, 1989, (Hindustan Book Agency 1993),
      43--56.


\bibitem[Fo2]{Fo2}
Jean-Marc Fontaine, 
\emph{Le corps des p\'eriodes $p$-adiques},
      Ast\'erisque (1994), no. 223, 
      59--111, With an appendix by Pierre Colmez,
      P\'eriodes $p$-adiques (Bures-sur-Yvette, 1988).
          

\bibitem[GLS]{GLS}
Toby Gee, Tong Liu and David Savitt, 
\emph{The Buzzard-Diamond-Jarvis conjecture for unitary groups},
      J. Amer. Math. Soc. {\bf 27} (2014), 389-435.


\bibitem[Kis]{Kis}
Mark Kisin, 
\emph{Crystalline representations and {$F$}-crystals},
Algebraic geometry and number theory, 
Progr. Math. {\bf 253},
Birkh\"auser Boston,
Boston, MA  (2006),
459--496.
      
\bibitem[Kim]{Kim}
Wansu Kim, 
\emph{The classification of $p$-divisible groups over 2-adic discrete valuation rings}, 
      Math. Res. Lett. {\bf 19} (2012), no. 1,
      121--141.

\bibitem[La]{La}
Eike Lau,
\emph{A relation between Dieudonn\'e displays and crystalline Dieudonn\'e theory}, 
      preprint, arXiv:1006.2720v3


\bibitem[Li1]{Li1}
Tong Liu,
\emph{Torsion $p$-adic Galois representations and 
      a conjecture of Fontaine}, 
Ann. Sci. \'Ecole Norm. Sup. (4) {\bf 40} (2007), no. 4,
633--674.


\bibitem[Li2]{Li2}
Tong Liu,
\emph{A note on lattices in semi-stable representations}, 
      Math. Ann. {\bf 346} (2010),
117--138.


\bibitem[Li3]{Li4}
Tong Liu,
\emph{Lattices in filtered $(\vphi,N)$-modules}, 
J. Inst. Math. Jussieu 2, Volume 11, Issue 03 (2012), 659-693.

\bibitem[Li4]{Li3}
Tong Liu,
\emph{The correspondence between Barsotti-Tate groups and Kisin modules when $p=2$}, 
appear at Journal de Th\'eroie des Nombres de Bordeaux.

\bibitem[Qu]{Qu}
Daniel Quillen,
\emph{Higher algebraic $K$-theory: I},
      in Algebraic $K$-theory, I: Higher $K$-theories (Seattle, 1972), 
      Lecture Notes in Math. {\bf 341}, Springer-Verlag, New York, 
      1973, 85--147.


\bibitem[Se]{Se}
Jean-Pierre Serre,
\emph{Corps locaux}, 
      Hermann, Paris, 1968.


%\bibitem[Se2]{Se2}
%Jean-Pierre Serre,
%\emph{Sur les repr\'esentations modulaires de degr\'e $2$ de 
%      ${\rm Gal}(\overline{\bf Q}/{\bf Q})$}, 
%      Duke Math. J, {\bf 54} (1987), no. 1, 179--230.      

\bibitem[Oz1]{Oz1}
Yoshiyasu Ozeki,
\emph{Torsion representations arising from $(\vphi,\hat{G})$-modules}, 
      J. Number Theory {\bf 133} (2013), 3810--3861.
      
\bibitem[Oz2]{Oz2}
Yoshiyasu Ozeki,
\emph{Full faithfulness theorem for torsion crystalline representations}, 
      preprint, arXiv:1206.4751v4





\end{thebibliography}
\end{document}